\documentclass[11pt]{article}

\def\TEXPAD{0}
\def\DARKMODE{0}

\PassOptionsToPackage{obeyspaces}{url}
\usepackage{amsfonts, amsmath, amssymb}
\usepackage{enumitem}

\usepackage[hidelinks]{hyperref}
\usepackage{bookmark}
\bookmarksetup{open,numbered,depth=5} % paragraphs go into outlines

\usepackage{amsthm}

\usepackage[usenames,dvipsnames]{xcolor}
\colorlet{prettygreen}{ForestGreen!60!LimeGreen}
\colorlet{graphorange}{orange}
\if\DARKMODE1
    \colorlet{commentblue}{cyan}
    \colorlet{graphblue}{cyan}
    \colorlet{graphred}{red}
    \colorlet{graphgreen}{prettygreen}
    \colorlet{commentpurple}{yellow}
    \colorlet{graphpurple}{violet!80!white}
    \colorlet{commentorange}{orange}
    \colorlet{graphyellow}{violet}
\else
    \colorlet{commentblue}{cyan!60!blue}
    \colorlet{graphblue}{blue!60!cyan}
    \colorlet{graphred}{red}
    \colorlet{graphgreen}{prettygreen}
    \colorlet{commentpurple}{violet}
    \colorlet{graphpurple}{violet}
    \colorlet{commentorange}{orange!70!red}
    \colorlet{graphyellow}{yellow}
\fi
\colorlet{graphyellow}{yellow}
\usepackage{tikz}
\usetikzlibrary{calc, positioning}
\tikzset{vtx/.style={circle, fill, inner sep=1.5pt}}
\tikzset{openvtx/.style={circle, draw, inner sep=1.5pt}}

\tikzset{trianglecolors/.style={
e1/.style={graphred},
e2/.style={blue},
e3/.style={graphgreen},
e4/.style={graphpurple},
e5/.style={graphyellow},
e6/.style={graphorange}
}}

\tikzset{four colors/.style={
emptyedge/.style={black!60, dashed},
e11/.style={emptyedge}, e22/.style={emptyedge}, e33/.style={emptyedge}, e44/.style={emptyedge}, e55/.style={emptyedge}, e66/.style={emptyedge}, e77/.style={emptyedge}, e88/.style={emptyedge}, e99/.style={emptyedge},
color1/.style={graphred},
color2/.style={graphblue},
color3/.style={graphgreen},
color4/.style={graphyellow},
e12/.style={color1}, e13/.style={color1}, e14/.style={color2}, e15/.style={color4}, e16/.style={color3}, e17/.style={color2}, e18/.style={color3}, e19/.style={color4},
e21/.style={color1}, e23/.style={color1}, e24/.style={color3}, e25/.style={color2}, e26/.style={color4}, e27/.style={color4}, e28/.style={color2}, e29/.style={color3},
e31/.style={color1}, e32/.style={color1}, e34/.style={color4}, e35/.style={color3}, e36/.style={color2}, e37/.style={color3}, e38/.style={color4}, e39/.style={color2},
e41/.style={color2}, e42/.style={color3}, e43/.style={color4}, e45/.style={color1}, e46/.style={color1}, e47/.style={color2}, e48/.style={color4}, e49/.style={color3},
e51/.style={color4}, e52/.style={color2}, e53/.style={color3}, e54/.style={color1}, e56/.style={color1}, e57/.style={color3}, e58/.style={color2}, e59/.style={color4},
e61/.style={color3}, e62/.style={color4}, e63/.style={color2}, e64/.style={color1}, e65/.style={color1}, e67/.style={color4}, e68/.style={color3}, e69/.style={color2},
e71/.style={color2}, e72/.style={color4}, e73/.style={color3}, e74/.style={color2}, e75/.style={color3}, e76/.style={color4}, e78/.style={color1}, e79/.style={color1},
e81/.style={color3}, e82/.style={color2}, e83/.style={color4}, e84/.style={color4}, e85/.style={color2}, e86/.style={color3}, e87/.style={color1}, e89/.style={color1},
e91/.style={color4}, e92/.style={color3}, e93/.style={color2}, e94/.style={color3}, e95/.style={color4}, e96/.style={color2}, e97/.style={color1}, e98/.style={color1},
}}

\usepackage{etoolbox} 
\patchcmd{\thebibliography}{\leftmargin\labelwidth}{\leftmargin\labelwidth\addtolength\itemsep{-0.1\baselineskip}}{}{}

\usepackage{mathtools}

\makeatletter
\newcommand\thankssymb[1]{\textsuperscript{\@fnsymbol{#1}}}
\makeatother

\author{Ting-Wei Chao\thanks{Department of Mathematics, Massachusetts Institute of Technology, Cambridge, MA, USA. Email: {\tt twchao@mit.edu}}, 
Maya Sankar\thanks{Department of Mathematics, Massachusetts Institute of Technology, Cambridge, MA, USA. Email: {\tt mayars@mit.edu}}, 
Hung-Hsun Hans Yu\thanks{Department of Mathematics, Princeton University, Princeton, NJ, USA.  Email: {\tt hansonyu@princeton.edu}}}

\title{On Colorful Kruskal--Katona Theorems}
\date{}

\usepackage[capitalize, nameinlink]{cleveref}

\usepackage{indentfirst}

\newtheorem{theorem}{Theorem}[section]
\newtheorem{lemma}[theorem]{Lemma}
\newtheorem{proposition}[theorem]{Proposition}
\newtheorem{corollary}[theorem]{Corollary}
\newtheorem{claim}[theorem]{Claim}
\newtheorem*{claim*}{Claim}

\theoremstyle{definition}
\newtheorem{definition}[theorem]{Definition}

\newtheorem{question}[theorem]{Question}

\newtheorem{conjecture}[theorem]{Conjecture}

\theoremstyle{remark}
\newtheorem{remark}[theorem]{Remark}

\if\TEXPAD0 % the following line breaks Maya's editor for some reason
	\crefname{claim}{Claim}{Claims}
\fi
\newcommand{\proofofclaim}{\renewcommand*{\qedsymbol}{$\blacksquare$}}

\newcommand*{\eqdef}{\stackrel{\mbox{\normalfont\tiny def}}{=}}   % definition by equality                      
\newcommand*{\PP}{\mathbb{P}}  
\newcommand*{\veps}{\varepsilon}                                 % Nice-looking epsilon
\newcommand*{\RR}{\mathbb{R}}                                     % Real numbers
\newcommand*{\EE}{\mathbb{E}}                                     % Expectation            
\newcommand*{\HH}{\mathbb{H}}  
\newcommand{\abs}[1]{\left\lvert#1\right\rvert}
\DeclarePairedDelimiterX{\infdivx}[2]{(}{)}{#1\;\delimsize\|\;#2}
\newcommand*{\kldiv}{d_{\textup{KL}}\infdivx}
     
\DeclareMathOperator{\supp}{supp}                               % volume 
\newcommand*{\FF}{\mathbb{F}}  
\newcommand*{\cL}{\mathcal{L}}                          
\newcommand{\bfi}{{\mathbf i}}
\newcommand{\bfj}{{\mathbf j}}
\newcommand{\bfk}{{\mathbf k}}
\newcommand{\bfc}{{\mathbf c}}

\newcommand{\NN}{\mathbb N}

\newcommand{\sg}{\sigma}

\newcommand\coin{
\mathrel{\ooalign{\hss$\bigcirc$\hss\cr\kern0.7ex\hbox{\scalebox{0.8}{$\$$}}}}}

\begin{document}

\maketitle

\begin{abstract}
What is the maximum number of rainbow triangles in an edge-colored graph with $m$ edges and $r$ colors? Using entropic techniques, we prove an upper bound of $C_rm^{3/2}$ rainbow triangles with $C_r=\sqrt{\frac{2(r-2)}{9r}}$; this constant is best possible whenever there exists an affine plane of order $r-1$. We also show that constructions attaining at least $(C_r-\varepsilon_r)m^{3/2}$ rainbow triangles must exhibit an affine plane structure, which further improves the upper bound if no such affine plane exists.

We also consider the problem of counting properly edge-colored cliques of larger sizes. Surprisingly, if the number $r$ of colors is odd, this count is instead maximized by blowups of a properly edge-colored $K_{r+1}$.
\end{abstract}

\section{Introduction}

The Kruskal--Katona theorem is one of the fundamental theorems in the field of extremal combinatorics. Independently proven by Kruskal \cite{Kru63} and Katona \cite{Kat68}, the theorem determined the maximum number of cliques of size $s$ in any graph $G$ with $m$ edges. It also generalizes to hypergraphs. A simplified version of the theorem was given by Lov\'asz \cite{Lov93}, stating that in a $k$-uniform hypergraph with $\binom{x}{k}$ edges there are at most $\binom{x}{s}$ cliques of size $s$, for any real number $x\geq s$. There are many extensions of this theorem. For example, one can generalize by replacing cliques with any graph $H$, asking for the maximum number of copies of $H$ in any graph with $m$ edges. The asymptotics of the answer to this question were determined by Alon \cite{Alon81}, and later generalized by Friedgut and Kahn \cite{FK98} to hypergraphs. 

Another way to generalize the Kruskal--Katona theorem is to consider edge-colored graphs.
We say that a subgraph in an edge-colored graph is \emph{rainbow} if all of its edges have different colors. Inspired by the joints problem from incidence geometry, the first and the third author \cite{CY24kk} studied the following rainbow version of the Kruskal--Katona theorem, which they called the \emph{rainbow triangle problem}. 
If $G$ is a $3$-edge-colored simple graph with a given number of edges, what is the maximum number of rainbow triangles in $G$? 
The first and the third author proved the following result.
\begin{theorem}[\cite{CY24kk}]\label{thm:rainbow-triangle}
    In a simple 3-edge-colored graph with $m_r$ red edges, $m_g$ green edges, and $m_b$ blue edges, the number $T$ of rainbow triangles satisfies
    \[T^2\leq 2m_rm_gm_b.\]
\end{theorem}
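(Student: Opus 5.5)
The plan is an entropy argument. Each rainbow triangle has exactly one labeling $(X,Y,Z)$ of its vertices with $XY$ red, $YZ$ green and $XZ$ blue. Let $(X,Y,Z)$ be uniformly random over rainbow triangles labeled this way, so that $H(X,Y,Z)=\log T$. The goal is then
\[
2H(X,Y,Z)\le \log 2+\log m_r+\log m_g+\log m_b .
\]
The naive Shearer bound $2H(X,Y,Z)\le H(X,Y)+H(Y,Z)+H(X,Z)$ only gives $T^2\le 8m_rm_gm_b$. The reason is that each ordered pair, e.g.\ $(X,Y)$, is supported on at most $2m_r$ values and not $m_r$. The whole proof is about recovering the factor $4$ lost to these orientation bits.

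First I would use two different chain-rule decompositions, each ending with one conditioning on a single vertex. Since $Z$ is determined as the common vertex of the unordered green edge $\{Y,Z\}$ and the unordered blue edge $\{X,Z\}$, we have $H(X,Y,Z)=H(\{X,Z\},\{Y,Z\})$. Splitting this and using that conditioning reduces entropy, I aim for
\[
H(X,Y,Z)\le H(\{X,Z\})+H(Y\mid Z),\qquad H(X,Y,Z)\le H(\{Y,Z\})+H(X\mid Z).
\]
The first uses $H(\{Y,Z\}\mid\{X,Z\})\le H(Y\mid X,Z)\le H(Y\mid Z)$ together with $H(X,Z)=H(\{X,Z\})+(\text{orientation})$, arranged so that the orientation information is charged to the other term. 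The unordered edges then contribute at most $\log m_b$ and $\log m_g$ with no loss. What remains is to show
\[
H(X\mid Z)+H(Y\mid Z)\le \log 2+\log m_r .
\]
Here $(X,Y)$ is an ordered pair along a red edge, and $Z$ is auxiliary.

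Second, I would prove this red bound. Conditioning on $Z$ can be dropped only in the favourable direction: $H(X\mid Z)+H(Y\mid Z)\le H(X,Y\mid Z)+I(X;Y\mid Z)$. So it suffices to prove a one-graph lemma: for any random ordered pair $(U,V)$ supported on the oriented edges of a graph with $m$ edges, a suitable symmetric combination of conditional entropies is at most $\log(2m)$. For example, the lemma should give $H(U\mid V)+H(V\mid U)\le\log(2m)$. To prove it, I would symmetrize. Replace the law of $(U,V)$ by its average with the swapped law $(V,U)$. This does not decrease the left-hand side, since conditional entropy is concave in the joint law and swapping exchanges the two terms. For a symmetric law the left-hand side equals $2H(U,V)-2H(U)$. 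Finally, $H(U,V)\le\log(2m)$ and $H(U,V)-2H(U)\le0$ hold, the latter by subadditivity, since $H(U)=H(V)$ for a symmetric law. Sanity checks: a star with $m$ edges and $K_n$ are tight. This matches the equality cases of the target, where one colour class in each triangle is forced to be "star-like" relative to the others.

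The main obstacle is the bookkeeping in the first step. I must make sure that exactly one orientation bit survives overall, not three, and that the two inequalities are combined so that the conditional terms land on a single colour class where the lemma applies. If the split above does not close, my fallback is to average over all three choices of which colour plays the role of the red pair. I would also symmetrize the labeling of the whole triangle, using the $S_3$ action together with a uniform random bit, before applying the lemma. Then the single surviving orientation bit accounts precisely for the factor $2$ in $T^2\le 2m_rm_gm_b$.
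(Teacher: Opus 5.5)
Your reduction fails at both stages, already on the basic examples. Since $(X,Z)$ is $\{X,Z\}$ plus one orientation bit, $\HH(\{Y,Z\}\mid\{X,Z\})=\HH(X,Y,Z)-\HH(\{X,Z\})=\HH(Y\mid X,Z)+\HH(X,Z\mid\{X,Z\})$. So the step $\HH(\{Y,Z\}\mid\{X,Z\})\le\HH(Y\mid X,Z)$ runs the wrong way, and no other term can absorb the orientation bit. Concretely, take the extremal example: $K_4$ with red edges $12,34$, green $14,23$, blue $13,24$. Your labeled triangles are $(1,2,3),(2,1,4),(3,4,1),(4,3,2)$. Then $\HH(X,Y,Z)=2$, while $\HH(\{X,Z\})=\HH(\{Y,Z\})=1$ and $\HH(X\mid Z)=\HH(Y\mid Z)=0$. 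Both of your chain-rule inequalities fail, and their sum reads $4\le 2$.

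The red bound $\HH(X\mid Z)+\HH(Y\mid Z)\le\log(2m_r)$ is also false. Join a vertex $z$ by blue edges to $a_1,\dots,a_k$ and by green edges to $b_1,\dots,b_k$, and add red edges $a_ib_i$. Then $Z=z$ and $(X,Y)=(a_{\bfi},b_{\bfi})$ with $\bfi$ uniform, so $\HH(X\mid Z)+\HH(Y\mid Z)=2\log k>\log(2k)$ for $k\ge 3$. Your one-graph lemma cannot rescue this. It conditions each endpoint on the other endpoint rather than on an outside vertex, and the term $I(X;Y\mid Z)$ can equal $\HH(X\mid Z)$. The lemma itself is just $\HH(U\mid V)+\HH(V\mid U)=\HH(U,V)-I(U;V)\le\log(2m)$, with no symmetrization needed.

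More fundamentally, no rebookkeeping of these ingredients can close the argument, fallbacks included. Nothing you use reflects that the colored graph is simple, i.e.\ that the color is a function of the vertex pair. Every step you describe remains valid in the multigraph obtained from $K_n$ by placing one edge of each color on every pair. That includes the chain rule, dropping conditioning, recovering $(X,Y,Z)$ from two unordered edges, uniform bounds on the three edge sets, and symmetrizing roles. There $(X,Y,Z)$ is uniform on all $n(n-1)(n-2)$ ordered triples and $m_r=m_g=m_b=\binom n2$, yet $T^2/(2m_rm_gm_b)=4(n-2)^2/(n(n-1))\to 4$. The naive Shearer bound is asymptotically sharp in that model.

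What is missing is a combinatorial input that exploits the single-valuedness of $\chi$. In the paper this comes from resampling and the mixture bound. Take a distribution symmetric under all six labelings. Let $Y_i$ be a resample of $X_i$ given $X_{i-1},X_{i+1},\chi(X_{i-1}X_i)$, and $Z_{i+1}$ a resample of $X_{i+1}$ given $X_{i-1},Y_i$. The six quintuples $(X_1,X_2,X_3,Y_i,Z_{i+1})$ and $(X_1,X_2,X_3,Z_{i+1},Y_i)$ have pairwise disjoint supports. This is exactly where one uses that a pair cannot carry two colors. Mixing them yields an ordered edge $(A,B)$ distributed like $(X_1,X_2)$ with $\HH(A,B)\ge 2\HH(X_3\mid X_1,X_2)+\log_2 3$. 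Hence $2\HH(X_1,X_2,X_3)\le 3\HH(X_1,X_2)-\log_2 3$. Since $\chi(X_1X_2)$ is uniform on the three colors, $\HH(X_1,X_2)\le\log_2 3+\frac13\sum_c\log_2(2m_c)$. Together with $\HH(X_1,X_2,X_3)=\log_2(6T)$, this gives $T^2\le 2m_rm_gm_b$.
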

This bound is tight in certain regimes, as shown by the following example. Partition the edge set of $K_4$ into three perfect matchings, each forming a different color class. In this colored graph, we have $m_r=m_g=m_b=2$ and $T=4$, so that $T^2=2m_rm_gm_b$. 
Any balanced blowup of this colored graph also yields a construction with $m_r=m_g=m_b$ for which the bound is tight. It remains open whether the bound is tight if the sizes of the color classes are unbalanced.
Note that, by the AM--GM inequality, any $3$-edge-colored simple graph with $m$ edges contains at most $\sqrt{\frac{2}{27}}m^{3/2}$ rainbow triangles.

The first main result of this paper is to generalize the previous result to more than three colors.
\begin{theorem}\label{thm:r-color}
Let $r\geq 3$ and $m$ be positive integers.
Let $G$ be an $r$-edge-colored simple graph with $m$ edges in total. Then the number of rainbow triangles is at most $\sqrt{\frac{2(r-2)}{9r}}m^{3/2}$.
\end{theorem}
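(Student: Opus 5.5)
The natural first idea is to apply \cref{thm:rainbow-triangle} to every triple of colors and sum, giving $T\le\sqrt2\sum_{i<j<k}\sqrt{m_im_jm_k}$. This is far too weak: at $m_i=m/r$ it exceeds the target by a factor of order $r^{3/2}$. The loss is that a single edge of color $a$ is charged separately in $\binom{r-1}{2}$ different triples. So I would reprove the bound directly, with an entropy argument in the style of \cite{CY24kk} that treats all colors at once. The constant $\frac{r-2}{r}$ should come from the fact that the three colors of a rainbow triangle are distinct.

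\textbf{Setup.} Let $(X,Y,Z)$ be a uniformly random ordered rainbow triangle, so $H(X,Y,Z)=\log(6T)$. Write $A=c(YZ)$, $B=c(XZ)$, $C=c(XY)$ for the colors opposite $X$, $Y$, $Z$. The marginal $(X,Y)$ is an ordered edge, so $H(X,Y)\le\log(2m)$, and similarly for the other two pairs. For each vertex $v$ and color $c$, let $d_c(v)$ be the number of $c$-colored edges at $v$; these satisfy $\sum_{v,c}d_c(v)=2m$.

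\textbf{Chain-rule decomposition.} I would split
\[
H(X,Y,Z)=H(X)+H(B,C\mid X)+H(Y\mid X,C)+H(Z\mid X,B)-I,
\]
where $I\ge 0$ is a mutual-information correction. Here I use that, given $X$, the vertex $Y$ ranges over $C$-colored neighbors of $X$ and $Z$ over $B$-colored neighbors. Then $H(Y\mid X,C)\le\EE\log d_C(X)$ and likewise for $Z$. I would pair this with the Shearer-type symmetric bound $2H(X,Y,Z)\le H(X,Y)+H(Y,Z)+H(X,Z)$ and take a suitable convex combination of the two inequalities. The combination should be chosen so that all the terms $\EE\log d_c(v)$ and $H(X)$ assemble, via Gibbs' inequality / log-sum, into $\log\sum_{v,c}d_c(v)=\log 2m$.

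\textbf{The color entropy.} The remaining term involves $H(B,C\mid X)$ together with the condition $B\neq C$, and $A\notin\{B,C\}$. The goal is to bound the color distribution of an ordered triple of distinct colors. Viewing the three color variables $A,B,C$ jointly, one can show that $\sum_c\Pr[c\in\{A,B,C\}]=3$. Jensen applied to how the mass of the edge distribution is spread over the $r$ colors then yields the factor $\frac{r-2}{r}$. Equality would hold when all colors are equally likely and, for a fixed color of $XY$, the pair $(B,C)$ is uniform over ordered pairs of the other $r-1$ colors — exactly the affine-plane situation. After exponentiating, the expected final shape is $(6T)^2\le 36\cdot\frac{2(r-2)}{9r}m^3$.

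\textbf{Main obstacle.} The hard step is choosing the right weighting between the two entropy inequalities so that the color-entropy term appears with exactly the coefficient that produces $\frac{r-2}{r}$ rather than something lossy. I would first calibrate the weights on the equality case (blow-ups of the affine-plane construction, where every inequality must be tight), and then check that the resulting inequality holds in general.
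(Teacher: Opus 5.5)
There is a genuine gap. The step you label the main obstacle is where all the content lies, and the plan you describe for it cannot work.

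\textbf{Shearer cannot be used.} Your own calibration test rules the plan out. In the affine-plane example and its balanced blowups, $(X,Y)$ is a uniform ordered edge, so $\HH(X,Y)=\HH(Y,Z)=\HH(X,Z)=\log_2(2m)$. The theorem is an equality there: $2\HH(X,Y,Z)=3\log_2(2m)+\log_2\frac{r-2}{r}$. Hence Shearer's inequality $2\HH(X,Y,Z)\le \HH(X,Y)+\HH(Y,Z)+\HH(X,Z)$ is \emph{strict} at the extremal example, with slack exactly $\log_2\frac{r}{r-2}$, which is the entire saving you are after. A sharp bound obtained by adding nonnegative multiples of valid inequalities may only use inequalities that are tight at the extremal example. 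So Shearer must receive weight $0$, and everything rests on your chain-rule decomposition.

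\textbf{The other ingredients do not close.} Gibbs' inequality absorbs $\HH(X)+\HH(C\mid X)+\EE\log_2 d_C(X)$ into $\log_2(2m)$. That leaves $\HH(B\mid X,C)+\EE\log_2 d_B(X)\le \EE\log_2\bigl(d(X)-d_C(X)\bigr)$, a local degree term. In general it is not $\frac12\log_2(2m)+O(1)$; take, for instance, $K_s$ joined completely to $p\gg s$ further vertices. The color step is also not an argument. The identity $\sum_c\Pr[c\in\{A,B,C\}]=3$ holds trivially for any three distinct colors, and $A$ appears in no entropy term of either inequality, so the constraint $A\notin\{B,C\}$ is never used.

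\textbf{What is missing.} You need a mechanism for the two factors in $\frac{r-2}{r}=\frac{r-1}{r}\cdot\frac{r-2}{r-1}$. For a symmetric distribution, $2\HH(X_1,X_2,X_3)\le 3\HH(X_1,X_2)$ is the sum of $\HH(X_3\mid X_1,X_2)\le \HH(X_3\mid X_1)$ and $\HH(X_3\mid X_1,X_2)\le \HH(X_3)$. Each half needs its own color saving.

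Your decomposition can supply the first. It is equivalent to $\HH(Z\mid X,Y)=\HH(Z\mid X)-I(Y;Z\mid X)$, and $I(Y;Z\mid X)\ge I(B;C\mid X)$. Given $X=x$, the pair $(B,C)$ is exchangeable with some marginal $\mu$ and avoids the diagonal. By the log-sum inequality, $I(B;C\mid X=x)\ge -\log_2\bigl(1-\sum_c\mu_c^2\bigr)\ge \log_2\frac{r}{r-1}$. This is exactly the paper's inequality $\HH(X_3\mid X_1,X_2)\le \HH(X_2\mid X_1)+\log_2\frac{r-1}{r}$, which the paper proves by a mixture instead.

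The second saving, $\HH(X_3\mid X_1,X_2)\le \HH(X_1)+\log_2\frac{r-2}{r-1}$, is where the third color enters, and nothing in your outline produces it. The paper resamples three variables:
\begin{itemize}
\item $X_1$ given $X_2,X_3,\chi(X_1X_2),\chi(X_1X_3)$;
\item $X_2$ given $X_1,X_3,\chi(X_1X_2)$;
\item $X_3$ given $X_1,X_2$.
\end{itemize}
Given $(X_1,X_2,X_3)$, these three resamples have pairwise disjoint supports, and each is distributed as $X_1$. The mixture bound (\cref{lem:mix}) then gives
\[
2^{\HH(X_1)}\ge\bigl(1+\tfrac{1}{r-1}+\tfrac{1}{(r-1)(r-2)}\bigr)2^{\HH(X_3\mid X_1,X_2)}=\tfrac{r-1}{r-2}\,2^{\HH(X_3\mid X_1,X_2)}.
\]
Adding the two inequalities yields $2\HH(X_1,X_2,X_3)\le 3\HH(X_1,X_2)+\log_2\frac{r-2}{r}$, and the uniform bound $\HH(X_1,X_2)\le\log_2(2m)$ finishes the proof.
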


This bound is tight if $q=r-1$ is a prime power. Let $G$ be a complete graph with vertex set $\mathbb F_q^2$ and let the color of an edge between two points $p_1,p_2\in V(G)$ correspond to the slope of the line from $p_1$ to $p_2$ in $\mathbb F_q^2$. Because $G$ is a complete graph, we have $m=\binom{q^2}2=\frac 12q^2(q^2-1)$. Moreover, any three non-collinear points in $\mathbb F_q^2$ form a rainbow triangle, so the number of rainbow triangles in $G$ is
\[
\frac 16q^2(q^2-1)(q^2-q)=\frac 16q^3(q-1)^2(q+1)=\sqrt{\frac{2(q-1)}{9(q+1)}}m^{3/2},
\]
matching the bound in \cref{thm:r-color}. Taking balanced blowups of $G$ shows that \cref{thm:r-color} is tight for infinitely many $m$.

More generally, \cref{thm:r-color} is tight if there is a projective plane of order $q=r-1$. To see this, let $P$ be such a projective plane and fix a line $\ell$ in $P$. We think of $\ell$ as the ``line at infinity'' and let $A=P\setminus\ell$ be the ``affine part'' of $P$, corresponding to the affine plane $\mathbb F_q^2$ in the previous construction. Let $G$ be a complete graph on vertex set $A$ colored by $r$ colors corresponding to the $r=q+1$ points of $\ell$. For any two points $p_1,p_2\in A$, we color the edge $p_1p_2$ with the color corresponding to $\overline{p_1p_2}\cap\ell$. The graph $G$ has $m=\binom{|A|}2$ edges and any three non-collinear points in $A$ form a rainbow triangle, so the number of rainbow triangles is $\frac 16|A|(|A|-1)(|A|-q)$. These quantities match the computation above because $|A|=|P|-|\ell|=(q^2+q+1)-(q+1)=q^2$. Thus, this graph $G$ and its balanced blowups are also tight examples for \cref{thm:r-color}.

\begin{figure}[h]
\centering
\begin{tikzpicture}[four colors, scale=1.7, baseline={(0,0)}]
\foreach \i in {1,...,4} {
	\draw let \n1 = {mod(\i-1,2)} in 
		(\n1, {-(\i-\n1-1)/2+0.5}) node[circle,draw=black] (v\i) {$n$};
}
\path (v2) --++ (0.5,-0.05) 
	node[anchor=west, inner sep=-1pt] (s0) {Slope}
	--++ (0,-0.3) coordinate (s1)
	--++ (0,-0.3) coordinate (s2)
	--++ (0,-0.3) coordinate (s3);
	
\begin{scope}[line width=1.5pt]
	\draw[color1] (v1) -- (v2) (v3) -- (v4);
	\draw[color2] (v1) -- (v3) (v2) -- (v4);
	\draw[color3] (v1) -- (v4) (v2) -- (v3);
	\foreach \i/\j in {1/0,2/\infty,3/1} {
	\draw[color\i] (s\i) --++ (0.25,0) ++ (0.2,0) node[black] {$\j$};
	}
\end{scope}
\end{tikzpicture}
\qquad\qquad
\begin{tikzpicture}[four colors, scale=1.7, baseline={(0,0)}]
\foreach \i in {1,...,9} {
	\draw let \n1 = {mod(\i-1,3)} in 
		(\n1, {-(\i-\n1-1)/3+1}) node[circle,draw=black] (v\i) {$n$};
}
\foreach \i [evaluate=\i as \ii using int(\i+1)] in {1,...,8} {
	\foreach \j in {\ii,...,9} {
		\draw[e\i\j, line width=1.5pt, opacity=0.8] let 
			\n1={(\j-\i==2 && mod(\i,3)==1) || (\i==3 && \j==7) ? 21 : (
				(\j-\i == 6) || (\i==1 && \j==9) ? -21 : 0
			)} 
		in(v\i) to[bend left=\n1] (v\j);
	}
}
\path (v3) --++ (0.5,-0.4) 
	node[anchor=west, inner sep=-1pt] (s0) {Slope}
	--++ (0,-0.3) coordinate (s1)
	--++ (0,-0.3) coordinate (s2)
	--++ (0,-0.3) coordinate (s3)
	--++ (0,-0.3) coordinate (s4);
\foreach \i/\j in {1/0,2/\infty,3/1,4/-1} {
	\draw[color\i, line width=1.5pt] (s\i) --++ (0.25,0) ++ (0.22,0) node[black] {$\j$};
}
\end{tikzpicture}

\caption{Constructions for \cref{thm:r-color} when $r=3$ (left) and $r=4$ (right), which are balanced blowups of the graphs induced by affine planes of order $2$ or $3$.}
\label{fig:affine-plane-coloring}
\end{figure}

For our convenience, we will call the graph $G$ obtained from this construction \emph{the graph induced by an affine plane of order $(r-1)$.}
An equivalent characterization of $G$ is that it is an $r$-edge-colored complete graph of $(r-1)^2$ vertices so that each color class is a union of $(r-1)$ vertex-disjoint $(r-1)$-cliques.

\subsection{Comparison with previous work}
Our proof of \cref{thm:r-color} is based on a different proof of \cref{thm:rainbow-triangle}.
The original proof in \cite{CY24kk} is a combination of the entropy method and a counting argument, and later a purely entropic proof was given in \cite{CY24rt} using an injection as a key ingredient. 
Recently, Balogh, Bradshaw, Garcia, and Lidick\'y \cite{BBGL25} gave a human-readable flag algebra proof to \cref{thm:rainbow-triangle}, which can be reformulated as either a counting proof or an entropic proof.
Their proof also relies on a key injection, which is significantly simpler than the one in \cite{CY24rt}.
They also characterized the optimal configurations, showing that they are blowups of the colored $K_4$ construction above, possibly with some isolated vertices.

In comparison, our proof uses the mixture bound, which was introduced in \cite{CY24kk,CY24turan}.
Informally, the mixture bound allows us to upper bound the entropies of random variables supported on disjoint sets, in terms of the entropy of a new random variable that ``mixes'' their distributions.
With some carefully constructed random variables and simple applications of the mixture bound, we can prove the following statement that implies both \cref{thm:rainbow-triangle} and \cref{thm:r-color}.
\begin{theorem}\label{thm:entropy-r-color}
Let $G$ be an $r$-edge-colored simple graph and let $(X_1,X_2,X_3)$ be any symmetric distribution over rainbow triangles. That is, $(X_1,X_2,X_3)$ has the same distribution as $(X_{\sigma(1)},X_{\sigma(2)},X_{\sigma(3)})$ for any permutation $\sigma$ of $\{1,2,3\}$. Then 
\[2\HH( X_1,X_2,X_3)\leq 3\HH(X_1,X_2)+\log_2\left(\frac{r-2}{r}\right).\] 
\end{theorem}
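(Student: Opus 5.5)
We first rewrite the target inequality. Since $\HH(X_1,X_2,X_3)=\HH(X_1,X_2)+\HH(X_3\mid X_1,X_2)$, the statement of \cref{thm:entropy-r-color} is equivalent to
\[2\HH(X_3\mid X_1,X_2)\le \HH(X_1,X_2)+\log_2\tfrac{r-2}{r}.\]
Let $C_{ij}$ denote the color of the edge $X_iX_j$; it is a function of $(X_i,X_j)$. By symmetry, $(X_1,X_2)$, $(X_1,X_3)$ and $(X_2,X_3)$ all have the same distribution, and so do the color variables $C_{12},C_{13},C_{23}$. Let $p_c=\PP(C_{12}=c)$ be this common color distribution.

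The plan is to use the chain rule with respect to colors,
\[\HH(X_1,X_2)=\HH(C_{12})+\HH(X_1,X_2\mid C_{12}),\]
and to write $\HH(X_3\mid X_1,X_2)=\HH(X_1,X_3\mid X_1,X_2)$. For a triangle, given the edge $X_1X_2$ of color $c$, the two edges at $X_3$ have distinct colors $a,b$, both different from $c$.

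The key step is to apply the mixture bound twice. Condition on the vertex $X_1$. The neighbors $X_2,X_3$ of $X_1$ lie in different color classes of the star at $X_1$. For each color $a$, let $Y_a$ be $X_2$ conditioned on $X_1$ and on $C_{12}=a$. These variables have disjoint supports (the $a$-colored neighborhoods of $X_1$). The mixture bound then controls the entropy of the neighbor distribution by $\log_2\sum_a 2^{\HH(Y_a)}$. Doing this for both $X_2$ and $X_3$ and using the symmetry $(X_1,X_2,X_3)\mapsto(X_1,X_3,X_2)$, we aim for an inequality of the form
\[2\HH(X_3\mid X_1,X_2)-\HH(X_1,X_2)\le \log_2 \sum_{(a,b,c)\ \text{distinct}} w_aw_bw_c\Big/\Big(\sum_a w_a\Big)^2\cdot(\text{normalizing}),\]
where the weights $w_a$ are exponentials of conditional entropies of the $a$-colored edges. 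In other words, the entropy difference is bounded by the log of a ``rainbow density'' of a weight vector on $r$ colors.

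The last step is an elementary optimization. We must show that the resulting expression, a ratio of a sum over ordered triples of distinct colors (equivalently a pair of distinct colors plus a third distinct color) to the appropriate power of $\sum_a w_a$, is at most $\frac{r-2}{r}$. This should follow from Maclaurin's inequality, or from Lagrange multipliers showing the maximum is at uniform weights. With uniform weights the ratio is $\frac{(r-1)(r-2)}{r^2}$ relative to $(r-1)/r$, giving exactly $\frac{r-2}{r}$. This matches the affine-plane construction, where every color has equal weight.

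The main obstacle is the middle step: choosing the right mixed random variables so that exactly the factor $(r-2)/r$ appears. A naive application only captures that the two edges at $X_3$ have different colors, which gives the weaker factor $(r-1)/r$. The additional constraint that the third color differs from both must enter through a second mixture, taken over the color $C_{12}$ of the opposite edge. I would first try to mix the distributions of $(X_1,X_2)$ conditioned on $C_{12}=c$ across $c$, weighted so that the weights line up with the conditional entropies $\HH(X_3\mid X_1,X_2,C_{12}=c)$. I would then check that the symmetric roles of the three edges make the exponents combine into the degree-three elementary symmetric form above.
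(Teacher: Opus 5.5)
Your reformulation $2\HH(X_3\mid X_1,X_2)\le \HH(X_1,X_2)+\log_2\frac{r-2}{r}$ is correct, and so is your sense that two applications of the mixture bound are needed. But the proposal never supplies the step that carries the proof: you name the choice of mixed variables as the main obstacle, and the mixtures you do describe cannot work.

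Splitting $X_2$ given $X_1=x_1$ by the color $C_{12}$ and applying \cref{lem:mix} only gives $\HH(X_2\mid X_1=x_1)\le\log_2\sum_a 2^{\HH(Y_a)}$. That is an \emph{upper} bound on $\HH(X_2\mid X_1)$, whereas your target needs $\HH(X_1,X_2)$ to be large compared with $h=\HH(X_3\mid X_1,X_2)$. These variables also see only the pair $(X_1,X_2)$, so they say nothing about how the rainbow condition constrains $X_3$ given $(X_1,X_2)$, which is where the saving must come from. Likewise, mixing $(X_1,X_2)$ conditioned on $C_{12}=c$ across $c$, with weights other than $\PP(C_{12}=c)$, produces a pair not distributed as $(X_1,X_2)$, so its entropy cannot be compared with $\HH(X_1,X_2)$.

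The mixture bound has to be used in the opposite direction. You build a mixture whose entropy is at least $\log_2\sum_i 2^{\HH(A_i)}$, with each $\HH(A_i)$ bounded \emph{below} in terms of $h$, and whose marginal is known to coincide with that of $X_1$ (or of $X_2$ given $X_1$). Dropping conditions then bounds it \emph{above} by $\HH(X_1)$ (or $\HH(X_2\mid X_1)$).

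The weighted ``rainbow density'' inequality you aim for is also never formulated, and its natural readings are false. Take a uniform rainbow triangle in the blowup of one rainbow triangle with parts of sizes $1,1,t$. Then $2h-\HH(X_1,X_2)=-\log_2 6$ for every $t$. Your ratio $\frac{\sum_{(a,b,c)\text{ distinct}}w_aw_bw_c}{(\sum_a w_a)\sum_{(a,b)\text{ distinct}}w_aw_b}$, with $w_a=2^{\HH(X_1,X_2\mid C_{12}=a)}=2m_a$, equals $\frac{3t}{(2t+1)(t+2)}\to 0$. The true inequality is uniform in the color distribution, so no optimization over color weights is involved.

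The missing idea is resampling triangle vertices with varying amounts of color information. Let $Y_3$ be a conditionally independent resample of $X_3$ given $(X_1,X_2)$. Let $Y_2$ be one of $X_2$ given $(X_1,X_3,\chi(X_1X_2))$, and $Y_1$ one of $X_1$ given $(X_2,X_3,\chi(X_1X_2),\chi(X_1X_3))$.

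Given $(X_1,X_2,X_3)=(x_1,x_2,x_3)$, their supports are pairwise disjoint. Only possible values $v$ of $Y_3$ have $\chi(vx_1),\chi(vx_2)\neq\chi(x_1x_2)$. Possible values of $Y_1$ and $Y_2$ are separated by whether $\chi(vx_3)=\chi(x_1x_3)$.

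Symmetry and the uniform bound on the number of admissible colors give $\HH(Y_3\mid X_1,X_2,X_3)=h$, $\HH(Y_2\mid X_1,X_2,X_3)\ge h-\log_2(r-1)$ and $\HH(Y_1\mid X_1,X_2,X_3)\ge h-\log_2((r-1)(r-2))$.

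Each $Y_i$ is distributed as $X_1$, so mixing all three and dropping conditions gives $\frac{r-1}{r-2}\,2^{h}\le 2^{\HH(X_1)}$. For $i=2,3$ the pair $(X_1,Y_i)$ is distributed as $(X_1,X_2)$, so mixing only $Y_2,Y_3$ gives $\frac{r}{r-1}\,2^{h}\le 2^{\HH(X_2\mid X_1)}$. Taking logarithms and adding yields the theorem. The factor $\frac{r-2}{r}=\frac{r-2}{r-1}\cdot\frac{r-1}{r}$ comes directly from the color counts $r-1$ and $(r-1)(r-2)$, not from an optimization over color weights.
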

We remark that the proof is injection-free, which leads us to believe that the proof is genuinely different from the previous proofs.
\cref{thm:r-color} follows immediately from \cref{thm:entropy-r-color} by choosing $(X_1,X_2,X_3)$ uniformly at random and by the uniform bound. The details are given in \cref{sec:many_color_RT}. 
Additionally, we will derive \cref{thm:rainbow-triangle} from the $r=3$ version of \cref{thm:entropy-r-color} in \cref{subsec:two_tricks}.

\subsection{Optimal configurations and stability}
We have seen that the affine plane of order $(r-1)$ gives a construction that is tight for \cref{thm:r-color}.
We can also show that this is essentially the only possible extremal construction.
\begin{theorem}\label{thm:tight-construction}
    Let $r\geq 3$ and $m$ be positive integers.
    If $G$ is a simple $r$-edge-colored graph with $m$ edges and exactly $\sqrt{\frac{2(r-2)}{9r}}m^{3/2}$ rainbow triangles, then $G$ is a balanced blowup of the graph induced by an affine plane of order $(r-1)$ possibly with some additional isolated vertices.
\end{theorem}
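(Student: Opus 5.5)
The plan is to run the proof of \cref{thm:r-color} backwards and read off structure from equality. Take $(X_1,X_2,X_3)$ uniform over ordered rainbow triangles, so $\HH(X_1,X_2,X_3)=\log_2(6T)$. The uniform bound gives $\HH(X_1,X_2)\le\log_2(2m)$. If $T=\sqrt{\frac{2(r-2)}{9r}}m^{3/2}$, then every inequality in the chain
\[
2\log_2(6T)=2\HH(X_1,X_2,X_3)\le 3\HH(X_1,X_2)+\log_2\tfrac{r-2}{r}\le 3\log_2(2m)+\log_2\tfrac{r-2}{r}
\]
must be an equality. The first consequence is that $(X_1,X_2)$ is uniform over all $2m$ ordered edges, so every edge lies in the same number $t=3T/m$ of rainbow triangles. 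The second is that each use of Shannon-type inequalities and of the mixture bound inside the proof of \cref{thm:entropy-r-color} is tight. I will write out that proof and list the equality conditions: conditional independences, and for the mixture bound the condition that the mixed distributions have proportional ``partition function'' weights.

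Next I translate these equality conditions into combinatorics. The expected output is the following. Conditioned on an edge $X_1X_2$ of color $c$, the third vertex $X_3$ is uniform over a set whose size depends only on the endpoints in a way that forces a regularity statement. For each non-isolated vertex $v$ and each color $c$, the $c$-degree $d_c(v)$ is the same value $d$ for every $c$, so $G$ is regular with degree $rd$. Moreover, the common-neighborhood counts split evenly among colors. The key identity to aim for comes from the $\log_2\frac{r-2}{r}$ term, since the mixture bound over the $r$ colors is sharp only when the mixture is balanced. That identity should say that every rainbow-triangle-free pair structure is ``complete'': for an edge $uv$ of color $c$, every vertex $w$ adjacent to both $u$ and $v$ closes a rainbow triangle unless $uw$ or $vw$ also has color $c$. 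In addition, $u$, $v$ and every vertex joined to them in color $c$ form a monochromatic clique in color $c$.

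From these I derive the global structure. Each color class is a disjoint union of cliques, which I get from the transitivity statement that if $uv$ and $vw$ have color $c$ then $uw$ is an edge of color $c$. Every pair of non-isolated vertices is either nonadjacent in a ``blowup'' sense, meaning twins with identical neighborhoods, or adjacent. Quotienting by the twin relation gives a complete graph $K_N$ in which each color class is a union of vertex-disjoint cliques of equal size $s$, with every vertex in exactly one clique of each color. Counting edges at a vertex gives $N-1=r(s-1)$. Equality in the constant, via the computation in the introduction, then forces $N=s^2$ and $s=r-1$, which is exactly the characterization of the graph induced by an affine plane of order $r-1$ given after \cref{fig:affine-plane-coloring}. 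Balancedness of the blowup follows from uniformity of $(X_1,X_2)$: all parts must have equal size, since otherwise edges would have unequal weights.

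The main obstacle is the second step: extracting clean combinatorial statements from equality in the mixture bound. Its equality case is a distributional condition, not an obvious structural one. It also only constrains pairs in the support, so non-edges and twin classes need separate handling. If this stalls, I would fall back on a different route. Using uniform edge-weights $t$ and double counting, I would show directly that $\sum_{v}\sum_{c}d_c(v)^2$ is forced to its extremal value, which yields the clique decomposition of each color class.
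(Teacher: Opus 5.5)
There is a genuine gap. Your first step is right: equality throughout forces $(X_1,X_2)$ to be uniform on all $2m$ ordered edges, so each edge lies in $3T/m$ rainbow triangles. But the second step carries the whole theorem, and you only state it as an ``expected output.'' The mechanism you point to is also not where the information lives. The proof of \cref{thm:entropy-r-color} has no mixture over the $r$ colors. It mixes the three resamples $Y_1,Y_2,Y_3$, and separately $Y_2,Y_3$. Their supports given $(X_1,X_2,X_3)$ are pairwise disjoint, so the mixture bound is an identity with no informative equality case. The slack lies elsewhere:
\begin{itemize}
\item in the uniform bounds $\HH(\chi(X_1X_2)\mid X_1,X_3)\le\log_2(r-1)$ and $\HH(\chi(X_1X_2),\chi(X_1X_3)\mid X_2,X_3)\le\log_2((r-1)(r-2))$;
\item in dropping conditioning in $\HH(Y\mid X_1,X_2,X_3)\le\HH(Y)$ and $\HH(Y'\mid X_1,X_2,X_3)\le\HH(Y'\mid X_1)$.
\end{itemize}
Worse, the structure you aim for is false in the extremal graphs. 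Take a balanced blowup with parts of size at least $2$, with $u,w$ in one part and $v$ in another. Then $\chi(uv)=\chi(vw)$ but $uw$ is not an edge. So your transitivity statement fails, and colour classes are disjoint unions of complete multipartite graphs, not cliques. The twin relation must therefore be established first. Your sentence that non-isolated vertices are either twins or adjacent is exactly the heart of the matter, and no argument supports it. Your final computation is correct once that structure is available: $N-1=r(s-1)$ together with the extremal constant forces $N=s^2$ and $s=r-1$.

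The missing idea is how equality produces a \emph{global} partition. Equality gives that $Y$ is independent of $(X_1,X_2,X_3)$, and that $Y'$ is conditionally independent of $(X_2,X_3)$ given $X_1$. Since $Y$ is the mixture of $Y_1$ and $Y'$ with weights $p_1$ and $1-p_1$, it follows that $Y_1$ is conditionally independent of $(X_2,X_3)$ given $X_1$. Concretely, the set of $v$ with $\chi(vX_2)=\chi(X_1X_2)$ and $\chi(vX_3)=\chi(X_1X_3)$ depends only on $X_1$; this is where twins come from. Equality also pins down $p_1,p_2,p_3$ and makes $\chi(X_3Y_2)$ and $(\chi(X_1Y_3),\chi(X_2Y_3))$ uniform. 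Hence the support of $Y$ given $(X_1,X_2,X_3)=x$ splits, by the colours from $Y$ to the triangle, into $(r-1)^2$ cells, each hit with probability $(r-1)^{-2}$. Now let $Y=Y_{\mathbf t}$, and let $Y^*$ be a resample of $Y$ inside its cell (indices mod $3$). Then $(Y,X_{\mathbf t+1},X_{\mathbf t+2},Y^*)$ has the law of $(X_1,X_2,X_3,Y_1)$. The conditional independence above then shows that the cell containing $Y$ depends only on $Y$, not on $x$. This global partition gives:
\begin{itemize}
\item the blowup parts;
\item completeness and monochromaticity between parts;
\item equal part sizes, from the uniformity of $(X_1,X_2)$;
\item a colouring of $K_{(r-1)^2}$ satisfying the hypotheses of \cref{lem:extract-affine-plane}.
\end{itemize}
Your fallback via $\sum_v\sum_c d_c(v)^2$ is too unspecified to assess. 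It also again targets a clique decomposition that fails before quotienting by twins.
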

In fact, we can strengthen this to a stability result for \cref{thm:entropy-r-color}.
However, as it is technical to state, we postpone the full statement to \cref{thm:strong-stability} and only state a rough statement here.
\begin{theorem}\label{thm:stability_brief}
    Under the same setting as \cref{thm:entropy-r-color}, an entropic stability version of \cref{thm:tight-construction} holds when 
    \[2\HH(X_1,X_2,X_3)> 3\HH(X_1,X_2)+\log_2\left(\frac{r-2}{r}\right)-\alpha r^{-16}\]
    for some absolute constant $\alpha>0$ small enough.
\end{theorem}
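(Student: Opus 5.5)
The plan is to rerun the proof of \cref{thm:entropy-r-color} while keeping track of the slack in every inequality. Write $\delta=3\HH(X_1,X_2)+\log_2\frac{r-2}{r}-2\HH(X_1,X_2,X_3)$ and assume $\delta<\alpha r^{-16}$. That proof is a short chain: a decomposition of the entropies by the colors $c_{ij}$ of the edges $X_iX_j$, followed by a few applications of the mixture bound and of subadditivity or conditioning. So $\delta$ is a sum of nonnegative deficits, one per inequality in the chain, and each deficit is at most $\delta$.

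The first step is to prove quantitative equality-case statements for each tool. For the mixture bound, applied to variables $Y_1,\dots,Y_k$ with disjoint supports, the deficit equals a KL divergence. This divergence is between the actual mixing weights and the ideal weights proportional to $2^{\HH(Y_i)}$. By Pinsker's inequality the weights are then $O(\sqrt{\delta})$-close in total variation to the ideal ones. For the subadditivity and conditioning steps, the deficit is a (conditional) mutual information. Pinsker again shows that the joint law is $O(\sqrt\delta)$-close to the corresponding product or conditionally independent law.

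Reading these approximate equalities off the chain should give the entropic shadow of an affine plane. I expect to obtain the following properties.
\begin{itemize}
\item The color $c_{12}$ is nearly uniform on the $r$ colors.
\item Given $X_1$ and $c_{12}$, the vertex $X_2$ is nearly uniform on a set that behaves like an equivalence class; these are the ``lines'' of that color through $X_1$.
\item Given $X_1$, the third vertex $X_3$ is nearly independent of $X_2$ apart from the forced constraint $c_{13}\neq c_{12}$.
\item The probability that $X_3$ falls into a given color class relative to $X_1$ is close to $\frac{1}{r-1}$ among the remaining colors, which is where the factor $\frac{r-2}{r}$ originates.
\end{itemize}
Symmetry of $(X_1,X_2,X_3)$ lets me transfer each of these statements to every pair of coordinates.

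Next I would convert these soft statements into combinatorial ones. Discard a set of vertices and edges of small measure, say $O(r^{c}\sqrt\delta)$ for a suitable constant $c$. On the remainder, each color class should be close to a disjoint union of cliques of roughly equal weight. Two cliques of distinct colors should meet in about a $\frac1{r-1}$ fraction of each. Each color should partition the vertex mass into about $r-1$ parts. Grouping vertices by their vector of clique memberships across all $r$ colors then produces about $(r-1)^2$ blobs. Any two blobs are joined by a line of exactly one color, and each line contains about $r-1$ blobs. Almost all of the mass lies on the rainbow triangles among blobs. This is an approximate affine plane of order $r-1$, and the precise conclusion would be that the law of $(X_1,X_2,X_3)$ is close in total variation to the uniform law on non-collinear triples of a balanced blowup of it. This is what \cref{thm:strong-stability} should assert.

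The main obstacle is this last conversion with only polynomial loss in $r$. Each Pinsker step costs a square root, and the union bounds over $r$ colors and about $r^2$ blobs cost powers of $r$. The combinatorial rigidity also needs care: from near-regularity I must deduce that blob intersections are exactly single points, so that the incidence structure is genuinely an affine plane rather than an approximate design. My first attempt would use exact integrality. The incidence counts are integers, and the errors are below $r^{-2}$ after accounting, so nearly-regular incidence counts must be exactly regular. The exponent $16$ is what remains after choosing $\alpha$ to absorb all these losses.
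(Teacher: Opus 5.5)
Your proposal has a genuine gap. The step you call the main obstacle is essentially the whole theorem, and the structure you plan to clean up is the wrong one. In a blowup of the affine-plane graph, vertices in the same part are non-adjacent. So each color class is a disjoint union of complete $(r-1)$-partite graphs, not of cliques. Indeed, if two distinct vertices shared a clique of color $c$ and one of color $c'\neq c$, their edge would need both colors, so grouping by clique memberships can only produce singletons. The parts are \emph{twin classes}, and none of your four bullet points detects them.

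The missing entropic input is a twin statement. Let $Y_1$ be the resample of $X_1$ given $X_2,X_3,\chi(X_1X_2),\chi(X_1X_3)$. Then $Y_1$ is $O(r^2\veps^{1/2})$-close to conditionally independent of $(X_2,X_3)$ given $X_1$. The paper obtains this by unmixing three facts:
\begin{itemize}
\item the mixture $Y$ of $Y_1,Y_2,Y_3$ is nearly independent of $(X_1,X_2,X_3)$;
\item the mixture $Y'$ of $Y_2,Y_3$ is nearly conditionally independent of $(X_2,X_3)$ given $X_1$;
\item $Y$ is a mixture of $Y_1$ and $Y'$ with weight $p_1=\Theta(r^{-2})$ on $Y_1$.
\end{itemize}

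The paper also never cleans up an approximate equivalence relation, which is where your losses would be uncontrolled. Instead it fixes one good rainbow triangle $x$ by Markov's inequality. It then defines the $(r-1)^2=1+(r-2)+(r-1)(r-2)$ parts directly as the conditional laws at $x$ of:
\begin{itemize}
\item $Y_1$;
\item $Y_2$, split by $\chi(x_3Y_2)$;
\item $Y_3$, split by $(\chi(x_1Y_3),\chi(x_2Y_3))$.
\end{itemize}
Their supports are disjoint for free. An auxiliary resample $Y^*$ then shows that this single partition describes $(X_1,Y_1)$, then $(X_1,X_2)$, then $(X_1,X_2,X_3)$, each up to $O(r^2\veps^{1/2})$. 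Only once this fixed index set exists can your integrality idea act. It gives exact color-regularity of $H$, and the rule that $c_{ij}\neq c_{ik}$ forces $ijk$ to be rainbow. In the latter, each index triple has mass $\Theta(r^{-6})$, and that is what forces the threshold $r^{-16}$.

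Your target conclusion is also false in this generality. Equality in \cref{thm:entropy-r-color} already holds for $(A_{\bfi},A_{\bfj},A_{\bfk})$, where $(\bfi,\bfj,\bfk)$ is a uniform rainbow triangle of the affine-plane graph and $A_1,\dots,A_{(r-1)^2}$ are independent with \emph{arbitrary} laws on disjoint parts of \emph{arbitrary} sizes. So you can only hope for closeness to such a weighted blowup, as in \cref{thm:strong-stability}. You cannot get closeness to the uniform law on a balanced blowup, nor that $X_2$ is nearly uniform given $X_1$ and $\chi(X_1X_2)$; only the part indices are uniform. Balancedness would have to come from the additional slack in $\HH(X_1,X_2)\le\log_2(2m)$ when $X$ is uniform, which is not part of the entropic statement.

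A smaller point concerns where the slack lives. With the optimal weights, the mixture bound on disjoint supports is an exact equality, so it contributes no deficit. The slack sits in the uniform bounds on the color entropies and in dropping conditioning. Those deficits make the $p_i$ $O(\veps)$-close to their ideal values directly, without Pinsker.
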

The full stability result immediately implies that the bound in \cref{thm:r-color} can be improved by some constant depending on $r$ if there is no finite projective plane of order $(r-1)$.
\begin{corollary}\label{cor:weak-stability}
    Let $r\geq 3$ and $m$ be positive integers.
    Suppose that there exists no finite projective plane of order $r-1$.
    Then there exists some $\varepsilon =\Theta(r^{-16})$ so that any simple $r$-edge-colored graph $G$ with $m$ edges contains at most $\left(\sqrt{\frac{2(r-2)}{9r}}-\varepsilon\right)m^{3/2}$ rainbow triangles.
\end{corollary}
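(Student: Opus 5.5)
The plan is to argue by contradiction and compactness, using the stability theorem (\cref{thm:strong-stability}, summarized in \cref{thm:stability_brief}) as a black box.

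Suppose $G$ is an $r$-edge-colored simple graph with $m$ edges and $T>(C_r-\varepsilon)m^{3/2}$ rainbow triangles, where $C_r=\sqrt{\frac{2(r-2)}{9r}}$. Let $(X_1,X_2,X_3)$ be uniform over ordered rainbow triangles, which is a symmetric distribution. Then $\HH(X_1,X_2,X_3)=\log_2(6T)$, and by the uniform bound $\HH(X_1,X_2)\le\log_2(2m)$. A direct computation gives
\[
2\HH(X_1,X_2,X_3)-3\HH(X_1,X_2)\ \ge\ \log_2\frac{36T^2}{8m^3}\ \ge\ \log_2\frac{r-2}{r}+2\log_2\Bigl(1-\frac{\varepsilon}{C_r}\Bigr).
\]
Choosing $\varepsilon=c\,C_r r^{-16}$ for a small absolute constant $c$ makes the deficit smaller than $\alpha r^{-16}$. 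Here $C_r$ is bounded between two absolute constants, so $\varepsilon=\Theta(r^{-16})$. Thus the hypothesis of \cref{thm:stability_brief} holds.

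Next, the full stability statement \cref{thm:strong-stability} should give a structural conclusion: the distribution of $(X_1,X_2,X_3)$ is close to one coming from a balanced blowup of the graph induced by an affine plane of order $r-1$. I expect it to be phrased through a partition of the vertices into $(r-1)^2$ parts, with color classes behaving like parallel classes of lines. The key step is to extract from this near-structure an actual combinatorial object. For instance, take the parts carrying most of the mass, and consider the parts that are pairwise joined predominantly by one color. If the stability statement is quantitative enough that, after rounding, each color class is exactly a union of $r-1$ disjoint $(r-1)$-cliques on the index set of $(r-1)^2$ parts, then these cliques form the lines of an affine plane of order $r-1$. Two points lie on exactly one line because each edge gets one color. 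Adding a line at infinity whose $r$ points are the $r$ colors then gives a projective plane of order $r-1$, which contradicts the hypothesis.

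The main obstacle is this rounding step. One has to show that an approximate affine-plane structure, with error $O(\alpha)$ in the relevant densities, forces an exact one. The plan is to use discreteness: the axioms of an affine plane are finitely many incidence conditions on a structure of bounded size $(r-1)^2$. If each is violated with probability below $1/\mathrm{poly}(r)$ in the weighted sense, then no violation can occur at all, because any single violation would carry mass at least $1/\mathrm{poly}(r)$ once the parts are balanced. This is presumably why the exponent $16$ appears. If \cref{thm:strong-stability} is stated directly as ``there exists an affine plane of order $r-1$ whose induced graph approximates $G$,'' then the corollary is immediate after the entropy computation above.
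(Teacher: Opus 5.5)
Your proof is correct and follows the paper: take the uniform rainbow-triangle distribution, check that the slack is below $\alpha r^{-16}$ (your entropy computation does exactly this), apply \cref{thm:strong-stability}, and turn the resulting affine plane of order $r-1$ into a projective plane to get a contradiction. The rounding step you flag as the main obstacle is already done inside that theorem (\cref{lem:color-regular,lem:rainbow-cherry-to-triangle}): its third item says the colored graph $H$ on $[(r-1)^2]$ is exactly a graph induced by an affine plane, so the case in your final sentence is the one that actually holds.
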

We make no attempt to optimize the saving $\varepsilon =\Theta(r^{-16})$ and believe this is not the optimal dependency.

We remark that deciding whether a finite projective plane of a given order exists is still a major open question.
Although finite projective planes of prime power order exist by taking projective planes over finite fields, it is even unknown whether a finite projective plane of order $12$ exists.
However, it is known that finite projective planes of order $6$ or $10$ do not exist \cite{Tar1900,Tar1901,Bose38,BR49,LTS89}.
It might thus be interesting to determine the optimal bound in \cref{thm:r-color} when $r=7$ or $11$.

\subsection{Properly colored cliques}
As mentioned above, given the number of edges, the $3$-edge-colored graphs that maximize the number of rainbow triangles are the blowups of $K_4$ colored in a specific way.
This coloring is in fact the unique proper $3$-edge-coloring of $K_4$.
Motivated by this, Balogh, Bradshaw, Garcia, and Lidick\'y \cite{BBGL25} proved the following.
\begin{theorem}[\cite{BBGL25}]
    In a simple graph with $m_r$ red edges, $m_g$ green edges, and $m_b$ blue edges, the number of properly colored $K_4$'s is at most $\frac{1}{4}(m_rm_gm_b)^{2/3}$. 
    Moreover, equality holds if and only if the graph is a balanced blowup of a properly colored $K_4$, possibly with some isolated vertices.
\end{theorem}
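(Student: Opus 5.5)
The plan is to argue entropically, in the spirit of \cref{thm:entropy-r-color}, with the random object being the properly colored $K_4$ itself. Let $N$ be the number of properly colored $K_4$'s and let $K$ be a uniformly random one, so $\HH(K)=\log_2 N$. In a proper $3$-edge-coloring of $K_4$ each color class is a perfect matching. So $K$ determines, and is determined by, its red matching $M_r=\{e_r,e_r'\}$, an unordered pair of vertex-disjoint red edges. The same holds for its green matching $M_g$ and its blue matching $M_b$. Hence
\[3\log_2 N=\HH(M_r)+\HH(M_g)+\HH(M_b).\]
The whole task is then to bound each $\HH(M_c)$ in terms of $m_c$.

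To bound $\HH(M_c)$, order the two edges of $M_c$ by an independent fair coin, giving an ordered pair $(e_c,e_c')$ with $\HH(e_c,e_c')=\HH(M_c)+1$. Both marginals are supported on the $m_c$ edges of color $c$, so the uniform bound and subadditivity give $\HH(M_c)\le 2\log_2 m_c-1$. Summing over the three colors yields
\[3\log_2 N\le 2\log_2(m_rm_gm_b)-3,\]
that is, $N\le \tfrac12 (m_rm_gm_b)^{2/3}$. This has the right exponent but the constant is off by a factor of $2$. As a sanity check, the balanced blowup of a properly colored $K_4$ with parts of size $n$ has $m_c=2n^2$ and $N=n^4=\tfrac14(8n^6)^{2/3}$. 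So the target is exact there, and any lossless argument must be tight on that example.

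The main obstacle is recovering the missing bit per $K_4$, equivalently a saving of $\tfrac13$ bit in each color. The lossy step is $\HH(e_c,e_c')\le \HH(e_c)+\HH(e_c')$, which ignores the dependence between the two edges of a matching. In the extremal example the two red edges of a $K_4$ lie in complementary pairs of blowup parts, which is exactly the mutual information being discarded. I would first try to recover it via the rainbow triangle structure. Label the vertices $X_1,\dots,X_4$ using the Klein four-group, which acts regularly and preserves colors, so that each triple $(X_i,X_j,X_k)$ is a rainbow triangle whose law is invariant under the color-preserving symmetries. Then write
\[\HH(K)+2=\HH(X_1,X_2,X_3)\]
(the fourth vertex is determined by the other three, and the labeling contributes $2$ bits). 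Next I would combine the $r=3$ case of \cref{thm:entropy-r-color}, rewritten with the unordered-edge entropies as in \cref{subsec:two_tricks}, with the Shearer-type count above. The idea is to take a convex combination of the two inequalities chosen so that the orientation bits are charged only $2$ bits in total rather than $6\cdot\tfrac23=4$. If that does not close the gap, I would apply the mixture bound directly to the pair $(e_c,e_c')$, mixing the distributions of the two red edges, to show $\HH(e_c,e_c')\le 2\log_2 m_c-\tfrac43$ up to the needed balancing across colors.

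For the equality case, I would trace when every inequality used is tight. This requires uniform edge marginals, which forces every edge to lie in the same number of properly colored $K_4$'s. It also requires the exact conditional-independence structure from the tight case of the mixture bound, which should force the four "vertex classes" to form a balanced blowup. This mirrors the characterization of \cite{BBGL25} and the analysis that will later be used for \cref{thm:tight-construction}.
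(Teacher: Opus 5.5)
Your Shearer-type bound $N\le \tfrac12(m_rm_gm_b)^{2/3}$ is correct. However, the proposal never supplies the step that removes the factor $2$, and both routes you sketch for it fail as stated.

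The first route rests on $\HH(K)+2=\HH(X_1,X_2,X_3)$, i.e.\ on the claim that three vertices of a properly colored $K_4$ determine the fourth. This is false. The fourth vertex can be \emph{any} $x_4$ such that each edge $x_4x_i$ has the color of its opposite edge. In the balanced blowup with parts of size $n$, every rainbow triangle extends to $n$ properly colored $K_4$'s. This must happen, since rainbow triangles number $O(m^{3/2})$ while properly colored $K_4$'s can number $\Theta(m^2)$. So \cref{thm:entropy-r-color} only controls $\HH(X_1,X_2,X_3)$ and says nothing about $\HH(X_4\mid X_1,X_2,X_3)$. No convex combination with the matching bound can replace an estimate of that term. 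Separately, a Klein-four labeling fixes the color pattern of $(X_1,X_2,X_3)$, so that triple is not a symmetric distribution as \cref{thm:entropy-r-color} requires.

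The second route targets $\HH(e_c,e_c')\le 2\log_2 m_c-\tfrac43$, which fails on your own extremal example. There, given $e_c$, the edge $e_c'$ is uniform on the opposite biclique, so $\HH(e_c,e_c')=\log_2(2n^2)+\log_2(n^2)=2\log_2 m_c-1$. Your accounting of the deficit is also off. Gaining one bit in $\log_2 N$ means gaining $3$ bits in $\sum_c\HH(M_c)$, i.e.\ a full bit per color on average, not $\tfrac13$ bit. This matches the blowup, where $\HH(e_c)+\HH(e_c')-\HH(e_c,e_c')=1$ exactly.

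Nor can that bit be saved color by color. Take a red perfect matching $a_ib_i$ ($i\in[m]$), and for $i<j$ color $a_ia_j,b_ib_j$ green and $a_ib_j,a_jb_i$ blue. Every pair of red edges then spans a properly colored $K_4$, so $\HH(M_r)=\log_2\binom{m_r}{2}>2\log_2 m_r-2$ for $m_r\ge 3$. Any argument along your lines must therefore genuinely couple the colors, and that is exactly what you leave as ``the needed balancing.''

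The paper only cites this theorem, but its machinery proves the inequality. Apply \cref{thm:entropy-properly-colored-Kd-in-r-colors} with $d=4$, $r=3$ to a uniformly random properly colored $K_4$ with a uniformly random vertex ordering; this gives $2\log_2(24N)\le 4\HH(X_1,X_2)-\log_2 36$. The ingredient with no counterpart in your plan is \cref{claim:Kd-second-ineq} at $k=4$, namely $\HH(X_4\mid X_1,X_2,X_3)\le \HH(X_1)-2$. It comes from mixing the sixteen resampled tuples, whose supports are $5$-wise disjoint, and is combined with \cref{claim:Kd-first-ineq} at $k=3$. Since $\chi(X_1X_2)$ is uniform on the three colors, $\HH(X_1,X_2)\le \log_2 3+\tfrac13\sum_c\log_2(2m_c)$, as in the paper's proof of \cref{thm:rainbow-triangle}. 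Substituting yields exactly $N\le\tfrac14(m_rm_gm_b)^{2/3}$.

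Finally, your equality discussion is only a heuristic (``should force''). Characterizing the extremal graphs needs an actual structural argument, as in \cite{BBGL25} or along the lines of \cref{sec:sturctural_stability}, which the proposal does not give.
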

Given this, it might be tempting to guess that for any given number of colors, the affine-plane construction also maximizes the number of properly colored $K_4$'s.
To our surprise, this is not the case: if the number $r$ of colors is odd, then the optimizers turn out to be balanced blowups of properly colored $K_{r+1}$.
In fact, this holds when maximizing the number of properly colored $K_d$'s for any $d\geq 4$.
\begin{theorem}\label{thm:properly-colored-Kd-in-r-colors}
    Let $d\geq 4$ and $r\geq d-1$ be integers. Let $G$ be an $r$-edge-colored simple graph with $m$ edges in total. Then the number of properly colored $K_d$'s in $G$ is at most $\binom{r+1}{d}\left(\frac{m}{\binom{r+1}{2}}\right)^{d/2}$.
\end{theorem}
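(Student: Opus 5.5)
I would prove \cref{thm:properly-colored-Kd-in-r-colors} entropically, in the same spirit as \cref{thm:entropy-r-color}. Let $(X_1,\dots,X_d)$ be a uniformly random ordered properly colored $K_d$ in $G$; this is a symmetric distribution. By the uniform bound, $\HH(X_1,X_2)\le\log_2(2m)$. So it suffices to prove the entropic inequality
\[
\HH(X_1,\dots,X_d)\le \frac d2\,\HH(X_1,X_2)+\log_2\frac{d!\binom{r+1}{d}}{\left(2\binom{r+1}{2}\right)^{d/2}}
\]
for every symmetric distribution on ordered properly colored $K_d$'s. The constant is the one forced by a balanced blowup of a properly colored $K_{r+1}$ with parts of size $t$. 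There $\HH(X_1,X_2)=\log_2(r(r+1)t^2)$ and $\HH(X_j\mid X_1,\dots,X_{j-1})=\log_2((r+2-j)t)$. Every step of the proof should therefore be an inequality that is tight for this blowup.

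The key steps are as follows. First, I write the chain rule
\[
\HH(X_1,\dots,X_d)=\HH(X_1,X_2)+\sum_{j\ge3}\HH(X_j\mid X_{<j}).
\]
Second, I bound each term $\HH(X_j\mid X_{<j})$ using the color structure. Given $X_{<j}$, the colors of the edges $X_1X_i$ for $1<i<j$ are $j-2$ distinct colors. Properness forces $c(X_1X_j)$ to avoid them, so it ranges over at most $r-(j-2)$ colors. Applying the mixture bound over the possible values of $c(X_1X_j)$ gives
\[
\HH(X_j\mid X_{<j})\le \HH(X_j\mid X_{<j},c(X_1X_j))+\log_2(r+2-j).
\]
The remaining conditional entropy should be compared to $\HH(X_2\mid X_1,c(X_1X_2))$, which equals $\log_2 t$ in the extremal example. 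I would then use symmetry together with submodularity, $\HH(X_j\mid X_{<j},c)\le \HH(X_j\mid X_1,c(X_1X_j))$, to reduce everything to the single quantity $A=\HH(X_2\mid X_1,c(X_1X_2))$ and the entropy $\HH(c(X_1X_2)\mid X_1)\le \log_2 r$.

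Third, and this is the crux, I must show $\HH(X_1)+A$ is controlled by $\tfrac12\HH(X_1,X_2)$ up to the correct constant. Equivalently, $\HH(X_1,X_2)$ cannot be much smaller than $2\HH(X_1)$ while the color-conditioned neighborhoods stay large. This is a Kruskal--Katona/Shearer-type step. I would get it by applying the chain-rule bound to two different orderings of the same clique, one starting at $X_1$ and one starting at $X_2$, and averaging. The pair $\{X_1,X_2\}$ then appears as a "base edge" symmetrically, which lets the $\HH(X_1)$ terms cancel.

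The main obstacle is to get these local bounds to combine with exactly the constant $\binom{r+1}{d}$ rather than something weaker. The naive chain gives a product $\prod_j (r+2-j)$ but mismatches the base term $r(r+1)$. The fact that $d\ge4$ is needed, since the $d=3$ answer is the different affine-plane constant, shows the argument must genuinely use at least two "later" vertices. I would first try to settle $d=4$ directly. For this I would consider the two vertices $X_3,X_4$ together: given $X_1,X_2$, the pair $(X_3,X_4)$ is an edge whose colors at $X_1$ and at $X_2$ avoid $c(X_1X_2)$. I would apply the mixture bound to the joint color pattern, which has at most $(r-1)(r-2)$ options, and compare with the $d=2$ edge entropy via symmetry. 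Then I would induct on $d$ by passing to links, using the case $d-1$ (for $d-1\ge4$) on the conditional distribution of $(X_2,\dots,X_d)$ given $X_1$.
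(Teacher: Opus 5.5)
Your reduction to an entropic inequality, the chain rule, and your second step, $\HH(X_j\mid X_{<j})\le A+\log_2(r+2-j)$ with $A=\HH(X_2\mid X_1,c(X_1X_2))$, are all valid. The trouble is that the second step already loses too much for the argument to be finished. Substituting it into the chain rule, the target follows only if
\[
2A+\log_2\bigl(r(r+1)\bigr)\le \HH(X_1,X_2),
\]
and this fails for legitimate inputs. Take $d$ even, $r\ge d$, and a balanced blowup (parts of size $t$) of $K_d$ properly colored with only $d-1$ colors. For the uniform distribution on ordered $K_d$'s one has $\HH(X_1,X_2)=\log_2(d(d-1)t^2)$ and $A=\log_2 t$. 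So your upper bound on $\HH(X_1,\dots,X_d)$ exceeds the right-hand side you want by $\frac{d-2}{2}\log_2\frac{r(r+1)}{d(d-1)}>0$, while the theorem itself holds there. Once you pass to this bound, no third step (Shearer-type or otherwise) can recover the target. The loss comes from replacing $\HH(c(X_1X_j)\mid X_{<j})$ by $\log_2(r+2-j)$ and then keeping only $A$. Averaging over orderings gives nothing new, since by symmetry every ordering yields the same bound. The link induction cannot be sharp either: the $(d-1)$-case applied to the link of a vertex in the extremal blowup of $K_{r+1}$ is strict, already for $d=r=5$.

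The missing ingredient is the resampling trick, which is what gives the mixture bound disjoint supports to exploit; a mixture over ``color patterns'' of a single variable has none. Fix $k$. Let $Y_i$ be a resample of $X_i$ given the other $X$'s, and let $Z_{ij}$ be a resample of $X_i$ given the others together with $c(X_iX_j)$. Then:
\begin{itemize}
\item $\HH(Y_i\mid X_{[k]})=\HH(X_k\mid X_{<k})$;
\item $\HH(Z_{ij}\mid X_{[k]})\ge \HH(X_k\mid X_{<k})-\log_2(r-k+2)$;
\item $\supp(X_{[k]},Z_{ij})$ is disjoint from $\supp(X_{[k]},Y_{i'})$ for distinct $i,j,i'$.
\end{itemize}
Mixing the pairwise disjoint family $Y_2,Z_{31},\dots,Z_{k1}$, each of which forms with $X_1$ a pair distributed as $(X_1,X_2)$, gives $\HH(X_k\mid X_{<k})\le \HH(X_2\mid X_1)+\log_2\frac{r-k+2}{r}$. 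This is already stronger than your second step, since $\HH(X_2\mid X_1)=A+\HH(c(X_1X_2)\mid X_1)\le A+\log_2 r$. Mixing all $k^2$ variables $Y_i,Z_{ij}$ gives $\HH(X_k\mid X_{<k})\le \HH(X_1)+\log_2\frac{r-k+2}{r+1}$; here each variable is distributed as $X_1$, and the supports are $(k+1)$-wise disjoint once $k\ge 4$. Add the two bounds for $k\ge 5$, and use the first for $k=3$ and the second for $k=4$. The terms $\HH(X_2\mid X_1)$ and $\HH(X_1)$ then recombine into $\HH(X_1,X_2)$, and the exact constant $\binom{r+1}{d}$ comes out. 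This second comparison, against $\HH(X_1)$ rather than against $A$, is where $d\ge 4$ is genuinely used, and it is absent from your proposal.
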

Similar to the proof of \cref{thm:entropy-r-color}, the proof of \cref{thm:properly-colored-Kd-in-r-colors} uses the mixture bound, and there is an entropic formulation (see \cref{thm:entropy-properly-colored-Kd-in-r-colors}) of the statement as well that we will introduce in the proof. 

\subsection{Connection to the joints problem}
As mentioned earlier, the rainbow triangle problem was originally motivated by the joints problem.
Here, we give a brief overview of the connection, and end the discussion with what \cref{thm:r-color} corresponds to in this picture.

The joints problem asks for the maximum number of joints that $m$ lines can form in $\RR^3$, where a \emph{joint} is a common intersection of three non-coplanar lines.
This problem was first studied by Chazelle et al.\ \cite{CEGPSSS92}. 
Attention was brought to this problem later by Wolff \cite{Wol99}, who suggested that this is one of the discrete analogs of the Kakeya problem, a central problem in harmonic analysis. Using the polynomial method developed by Dvir \cite{Dvir09}, Guth and Katz \cite{GK10} showed that the number of joints is $O(m^{3/2})$, which is tight up to the multiplicative constant. Recently, the correct multiplicative constant was determined by Zhao and the third author \cite{YZ23}, and later a tight upper bound was proven by the first and the third author \cite{CY23}. 
In particular, they showed that, for any real number $x\geq 3$, any $\binom{x}{2}$ lines in $\RR^3$ determine at most $\binom{x}{3}$ joints.

The aforementioned result in fact implies Lov\'asz's version of the Kruskal--Katona theorem by restricting to \emph{generically induced configurations}.
In $\RR^3$, a generically induced configuration given by a graph $G=([n],E)$ is a collection of lines $\cL$ obtained as follows. Let $H_1,\dots,H_n$ be planes in $\RR^3$ in general position, and let $\cL=\{H_i\cap H_j\mid ij\in E\}$. Note that each line in $\cL$ corresponds to an edge in $G$, and each joint formed by $\cL$ corresponds to a triangle in $G$. This shows that Lov\'asz's version of the Kruskal--Katona theorem is indeed a corollary of \cite{CY23}.

Many other generalizations of the joints problem have also been considered. One such generalization is the multijoints problem, which is a multicolor analogue of the original problem. Given three sets of lines $\cL_r,\cL_g,\cL_b$, a \emph{multijoint} is a joint formed by one line from each set. 
The multijoints problem, first proposed by Carbery, asks to determine the maximum number of multijoints given $\abs{\cL_r},\abs{\cL_g},\abs{\cL_b}$. Using the polynomial method with higher order derivatives, Zhang \cite{Zha20} showed that the number of multijoints is $O(\sqrt{\abs{\cL_r}\abs{\cL_g}\abs{\cL_b}})$, which is tight up to the multiplicative constant. The upper bound was improved to $\sqrt{6\abs{\cL_r}\abs{\cL_g}\abs{\cL_b}}$ by Tidor, Zhao, and the third author \cite{TYZ22}.
However, the multiplicative constant is likely not optimal for the following reason.
When restricted to generically induced configurations, the problem becomes exactly the rainbow triangle problem, which by \cref{thm:rainbow-triangle} has a better upper bound $\sqrt{2\abs{\cL_r}\abs{\cL_g}\abs{\cL_b}}$.
One of the motivations to consider the rainbow triangle problem in \cite{CY24kk} is to see whether the proof for the rainbow triangle theorem can be lifted up to an improvement on the multijoints problem, although it is still currently open.

If instead of three sets of lines, we have $r$ sets of lines $\cL_1,\cL_2,\ldots, \cL_r$, we can still define a multijoint to be a joint formed by three lines coming from three different sets.
What \cref{thm:r-color} implies is that for generically induced configurations, $r$ sets of lines form at most $
\sqrt{\frac{2(r-2)}{9r}}m^{3/2}$ multijoints if $m$ is the total number of lines.
Deciding whether this remains true for all joints configurations seems like an interesting problem, although we believe that it would be hard to prove if true.

There is also a corresponding statement of \cref{thm:properly-colored-Kd-in-r-colors} in the setting of joints, although the statement is not as natural in that setting.
We thus omit the discussion here and refer the readers to \cite{CY23,CY24kk} for more details on how to relate graph-theoretic statements to statements about joints.
From the point of view of joints, counting rainbow $K_d$'s is more natural.
However, we believe that this is a much more difficult problem, and we will discuss this briefly in \cref{subsec:rainbow_Kd}.

\subsection{Structure of the paper}
We begin with a quick review of the definitions and propositions of Shannon entropy in \cref{sec:prelim}. In \cref{sec:new_proof_2rgb}, we present another new proof of \cref{thm:rainbow-triangle}, using it to motivate some important techniques we rely on heavily throughout the paper. Then we prove \cref{thm:r-color,thm:entropy-r-color} in \cref{sec:many_color_RT} and \cref{thm:properly-colored-Kd-in-r-colors} in \cref{sec:proper_Kd}. In \cref{sec:sturctural_stability}, we prove \cref{thm:tight-construction}, introduce the notions and some properties of KL divergence and total variation distance, and then present the proof of \cref{thm:stability_brief,cor:weak-stability}. Finally, we make some concluding remarks in \cref{sec:remark}.

\paragraph{AI Statement}
No AI tools were used in the preparation of this project, including in the research process and the writing of this manuscript.

\section{Preliminaries}\label{sec:prelim}
\subsection{Shannon entropy}
In this subsection, we include some definitions and propositions we need about Shannon entropy. For a more detailed introduction and proofs for the propositions in this section, see \cite{AS00}.

For any discrete random variable $X$, we denote by $P_X(x)$ the probability $\PP(X=x)$. Throughout the paper, all random variables $X$ will be discrete and will have finite support. In other words, $\supp(X)\eqdef\{x\mid P_X(x)>0\}$ is finite.
\begin{definition}[Entropy]
    For any random variable $X$, its \emph{entropy} is defined as
    \[\HH(X)\eqdef\quad\smashoperator{\sum_{x\in \supp(X)}}-P_X(x)\log_2P_X(x).\]
    For any $d$-tuple $(X_1,\dots,X_d)$ of random variables, we use $\HH(X_1,\dots,X_d)$ to denote the entropy of the tuple, viewing it as a random variable with support contained in $\supp(X_1)\times\dots\times\supp(X_d)$.
\end{definition}

The following uniform bound helps us connects the entropy of $X$ and the size of $\supp(X)$.
\begin{proposition}[Uniform bound]\label{prop:uniform}
    Let $X$ be a random variable. We have
    \[\HH(X)\leq \log_2\abs{\supp(X)},\]
    where the equality holds if and only if $X$ is uniform.
\end{proposition}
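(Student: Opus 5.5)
We prove the uniform bound directly from the concavity of the logarithm via Jensen's inequality. Write $S=\supp(X)$, so that $P_X(x)>0$ for every $x\in S$ and $\sum_{x\in S}P_X(x)=1$. Then we rewrite the entropy as an expectation:
\[
\HH(X)=\sum_{x\in S}P_X(x)\log_2\frac{1}{P_X(x)}=\EE\left[\log_2\frac{1}{P_X(X)}\right].
\]
Since $\log_2$ is strictly concave on $(0,\infty)$, Jensen's inequality gives
\[
\EE\left[\log_2\frac{1}{P_X(X)}\right]\leq \log_2\EE\left[\frac{1}{P_X(X)}\right]=\log_2\sum_{x\in S}P_X(x)\cdot\frac{1}{P_X(x)}=\log_2\abs{S},
\]
which is the desired inequality.

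For the equality case, we use the strictness of the concavity. Equality in Jensen's inequality holds exactly when the random variable $1/P_X(X)$ is almost surely constant. Since every $x\in S$ has positive probability, this means $P_X(x)$ takes the same value $c$ for all $x\in S$. Normalization then forces $c=1/\abs{S}$, so $X$ is uniform on its support. Conversely, if $X$ is uniform on $S$, the sum evaluates directly to $\log_2\abs{S}$.

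An alternative route, equally short, is to apply Gibbs' inequality $\sum_x p(x)\log_2\frac{p(x)}{q(x)}\geq 0$ with $q$ the uniform distribution on $S$. Its equality case, $p=q$, yields the same characterization.

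No step here is a genuine obstacle. The only point requiring care is to restrict every sum to $\supp(X)$, so that $1/P_X(x)$ is well defined and the equality characterization concerns uniformity on the support rather than on some ambient set.
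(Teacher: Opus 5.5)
Your Jensen argument is correct and complete. The equality case is handled properly: strict concavity forces $P_X$ to be constant on $\supp(X)$, and you are right to restrict every sum to the support. The paper gives no proof of its own and defers to Alon--Spencer \cite{AS00}, where the standard proof is essentially this same concavity argument.
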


We also define the conditional entropy of one random variable conditioned on another.
\begin{definition}[Conditional entropy]
    For any random variables $X,Y$, we define
    \[\HH(Y\mid X)\eqdef\HH(X,Y)-\HH(X).\]
    Equivalently, we have
    \[\HH(Y\mid X)=\quad\smashoperator{\sum_{x\in\supp(X)}}P_X(x)\HH(Y\mid X=x)=\quad\smashoperator{\sum_{(x,y)\in\supp(X,Y)}}-P_{X,Y}(x,y)\log_2\left(\frac{P_{X,Y}(x,y)}{P_X(x)}\right).\]
\end{definition}

Using conditional entropy, we can write the entropy of a tuple as a telescoping sum, which we call the chain rule.
\begin{proposition}[Chain rule]
    Let $X_1,\dots,X_d$ be random variables. We have
    \[\HH(X_1,\dots,X_d)=\HH(X_1)+\HH(X_2\mid X_1)+\dots+\HH(X_d\mid X_1,\dots,X_{d-1}).\]
\end{proposition}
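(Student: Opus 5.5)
The plan is to derive the chain rule directly from the definition of conditional entropy, $\HH(Y\mid X)\eqdef\HH(X,Y)-\HH(X)$, by a telescoping sum, with induction on $d$ as a convenient way to organize it. No earlier result is needed beyond the definitions of entropy and conditional entropy.

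The only point needing care is that conditional entropy was defined for a pair of random variables $X,Y$, whereas here we condition on a tuple. For $2\le k\le d$, I will regard $(X_1,\dots,X_{k-1})$ as a single random variable $X$, with support contained in $\supp(X_1)\times\dots\times\supp(X_{k-1})$, and set $Y=X_k$. Then $(X,Y)$ is the random variable $((X_1,\dots,X_{k-1}),X_k)$. This is the image of $(X_1,\dots,X_k)$ under the bijection $(x_1,\dots,x_k)\mapsto((x_1,\dots,x_{k-1}),x_k)$ of the underlying product sets.

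Entropy depends only on the multiset of probability values $\{P(x)\}$. A bijective relabelling of outcomes preserves these values, so it preserves entropy. Hence $\HH(X,Y)=\HH(X_1,\dots,X_k)$, and the definition gives
\[
\HH(X_k\mid X_1,\dots,X_{k-1})=\HH(X_1,\dots,X_k)-\HH(X_1,\dots,X_{k-1}).
\]

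Summing this identity over $k=2,\dots,d$ makes the right-hand side telescope to $\HH(X_1,\dots,X_d)-\HH(X_1)$. Adding $\HH(X_1)$ to both sides gives the claimed formula.

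Equivalently, by induction on $d$: the case $d=1$ is trivial, and the inductive step is exactly the displayed identity with $k=d$ added to the inductive hypothesis for $d-1$.

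There is no real obstacle here. The one step to state explicitly is the identification of the nested tuple with the flat tuple, that is, the invariance of entropy under bijections of the support, since the definition of $\HH(X_1,\dots,X_d)$ treats the tuple as a single random variable.
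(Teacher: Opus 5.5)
Your argument is correct. Since the paper defines $\HH(Y\mid X)\eqdef\HH(X,Y)-\HH(X)$, the chain rule is exactly your telescoping sum, and identifying $((X_1,\dots,X_{k-1}),X_k)$ with $(X_1,\dots,X_k)$ via a bijection of supports is the only point that needs stating. The paper gives no proof of its own and defers to \cite{AS00}, so there is no alternative route to compare.
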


We also need the subadditivity and the dropping conditions inequalities of entropy.
\begin{proposition}[Subadditivity]\label{prop:subadd}
    Let $X,Y,Z$ be random variables. We have
    \[\HH(X,Y)\leq \HH(X)+\HH(Y),\]
    with equality if and only if $X,Y$ are independent. We also have
    \[\HH(X,Y\mid Z)\leq \HH(X\mid Z)+\HH(Y\mid Z),\]
    with equality if and only if $X,Y$ are conditionally independent given $Z$.
\end{proposition}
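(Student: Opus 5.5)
Subadditivity is a statement about entropy only, so the plan is to derive both parts from Gibbs' inequality (nonnegativity of KL divergence), which in turn follows from Jensen's inequality for the strictly concave function $\log_2$.

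\textbf{Unconditional inequality.} Let $S=\supp(X,Y)$. Write
\[
\HH(X)+\HH(Y)-\HH(X,Y)=\sum_{(x,y)\in S}P_{X,Y}(x,y)\log_2\frac{P_{X,Y}(x,y)}{P_X(x)P_Y(y)}.
\]
This rewriting uses $P_X(x)=\sum_y P_{X,Y}(x,y)$, and the analogous identity for $Y$, to turn the marginal sums into sums over $S$. We then apply Jensen's inequality to $-\log_2$ with weights $P_{X,Y}(x,y)$ on $S$:
\[
-\sum_{S}P_{X,Y}\log_2\frac{P_XP_Y}{P_{X,Y}}\ \ge\ -\log_2\sum_{S}P_X(x)P_Y(y)\ \ge\ -\log_2 1=0.
\]
The last step holds because the sum over $S$ is at most the sum over all of $\supp(X)\times\supp(Y)$, which equals $1$.

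\textbf{Equality case.} Equality requires two things. First, Jensen must be tight. Since $\log_2$ is strictly concave, this forces the ratio $P_X(x)P_Y(y)/P_{X,Y}(x,y)$ to be a constant $c$ on $S$. Second, the sum over $S$ must equal $1$, which forces $S=\supp(X)\times\supp(Y)$. Summing the identity $P_XP_Y=cP_{X,Y}$ over $S$ then gives $c=1$, which is exactly independence. Conversely, if $X$ and $Y$ are independent, every term of the displayed sum vanishes, so equality holds.

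\textbf{Conditional version.} Expand $\HH(X,Y\mid Z)=\sum_z P_Z(z)\,\HH(X,Y\mid Z=z)$, and do the same for $\HH(X\mid Z)$ and $\HH(Y\mid Z)$. For each $z\in\supp(Z)$, apply the unconditional inequality to the conditional distribution of $(X,Y)$ given $Z=z$, then average with the weights $P_Z(z)$. Because every weight $P_Z(z)$ is positive, equality holds overall if and only if it holds for each $z$. That is exactly conditional independence of $X$ and $Y$ given $Z$.

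The only delicate point is the equality case, specifically the support bookkeeping. Terms with $P_{X,Y}=0$ are excluded from $S$, so tightness of Jensen alone is not enough. We also need tightness of the step "sum over $S$ is at most $1$" to rule out missing support points. Everything else is routine algebra.
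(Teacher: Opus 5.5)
Your proof is correct, including the equality case. Tightness of Jensen forces $P_X(x)P_Y(y)/P_{X,Y}(x,y)$ to be constant on $S$, and tightness of the second step forces $S=\supp(X)\times\supp(Y)$; together these give independence, and the conditional case follows by averaging over $z\in\supp(Z)$ with positive weights. The paper gives no proof of its own for this proposition, deferring to \cite{AS00}, and your Jensen/Gibbs argument is the standard one.
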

\begin{proposition}[Dropping conditions]\label{prop:drop}
    Let $X,Y,Z$ be random variables. We have
    \[\HH(X\mid Y)\leq \HH(X),\]
    with equality if and only if $X,Y$ are independent. We also have
    \[\HH(X\mid Y, Z)\leq \HH(X\mid Z),\]
    with equality if and only if $X,Y$ are conditionally independent given $Z$.
\end{proposition}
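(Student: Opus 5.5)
The plan is to derive \cref{prop:drop} directly from \cref{prop:subadd} together with the definition of conditional entropy and the chain rule, so no new analytic input is needed. For the unconditional inequality, I would use the definition $\HH(X\mid Y)=\HH(X,Y)-\HH(Y)$. By the first part of \cref{prop:subadd}, $\HH(X,Y)\le \HH(X)+\HH(Y)$. Subtracting $\HH(Y)$, which is finite because all supports are finite, gives $\HH(X\mid Y)\le \HH(X)$. Each step is an identity apart from the single application of subadditivity. Hence equality holds if and only if $\HH(X,Y)=\HH(X)+\HH(Y)$, which by the equality clause of \cref{prop:subadd} happens exactly when $X$ and $Y$ are independent.

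For the conditional version, I would first rewrite $\HH(X\mid Y,Z)$ in terms of quantities conditioned on $Z$ alone. By definition, $\HH(X\mid Y,Z)=\HH(X,Y,Z)-\HH(Y,Z)$. The chain rule gives $\HH(X,Y,Z)=\HH(Z)+\HH(X,Y\mid Z)$ and $\HH(Y,Z)=\HH(Z)+\HH(Y\mid Z)$. Subtracting these yields the identity
\[
\HH(X\mid Y,Z)=\HH(X,Y\mid Z)-\HH(Y\mid Z).
\]
The conditional part of \cref{prop:subadd} gives $\HH(X,Y\mid Z)\le \HH(X\mid Z)+\HH(Y\mid Z)$, and substituting this bound gives $\HH(X\mid Y,Z)\le \HH(X\mid Z)$. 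Equality holds precisely when equality holds in the conditional subadditivity, that is, exactly when $X$ and $Y$ are conditionally independent given $Z$.

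An alternative route for the conditional case is to apply the unconditional statement to the conditioned distributions given $Z=z$ and average over $z$ with weights $P_Z(z)$. Equality would then hold if and only if $X$ and $Y$ are independent under $P(\cdot\mid Z=z)$ for every $z\in\supp(Z)$, which is the definition of conditional independence. This route needs the expression $\HH(X\mid Y,Z)=\sum_z P_Z(z)\,\HH(X\mid Y,Z=z)$, which follows from the second formula in the definition of conditional entropy.

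There is no genuine obstacle in either route. The only point requiring care is the equality characterization. It must be transferred exactly, which works because every manipulation other than the single use of \cref{prop:subadd} is an identity. In the averaging route, it also uses that a weighted sum of nonnegative gaps with positive weights $P_Z(z)$ vanishes only if each gap vanishes.
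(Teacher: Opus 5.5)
The paper gives no proof of this proposition. It is listed among the standard entropy facts in \cref{sec:prelim}, and the reader is referred to \cite{AS00}.

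Your argument is correct. The relations $\HH(X\mid Y)=\HH(X,Y)-\HH(Y)$ and $\HH(X\mid Y,Z)=\HH(X,Y\mid Z)-\HH(Y\mid Z)$ are identities between finite quantities. So both inequalities and both equality characterizations transfer exactly from \cref{prop:subadd}. Your averaging alternative for the conditional case is also sound.

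Note, though, that this is a reformulation rather than a proof from first principles. The same identities show that \cref{prop:subadd} and \cref{prop:drop} are equivalent statements. All the analytic content therefore sits in subadditivity, which the paper also takes as a black box.

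If you want a self-contained argument, a natural route is
\[
\HH(X)-\HH(X\mid Y)=\HH(X)+\HH(Y)-\HH(X,Y)=\kldiv{P_{X,Y}}{P_{X',Y}},
\]
where $X'$ is an independent copy of $X$. This is exactly the identity the paper computes in the proof of \cref{cor:mutual-info-small}. Combine it with Gibbs' inequality, which follows from Jensen's inequality for $\log_2$: KL divergence is nonnegative and vanishes only when $(X,Y)$ and $(X',Y)$ are identically distributed, that is, when $X$ and $Y$ are independent. The conditional statement then follows from your averaging over $z\in\supp(Z)$.
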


We also need the following fact. 
\begin{proposition}\label{prop:recover}
    Let $f$ be a deterministic function and let $X$ be a random variable. We have
    \[\HH(f(X)\mid X)=0.\]
\end{proposition}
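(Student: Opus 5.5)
This is the standard fact that $\HH(f(X)\mid X)=0$ for a deterministic function $f$. The plan is to compute directly from the definition of conditional entropy given in the preliminaries. We use the pointwise expression
\[\HH(f(X)\mid X)=\smashoperator{\sum_{x\in\supp(X)}}P_X(x)\,\HH(f(X)\mid X=x),\]
and it suffices to show that each conditional entropy $\HH(f(X)\mid X=x)$ vanishes.

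Fix $x\in\supp(X)$. Conditioned on the event $X=x$, the random variable $f(X)$ takes the single value $f(x)$ with probability $1$. Its conditional distribution is therefore a point mass, so its support has size one. We can then conclude in either of two ways. By definition, the entropy is the single term $-1\cdot\log_2 1=0$. Alternatively, the uniform bound (\cref{prop:uniform}) gives $\HH(f(X)\mid X=x)\le\log_2 1=0$, and entropy is nonnegative because each summand $-p\log_2 p$ with $p\in(0,1]$ is nonnegative.

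Summing over $x$ with the weights $P_X(x)$ then gives $\HH(f(X)\mid X)=0$.

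As a sanity check, we can also run the argument through the joint form. Since $P_{X,f(X)}(x,f(x))=P_X(x)$ and the pair has no other support points, every logarithm in
\[\sum -P_{X,Y}\log_2\!\left(\frac{P_{X,Y}}{P_X}\right)\]
is $\log_2 1=0$. Equivalently, $\HH(X,f(X))=\HH(X)$.

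There is no real obstacle here. The only point needing care is to restrict to $x\in\supp(X)$, so that conditioning on $X=x$ is well defined.
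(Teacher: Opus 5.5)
Your proof is correct: you compute directly from the paper's definition of conditional entropy and observe that $f(X)$ is a point mass given $X=x$, which is the standard argument. The paper itself gives no proof of this preliminary fact and simply refers to the standard text of Alon--Spencer, so there is nothing further to compare.
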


\section{New proof for three colors}\label{sec:new_proof_2rgb}

To illustrate the techniques used in this paper, we start by proving \cref{thm:rainbow-triangle-simple}, which corresponds to \cref{thm:rainbow-triangle} in the case $m_r=m_g=m_b$. In \cref{subsec:two_tricks}, we highlight several key ideas in the proof. Building them out contributes yet another proof of \cref{thm:rainbow-triangle} to the literature. Along the way, we prove the $r=3$ case of \cref{thm:entropy-r-color}. Although some ideas are shared, let us remark that the argument in \cref{subsec:two_tricks} is not a specialization of the proof of \cref{thm:entropy-r-color} presented in \cref{sec:many_color_RT}; the latter is more complex.

\subsection{Simple proof}\label{subsec:simple_proof}
To motivate the techniques used in this paper, we start by proving the following weakening of \cref{thm:rainbow-triangle}.
As remarked in the introduction, \cref{thm:rainbow-triangle-simple} is an immediate corollary of \cref{thm:rainbow-triangle} by the AM--GM inequality.

\begin{theorem}[\cite{CY24kk}]\label{thm:rainbow-triangle-simple}
Let $G$ be a simple graph with $m$ edges together with a 3-edge-coloring $\chi:E(G)\to \{\mathrm{red},\mathrm{green},\mathrm{blue}\}$. Then $G$ contains at most $\sqrt{\frac{2}{27}}m^{3/2}$ rainbow triangles.
\end{theorem}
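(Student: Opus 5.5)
The plan is to prove an entropic inequality and then convert it to a count. Let $T$ be the number of rainbow triangles, and let $(X_1,X_2,X_3)$ be uniform over the $6T$ ordered rainbow triangles; this distribution is symmetric. By the uniform bound (\cref{prop:uniform}), $\HH(X_1,X_2,X_3)=\log_2(6T)$ and $\HH(X_1,X_2)\le\log_2(2m)$. Therefore it suffices to prove the $r=3$ case of \cref{thm:entropy-r-color}:
\[2\HH(X_1,X_2,X_3)\le 3\HH(X_1,X_2)-\log_2 3 .\]
Indeed, this gives $36T^2\le 8m^3/3$, that is, $T\le\sqrt{2/27}\,m^{3/2}$. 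Write $t=\HH(X_1,X_2,X_3)$, $e=\HH(X_1,X_2)$ and $v=\HH(X_1)$. By symmetry, every $\HH(X_i)$ equals $v$ and every $\HH(X_i,X_j)$ equals $e$. Let $c_{ij}$ denote the color of the edge $X_iX_j$. Since each unordered triangle is rainbow and each of its edges is equally likely to be $X_1X_2$, the color $c_{12}$ is uniform on the three colors.

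\emph{First bound (via $X_1$).} I will use $t=v+\HH(X_2,X_3\mid X_1)$ and control $\HH(X_2,X_3\mid X_1=x)$ by the mixture bound. Given $X_1=x$, the color $c_{12}$ determines which color class $N_c(x)$ contains $X_2$. These classes are disjoint because the graph is simple. The pair $(X_2,X_3)$ lies in $\bigcup_{c\ne c'}N_c(x)\times N_{c'}(x)$, and the pieces of this union are disjoint. Set $a_c(x)=2^{\HH(X_2\mid X_1=x,\,c_{12}=c)}$. The mixture bound gives
\[\HH(X_2,X_3\mid X_1=x)\le\log_2\sum_{c\ne c'}a_c(x)a_{c'}(x),\]
after using subadditivity within each piece together with the symmetry $X_2\leftrightarrow X_3$ given $X_1$. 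It also gives $\HH(X_2\mid X_1=x)=\log_2\sum_c a_c(x)-D(x)$, where $D(x)\ge0$ is the gap between the actual distribution of $c_{12}$ given $X_1=x$ and the optimal mixture weights. Then the elementary inequality $\sum_{c\ne c'}a_ca_{c'}=(\sum a_c)^2-\sum a_c^2\le\tfrac23(\sum a_c)^2$ produces the crucial factor $2/3$. After averaging over $x$ using concavity, this yields
\[t\le 2e-v+\log_2\tfrac23+(\text{correction terms from }D).\]

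\emph{Second bound (via $X_3$).} Here $t=e+\HH(X_3\mid X_1,X_2)\le e+\HH(X_3\mid c_{12})$. Moreover, $\HH(X_3\mid c_{12})=v-\log_2 3+\HH(c_{12}\mid X_3)$, because $\HH(c_{12})=\log_23$. This step exploits the fact that the opposite edge's color is the one color \emph{not} seen at $X_3$. So
\[t\le e+v-\log_2 3+\HH(c_{12}\mid X_3).\]

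Adding the two bounds cancels $v$. With the crude estimates one only gets $2t\le 3e+\log_2\tfrac23$, which loses a factor of $2$ compared with the target $-\log_2 3$. So the main obstacle is to make the bookkeeping of the color entropies exact. The term $\HH(c_{12}\mid X_3)$ in the second bound is at most $\log_2 3$. It should be paid for by a matching gain in the first bound: the conditional distribution of colors at a vertex enters the first bound through the mixture weights, and I expect $\HH(c_{12}\mid X_3)$ to be exactly offset by $D$ and by the color entropy at $X_1$. My first attempt will be to rewrite both bounds in terms of the quantities $\HH(X_1\mid c)$ and $\HH(X_2\mid X_1,c)$ for each color $c$. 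I will then apply the mixture bound simultaneously over colors, so that the vertex entropy $v$ is replaced by a mixture over colors. This should turn the loss $\log_2\tfrac23$ into $\log_2\tfrac13$ once the two color-entropy terms are matched. If that fails, I would instead condition everything on the color pattern $(c_{12},c_{13},c_{23})$, which takes one of six values uniformly, and run both bounds within each pattern class.
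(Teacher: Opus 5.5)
Your reduction to $2\HH(X_1,X_2,X_3)\le 3\HH(X_1,X_2)-\log_2 3$ is sound, and so is your first bound. It even holds with no $D$-corrections. Let $q$ be the law of $c_{12}$ given $X_1=x$. Then $\HH(X_2,X_3\mid X_1=x)\le 2\HH(X_2\mid X_1=x)+\HH(c_{12},c_{13}\mid X_1=x)-2\HH(c_{12}\mid X_1=x)$, and the last difference is at most $\log_2(1-\sum_c q_c^2)\le\log_2\frac{2}{3}$. Argue this at the averaged level: bounding $\HH(X_2\mid X_1=x,c_{12}=c,c_{13}=c')$ piece by piece by $\log_2 a_c(x)$ is not valid, because conditioning on an event can raise entropy. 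The result is $\HH(X_3\mid X_1,X_2)\le\HH(X_2\mid X_1)+\log_2\frac{2}{3}$, which is exactly the $r=3$ case of \cref{claim:many-color-2}.

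The genuine gap is your second bound, and the color bookkeeping you propose cannot repair it. Consider the balanced $n$-blowup of the properly $3$-edge-colored $K_4$. There the target inequality is an equality and so is your first bound. Also $D\equiv 0$ and $\HH(c_{12}\mid X_1)=\HH(c_{12}\mid X_3)=\log_2 3$, so every color term is already at its extreme value. Yet $\HH(X_3\mid X_1,X_2)=\log_2(2n)$ while $\HH(X_3\mid c_{12})=\log_2(4n)$. The reason is that each color class of $K_4$ is a perfect matching, so knowing only the color of $X_1X_2$ does not determine which of the two edges of that color it lies over. The step $\HH(X_3\mid X_1,X_2)\le\HH(X_3\mid c_{12})$ therefore discards exactly the missing bit. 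Any valid sharpening of the first bound must still hold at this example, where it is already tight, so the sum of your two bounds stays one bit short there. Conditioning on the color pattern has the same defect: it again replaces $(X_1,X_2)$ by a coarse statistic.

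What you need is a bound on $\HH(X_3\mid X_1,X_2)$ that uses the whole pair, namely $\HH(X_3\mid X_1,X_2)\le\HH(X_1)-1$, the $r=3$ case of \cref{claim:many-color-1}. Write $h=\HH(X_3\mid X_1,X_2)$ and take three resamples:
\begin{itemize}
\item $X_3$ given $X_1,X_2$;
\item $X_2$ given $X_1,X_3,\chi(X_1X_2)$;
\item $X_1$ given $X_2,X_3,\chi(X_1X_2),\chi(X_1X_3)$.
\end{itemize}
Given $(X_1,X_2,X_3)$, these three vertices have pairwise disjoint supports. Each is marginally distributed as $X_1$, and their conditional entropies are at least $h$, $h-1$, $h-1$ respectively. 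The mixture bound then gives $2\cdot 2^{h}\le 2^{\HH(X_1)}$, and adding this to your first bound yields the theorem.

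The paper's own proof of this statement gets the needed strength differently. It resamples twice in succession: first $Y_i$ given $X_{i-1},X_{i+1},\chi(X_{i-1}X_i)$, then $Z_{i+1}$ given $X_{i-1},Y_i$. It shows $\HH(Y_i,Z_{i+1}\mid X_1,X_2,X_3)\ge 2h-1$. It then uses the disjointness of the six quintuples (\cref{claim:supports-id-ref-2}) to obtain $\log_2(2m)\ge\log_2 6+2h-1$ directly.
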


\begin{proof}
Let $(X_1,X_2,X_3)$ be a uniformly random triple of vertices forming a rainbow triangle in $G$. Throughout this proof, subscripts are taken modulo 3. For each $i\in[3]$, sample $Y_i$ uniformly among those vertices for which $(X_{i-1}, Y_i, X_{i+1})$ is a rainbow triangle with the same coloring as $(X_{i-1},X_i,X_{i+1})$. In fact we need only require that $(X_{i-1}, Y_i, X_{i+1})$ is a rainbow triangle satisfying $\chi(X_{i-1}Y_i)=\chi(X_{i-1}X_i)$, as this implies equality of the last color $\chi(Y_iX_{i+1})=\chi(X_iX_{i+1})$ as well. Lastly, sample $Z_{i+1}$ uniformly among those vertices for which $(X_{i-1},Y_i,Z_{i+1})$ forms a rainbow triangle.

\begin{figure}[h]	
\centering
\begin{tikzpicture}[trianglecolors,scale=0.8, every node/.append style={scale=0.8}]
\foreach \i in {1,2,3} {
	\path let \n1={int(\i+0)},\n2={int(mod(\i,3)+1)},\n3={int(mod(\i+1,3)+1)},
			\n{sqrt3}={sqrt(3)} in
	(120*\i-30:1) coordinate[vtx] (X\n1) 
		--+ (120*\i:0.4) node{$X_\n1$}
		--++ (120*\i-120:\n{sqrt3}) coordinate[openvtx] (Y\n2)
		--+ (120*\i-90:0.4) node{$Y_\n2$}
		--++ (120*\i:\n{sqrt3}-0.1) --+(120*\i-30:0.25) node{$Z_\n3$}
		--+ (120*\i+60:0.35) coordinate[openvtx] (Z\n31)
		--+ (120*\i-120:0.35) coordinate[openvtx] (Z\n32);
}

\foreach \i in {1,2,3} {
	\draw[e\i, line width=1pt] let \n1={int(\i+0)},\n2={int(mod(\i,3)+1)},\n3={int(mod(\i+1,3)+1)} in
	(Y\n2) -- (X\n1) -- (X\n2) -- (Y\n1) 
	(X\n2) -- (Z\n12) 
	(Y\n3) -- (Z\n11)
	(Y\n1) -- (Z\n22)
	(X\n3) -- (Z\n21);
}
\end{tikzpicture}
\caption{The possible colorings of edges to $Y_i$ and $Z_{i+1}$ given a fixed rainbow triangle $(X_1,X_2,X_3)$.}
\label{fig:coloring-id-ref}
\end{figure}

We first compute the conditional entropies of $Y_2$ and $Z_3$ given $X_1,X_2,X_3$. Since $Y_2$ and $X_2$ are independently sampled from the same distribution once $X_1,X_3,\chi(X_1X_2)$ are fixed, we know that
\[\HH(Y_2\mid X_1,X_2,X_3) = \HH(Y_2\mid X_1,X_3,\chi(X_1X_2))= \HH(X_2\mid X_1,X_3,\chi(X_1X_2)).\]
Using the chain rule twice, the right side expands as
\[\HH(X_2\mid X_1,X_3)+\HH(\chi(X_1X_2)\mid X_1,X_2,X_3)-\HH(\chi(X_1X_2)\mid X_1,X_3).\]
As $\chi(X_1X_2)$ is determined by $X_1,X_2,X_3$, \cref{prop:recover} implies $\HH(\chi(X_1X_2)\mid X_1,X_2,X_3)=0$.
Moreover, by the uniform bound, we have that $\HH(\chi(X_1X_2)\mid X_1,X_3)\leq \log_22=1$ as $\chi(X_1X_2)$ is in $ [3]\backslash \{\chi(X_1X_3)\}$, and this set has size 2.
Therefore,
\[\HH(Y_2\mid X_1,X_2,X_3)\geq \HH(X_2\mid X_1,X_3)-1.\]

Next, observe that $\HH(Z_3\mid X_1,X_2,X_3,Y_2) = \HH(Z_3\mid X_1,Y_2)$, because the choice of $Z_3$ conditioned on $X_1,Y_2$ is independent of the choices of $X_2$ and $X_3$. To further understand this quantity, first observe that $(X_1,Y_2,X_3)$ is uniformly distributed among all rainbow triangles, as $Y_2$ and $X_2$ have the same distribution conditioned on $X_1,X_3$. It follows that $(X_1,Y_2,Z_3)$ is also uniformly distributed among all rainbow triangles, as $Z_3$ and $X_3$ have the same distribution conditioned on $X_1,Y_2$. Thus,
\[
\HH(Z_3\mid X_1,X_2,X_3,Y_2) = \HH(Z_3\mid X_1,Y_2) = \HH(X_3\mid X_1,Y_2).
\]
Therefore, by the chain rule,
\begin{align*}
\HH(Y_2,Z_3\mid X_1,X_2,X_3) 
&= \HH(Y_2\mid X_1,X_2,X_3)+\HH(Z_3\mid X_1,X_2,X_3,Y_2)
\\&\geq \HH(X_2\mid X_1,X_3)+\HH(X_3\mid X_1,X_2)-1
\\&=2\HH(X_3\mid X_1,X_2)-1\quad\text{by symmetry.}
\end{align*}
By symmetry, $\HH(Y_i,Z_{i+1}\mid X_1,X_2,X_3)\geq 2\HH(X_3\mid X_1,X_2)-1$ for each $i\in[3]$.

Now, sample $\bfi\in[3],\bfj\in[2]$ independently and uniformly at random and let
\[(A,B)=\begin{cases}(Y_\bfi, Z_{\bfi+1}) & \textup{if }\bfj=1,\\(Z_{\bfi+1},Y_\bfi)&\textup{if }\bfj=2.\end{cases}\] The remainder of our proof hinges on the following claim, which shows that $\bfi,\bfj, Y_\bfi, Z_{\bfi+1}$ can be recovered from $X_1,X_2,X_3,A,B$.

\begin{claim}\label{claim:supports-id-ref-2}
The six random quintuples $(X_1,X_2,X_3,Y_i,Z_{i+1})$ for $i\in[3]$ and $(X_1,X_2,X_3,Z_{i+1},Y_i)$ for $i\in [3]$ have pairwise disjoint supports.
\end{claim}

\begin{proof}\proofofclaim
Suppose that $(x_1,x_2,x_3,a,b)$ is in the support of one of the six random quintuples.
We will show that it cannot be in the supports of the five other random quintuples.

Without loss of generality, assume that $(x_1,x_2,x_3,a,b)\in \supp(X_1,X_2,X_3,Y_1,Z_2)$.
Then we have $\chi(ax_2) = \chi(x_1x_2)$ and $\chi(ax_3) = \chi(x_1x_3)$.
This shows that conditioned on $X_1=x_1,X_2=x_2,X_3=x_3$, the vertex $a$ cannot be in the supports of $Y_2$, $Y_3$, or $Z_2$.
Therefore we just need to show that $(x_1,x_2,x_3,a,b)\not\in \supp(X_1,X_2,X_3,Z_3,Y_2)\cup \supp(X_1,X_2,X_3,Z_1,Y_3)$.

If $(x_1,x_2,x_3,a,b)\in \supp(X_1,X_2,X_3,Z_3,Y_2)$, then $\chi(bx_1) = \chi(x_1x_2)$ and $\chi(bx_3) = \chi(x_2x_3)$.
Because $\chi(ax_3) = \chi(x_1x_3)$ and $x_3ab$ is a rainbow triangle, this forces $\chi(ab) = \chi(x_1x_2)$.
This contradicts the fact that $x_1ab$ is a rainbow triangle and that $\chi(bx_1) = \chi(x_1x_2)$. Thus, $\supp(X_1,X_2,X_3,Y_1,Z_2)$ and $\supp(X_1,X_2,X_3,Z_3,Y_2)$ are disjoint. 

The case where $(x_1,x_2,x_3,a,b)$ is in the support of $(X_1,X_2,X_3,Z_1,Y_3)$ is analogous by symmetry, as the same argument shows that $\supp(X_1,X_2,X_3,Y_3,Z_1)\cap\supp(X_1,X_2,X_3,Z_2,Y_1)=\emptyset$.
\qedhere
\end{proof}

Since $X_1,X_2,X_3,A,B$ can be used to recover $\bfi,\bfj, Y_\bfi, Z_{\bfi+1}$, by \cref{prop:recover}, we have
\[\HH(\bfi,\bfj, Y_\bfi, Z_{\bfi+1}\mid A,B,X_1,X_2,X_3)=0,\]
and so by the chain rule,
\[\HH(A,B\mid X_1,X_2,X_3)=\HH(\bfi,\bfj, Y_\bfi, Z_{\bfi+1},A,B\mid X_1,X_2,X_3)\geq \HH(\bfi,\bfj, Y_\bfi, Z_{\bfi+1}\mid X_1,X_2,X_3).\]
We remark that this is actually an equality, as $A,B$ can also be recovered from $X_1,X_2,X_3,\bfi,\bfj,Y_{\bfi},Z_{\bfi+1}$.
The right side can be further expanded as 
\[
\HH(\bfi,\bfj\mid X_1,X_2,X_3)+\HH(Y_\bfi,Z_{\bfi+1}\mid X_1,X_2,X_3,\bfi)
\]
by the chain rule and the fact that $(Y_{\bfi},Z_{\bfi+1})$ is independent of $\bfj$.
The first term is $\log_26$, and the second term is the average of the three entropies $\HH(Y_i,Z_{i+1}\mid X_1,X_2,X_3)$ for $i\in[3]$. Thus,
\[
\HH(A,B\mid X_1,X_2,X_3)=\log_26+\frac 13\sum_{i\in[3]}\HH(Y_i,Z_{i+1}\mid X_1,X_2,X_3)\geq\log_26 + 2\HH(X_3\mid X_1,X_2)-1.
\]
Note that $\{A,B\}$ is always an edge in $E(G)$, showing that the support of the ordered tuple $(A,B)$ has size at most $2m$.
Dropping conditions and applying the uniform bound, we have that
\[
\log_2 2m\geq\log_26+2\HH(X_3\mid X_1,X_2)-1.
\]
Applying the chain rule and the uniform bound $\HH(X_1,X_2)\leq \log_2(2m)$, we conclude that
\[
\HH(X_1,X_2,X_3)=\HH(X_1,X_2)+\HH(X_3\mid X_1,X_2)\leq\log_2(2m)+\frac 12\log_2(2m/3).
\]
Because the number $T$ of rainbow triangles satisfies $\log(6T)=\HH(X_1,X_2,X_3)$ --- as each rainbow triangle can be labeled in exactly six ways --- we derive that $6T\leq (2m)\left(\frac{2m}{3}\right)^{1/2}$. Hence, $T\leq\sqrt{\frac{2}{27}}m^{3/2}$ as desired.
\end{proof}

\begin{remark}
One could simplify the above proof by sampling $\bfi$ but not $\bfj$, and then studying the entropy of the random edge $e=\{Y_\bfi,Z_{\bfi+1}\}$. The analysis from \cref{claim:supports-id-ref-2} implies that the tuple $(\bfi,Y_\bfi,Z_{\bfi+1})$ can be recovered from the unordered pair $e=\{Y_\bfi,Z_{\bfi+1}\}$, and dropping conditions together with a uniform bound show that $\HH(e\mid X_1,X_2,X_3)\leq \HH(e)\leq\log m$. We chose to present the proof in terms of ordered tuples as it provides better motivation for the distributions used later in the paper, where considering ordered tuples is genuinely necessary. 
\end{remark}

\subsection{Two tricks: resampling and mixtures}\label{subsec:two_tricks}

Let us highlight two tricks used in the previous proof that are crucial throughout this paper. The first trick is \emph{resampling random variables}: given several random variables, we can always resample one variable conditioned on one or more of the others. In the above proof, we used that $Y_i$ is distributed according to $X_i$ given $X_{i-1}$, $X_{i+1}$, and $\chi(X_{i-1}Y_i)$, and that $Z_{i+1}$ is distributed according to $X_{i+1}$ given $X_{i-1}$ and $Y_i$. In fact, if $(X_1,X_2,X_3)$ were a rainbow triangle chosen according to some non-uniform distribution, most of our computations would still hold true. In this case, we would choose $Y_i$ according to the distribution of $X_i$ conditioned on $X_{i-1},X_{i+1},\chi(X_{i-1}Y_i)$ and choose $Z_{i+1}$ according to the distribution of $X_{i+1}$ conditioned on $X_{i-1}$.

The second trick is \emph{taking mixtures} of random variables (or tuples of random variables) with disjoint supports. A \emph{mixture} of some (possibly codependent) random variables $A_1,\ldots,A_k$ is a new random variable $B$ sampled by independently choosing an index $\bfi\in[k]$ according to some (not necessarily uniform) distribution and setting $B=A_\bfi$. If the sets $\supp(A_i)$ are pairwise disjoint, then $\bfi$ can be uniquely determined from $B$ and thus $\HH(\bfi\mid B)=0$. More generally, if these supports are \emph{$(s+1)$-wise disjoint}, meaning that any $a\in \bigcup_{i\in[k]}\supp(A_i)$ is in at most $s$ of the sets $\supp(A_i)$, then each possibility for $B$ allows in at most $s$ choices for $\bfi$, and the uniform bound implies $\HH(\bfi\mid B)\leq\log_2s$.
If, in addition, $\bfi\in[k]$ is uniformly distributed then
\[
\HH(B) = \HH(\bfi) + \HH(B\mid \bfi) - \HH(\bfi\mid B)\geq\log_2k + \frac 1k\sum_{i=1}^k\HH(A_i)-\log_2s.
\]

When the entropies $\HH(A_1),\ldots,\HH(A_k)$ are not all the same, a carefully chosen non-uniform distribution on $\bfi$ yields the following stronger bound, due to the first and third authors \cite{CY24turan}. The version stated here is slightly stronger, giving explicit equality when $A_1,\ldots,A_k$ have disjoint supports, which will be necessary to our stability arguments. 

\begin{lemma}[Mixture bound \cite{CY24turan}]\label{lem:mix}
Let $A_1,\ldots,A_k$ be random variables whose supports are $(s+1)$-wise disjoint. Then there is a mixture $B$ of $A_1,\ldots,A_k$ such that
\[
\sum_{i=1}^k2^{\HH(A_i)}\leq s\times 2^{\HH(B)}.
\]
Moreover, if $s=1$ then $B$ can be chosen so that equality holds.
\end{lemma}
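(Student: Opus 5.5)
We choose the mixture distribution directly: put $p_i = 2^{\HH(A_i)}/S$ with $S=\sum_{j=1}^k 2^{\HH(A_j)}$. Then we sample $\bfi$ according to $(p_i)_{i\in[k]}$, independently of the $A_j$'s, and set $B=A_\bfi$. The plan is to prove
\[
\HH(B)\geq \log_2 S-\log_2 s,
\]
with equality when $s=1$. This inequality is equivalent to $\sum_i 2^{\HH(A_i)}\le s\cdot 2^{\HH(B)}$.

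The computation uses the chain rule on the pair $(\bfi,B)$, exactly as in the uniform case displayed just before the lemma:
\[
\HH(B)=\HH(\bfi,B)-\HH(\bfi\mid B)=\HH(\bfi)+\HH(B\mid\bfi)-\HH(\bfi\mid B).
\]
We then evaluate the three terms on the right.

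\begin{enumerate}
\item Since $\bfi$ is independent of the $A_j$'s, conditioning on $\bfi=i$ leaves $B$ distributed as $A_i$. Hence $\HH(B\mid\bfi)=\sum_i p_i\HH(A_i)$.
\item The entropy of the index is $\HH(\bfi)=-\sum_i p_i\log_2 p_i=\sum_i p_i\bigl(\log_2 S-\HH(A_i)\bigr)$.
\item Adding the first two terms, the $\HH(A_i)$ contributions cancel and leave exactly $\log_2 S$.
\item For the last term, the $(s+1)$-wise disjointness means each value $b$ lies in at most $s$ of the sets $\supp(A_i)$. Given $B=b$, the index $\bfi$ is therefore supported on at most $s$ values. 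Applying the uniform bound (\cref{prop:uniform}) to each conditional distribution and averaging gives $\HH(\bfi\mid B)\le\log_2 s$.
\end{enumerate}

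Combining these gives $\HH(B)\ge \log_2 S-\log_2 s$, which is the claimed inequality.

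For the equality case $s=1$, the supports are pairwise disjoint, so $\bfi$ is a deterministic function of $B$. By \cref{prop:recover}, $\HH(\bfi\mid B)=0$, and all the steps above become equalities. This gives $\HH(B)=\log_2 S$, that is, $\sum_i 2^{\HH(A_i)}=2^{\HH(B)}$.

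The main obstacle is finding the optimal weights $p_i\propto 2^{\HH(A_i)}$. One way to see they are right is to maximize $\HH(\bfi)+\sum_i p_i\HH(A_i)$ over probability vectors, for instance with Lagrange multipliers or Gibbs' inequality. With this choice, the cancellation in the third step happens exactly, and everything else is routine.

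Two degenerate cases need a brief remark.
\begin{itemize}
\item If some $A_i$ is deterministic, then $\HH(A_i)=0$ and $2^0=1$, so the formula for $p_i$ still applies with no change.
\item The support-counting in the fourth step only involves indices with $p_i>0$, and every $p_i$ is positive here.
\end{itemize}
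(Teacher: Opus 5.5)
Your proposal is correct and matches the paper's proof essentially step for step. It uses the same weights $p_i\propto 2^{\HH(A_i)}$, the same chain-rule identity $\HH(B)+\HH(\bfi\mid B)=\HH(\bfi)+\HH(B\mid\bfi)=\log_2\sum_j 2^{\HH(A_j)}$, and the same uniform bound $\HH(\bfi\mid B)\le\log_2 s$, which holds with equality (both sides zero) when $s=1$.
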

\begin{proof}
    We pick the independent random index $\bfi\in [k]$ as follows. For each $i\in[k]$, set
    \[p_i=\frac{2^{\HH(A_i)}}{\sum_{j=1}^k2^{\HH(A_j)}}\,\]
    and let $\PP(\bfi=i)=p_i$. Set $B=A_{\bfi}$. By the chain rule, we have $\HH(B)+\HH(\bfi\mid B) = \HH(\bfi) + \HH(B\mid \bfi) $. Note that
    \begin{align*}
        \HH(\bfi)+\HH(B\mid \bfi)&=\sum_{i=1}^kp_i\left(\HH(B\mid \bfi=i)-\log_2p_i\right)
        =\sum_{i=1}^kp_i\left(\HH(A_i)-\log_2p_i\right)
        \\&=\sum_{i=1}^kp_i\log_2\left(\sum_{j=1}^k2^{\HH(A_j)}\right)
        =\log_2\left(\sum_{j=1}^k2^{\HH(A_j)}\right).
    \end{align*}
    Also, by the definition of conditional entropy and the uniform bound, we have \[\HH(\bfi\mid B)=\sum_{b\in\supp(B)}P_B(b)\HH(\bfi\mid B=b)\leq \sum_{b\in\supp(B)}P_B(b)\log_2s=\log_2s,\] and the equality holds when $s=1$. Thus, 
    \[\log_2\left(\sum_{j=1}^k2^{\HH(A_j)}\right)=\HH(B)+\HH(\bfi\mid B)\leq \HH(B)+\log_2s,\]
    and the equality holds when $s=1$. This completes the proof.
\end{proof}

Incorporating the resampling trick and the mixture bound into the previous proof yields a stronger result which can be used to derive \cref{thm:rainbow-triangle}. In fact, this is the $r=3$ case of \cref{thm:entropy-r-color}. Say a distribution $(X_1,X_2,X_3)$ is \emph{symmetric} if $(X_1,X_2,X_3)$ and $(X_{\sg(1)},X_{\sg(2)},X_{\sg(3)})$ have the same marginal distribution for any permutation $\sg:[3]\to[3]$. 

\begin{theorem}[\cref{thm:entropy-r-color} for $r=3$]\label{thm:entropy_2rgb}
Let $G$ be a simple graph together with a 3-edge-coloring $\chi:E(G)\to \{\mathrm{red},\mathrm{green},\mathrm{blue}\}$. Let $(X_1,X_2,X_3)$ be any symmetric distribution on triples of vertices forming a rainbow triangle in $G$. Then $2\HH(X_1,X_2,X_3)\leq 3\HH(X_1,X_2)-\log_23$.
\end{theorem}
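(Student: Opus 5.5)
The plan is to rerun the proof of \cref{thm:rainbow-triangle-simple} with the resampling trick of \cref{subsec:two_tricks}, so that uniformity is never used. Put $h=\HH(X_3\mid X_1,X_2)$, which by symmetry equals $\HH(X_i\mid X_j,X_k)$ for any distinct $i,j,k$. Indices are taken mod $3$. For each $i$, sample $Y_i$ from the conditional law of $X_i$ given $(X_{i-1},X_{i+1},\chi(X_{i-1}X_i))$, independently of $X_i$. Then sample $Z_{i+1}$ from the conditional law of $X_{i+1}$ given $(X_{i-1},Y_i)$, independently of everything else.

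\textbf{Step 1: distributional identities.} First I would check that $(X_{i-1},Y_i,X_{i+1})$ has the same law as $(X_{i-1},X_i,X_{i+1})$. This holds because $Y_i$ and $X_i$ are conditionally i.i.d.\ given $X_{i-1},X_{i+1},\chi(X_{i-1}X_i)$, and this conditioning information is a function of the triple. Consequently $(X_{i-1},X_{i+1})$ has the same law as in the original triple, and $(X_{i-1},Y_i,Z_{i+1})$ also has the law of $(X_{i-1},X_i,X_{i+1})$.

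In particular, every tuple we build is supported on rainbow triangles. The ordered pair $(Y_i,Z_{i+1})$ is distributed like $(X_i,X_{i+1})$, which by symmetry has the law of $(X_1,X_2)$. The same holds for $(Z_{i+1},Y_i)$.

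\textbf{Step 2: conditional entropy lower bound.} Exactly as in the simple proof, we have
\[\HH(Y_2\mid X_1,X_2,X_3)=\HH(X_2\mid X_1,X_3,\chi(X_1X_2))\ge h-\HH(\chi(X_1X_2)\mid X_1,X_3)\ge h-1,\]
using that $\chi(X_1X_2)$ takes at most two values once $\chi(X_1X_3)$ is fixed. We also have
\[\HH(Z_3\mid X_1,X_2,X_3,Y_2)=\HH(Z_3\mid X_1,Y_2)=\HH(X_3\mid X_1,X_2)=h\]
by Step 1. Hence $\HH(Y_i,Z_{i+1}\mid X_1,X_2,X_3)\ge 2h-1$ for every $i$.

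\textbf{Step 3: mixture.} Take $\bfi\in[3]$ and $\bfj\in[2]$ uniform and independent, and define $(A,B)$ as before. \cref{claim:supports-id-ref-2} only uses the coloring constraints defining the supports, not uniformity, so it applies verbatim. Thus the six quintuples have pairwise disjoint supports, and $(\bfi,\bfj)$ is recoverable from $(X_1,X_2,X_3,A,B)$. As in the simple proof, this gives
\[\HH(A,B\mid X_1,X_2,X_3)\ge \log_2 6+2h-1=\log_2 3+2h.\]
Now comes the key replacement of the uniform bound. By Step 1, each of the six ordered pairs has the law of $(X_1,X_2)$, so their mixture $(A,B)$ also has the law of $(X_1,X_2)$. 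Dropping conditions then yields
\[\HH(X_1,X_2)=\HH(A,B)\ge \log_2 3+2h.\]
Finally, $2\HH(X_1,X_2,X_3)=2\HH(X_1,X_2)+2h\le 3\HH(X_1,X_2)-\log_2 3$, which is the claim. Since all six branches have equal entropy, the uniform mixture suffices and the weighted \cref{lem:mix} is not needed.

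\textbf{Main obstacle.} I expect the main care to be needed in Step 1, in verifying the distributional identities for a non-uniform symmetric law. The delicate points are that resampling $Y_i$ conditioned on the color preserves the joint law of the triple, and that the pair laws of all six branches really coincide with that of $(X_1,X_2)$. It is this coincidence that lets $\HH(A,B)$ be computed exactly rather than bounded by $\log_2(2m)$.
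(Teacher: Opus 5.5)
Your proof is correct and is essentially the paper's argument: it uses the same resampling of $Y_i$ and $Z_{i+1}$ and the same disjointness from \cref{claim:supports-id-ref-2}, together with the same key observation that the mixture $(A,B)$ has the law of $(X_1,X_2)$, which replaces the uniform bound $\log_2(2m)$. The only difference is that you take a uniform mixture instead of the weighted one from \cref{lem:mix}; this loses nothing here because every branch satisfies the same lower bound $2h-1$, so your remark that all six branches have equal entropy (true by cyclic symmetry of the law, though you did not prove it) is not actually needed.
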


\begin{proof}
For $i\in[3]$, let $Y_i$ be a resample of $X_i$ given $X_{i-1},X_{i+1}$ and $\chi(X_{i-1}X_{i})$. Then let $Z_{i+1}$ be a resample of $X_{i+1}$ given $X_{i-1},Y_i$. As in the proof of \cref{thm:rainbow-triangle-simple}, all subscripts are taken modulo 3.

Mimicking the initial computation in the proof of \cref{thm:rainbow-triangle-simple}, we again derive
\[
\HH(Y_i,Z_{i+1}\mid X_1,X_2,X_3)=\HH(Y_i\mid X_1,X_2,X_3)+\HH(Z_{i+1}\mid X_1,X_2,X_3)
\geq 2\HH(X_3\mid X_1,X_2)-1
\]
for each $i\in[3]$.

\cref{claim:supports-id-ref-2} still holds in this setting, so we may apply the mixture bound (\cref{lem:mix}) to the six random tuples $(X_1,X_2,X_3,Y_i,Z_{i+1})$ for $i\in[3]$ and $(X_1,X_2,X_3,Z_{i+1},Y_i)$ for $i\in[3]$. This yields a mixture $(X_1,X_2,X_3,A,B)$ of these quintuples with the property that
\[
2^{\HH(X_1,X_2,X_3,A,B)}\geq \sum_{i\in[3]}\left(2^{\HH(X_1,X_2,X_3,Y_i,Z_{i+1})}+2^{\HH(X_1,X_2,X_3,Z_{i+1},Y_i)}\right)=2\times\sum_{i\in[3]}2^{\HH(X_1,X_2,X_3,Y_i,Z_{i+1})}.
\]
Dividing both sides by $2^{\HH(X_1,X_2,X_3)}$, dropping conditions on the left, and applying the bound we derived on the right yields
\[
2^{\HH(A,B)}\geq 2^{\HH(A,B\mid X_1,X_2,X_3)}\geq 2\times \sum_{i\in[3]} 2^{\HH(Y_i,Z_{i+1}\mid X_1,X_2,X_3)}\geq 6\times 2^{2\HH(X_3\mid X_1,X_2)-1}.
\]
Therefore,
\[
\HH(A,B)\geq 2\HH(X_3\mid X_1,X_2)+\log_23.
\]
To complete the proof, observe that $(Y_i,Z_{i+1})$ has the same marginal distribution as $(Y_i,X_{i+1})$, which in turn has the same marginal distribution as $(X_i,X_{i+1})$, because of how we sampled $Z_{i+1}$ and $Y_i$. Because $(X_1,X_2,X_3)$ has a symmetric distribution, this then is simply the distribution of $(X_1,X_2)$. Similarly, $(Z_{i+1},Y_i)$ has the same marginal distribution as $(X_{i+1},X_i)$, which is also the distribution of $(X_1,X_2)$. Thus, the mixture $(A,B)$ also has the same marginal distribution as $(X_1,X_2)$. It follows that
\begin{align*}
3\HH(X_1,X_2)&=2\HH(X_1,X_2)+\HH(A,B)
\\&\geq 2\HH(X_1,X_2)+2\HH(X_3\mid X_1,X_2)+\log_23=2\HH(X_1,X_2,X_3)+\log_23.\qedhere
\end{align*}
\end{proof}

Using \cref{thm:entropy_2rgb}, we can derive a new proof of \cref{thm:rainbow-triangle}.
\begin{proof}[Proof of \cref{thm:rainbow-triangle}]
Let $(X_1,X_2,X_3)$ be a uniformly random triple of vertices that form a rainbow triangle in $G$. Because each rainbow triangle contributes 6 ordered triples, we have $\HH(X_1,X_2,X_3)=\log_2(6T)$, recalling that $T$ is the number of rainbow triangles in $G$.

By \cref{thm:entropy_2rgb}, we know that $\HH(X_1,X_2,X_3)\leq \frac 32\HH(X_1,X_2)-\frac 12\log_23$. Moreover, the chain rule yields
\[\HH(X_1,X_2)=\HH(\chi(X_1X_2))+\HH(X_1,X_2\mid\chi(X_1X_2))\]
Because $(X_1,X_2,X_3)$ is equally likely to be any of the six permutations of a given rainbow triangle $(x_1,x_2,x_3)$, it follows that $\chi(X_1X_2)$ is uniformly distributed among the three colors $\{r,g,b\}$. Thus, 
\begin{align*}
    \HH(X_1,X_2)=&\HH(\chi(X_1X_2))+\frac 13\sum_{c\in\{r,g,b\}}\HH(X_1,X_2\mid\chi(X_1X_2)=c)\\
    \leq&\log_23+\frac 13(\log_2(2m_r)+\log_2(2m_g)+\log_2(2m_b)),
\end{align*}
where the final inequality follows from the uniform bound. Therefore,
\[
\log_2(6T)=\HH(X_1,X_2,X_3)\leq\frac 32\HH(X_1,X_2)-\frac 12\log_23\leq \log_23+\frac 12\log_2(8m_rm_gm_b).
\]
Rearranging yields $T\leq \sqrt{2m_rm_gm_b}$.
\end{proof}

\section{Proof of rainbow triangles with many colors}\label{sec:many_color_RT}
In this section, we give a proof of \cref{thm:r-color,thm:entropy-r-color}.
We first quickly show that \cref{thm:entropy-r-color} implies \cref{thm:r-color}.

\begin{proof}[Proof of \cref{thm:r-color} using \cref{thm:entropy-r-color}]
Let $(X_1,X_2,X_3)$ be a uniformly random rainbow triangle in $G$.
    Let $T$ be the number of rainbow triangles.
    Then by \cref{thm:entropy-r-color} and the uniform bound,
    \[\log_2 (6T) = \HH(X_1,X_2,X_3)\leq \frac{1}{2}\left(3\HH(X_1,X_2)+\log_2\frac{r-2}{r}\right)\leq \frac{1}{2}\log_2\left(\frac{r-2}{r}(2m)^3\right).\]
    Rearranging gives
    \[T\leq \sqrt{\frac{8(r-2)}{36r}}m^{3/2} = \sqrt{\frac{2(r-2)}{9r}}m^{3/2}.\qedhere\]
\end{proof}

In the remainder of the section, we prove \cref{thm:entropy-r-color}.

\begin{proof}[Proof of \cref{thm:entropy-r-color}]
Recall that $(X_1,X_2,X_3)$ is random rainbow triangle in $G$, chosen according to a symmetric but not necessarily uniform distribution.
Sample random variables as follows. 
This is depicted in \Cref{fig:rainbow-triangle-sampling}.
\begin{itemize}
\item Take $Y_1$ to be a conditionally independent resample of $X_1$ given $X_2,X_3,\chi(X_1X_2)$ and $\chi(X_1X_3)$.
\item Take $Y_2$ to be a conditionally independent resample of $X_2$ given $X_1,X_3$ and $\chi(X_1X_2)$.
\item Take $Y_3$ to be a conditionally independent resample of $X_3$ given $X_1$ and $X_2$.
\end{itemize}

\begin{figure}[h]	
\centering
\begin{tikzpicture}[trianglecolors,scale=0.8, every node/.append style={scale=0.8}]
\foreach \i in {1,2,3} {
	\path let \n1={int(\i+0)},\n2={int(mod(\i,3)+1)},\n3={int(mod(\i+1,3)+1)},
			\n{sqrt3}={sqrt(3)} in
	(120*\i-30:1) coordinate[vtx] (X\n1) 
		--+ (120*\i-30:0.4) node{$X_\n1$}
		--++ (120*\i-120:\n{sqrt3}) coordinate[openvtx] (Y\n2)
		--+ (120*\i-90:0.4) node{$Y_\n2$};
}

\begin{scope}[thick]
\draw[e1] (Y2) -- (X1) -- (X2) -- (Y1);
\draw[e2] (X2) -- (X3);
\draw[e3] (X1) -- (X3) -- (Y1) ;
\end{scope}

\begin{scope}[thick, dashed, gray]
\draw  (Y3) -- (X1) ;
\draw		(Y2) -- (X3) ;
\draw (X2) -- (Y3) ;
\end{scope}
\end{tikzpicture}
\caption{The possible colorings of the edges $X_{i\pm 1}Y_i$ given a rainbow triangle $(X_1,X_2,X_3)$ with the red--blue--green coloring shown. Dashed edges may receive any color, provided that the triangles are rainbow.}
\label{fig:rainbow-triangle-sampling}
\end{figure}

We begin by showing that the supports of $Y_1,Y_2,Y_3$ are disjoint given $X_1,X_2,X_3$.

\begin{claim}
The supports of $(X_1,X_2,X_3,Y_i)_{i\in [3]}$ are pairwise disjoint.
\end{claim}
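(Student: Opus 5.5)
Fix a point $(x_1,x_2,x_3,y)$ that lies in the supports of two of these quadruples. We derive a contradiction from the color constraints each resample imposes. Since $(X_1,X_2,X_3)$ is a rainbow triangle, the three colors $\chi(x_1x_2)$, $\chi(x_1x_3)$, $\chi(x_2x_3)$ are distinct. The resampling rules give the following facts about the support of each $Y_i$ given $X=(x_1,x_2,x_3)$:
\begin{itemize}
\item A point $y$ in the support of $Y_1$ satisfies $\chi(yx_2)=\chi(x_1x_2)$ and $\chi(yx_3)=\chi(x_1x_3)$, and $(y,x_2,x_3)$ is a rainbow triangle.
\item A point $y$ in the support of $Y_2$ satisfies $\chi(x_1y)=\chi(x_1x_2)$, and $(x_1,y,x_3)$ is a rainbow triangle.
\item A point $y$ in the support of $Y_3$ only satisfies that $(x_1,x_2,y)$ is a rainbow triangle.
\end{itemize}
All three facts hold because a resample given certain variables has support inside the conditional support of the original variable given those same variables.

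Next we compare the supports pairwise, always using that a rainbow triangle cannot repeat a color.
\begin{itemize}
\item \emph{$Y_1$ versus $Y_2$.} A point $y$ in both supports satisfies $\chi(yx_2)=\chi(x_1x_2)$ and $\chi(yx_1)=\chi(x_1x_2)$. This contradicts the fact that $(x_1,y,x_3)$ is rainbow? Not directly, since that triangle does not use the edge $yx_2$. Instead we use that $(x_1,x_2,y)$ must then have two edges of color $\chi(x_1x_2)$, namely $x_1x_2$ and $x_2y$. That is a triangle, but we have not assumed it is rainbow. A cleaner route is needed: from $Y_1$ we get $\chi(yx_3)=\chi(x_1x_3)$, and from $Y_2$ we get $\chi(yx_1)=\chi(x_1x_2)$. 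Now $(x_1,y,x_3)$ is rainbow, so $\chi(yx_3)\ne\chi(x_1x_3)$, which is a contradiction.
\item \emph{$Y_1$ versus $Y_3$.} The point $y$ has $\chi(yx_2)=\chi(x_1x_2)$, but $(x_1,x_2,y)$ is rainbow, which is a contradiction.
\item \emph{$Y_2$ versus $Y_3$.} The point $y$ has $\chi(x_1y)=\chi(x_1x_2)$, but $(x_1,x_2,y)$ is rainbow, which is again a contradiction.
\end{itemize}
Note that $y=x_i$ is never in two supports, because $x_i$ itself satisfies only the constraints for index $i$.

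The only step that takes care is choosing, for each pair, which triangle's rainbowness to invoke. In every case the key is that the constraint attached to one index forces an edge of the triangle certified rainbow by the other index to repeat a color of $x_1x_2$ or $x_1x_3$.
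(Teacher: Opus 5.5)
Your proof is correct and is essentially the paper's argument: the paper likewise separates $Y_3$ from $Y_1,Y_2$ by checking whether $\chi(vx_1)$ or $\chi(vx_2)$ equals $\chi(x_1x_2)$, and separates $Y_1$ from $Y_2$ by checking whether $\chi(vx_3)=\chi(x_1x_3)$. In a write-up, delete the false start in your $Y_1$-versus-$Y_2$ bullet. Keep only the final comparison: $\chi(yx_3)=\chi(x_1x_3)$ from the $Y_1$ constraint, against the rainbow triangle $(x_1,y,x_3)$ from the $Y_2$ constraint.
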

\begin{proof}\proofofclaim
    Suppose that $(x_1,x_2,x_3,v)$ is in $\bigcup_{i\in [3]}\supp(X_1,X_2,X_3,Y_i)$.
    Then we see that $(x_1,x_2,x_3,v)$ is in the support of $\supp(X_1,X_2,X_3,Y_3)$ if and only if both $\chi(vx_1), \chi(vx_2)$ are both not equal to $\chi(x_1x_2)$.
    Moreover, if $(x_1,x_2,x_3,v)\not\in \supp(X_1,X_2,X_3,Y_3)$, then we can decide which of the remaining supports $(x_1,x_2,x_3,v)$ is in just from the color $\chi(vx_3)$: it is in $\supp(X_1,X_2,X_3,Y_1)$ only if $\chi(vx_3) = \chi(x_1x_3)$, and it is in $\supp(X_1,X_2,X_3,Y_2)$ only if $\chi(vx_3)\neq \chi(x_1x_3)$.
    This shows that the three supports are pairwise disjoint.
\end{proof}

Next we compute the conditional entropies of $Y_1,Y_2,Y_3$ given $X_1,X_2,X_3$.

\begin{claim}\label{claim:entropy-lower-bound}
We have the following (in)equalities on entropy.
\begin{align*}
\HH(Y_3\mid X_1,X_2,X_3)&=	\HH(X_3\mid X_1,X_2).
\\\HH(Y_2\mid X_1,X_2,X_3)&\geq \HH(X_3\mid X_1,X_2)-\log_2(r-1).
\\\HH(Y_1\mid X_1,X_2,X_3)&\geq \HH(X_3\mid X_1,X_2)-\log_2((r-1)(r-2)).
\end{align*}
\end{claim}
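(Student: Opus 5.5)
The plan is to imitate the computation in the proof of \cref{thm:rainbow-triangle-simple}. Each $Y_i$ is a conditionally independent resample of $X_i$ given some data $D_i$. Writing $D_i$ for that data, we first use conditional independence to replace $\HH(Y_i\mid X_1,X_2,X_3)$ by $\HH(Y_i\mid D_i)$, and then use the resampling property to get $\HH(Y_i\mid D_i)=\HH(X_i\mid D_i)$. The conditioning data are
\[
D_3=(X_1,X_2),\qquad D_2=(X_1,X_3,\chi(X_1X_2)),\qquad D_1=(X_2,X_3,\chi(X_1X_2),\chi(X_1X_3)).
\]
Since $D_3$ is a sub-tuple of the conditioning, the first line is immediate: $\HH(Y_3\mid X_1,X_2,X_3)=\HH(Y_3\mid X_1,X_2)=\HH(X_3\mid X_1,X_2)$.

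For $Y_2$ we expand $\HH(X_2\mid X_1,X_3,\chi(X_1X_2))$ by the chain rule, exactly as in the three-color proof. This gives
\[
\HH(X_2\mid X_1,X_3)+\HH(\chi(X_1X_2)\mid X_1,X_2,X_3)-\HH(\chi(X_1X_2)\mid X_1,X_3).
\]
The middle term vanishes by \cref{prop:recover}. For the last term, $\chi(X_1X_2)$ lies in $[r]\setminus\{\chi(X_1X_3)\}$, a set of size $r-1$ determined by $X_1,X_3$, so the uniform bound gives at most $\log_2(r-1)$. Symmetry of the distribution then gives $\HH(X_2\mid X_1,X_3)=\HH(X_3\mid X_1,X_2)$.

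For $Y_1$ we do the same expansion with the pair of colors $(\chi(X_1X_2),\chi(X_1X_3))$. Given $X_2,X_3$, this pair is an ordered pair of distinct colors, both different from $\chi(X_2X_3)$. There are therefore at most $(r-1)(r-2)$ possibilities, and the uniform bound gives the loss $\log_2((r-1)(r-2))$. Symmetry again converts $\HH(X_1\mid X_2,X_3)$ into $\HH(X_3\mid X_1,X_2)$.

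The only step needing care is the justification of $\HH(Y_i\mid X_1,X_2,X_3)=\HH(X_i\mid D_i)$. Concretely, $D_i$ is a function of $(X_1,X_2,X_3)$, and $Y_i$ is conditionally independent of $(X_1,X_2,X_3)$ given $D_i$. Hence $\HH(Y_i\mid X_1,X_2,X_3)=\HH(Y_i\mid X_1,X_2,X_3,D_i)=\HH(Y_i\mid D_i)$, using the equality case of \cref{prop:drop}. Because $(Y_i,D_i)$ has the same distribution as $(X_i,D_i)$, the latter equals $\HH(X_i\mid D_i)$. Everything else is routine bookkeeping.
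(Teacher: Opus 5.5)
Your proposal is correct and follows essentially the same argument as the paper. You reduce each $\HH(Y_i\mid X_1,X_2,X_3)$ to $\HH(X_i\mid D_i)$ via the resampling, peel off the color data with the chain rule, bound that color term by the uniform bound over $r-1$ colors (respectively $(r-1)(r-2)$ ordered pairs), and finish by symmetry; your explicit justification of the first reduction via the equality case of \cref{prop:drop} is a detail the paper leaves implicit.
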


\begin{proof}\proofofclaim
Because $Y_3$ and $X_3$ are sampled according to the same distribution conditioned on $X_1,X_2$. It follows that $\HH(Y_3\mid X_1,X_2,X_3)=\HH(Y_3\mid X_1,X_2)=\HH(X_3\mid X_1,X_2)$.
Similarly, $Y_2$ and $X_2$ have the same distribution conditioned on $X_1$, $X_3$, and $\chi(X_1X_2)$. Thus,
\begin{align*}
\HH(Y_2\mid X_1,X_2,X_3)&=\HH(Y_2\mid X_1,X_3,\chi(X_1X_2))
=\HH(X_2\mid X_1,X_3,\chi(X_1X_2))
\\&= \HH(X_2, \chi(X_1X_2)\mid X_1,X_3)-\HH(\chi(X_1X_2)\mid X_1,X_3)
\\&\geq \HH(X_2\mid X_1,X_3)-\log_2(r-1)
\\&=\HH(X_3\mid X_1,X_2)-\log_2(r-1)\quad\text{by symmetry}.
\end{align*}
Similarly, $Y_1$ and $X_1$ have the same distribution conditioned on $X_2$, $X_3$, $\chi(X_1X_2)$, and $\chi(X_1X_3)$. Thus,
\begin{align*}
\HH(Y_1\mid X_1,X_2,X_3)&=\HH(Y_1\mid X_2,X_3,\chi(X_1X_2),\chi(X_1X_3))
=\HH(X_1\mid X_2,X_3,\chi(X_1X_2),\chi(X_1X_3))
\\&=\HH(X_1,\chi(X_1X_2),\chi(X_1X_3)\mid X_2,X_3)-\HH(\chi(X_1X_2),\chi(X_1X_3)\mid X_2,X_3)
\\&\geq \HH(X_1\mid X_2,X_3)-\log_2((r-1)(r-2))
\\&= \HH(X_3\mid X_1,X_2)-\log_2((r-1)(r-2))\quad\text{by symmetry}.\qedhere
\end{align*}
\end{proof}
To prove the theorem, we apply the mixture bound twice to get two inequalities (\cref{claim:many-color-1,claim:many-color-2}) that will imply the theorem when put together.

\begin{claim}\label{claim:many-color-1}
    $\HH(X_3\mid X_1,X_2)\leq \HH(X_1)+\log_2\frac{r-2}{r-1}$.
\end{claim}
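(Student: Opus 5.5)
The plan is to apply the mixture bound (\cref{lem:mix}) once, to the three quadruples $(X_1,X_2,X_3,Y_i)$ for $i\in[3]$, and then read off a marginal. By the preceding claim these three quadruples have pairwise disjoint supports, so we may take $s=1$.

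First, the mixture bound gives a mixture $(X_1,X_2,X_3,B)$ with
\[
2^{\HH(X_1,X_2,X_3,B)}=\sum_{i\in[3]}2^{\HH(X_1,X_2,X_3,Y_i)}.
\]
Here $B=Y_{\bfi}$ for an independent random index $\bfi$. Divide both sides by $2^{\HH(X_1,X_2,X_3)}$ and use the chain rule, $\HH(X_1,X_2,X_3,Y_i)=\HH(X_1,X_2,X_3)+\HH(Y_i\mid X_1,X_2,X_3)$. Then substitute the three bounds of \cref{claim:entropy-lower-bound}. Writing $h=\HH(X_3\mid X_1,X_2)$, this yields
\[
2^{\HH(B\mid X_1,X_2,X_3)}\geq 2^{h}\left(1+\frac{1}{r-1}+\frac{1}{(r-1)(r-2)}\right).
\]
The bracket simplifies: its numerator is $(r-1)(r-2)+(r-2)+1=(r-1)^2$, so the bracket equals $\frac{r-1}{r-2}$. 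Hence
\[
\HH(B\mid X_1,X_2,X_3)\ge h+\log_2\frac{r-1}{r-2}.
\]

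Second, dropping conditions gives $\HH(B)\ge \HH(B\mid X_1,X_2,X_3)$. It remains to identify the distribution of $B$. Each $Y_i$ is a conditional resample of $X_i$, so $Y_i$ has the same marginal distribution as $X_i$. By symmetry of $(X_1,X_2,X_3)$, that is the distribution of $X_1$. Since $\bfi$ is chosen independently, $B$ is a mixture of three identically distributed variables, so $B$ has the distribution of $X_1$ and $\HH(B)=\HH(X_1)$. Combining the two steps gives $h+\log_2\frac{r-1}{r-2}\le \HH(X_1)$, which is exactly the claim.

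The main point to check is bookkeeping rather than a genuine obstacle. The mixture bound is applied to the full quadruples, not to the $Y_i$ alone, and disjointness is only available for the quadruples. So the common term $\HH(X_1,X_2,X_3)$ must be factored out correctly before the conditional bounds are substituted. The other step to verify is that the resampling preserves the marginal of $X_i$, which holds because $Y_i$ is sampled from the conditional law of $X_i$ given a function of $(X_1,X_2,X_3)$.
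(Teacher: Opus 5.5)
Your proposal is correct and follows the paper's proof essentially verbatim: you mix the three pairwise-disjoint quadruples $(X_1,X_2,X_3,Y_i)$, factor out $2^{\HH(X_1,X_2,X_3)}$, insert the bounds of \cref{claim:entropy-lower-bound} to obtain the factor $\frac{r-1}{r-2}$, and then drop conditioning using that the mixture is distributed as $X_1$. Your remarks about applying the mixture bound to the full quadruples and about resampling preserving the marginals are exactly the points the paper relies on.
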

\begin{proof}\proofofclaim
    By the mixture bound, there exists a mixture $(X_1,X_2,X_3,Y)$ of $(X_1,X_2,X_3,Y_i)_{i\in [3]}$ so that
    \[\sum_{i=1}^{3}2^{\HH(X_1,X_2,X_3,Y_i)}\leq 2^{\HH(X_1,X_2,X_3,Y)}.\]
    Since the marginal distribution of each $Y_i$ is the same as the distribution of $X_1$, we know that the marginal distribution of $Y$ is the same as the one of $X_1$ as well. Therefore, dividing both sides by $2^{\HH(X_1,X_2,X_3)}$ (via the chain rule) and dropping conditioning yields
    \[\sum_{i=1}^{3}2^{\HH(Y_i\mid X_1,X_2,X_3)}
    \leq 2^{\HH(Y\mid X_1,X_2,X_3)}\leq 2^{\HH(Y)}=2^{\HH(X_1)}.\]
    By \cref{claim:entropy-lower-bound}, we have
    \[\sum_{i=1}^{3}2^{\HH(Y_i\mid X_1,X_2,X_3)}\geq \left(1+\frac{1}{r-1}+\frac{1}{(r-1)(r-2)}\right)2^{\HH(X_3\mid X_1,X_2)} = \frac{r-1}{r-2}\cdot 2^{\HH(X_3\mid X_1,X_2)}.\]
    Rearranging gives us the desired inequality.
\end{proof}

\begin{claim}\label{claim:many-color-2}
    $\HH(X_3\mid X_1,X_2)\leq \HH(X_2\mid X_1)+\log_2\frac{r-1}{r}$.
\end{claim}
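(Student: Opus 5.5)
The plan is to run the argument of \cref{claim:many-color-1} again, this time with only two of the resampled variables, and to drop the conditioning only down to $X_1$ instead of all the way down. The two variables are the resamples $Y_2$ and $Y_3$ already constructed in the proof of \cref{thm:entropy-r-color}. The point is that, conditioned on $(X_1,X_2,X_3)$, both are vertices adjacent to $X_1$. Moreover, the pairs $(X_1,Y_2)$ and $(X_1,Y_3)$ each have the same distribution as $(X_1,X_2)$.

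\textbf{Step 1 (disjointness).} I would check that the supports of $(X_1,X_2,X_3,Y_2)$ and $(X_1,X_2,X_3,Y_3)$ are disjoint, using the color of the edge from $v$ to $x_1$. If $(x_1,x_2,x_3,v)$ lies in the support of $(X_1,X_2,X_3,Y_2)$, then $\chi(vx_1)=\chi(x_1x_2)$. If it lies in the support of $(X_1,X_2,X_3,Y_3)$, then $x_1x_2v$ is a rainbow triangle, so $\chi(vx_1)\neq\chi(x_1x_2)$. These two conditions are incompatible, so the supports are disjoint.

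\textbf{Step 2 (mixture and marginals).} By \cref{lem:mix} with $s=1$, there is a mixture $(X_1,X_2,X_3,Y)$ of the two quadruples with
\[
2^{\HH(X_1,X_2,X_3,Y_2)}+2^{\HH(X_1,X_2,X_3,Y_3)}\le 2^{\HH(X_1,X_2,X_3,Y)}.
\]
Dividing by $2^{\HH(X_1,X_2,X_3)}$ turns each term into a conditional entropy given $(X_1,X_2,X_3)$.

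Next I would identify the marginal of $(X_1,Y)$. Since $Y_3$ is a resample of $X_3$ given $(X_1,X_2)$, the pair $(X_1,Y_3)$ has the law of $(X_1,X_3)$, which equals the law of $(X_1,X_2)$ by symmetry. Since $Y_2$ is a resample of $X_2$ given $(X_1,X_3,\chi(X_1X_2))$, the pair $(X_1,Y_2)$ has the law of $(X_1,X_2)$ directly. The mixing index is chosen independently of everything else, so $(X_1,Y)$ also has the law of $(X_1,X_2)$.

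Dropping the conditions $X_2,X_3$ (keeping $X_1$) then gives
\[
2^{\HH(Y_2\mid X_1,X_2,X_3)}+2^{\HH(Y_3\mid X_1,X_2,X_3)}\le 2^{\HH(Y\mid X_1,X_2,X_3)}\le 2^{\HH(Y\mid X_1)}=2^{\HH(X_2\mid X_1)}.
\]

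\textbf{Step 3 (plug in).} By \cref{claim:entropy-lower-bound}, the left side is at least
\[
\left(1+\tfrac{1}{r-1}\right)2^{\HH(X_3\mid X_1,X_2)}=\tfrac{r}{r-1}\,2^{\HH(X_3\mid X_1,X_2)}.
\]
Taking $\log_2$ and rearranging gives exactly the claim, $\HH(X_3\mid X_1,X_2)\le \HH(X_2\mid X_1)+\log_2\frac{r-1}{r}$.

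The only delicate point is Step 2. One has to be careful that the marginal identification is for the pair $(X_1,Y)$, not merely for $Y$ alone, so that dropping conditions lands on $\HH(X_2\mid X_1)$ rather than on $\HH(X_1)$. This works because $X_1$ is one of the conditioning variables for both resamples.
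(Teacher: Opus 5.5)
Your proposal is correct and matches the paper's proof step for step. You mix the $Y_2$ and $Y_3$ quadruples, use that $(X_1,Y)$ has the law of $(X_1,X_2)$ so you can drop conditioning only down to $X_1$, and then plug in \cref{claim:entropy-lower-bound}. The one cosmetic difference is that you re-check disjointness of the two supports directly via the color $\chi(vx_1)$, whereas the paper cites its earlier claim that all three supports are pairwise disjoint.
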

\begin{proof}\proofofclaim
    We run the same proof except, instead of mixing all $(X_1,X_2,X_3,Y_i)_{i\in [3]}$, we only mix the tuples for $i=2,3$.
    By the mixture bound, there exists a mixture $(X_1,X_2,X_3,Y')$ of $(X_1,X_2,X_3,Y_i)_{i=2,3}$ so that
    \[\sum_{i=2}^{3}2^{\HH(X_1,X_2,X_3,Y_i)}\leq 2^{\HH(X_1,X_2,X_3,Y')}.\]
    Note that the marginal distribution of $(X_1,Y_i)$ for $i=2,3$ is the same as the distribution of $(X_1,X_i)$, which by symmetry is the same as the distribution of $(X_1,X_2)$.
    Therefore the marginal distribution of $(X_1,Y')$ is the same as $(X_1,X_2)$ as well, and in particular $\HH(Y'\mid X_1)=\HH(X_2\mid X_1)$.
    Dividing both sides by $2^{\HH(X_1,X_2,X_3)}$ and dropping conditioning as in the proof of \cref{claim:many-color-1} now yields
    \[\sum_{i=2}^32^{\HH(Y_i\mid X_1,X_2,X_3)}
    \leq 2^{\HH(Y'\mid X_1,X_2,X_3)} 
    \leq 2^{\HH(Y'\mid X_1)} = 2^{\HH(X_2\mid X_1)}.\]
    On the other hand, by \cref{claim:entropy-lower-bound}, we have
    \[\sum_{i=2}^{3}2^{\HH(Y_i\mid X_1,X_2,X_3)}
    \geq\left(1+\frac 1{r-1}\right)2^{\HH(X_3\mid X_1,X_2)}
    = \frac{r}{r-1}\cdot 2^{\HH(X_3\mid X_1,X_2)}.\]
    Rearranging gives the desired inequality.
\end{proof}

To complete the proof, simply sum up the inequalities in \cref{claim:many-color-1,claim:many-color-2} to get
\[2\HH(X_3\mid X_1,X_2)\leq \HH(X_1,X_2)+\log_2\left(\frac{r-2}{r}\right).\]
Adding $2\HH(X_1,X_2)$ to both sides and applying the chain rule completes the proof.
\end{proof}

\section{Properly colored \texorpdfstring{$K_d$}{Kd}'s}\label{sec:proper_Kd}

We now count properly colored $K_d$'s for $d\geq 4$. The goal of this section is to give a proof of \cref{thm:properly-colored-Kd-in-r-colors} by proving the following entropic version of it.

\begin{theorem}\label{thm:entropy-properly-colored-Kd-in-r-colors}
    Let $d,r$ be two positive integers with $d\geq 4$ and $r\geq d-1$.
    Let $G$ be a graph whose edges are colored with $r$ colors.
    Let $(X_1,\ldots, X_d)$ be a symmetric distribution supported on properly colored $K_d$'s in $G$.
    Then
    \[2\HH(X_1,\ldots, X_d)\leq d\HH(X_1,X_2)+\log_2\left(\frac{(r-1)^2\cdots (r-d+2)^2}{(r+1)^{d-2}r^{d-2}}\right).\]
\end{theorem}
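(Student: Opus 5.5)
The plan is to reduce the statement to one inequality per added vertex and to prove each by mixtures, as in \cref{claim:many-color-1,claim:many-color-2}. By the chain rule,
\[
\HH(X_1,\ldots,X_d)=\HH(X_1,X_2)+\sum_{k=3}^{d}\HH(X_k\mid X_1,\ldots,X_{k-1}).
\]
So it suffices to prove, for each $3\le k\le d$,
\[
2\HH(X_k\mid X_1,\ldots,X_{k-1})\le \HH(X_1,X_2)+\log_2\frac{(r-k+2)^2}{r(r+1)}. \qquad (\ast)
\]
Summing $(\ast)$ over $k$ and adding $2\HH(X_1,X_2)$ to both sides gives the theorem. As a sanity check, $(\ast)$ is tight for every $k$ on a balanced blowup of a properly colored $K_{r+1}$, with $n$ vertices per part. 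There, $\HH(X_1,X_2)=\log_2(n^2r(r+1))$ and $\HH(X_k\mid X_{<k})=\log_2(n(r+2-k))$. This also dictates the split below.

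Mirroring the split in \cref{sec:many_color_RT}, I would write $(\ast)$ as the sum of two inequalities:
\[
\HH(X_k\mid X_{<k})\le \HH(X_1)+\log_2\tfrac{r-k+2}{r+1},\qquad
\HH(X_k\mid X_{<k})\le \HH(X_2\mid X_1)+\log_2\tfrac{r-k+2}{r}.
\]
Both are again tight on the $K_{r+1}$ blowup. Each will come from the mixture bound (\cref{lem:mix}) applied to tuples $(X_1,\ldots,X_{k},Y_\alpha)$. Each $Y_\alpha$ is a conditionally independent resample of some $X_i$, $i\in[k]$, given the remaining vertices of $(X_1,\ldots,X_k)$ and a prescribed set of colors of edges at $X_i$. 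Since $(X_1,\ldots,X_d)$ is symmetric, every resample has marginal distribution equal to that of $X_1$. For the second inequality I would use only resamples of $X_i$ with $i\ne 1$ that keep $X_1$ fixed, so the pair $(X_1,Y_\alpha)$ is distributed like $(X_1,X_2)$. Dropping conditions as in \cref{claim:many-color-1} then bounds $\sum_\alpha 2^{\HH(Y_\alpha\mid X_1,\ldots,X_k)}$ by $2^{\HH(X_1)}$ or by $2^{\HH(X_2\mid X_1)}$, respectively.

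For the lower bounds, the computation of \cref{claim:entropy-lower-bound} applies. A resample of $X_i$ given $X_{[k]\setminus i}$ and $j$ prescribed colors at $X_i$ has conditional entropy at least $\HH(X_k\mid X_{<k})-\log_2$ of the number of admissible values of those colors. Here proper coloring is used: the $j$ colors must be distinct, they must avoid colors already present at the other endpoints, and they lie among $r$ colors. This gives weights of the form $1/((r-k+2)(r-k+1)\cdots)$. Telescoping as in $1+\frac1{r-1}+\frac1{(r-1)(r-2)}=\frac{r-1}{r-2}$, the total weight should be $\frac{r+1}{r-k+2}$ for the first inequality and $\frac{r}{r-k+2}$ for the second.

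The main obstacle is designing the family of resamples $Y_\alpha$, meaning which vertex and which colors are fixed, so that two things hold. First, the supports of $(X_1,\ldots,X_k,Y_\alpha)$ must be pairwise disjoint, because \cref{lem:mix} needs $s=1$ to give no loss. Second, the weights must telescope to exactly $\frac{r+1}{r-k+2}$ and $\frac{r}{r-k+2}$. The $+1$ in $r+1$ strongly suggests that one class of resamples corresponds to a vertex $v$ joined to a new "virtual" part. I would try to classify a vertex $v$ by how the colors $\chi(vX_j)$ coincide with colors already in the $K_k$. Each $v$ then has a unique type: either it is a genuine resample of $X_k$ given $X_{<k}$, or it matches the colors at some $X_i$ in a fixed pattern. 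That type identifies $\alpha$, which gives disjointness. The first attempt is to index $\alpha$ by the number $j$ of edges whose colors are copied and read $j$ off from $v$. If the counts do not telescope correctly, I would fall back to a non-uniform weighting through the general $(s+1)$-wise disjoint form of the mixture bound.
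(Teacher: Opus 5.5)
Your proposal has a genuine gap: the reduction to $(\ast)$ for every $3\le k\le d$ fails at $k=3$. There $(\ast)$ is false, and so is its half $\HH(X_3\mid X_1,X_2)\le \HH(X_1)+\log_2\frac{r-1}{r+1}$. Take $r=d=4$ and let $G$ be the complete graph on $\mathbb F_3^2$ colored by slope. A $K_4$ is properly colored exactly when no three of its vertices are collinear. Each ordered pair extends in $6\cdot 3$ ways to an ordered properly colored $K_4$, and each non-collinear triple in $3$ ways. Under the uniform distribution this gives $\HH(X_1)=\log_2 9$, $\HH(X_1,X_2)=\log_2 72$ and $\HH(X_3\mid X_1,X_2)=\log_2 6$. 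But $9\cdot\frac35<6$ and $72\cdot\frac{9}{20}<36$, so both inequalities fail. More generally, at $k=3$ your $(\ast)$ has a smaller constant than \cref{thm:entropy-r-color} whenever $r\ge 4$, since $(r-1)^2<(r-2)(r+1)$, and affine planes are tight for that theorem. Your tightness check on the $K_{r+1}$ blowup cannot detect this. The paper proves the $\HH(X_1)$-type bound only for $k\ge 4$ and regroups the first two steps. It uses the $\HH(X_2\mid X_1)$-type bound twice at $k=3$ and the $\HH(X_1)$-type bound twice at $k=4$. This gives $2\HH(X_3\mid X_1,X_2)+2\HH(X_4\mid X_1,X_2,X_3)\le 2\HH(X_1,X_2)+\log_2\frac{(r-1)^2(r-2)^2}{r^2(r+1)^2}$. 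The constant is the same as your two copies of $(\ast)$, because $\left(\frac{r-1}{r}\right)^2\left(\frac{r-2}{r+1}\right)^2=\frac{(r-1)^2}{r(r+1)}\cdot\frac{(r-2)^2}{r(r+1)}$. The inequality $(\ast)$ is used as you intend only for $k\ge 5$.

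Second, you leave open the construction that carries the argument, and your heuristics point away from what works. It is enough to fix at most one extra color. Let $Y_i$ resample $X_i$ given the other $k-1$ vertices, and let $Z_{ij}$ resample $X_i$ given those and $\chi(X_iX_j)$. The latter costs at most $\log_2(r-k+2)$, since that color must avoid the $k-2$ other colors at $X_j$. The key observation is that $\supp(X_1,\ldots,X_k,Z_{ij})$ and $\supp(X_1,\ldots,X_k,Y_{i'})$ are disjoint for pairwise distinct $i,j,i'$. Indeed, a vertex $v$ replacing $x_{i'}$ in a properly colored $K_k$ cannot satisfy $\chi(vx_j)=\chi(x_ix_j)$. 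For the $\HH(X_2\mid X_1)$-bound, mix $Y_2$ with $Z_{i1}$ for $3\le i\le k$. These are pairwise disjoint, with total weight $1+\frac{k-2}{r-k+2}=\frac{r}{r-k+2}$. For the $\HH(X_1)$-bound, pairwise disjointness is the wrong target. Instead, mix all $k$ of the $Y_i$ and all $k(k-1)$ of the $Z_{ij}$, check that their supports are $(k+1)$-wise disjoint, and apply \cref{lem:mix} with $s=k$. The factor $k$ lost is exactly recovered, because the total weight is $k\cdot\frac{r+1}{r-k+2}$. The $(k+1)$-wise disjointness needs $k\ge 4$ when $v$ lies in exactly two $Y$-supports, since then at most $2+1+1$ supports contain it. This is precisely where $k=3$ breaks.
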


\begin{proof}[Proof of \cref{thm:properly-colored-Kd-in-r-colors} using \cref{thm:entropy-properly-colored-Kd-in-r-colors}]
    Let $(X_1,\ldots, X_d)$ be a uniformly random properly colored $K_d$ in $G$ with a uniformly chosen ordering.
    Let $N_d$ be the number of properly colored $K_d$'s in $G$.
    Then by \cref{thm:entropy-properly-colored-Kd-in-r-colors} and the uniform bound,
    \[\log_2 d!N_d = \HH(X_1,\ldots, X_d)\leq \frac{d}{2}\log_2(2m)+\frac{1}{2}\log_2\left(\frac{(r-1)^2\cdots (r-d+2)^2}{(r+1)^{d-2}r^{d-2}}\right).\]
    Rearranging gives
    \[N_d\leq \frac{(r-1)\dots (r-d+2)}{d!}\sqrt{\frac{(2m)^d}{(r+1)^{d-2}r^{d-2}}} = \binom{r+1}{d}\left(\frac{2m}{(r+1)r}\right)^{d/2} = \binom{r+1}{d}\left(\frac{m}{\binom{r+1}{2}}\right)^{d/2}.\]
\end{proof}

The remaining of this section will be dedicated to proving \cref{thm:entropy-properly-colored-Kd-in-r-colors}.
When reading the proofs, it is helpful to keep in mind that \cref{thm:properly-colored-Kd-in-r-colors,thm:entropy-properly-colored-Kd-in-r-colors}, as well as all intermediate statements in this section, are tight when the number of colors $r$ is odd and $G$ is a blowup of a properly edge-colored $K_{r+1}$ .

\begin{proof}[Proof of \cref{thm:entropy-properly-colored-Kd-in-r-colors}]
    Our goal is to prove the following two claims.
    \begin{claim}\label{claim:Kd-first-ineq}
        For every $3\leq k\leq d$,
        \[\HH(X_k\mid X_1,\ldots, X_{k-1})\leq \HH(X_2\mid X_1)+\log_2\left(\frac{r-k+2}{r}\right).\]
    \end{claim}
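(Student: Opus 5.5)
The plan is to mirror the proof of \cref{claim:many-color-2}. We condition on $X_1,\dots,X_{k-1}$ only, build $k-1$ resampled vertices whose supports are pairwise disjoint, and apply the mixture bound (\cref{lem:mix}). We then drop conditioning down to $X_1$. The guiding picture is the tight example, a blowup of a properly colored $K_{r+1}$. There, given $X_1,\dots,X_{k-1}$, the candidates for $X_k$ fill $r-k+2$ of the $r$ color classes at $X_1$, while a candidate for $X_2$ given $X_1$ may lie in any of the $r$ classes. The $k-2$ missing classes are the colors $\chi(X_1X_j)$ for $2\le j\le k-1$. We fill each one by resampling $X_j$ with its color at $X_1$ fixed.

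\emph{Construction.} Let $Y_k$ be a conditionally independent resample of $X_k$ given $X_1,\dots,X_{k-1}$. For each $2\le j\le k-1$, let $Y_j$ be a conditionally independent resample of $X_j$ given $(X_i)_{i\in[k-1]\setminus\{j\}}$ and $\chi(X_1X_j)$.

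\emph{Disjoint supports.} The tuples $(X_1,\dots,X_{k-1},Y_j)$ for $2\le j\le k$ have pairwise disjoint supports. Indeed, $\chi(X_1Y_j)=\chi(X_1X_j)$ for $j\le k-1$, and these $k-2$ colors are distinct because the clique is properly colored. On the other hand, $(X_1,\dots,X_{k-1},Y_k)$ is a properly colored clique, so $\chi(X_1Y_k)$ avoids all of these colors. Hence the color $\chi(X_1v)$ determines which support a candidate $v$ lies in.

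\emph{Entropy bounds.} First, $\HH(Y_k\mid X_1,\dots,X_{k-1})=\HH(X_k\mid X_1,\dots,X_{k-1})=:h$.
For $2\le j\le k-1$, write $\mathcal X_{-j}$ for the tuple $(X_i)_{i\in[k-1]\setminus\{j\}}$. Then, exactly as in \cref{claim:entropy-lower-bound},
\[
\HH(Y_j\mid X_1,\dots,X_{k-1})=\HH\bigl(X_j\mid \mathcal X_{-j},\chi(X_1X_j)\bigr)=\HH\bigl(X_j\mid \mathcal X_{-j}\bigr)-\HH\bigl(\chi(X_1X_j)\mid \mathcal X_{-j}\bigr).
\]
By symmetry of the distribution, the first term on the right equals $h$. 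The color $\chi(X_1X_j)$ must avoid the $k-3$ colors $\chi(X_1X_i)$ for $i\in[k-1]\setminus\{1,j\}$. It therefore takes at most $r-k+3$ values, and the uniform bound gives a loss of at most $\log_2(r-k+3)$.

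This bound is too weak: we need loss $\log_2(r-k+2)$ in order to get the sum $\bigl(1+\frac{k-2}{r-k+2}\bigr)2^h=\frac{r}{r-k+2}2^h$. The fix is to also condition on the $k$-th vertex. We replace the setup by conditioning on $X_1,\dots,X_k$ throughout, and $Y_j$ is resampled given $(X_i)_{i\in[k]\setminus\{j\}}$ and $\chi(X_1X_j)$. Now $\chi(X_1X_j)$ avoids $k-2$ colors, so the loss is at most $\log_2(r-k+2)$. By symmetry, $\HH(X_j\mid (X_i)_{i\ne j})=h$ once more.

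For $Y_k$, we resample $X_k$ given $X_1,\dots,X_{k-1}$ as before. Its conditional entropy given $X_1,\dots,X_k$ is still $h$, by conditional independence.

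Disjointness must then be rechecked, and this is the step I expect to be the main obstacle. Each $\chi(X_1Y_j)=\chi(X_1X_j)$ for $2\le j\le k$ is a fixed distinct color. However, $\chi(X_1Y_k)$ may now equal $\chi(X_1X_k)$, which collides with nothing among $Y_2,\dots,Y_{k-1}$. Since $Y_k$ must avoid $\chi(X_1X_j)$ for $2\le j\le k-1$, the supports are still disjoint.

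\emph{Conclusion.} Each $(X_1,Y_j)$ has the distribution of $(X_1,X_j)$, which by symmetry is that of $(X_1,X_2)$. Applying \cref{lem:mix} to the tuples $(X_1,\dots,X_k,Y_j)$ for $2\le j\le k$ gives a mixture $(X_1,\dots,X_k,Y)$. We divide by $2^{\HH(X_1,\dots,X_k)}$ and drop conditioning to $X_1$, where $(X_1,Y)$ has the distribution of $(X_1,X_2)$. This yields
\[
\frac{r}{r-k+2}\,2^{h}\le \sum_{j=2}^k 2^{\HH(Y_j\mid X_1,\dots,X_k)}\le 2^{\HH(Y\mid X_1,\dots,X_k)}\le 2^{\HH(X_2\mid X_1)},
\]
which rearranges to the claim. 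The hypothesis $r\ge d-1\ge k-1$ ensures $r-k+2\ge 1$, so the uniform bound applies.
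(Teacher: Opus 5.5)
Your final construction is correct and is essentially the paper's proof with the roles of indices $2$ and $k$ swapped: the paper mixes the unconstrained resample $Y_2$ (of $X_2$ given the other $k-1$ vertices) with the color-pinned resamples $Z_{i1}$, $3\le i\le k$ (of $X_i$ given the rest and $\chi(X_iX_1)$), just as you mix $Y_k$ with your $Y_j$, $2\le j\le k-1$, and then applies the same mixture bound and drops conditioning to $X_1$. The only slip is cosmetic: in the disjointness paragraph, $\chi(X_1Y_j)=\chi(X_1X_j)$ holds for $2\le j\le k-1$, not $2\le j\le k$, and the abandoned first attempt can simply be deleted.
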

    \begin{claim}\label{claim:Kd-second-ineq}
        For every $4\leq k\leq d$,
        \[\HH(X_k\mid X_1,\ldots, X_{k-1})\leq \HH(X_1)+\log_2\left(\frac{r-k+2}{r+1}\right).\]
    \end{claim}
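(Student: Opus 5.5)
The plan is to copy the proof of \cref{claim:many-color-1}, replacing the rainbow triangle by the properly colored $K_k$ on $(X_1,\dots,X_k)$. The target constant is guided by the tight example, a blowup of a properly colored $K_{r+1}$. There, given $X_1,\dots,X_{k-1}$, the vertex $X_k$ may lie in any of $r+2-k$ parts, while $X_1$ may lie in any of $r+1$ parts. So I want $k$ resampled vertices $Y_1,\dots,Y_k$ with pairwise disjoint supports (jointly with $X_1,\dots,X_k$) and with
\[
\sum_{j=1}^k 2^{\HH(Y_j\mid X_1,\dots,X_k)}\geq\left(1+\frac{k-1}{r-k+2}\right)2^{\HH(X_k\mid X_1,\dots,X_{k-1})}=\frac{r+1}{r-k+2}\,2^{\HH(X_k\mid X_1,\dots,X_{k-1})}.
\]
Given this, the mixture bound (\cref{lem:mix} with $s=1$) produces a mixture $(X_1,\dots,X_k,Y)$. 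Since every $Y_j$ has the marginal distribution of $X_1$, so does $Y$. Dividing by $2^{\HH(X_1,\dots,X_k)}$ and dropping conditioning bounds the left side by $2^{\HH(Y)}=2^{\HH(X_1)}$, and rearranging gives the claim.

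\textbf{Construction.} Let $Y_k$ be a conditionally independent resample of $X_k$ given $X_1,\dots,X_{k-1}$; then $\HH(Y_k\mid X_1,\dots,X_k)=\HH(X_k\mid X_1,\dots,X_{k-1})$. For each $j\le k-1$, fix an index $i(j)\in[k-1]\setminus\{j\}$. Let $Y_j$ be a resample of $X_j$ given all $X_l$ with $l\neq j$ (including $X_k$) and the single color $\chi(X_jX_{i(j)})$. As in \cref{claim:entropy-lower-bound},
\[
\HH(Y_j\mid X_1,\dots,X_k)=\HH(X_j\mid (X_l)_{l\ne j})-\HH(\chi(X_jX_{i(j)})\mid (X_l)_{l\ne j}).
\]
The first term equals $\HH(X_k\mid X_1,\dots,X_{k-1})$ by symmetry. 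For the second, $X_{i(j)}$ is already incident in the clique to the $k-2$ vertices other than $X_j$, and those $k-2$ edges have distinct colors. Properness forces $\chi(X_jX_{i(j)})$ to avoid all of them, leaving at most $r-k+2$ values. This gives weight $\frac{1}{r-k+2}$ for each $j<k$, exactly the sum displayed above.

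\textbf{Disjointness (main obstacle).} $Y_k$ versus $Y_j$ for $j<k$: the edge $Y_jX_{i(j)}$ has color $\chi(X_jX_{i(j)})$. But $Y_k$ is properly joined to both $X_j$ and $X_{i(j)}$ (both among $X_1,\dots,X_{k-1}$), so $\chi(Y_kX_{i(j)})\neq\chi(X_jX_{i(j)})$.

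$Y_j$ versus $Y_{j'}$ for distinct $j,j'<k$: if $i(j)\ne j'$, the same argument works, since $Y_{j'}$ is properly adjacent to $X_j$ and $X_{i(j)}$, so its color to $X_{i(j)}$ cannot be $\chi(X_jX_{i(j)})$. The only problematic pairs are those with $i(j)=j'$ and $i(j')=j$ simultaneously. So I will choose $i$ as a derangement of $[k-1]$ with no $2$-cycles, for instance the cyclic shift $i(j)=j+1 \bmod (k-1)$. This requires $k-1\ge3$, which is exactly where the hypothesis $k\ge4$ enters. With that choice every pair has at least one usable direction.

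The remaining care is to check that a point $(x_1,\dots,x_k,v)$ in the support of some $(X_1,\dots,X_k,Y_j)$ really satisfies the stated properness relations; this follows because $(x_1,\dots,x_{j-1},v,x_{j+1},\dots,x_k)$ is itself a properly colored $K_k$. With disjointness in hand, the mixture bound finishes the proof as described above.
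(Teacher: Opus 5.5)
Your proof is correct, but it uses the mixture bound differently from the paper. In the paper's notation, your tuples are the full resample $Y_k$ together with the colour-constrained resamples $Z_{j,i(j)}$ for $j\in[k-1]$.

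The paper instead mixes the whole family $(Y_i)_{i\in[k]}$, $(Z_{ij})_{i\neq j}$. It shows, via the count $\abs{I}+\sum_{i\in I}\abs{J_i}\le k$, that these supports are $(k+1)$-wise (not pairwise) disjoint, and then applies \cref{lem:mix} with $s=k$. Dividing the total weight $k+\frac{k(k-1)}{r-k+2}$ by $s=k$ gives exactly your $1+\frac{k-1}{r-k+2}$, so nothing is lost.

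You extract a pairwise disjoint subfamily of size $k$. Each of your disjointness checks is then a single application of \cref{claim:weak-support-disjoint}, together with $\supp Z_{ij}\subseteq\supp Y_i$, and only the $s=1$ case of the mixture bound is needed. The hypothesis $k\ge 4$ enters at the same combinatorial point in both arguments. For the paper it is the case $\abs{I}=2$; for you it is the existence of a fixed-point-free map on $[k-1]$ with no $2$-cycles.

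The trade-off is as follows.
\begin{itemize}
\item \textbf{Your route.} The mixture step holds with equality, and the argument has the same shape as \cref{claim:many-color-1,claim:Kd-first-ineq}. That equality is what the paper says its stability arguments need (see \cref{sec:sturctural_stability}).
\item \textbf{The paper's route.} It avoids any arbitrary choice of the map $i(\cdot)$ and is fully symmetric. The price is the general $(s+1)$-wise mixture bound and a slightly more delicate multiplicity count.
\end{itemize}
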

    Assuming we have proven both claims, then we get that 
    \[2\HH(X_k\mid X_1,\ldots, X_{k-1})\leq \HH(X_1,X_2)+\log_2\left(\frac{(r-k+2)^2}{r(r+1)}\right)\]
    for every $k\geq 4$, though we will only use this for $k\geq 5$.
    By the chain rule, this shows that
    \[2\HH(X_5,\ldots, X_d\mid X_1,X_2,X_3,X_4)\leq (d-4)\HH(X_1,X_2)+\log_2\left(\frac{(r-3)^2\cdots (r-d+2)^2}{r^{d-4}(r+1)^{d-4}}\right).\]
    Lastly, we know that
    \begin{align*}
        2\HH(X_1,X_2,X_3,X_4) =& 2\HH(X_1,X_2)+2\HH(X_3\mid X_1,X_2)+2\HH(X_4\mid X_1,X_2,X_3)\\
        \leq& 2\HH(X_1,X_2)+2\left[\HH(X_2\mid X_1)+\log_2\left(\frac{r-1}{r}\right)\right]+2\left[\HH(X_1)+\log_2\left(\frac{r-2}{r+1}\right)\right]\\
        =&4\HH(X_1,X_2)+\log_2\left(\frac{(r-1)^2(r-2)^2}{r^2(r+1)^2}\right).
    \end{align*}
    Adding the two inequalities up gives the desired upper bound on $2\HH(X_1,\ldots,X_d)$.

    We will now prove the two upper bounds on $\HH(X_k\mid X_1,\ldots, X_{k-1})$.
    As such, we fix some $k\in [3,d]$.
    For any $i\in[k]$, let $Y_i$ be a resample of $X_i$ given all the other $k-1$ random variables among $(X_1,\ldots, X_k)$.
    For any $i,j\in[k]$ with $i\neq j$, let $Z_{ij}$ be a resample of $X_i$ given all the other $k-1$ random variables $(X_1,\ldots, X_k)$ and also $\chi(X_iX_j)$.

\begin{figure}[h]	
\centering
\begin{tikzpicture}[trianglecolors,scale=1.3, every node/.append style={scale=1.0}]
\foreach \i in {1,2,3,4} {
	\path (90*\i-45:1) coordinate[vtx] (X\i) 
		--+ (90*\i-45:0.3) node{$X_\i$};
}

\path (20:1) coordinate[openvtx] (Y1) --+ (45:0.3) node{$Y_1$};
\path (160:1) coordinate[openvtx] (Z23) --+ (135:0.3) node{$Z_{23}$};

\begin{scope}[thick]
\draw[e1] (X1) -- (X2);
\draw[e2] (X2) -- (X3) -- (Z23);
\draw[e3] (X1) -- (X3);
\draw[e4] (X4) -- (X1);
\draw[e5] (X4) -- (X2);
\draw[e6] (X4) -- (X3);
\end{scope}

\begin{scope}[thick, dashed, gray]
\draw  (X2) -- (Y1) -- (X3);
\draw (X4) -- (Y1);
\draw (X1) -- (Z23) -- (X4);
\end{scope}
\end{tikzpicture}
\caption{A diagram for the sampled random variables $Y_1, Z_{23}$ when $k=4$. Dashed edges may receive any color, provided that all the $K_4$'s are properly colored.}
\label{fig:k4-sampling}
\end{figure}

    \begin{claim}\label{claim:Kd-entropy-lower-bound}
        For any $i,j\in[k]$ with $i\neq j$, we have
        \[\HH(Y_i\mid X_1,\ldots, X_k) = \HH(X_k\mid X_1,\ldots, X_{k-1})\]
        and
        \[\HH(Z_{ij}\mid X_1,\ldots, X_k)\geq \HH(X_k\mid X_1,\ldots, X_{k-1})-\log_2(r-k+2).\]
    \end{claim}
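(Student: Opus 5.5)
The plan mirrors the proof of \cref{claim:entropy-lower-bound}, with $(X_1,\ldots,X_k)$ in place of the triangle. Write $X_{-i}$ for the tuple $(X_1,\ldots,X_k)$ with $X_i$ removed. Since $(X_1,\ldots,X_d)$ is symmetric, so is the marginal $(X_1,\ldots,X_k)$, and hence
\[\HH(X_i\mid X_{-i})=\HH(X_k\mid X_1,\ldots,X_{k-1})\quad\text{for every } i\in[k].\]
This symmetry reduction is used in both parts.

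\emph{The equality for $Y_i$.} By construction, $Y_i$ is conditionally independent of $X_i$ given $X_{-i}$. It also has the same conditional distribution as $X_i$ given $X_{-i}$. Therefore
\[\HH(Y_i\mid X_1,\ldots,X_k)=\HH(Y_i\mid X_{-i})=\HH(X_i\mid X_{-i})=\HH(X_k\mid X_1,\ldots,X_{k-1}).\]

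\emph{The inequality for $Z_{ij}$.} Here $Z_{ij}$ is conditionally independent of $X_i$ given $(X_{-i},\chi(X_iX_j))$, and $\chi(X_iX_j)$ is a function of $(X_1,\ldots,X_k)$. Hence
\[\HH(Z_{ij}\mid X_1,\ldots,X_k)=\HH(Z_{ij}\mid X_{-i},\chi(X_iX_j))=\HH(X_i\mid X_{-i},\chi(X_iX_j)).\]
By the chain rule and \cref{prop:recover}, the right side equals
\[\HH(X_i\mid X_{-i})+\HH(\chi(X_iX_j)\mid X_1,\ldots,X_k)-\HH(\chi(X_iX_j)\mid X_{-i}),\]
and the middle term vanishes. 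It remains to show that, given $X_{-i}$, the color $\chi(X_iX_j)$ takes at most $r-k+2$ values, so that the uniform bound gives $\HH(\chi(X_iX_j)\mid X_{-i})\le\log_2(r-k+2)$.

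\emph{Counting the available colors.} The tuple $(X_1,\ldots,X_k)$ spans a properly colored $K_k$, being a subclique of a properly colored $K_d$. Properness at the vertex $X_j$ means the $k-1$ edges at $X_j$ receive distinct colors. So $\chi(X_iX_j)$ must avoid the colors of the $k-2$ edges $X_jX_l$ with $l\ne i,j$, and these colors are determined by $X_{-i}$ and are pairwise distinct. This leaves at most $r-(k-2)=r-k+2$ possibilities. This is where $r\ge d-1\ge k-1$ ensures the count is positive and meaningful.

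Combining these bounds with the symmetry reduction gives the claimed inequality. The only real care point is this color count: it uses only properness at the vertex $X_j$, not the full properness constraints involving $X_i$. I expect that count is exactly tight in the $K_{r+1}$ blowup, matching the paper's remark on tightness.
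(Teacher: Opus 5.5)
Your proposal is correct and matches the paper's proof: the $Y_i$ equality comes from conditional independence plus symmetry, and the $Z_{ij}$ bound comes from the chain rule together with the uniform bound on $\chi(X_iX_j)$ given the other $k-1$ vertices. The only difference is cosmetic. The paper first uses symmetry to reduce to $(i,j)=(k,1)$ and leaves the color count implicit, whereas you keep general $(i,j)$ and correctly justify the $r-k+2$ count via properness at $X_j$.
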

    \begin{proof}\proofofclaim
        As $Y_i$ is a resample of $X_i$ given the other $k-1$ random variables, we have
        \[\HH(Y_i\mid X_1,\ldots, X_k)=\HH(X_i\mid X_1,\ldots, X_{i-1},X_{i+1},\ldots, X_k) = \HH(X_k\mid X_1,\ldots, X_{k-1})\]
        where the last inequality comes from symmetry.
        Similarly, we have
        \[\HH(Z_{ij}\mid X_1,\ldots, X_k) = \HH(X_k\mid X_1,\ldots, X_{k-1},\chi(X_1X_k)).\]
        By the chain rule, we can expand
        \begin{align*}
            \HH(Z_{ij}\mid X_1,\ldots, X_k)
            &=\HH(X_k, \chi(X_1X_k)\mid X_1,\ldots, X_{k-1})-\HH(\chi(X_1X_k)\mid X_1,\ldots, X_{k-1})\\
            &\geq\HH(X_k\mid X_1,\ldots, X_{k-1})-\log_2(r-k+2)
        \end{align*}
        where we use the uniform bound to bound the latter term.
    \end{proof}

    As before, we would like to show that the supports of the $Y_i$'s and the $Z_{ij}$'s are disjoint given $X_1,\ldots, X_k$.
    This time they are not pairwise disjoint as clearly the support of $Z_{ij}$ is contained in the support of $Y_i$.
    However, we can still show that some supports are disjoint.

    \begin{claim}\label{claim:weak-support-disjoint}
        We have that $\supp(X_1,\ldots, X_k, Z_{ij})$ is disjoint from $\supp(X_1,\ldots, X_k, Y_{i'})$ for any three pairwise distinct indices $i,j,i'\in[k]$. 
    \end{claim}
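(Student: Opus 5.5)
Fix $(x_1,\ldots,x_k,v)$ and suppose, for contradiction, that it lies in both $\supp(X_1,\ldots,X_k,Z_{ij})$ and $\supp(X_1,\ldots,X_k,Y_{i'})$, where $i,j,i'$ are pairwise distinct. The plan is to read off local color constraints from each membership and then derive a contradiction with properness at the vertex $v$ or at $x_j$.

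From membership in $\supp(X_1,\ldots,X_k,Z_{ij})$ we get three facts:
\begin{itemize}
\item the tuple $(x_1,\ldots,x_{i-1},v,x_{i+1},\ldots,x_k)$, i.e.\ $x_i$ replaced by $v$, is in the support of $(X_1,\ldots,X_k)$, so it is a properly colored $K_k$;
\item $\chi(vx_j)=\chi(x_ix_j)$;
\item $v\neq x_i$, since $v$ is adjacent to $x_j$ through an edge of color $\chi(x_ix_j)$, and $v$ is distinct from the other $x_\ell$.
\end{itemize}
From membership in $\supp(X_1,\ldots,X_k,Y_{i'})$ we get that the tuple with $x_{i'}$ replaced by $v$ is a properly colored $K_k$. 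In particular, $v$ is adjacent to $x_i$ (because $i\neq i'$) and also to $x_j$. That tuple contains $x_i$, $x_j$ and $v$, so the triangle $x_i x_j v$ lies inside a properly colored clique.

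Properness at the vertex $x_j$ in this clique means the two edges $x_jx_i$ and $x_jv$ must receive different colors. But the first membership gives $\chi(vx_j)=\chi(x_ix_j)$, a contradiction. The argument needs $i'\notin\{i,j\}$ so that both $x_i$ and $x_j$ survive in the second clique; this is exactly the distinctness hypothesis.

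The only step that needs care is checking that $v\neq x_i$ and that the relevant edges exist, so that properness genuinely applies. The equal-color condition forces $v$ to be a vertex other than $x_i$ (otherwise the second tuple would have a repeated vertex), so no real obstacle remains.
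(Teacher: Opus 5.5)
Your proof is correct and matches the paper's: the $Z_{ij}$ membership gives $\chi(vx_j)=\chi(x_ix_j)$, while the $Y_{i'}$ membership puts $v,x_i,x_j$ into a properly colored $K_k$ (using $i'\notin\{i,j\}$), which forces those two colors to differ. One small correction: your first-bullet reason for $v\neq x_i$ is invalid, since the resample $Z_{ij}$ may equal $x_i$ and then satisfies the color condition trivially; the distinctness you need instead comes from the second clique, as your final paragraph correctly says.
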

    \begin{proof}\proofofclaim
        Suppose that $(x_1,\ldots, x_k,v)\in \supp(X_1,\ldots, X_k, Z_{ij})\cap \supp(X_1,\ldots, X_k, Y_{i'})$.
        Then as $v$ can play the role of $Z_{ij}$, we see that $\chi(vx_j) = \chi(x_ix_j)$.
        However, as $v$ can play the role of $Y_{i'}$ as well, we know that $x_1,\ldots, x_{i'-1},v,x_{i'+1},\ldots, x_k$ is a properly colored $K_k$.
        In particular, we have $\chi(vx_j)\neq \chi(x_ix_j)$, which is a contradiction.
    \end{proof}
    Now we can prove \cref{claim:Kd-first-ineq,claim:Kd-second-ineq}.

    \begin{proof}[Proof of \cref{claim:Kd-first-ineq}]\proofofclaim
        Because $\supp(X_1,\ldots, X_k,Z_{i1})\subseteq \supp(X_1,\ldots, X_k,Y_i)$ for any $i\neq 1$, applying \cref{claim:weak-support-disjoint} shows that $(X_1,\ldots, X_k,Y_2)$ and $(X_1,\ldots, X_k,Z_{i1})_{3\leq i\leq k}$ have pairwise disjoint supports.
        Therefore by the mixture bound (\cref{lem:mix}), there is a mixture $(X_1,\ldots, X_k, W)$ of these $k-1$ random tuples so that
        \[2^{\HH(X_1,\ldots, X_k,W)}\geq 2^{\HH(X_1,\ldots, X_k,Y_2)}+\sum_{i=3}^{k}2^{\HH(X_1,\ldots, X_k,Z_{i1})}.\]
        Dividing both sides by $2^{\HH(X_1,\ldots,X_k)}$ (via the chain rule) and applying \cref{claim:Kd-entropy-lower-bound} yields
        \[
        2^{\HH(W\mid X_1,\ldots,X_k)}\geq 2^{\HH(Y_2\mid X_1,\ldots,X_k)}+\sum_{i=3}^k2^{\HH(Z_{i1}\mid X_1,\ldots,X_k)}\geq 2^{\HH(X_k\mid X_1,\ldots, X_{k-1})}\left(1+\frac{k-2}{r-k+2}\right),
        \]
        or equivalently
        \[\HH(W\mid X_1,\ldots, X_k)\geq \HH(X_k\mid X_1,\ldots, X_{k-1})+\log_2\left(\frac{r}{r-k+2}\right).\]
        Now notice that the marginal distributions of $(X_1,Y_2)$ and $(X_1,Z_{i1})_{3\leq i\leq k}$ are each identically to the distribution of $(X_1,X_2)$.
        Therefore $(X_1,W)$ is also distributed as $(X_1,X_2)$, showing that
        \[\HH(W\mid X_1,\ldots,X_k)\leq \HH(W\mid X_1) = \HH(X_2\mid X_1).\]
        Plugging this back in, we get
        \[\HH(X_k\mid X_1,\ldots, X_{k-1})\leq \HH(X_2\mid X_1)+\log_2\left(\frac{r-k+2}{r}\right),\]
        which is the desired inequality.
    \end{proof}

    \begin{proof}[Proof of \cref{claim:Kd-second-ineq}]\proofofclaim
        We begin by showing that the supports of the random tuples $(X_1,\ldots, X_k,Y_i)_{i\in [k]}$ and $(X_1,\ldots, X_k,Z_{ij})_{i\neq j\in [k]}$ are $(k+1)$-wise disjoint.
        To see this, fix any $(x_1,\ldots, x_k,v)$ in the union of the supports.
        Let $I$ be the subset of $[k]$ consisting of indices $i$ for which $ \supp(X_1,\ldots, X_k, Y_i)$ contains $(x_1,\ldots, x_k,v)$.
        For any $i\in I$, let $J_i$ be the subset of $[k]\backslash \{i\}$ consisting of indices $j$ with $(x_1,\ldots, x_k,v)\in \supp(X_1,\ldots, X_k,Z_{ij})$.
        Our aim is to prove that
        \[\abs{I}+\sum_{i\in I}\abs{J_i}\leq k.\]

        Recall that \cref{claim:weak-support-disjoint} says that it is impossible to find $i,i',j\in[k]$ pairwise distinct so that $i,i'\in I$ and $j\in J_i$.
        If $\abs{I}=1$, then the inequality holds trivially as $\abs{J_i}\leq k-1$ for each $i\in I$.
        If $\abs{I}\geq 3$, then we have that $\abs{J_i}=0$ for each $i\in I$: otherwise, for any $j\in J_i$ we can always find $i'\in I\backslash \{i,j\}$, which contradicts \cref{claim:weak-support-disjoint}.
        Thus the inequality also holds as $\abs{I}\leq k$.

        The remaining case is $\abs{I}=2$, where we will finally use the assumption $k\geq 4$.
        Suppose that $I = \{i,i'\}$.
        Then \cref{claim:weak-support-disjoint} implies that $J_i\subseteq \{i'\}$ and $J_{i'}\subseteq \{i\}$, showing that
        \[\abs{I}+\sum_{i\in I}\abs{J_i}\leq 2+(1+1) = 4\leq k,\]
        as desired.

        Now we can apply the mixture bound (\cref{lem:mix}) in its most general form to get a mixture $(X_1,\ldots, W')$ of the tuples $(X_1,\ldots, X_k,Y_i)_{i\in [k]}$ and $(X_1,\ldots, X_k, Z_{ij})_{i\neq j\in [k]}$ that satisfies
        \[k\cdot 2^{\HH(X_1,\ldots, X_k,W')}\geq \sum_{i\in [k]}2^{\HH(X_1,\ldots X_k, Y_i)}+\sum_{i\neq j\in[k]}2^{\HH(X_1,\ldots, X_k,Z_{ij})}.\]
        Dividing both sides by $2^{\HH(X_1,\ldots,X_k)}$ (via the chain rule) and applying \cref{claim:Kd-entropy-lower-bound} yields
        \[k\cdot 2^{\HH(W'\mid X_1,\ldots, X_k)}\geq 2^{\HH(X_k\mid X_1,\ldots, X_{k-1})}\left(k+\frac{k(k-1)}{r-k+2}\right).\]
        We may cancel out the $k$ and take logarithms to get
        \[\HH(W'\mid X_1,\ldots, X_k)\geq \HH(X_k\mid X_1,\ldots, X_{k-1})+\log_2\left(\frac{r+1}{r-k+2}\right).\]
        On the other hand, we know that the marginal distribution of $Y_i$ for each $i\in [k]$ and the marginal distribution of $Z_{ij}$ for each $i\neq j\in [k]$ are the same as the distribution of $X_1$.
        Therefore $W'$ is identically distributed as $X_1$ as well, showing that
        \[\HH(W'\mid X_1,\ldots, X_k)\leq \HH(W') = \HH(X_1).\]
        Combining the two inequalities gives the desired upper bound on $\HH(X_k\mid X_1,\ldots, X_{k-1})$.
    \end{proof}
    Since the two claims are proven, the theorem statement follows.
\end{proof}

\section{Structural theorem and stability}\label{sec:sturctural_stability}
In this section, except for \cref{subsec:total-variation}, we will use the following notation.
Let $G$ be a simple $r$-edge-colored graph, and let $(X_1,X_2,X_3)$ be a random rainbow triangle in $G$ following a symmetric (but not necessarily uniform) distribution.
Recall in the proof of \cref{thm:entropy-r-color} we sampled the following.
\begin{itemize}
  \setlength\itemsep{0.1em}
    \item $Y_1$ is a conditionally independent resample of $X_1$ given $X_2,X_3,\chi(X_1X_2),\chi(X_1X_3)$.
    \item $Y_2$ is a conditionally independent resample of $X_2$ given $X_1,X_3,\chi(X_1X_2)$.
    \item $Y_3$ is a conditionally independent resample of $X_3$ given $X_1,X_2$.
    \item $Y$ is a mixture of $Y_1,Y_2,Y_3$ such that 
    $2^{\HH(Y\mid X_1,X_2,X_3)} = \sum_{i=1}^{3}2^{\HH(Y_i\mid X_1,X_2,X_3)}$.
    \item $Y'$ is a mixture of $Y_2,Y_3$ such that $2^{\HH(Y'\mid X_1,X_2,X_3)} = \sum_{i=2}^{3}2^{\HH(Y_i\mid X_1,X_2,X_3)}$.
\end{itemize}
Here, we use the special case of \cref{lem:mix} that the equality holds since the supports are pairwise disjoint. Let 
\[p_i\eqdef\frac{2^{\HH(Y_i,X_1,X_2,X_3)}}{\sum_{j=1}^32^{\HH(Y_j,X_1,X_2,X_3)}}\,\forall i\in [3].\]
Recall from \cref{lem:mix} that $Y$ is the mixture of $Y_1,Y_2,Y_3$ with probabilities $(p_1,p_2,p_3)$ and $Y'$ is the mixture of $Y_2,Y_3$ with probabilities $(\frac{p_2}{p_2+p_3},\frac{p_3}{p_2+p_3})$.
Finally, let 
\[\veps \eqdef 3\HH(X_1,X_2)+\log_2\frac{r-2}{r}-2\HH(X_1,X_2,X_3)\geq 0\]
measure the amount of slack in \cref{thm:entropy-r-color}.

\subsection{Structural theorem}\label{subsec:structural}

In this subsection, we will give a proof of \cref{thm:tight-construction}.
We will thus take $G$ to be a simple $r$-edge-colored graph with $m$ edges and $\sqrt{\frac{2(r-2)}{9r}}m^{3/2}$ rainbow triangles.
Let $(X_1,X_2,X_3)$ be a uniformly random rainbow triangle in $G$.
Then we can compute
\[3\HH(X_1,X_2)+\log_2\frac{r-2}{r} \leq \log_2 \frac{8(r-2)m^3}{r}\]
and
\[2\HH(X_1,X_2,X_3) = 2\log_2\left(6\times \sqrt{\frac{2(r-2)}{9r}}m^{3/2}\right) = \log_2\frac{8(r-2)m^3}{r}.\]
This forces the first equality to hold by \cref{thm:entropy-r-color}, and we have $\varepsilon=0$.
By examining the proof of \cref{thm:r-color}, we can extract the following information as stated in \cref{lem:structure-easy-corollary}.
As the proof relies on simple but somewhat long entropic computations, we defer those computations to \cref{subsec:entropy_input}.

\begin{lemma}\label{lem:structure-easy-corollary}
The following are all true when $G$ has exactly $\sqrt{\frac{2(r-2)}{9r}}m^{3/2}$ rainbow triangles and $(X_1,X_2,X_3)$ is a uniformly random rainbow triangle in $G$.
\begin{itemize}
  \setlength\itemsep{0.1em}
    \item  We have $p_1 = \frac{1}{(r-1)^2}$, $p_2 = \frac{r-2}{(r-1)^2}$ and $p_3 = \frac{r-2}{r-1}$.
    \item $Y$ is independent of $X_1,X_2,X_3$.
    \item $Y'$ is conditionally independent of $X_2,X_3$ given $X_1$.
    \item Given $X_1,X_2,X_3$, the color $\chi(X_3Y_2)$ distributes uniformly among $[r]\backslash \{\chi(X_1X_2),\chi(X_1X_3)\}$.
    \item Given $X_1,X_2,X_3$, the color pair $(\chi(X_1Y_3),\chi(X_2Y_3))$ distributes uniformly among the color pairs $(c_1,c_2)$ with $c_1,c_2\in [r]\backslash \{\chi(X_1X_2)\}$ and $c_1\neq c_2$.
\end{itemize}
\end{lemma}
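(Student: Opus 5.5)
We extract the structure from $\varepsilon=0$ by retracing the proof of \cref{thm:entropy-r-color} and forcing every inequality in it to be an equality. The final inequality is the sum of \cref{claim:many-color-1,claim:many-color-2}, and each claim is the slack-free combination of three ingredients: the lower bounds in \cref{claim:entropy-lower-bound}, the mixture bound (exact here because the supports are pairwise disjoint), and a dropping-conditions step. Since $\varepsilon=0$ is the sum of the two (nonnegative) slacks, both claims are tight. Hence every inequality used inside them is tight: each bound in \cref{claim:entropy-lower-bound} holds with equality, and so do the two dropping-conditions steps $\HH(Y\mid X_1,X_2,X_3)\le \HH(Y)$ and $\HH(Y'\mid X_1,X_2,X_3)\le\HH(Y'\mid X_1)$.

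\textbf{Independence statements.} By \cref{prop:drop}, equality in $\HH(Y\mid X_1,X_2,X_3)=\HH(Y)$ forces $Y$ to be independent of $(X_1,X_2,X_3)$. Likewise, equality in $\HH(Y'\mid X_1,X_2,X_3)=\HH(Y'\mid X_1)$ forces $Y'$ to be conditionally independent of $(X_2,X_3)$ given $X_1$.

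\textbf{The weights $p_i$.} Because the lower bounds in \cref{claim:entropy-lower-bound} are tight, we have $2^{\HH(Y_i\mid X_1,X_2,X_3)}=c_i\,2^{\HH(X_3\mid X_1,X_2)}$ with
\[
c_3=1,\qquad c_2=\frac1{r-1},\qquad c_1=\frac1{(r-1)(r-2)}.
\]
Dividing through by $2^{\HH(X_1,X_2,X_3)}$ in the definition of $p_i$ gives $p_i=c_i/(c_1+c_2+c_3)$, and $c_1+c_2+c_3=\frac{r-1}{r-2}$. This yields exactly the stated values $p_1=\frac1{(r-1)^2}$, $p_2=\frac{r-2}{(r-1)^2}$, $p_3=\frac{r-2}{r-1}$.

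\textbf{Uniformity of the colors.} Equality in the $Y_2$ bound means $\HH(\chi(X_1X_2)\mid X_1,X_3)=\log_2(r-1)$. Since the support of this color given $(X_1,X_3)$ is contained in $[r]\setminus\{\chi(X_1X_3)\}$, a set of size $r-1$, the equality case of the uniform bound forces it to be uniform there. By symmetry of $(X_1,X_2,X_3)$, relabel $X_1\leftrightarrow X_3$ to get: given $(X_3,X_1)$, the color $\chi(X_3X_2)$ is uniform on $[r]\setminus\{\chi(X_1X_3)\}$.

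To pass to $\chi(X_3Y_2)$, use that $Y_2$ is a resample of $X_2$ given $(X_1,X_3,\chi(X_1X_2))$. Conditioning on $(X_1,X_2,X_3)$ only reveals $\chi(X_1X_2)=c$, so we need the distribution of $\chi(X_3X_2)$ given $(X_1,X_3)$ and $\chi(X_1X_2)=c$. This requires the pair $(\chi(X_1X_2),\chi(X_3X_2))$ to be uniform over ordered pairs of distinct colors avoiding $\chi(X_1X_3)$. That follows from equality in the $Y_1$ bound: $\HH(\chi(X_1X_2),\chi(X_1X_3)\mid X_2,X_3)=\log_2((r-1)(r-2))$, so that pair is uniform on its $(r-1)(r-2)$ possible values. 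Relabeling indices by symmetry turns this into the uniformity of $(\chi(X_1X_2),\chi(X_3X_2))$ given $(X_1,X_3)$. Conditioning on $\chi(X_1X_2)=c$ then leaves $\chi(X_3X_2)$ uniform on $[r]\setminus\{\chi(X_1X_2),\chi(X_1X_3)\}$, and therefore so is $\chi(X_3Y_2)$.

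The $Y_3$ item is the same argument: $(\chi(X_1Y_3),\chi(X_2Y_3))$ has the law of $(\chi(X_1X_3),\chi(X_2X_3))$ given $(X_1,X_2)$. Its uniformity over the $(r-1)(r-2)$ allowed pairs comes from the tight $Y_1$ equality with indices permuted by symmetry.

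\textbf{Main obstacle.} The delicate step is this index bookkeeping. We must check that the symmetric relabeling really converts the equality case of the uniform bound (which concerns colors conditioned on the other two vertices) into statements about the resampled vertex conditioned on all of $X_1,X_2,X_3$. This works because conditioning on $(X_1,X_2,X_3)$ reveals only the colors used as resampling data.
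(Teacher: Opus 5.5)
Your proposal is correct and is essentially the paper's argument. The paper also forces every inequality in \cref{claim:many-color-1,claim:many-color-2} to be tight (packaged as items (a)--(e) of \cref{lem:entropy_input} at $\veps=0$), which pins down $2^{\HH(\chi(X_1X_2)\mid X_1,X_3)}=r-1$ and $2^{\HH(\chi(X_1X_2),\chi(X_1X_3)\mid X_2,X_3)}=(r-1)(r-2)$, gives the $p_i$ and the two independence statements as you do, and gets the color statements from the equality case of the uniform bound; the only cosmetic difference is that for $\chi(X_3Y_2)$ the paper computes $\HH(\chi(X_3Y_2)\mid X_1,X_2,X_3)=\log_2(r-2)$ as the difference of those two logarithms, whereas you condition the uniform color pair directly, which is equally valid.
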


As $Y$ is a mixture of $Y_1$ and $Y'$, \cref{lem:structure-easy-corollary} immediately implies that $Y_1$ conditionally independent of $X_2,X_3$ given $X_1$.
This is going to be the key ingredient in the proof.
In fact, we can use this observation and \cref{lem:structure-easy-corollary} to show the following lemma.

To state our next lemma, we need some definitions.
For any rainbow triangle $x=(x_1,x_2,x_3)$, let $V_1^{(x)}$ be the support of $Y_1$ given $(X_1,X_2,X_3)=(x_1,x_2,x_3)$.
Let $V_2^{(x)}\cup\cdots \cup V_{r-1}^{(x)}$ be the support of $Y_2$ given $(X_1,X_2,X_3)=(x_1,x_2,x_3)$, and partition this support into $V_2^{(x)},\ldots, V_{r-1}^{(x)}$ based on the color $\chi(x_3Y_2)$, which lies in the set $[r]\backslash \{\chi(x_1x_2), \chi(x_1x_3)\}$ of size $r-2$.
Lastly, let $V^{(x)}_r\cup\cdots\cup V^{(x)}_{(r-1)^2}$ be the support of $Y_3$ given $(X_1,X_2,X_3)=(x_1,x_2,x_3)$, and partition this support into $V^{(x)}_r,\ldots, V^{(x)}_{(r-1)^2}$ based on the color pair $(\chi(x_1Y_3),\chi(x_2Y_3))$, where it ranges over $(c_1,c_2)\in [r]^2$ with $c_1,c_2,\chi(x_1x_2)$ all distinct.
Recall that $V_1^{(x)},\ldots, V_{(r-1)^2}^{(x)}$ are disjoint sets of vertices. For intuition, note that if $G$ is a blowup of a graph induced by an affine plane as pictured in \Cref{fig:affine-plane-coloring}, then $V_1^{(x)},\ldots, V_{(r-1)^2}^{(x)}$ are the $(r-1)^2$ parts of the blowup in some order.

\begin{lemma}\label{lem:structure-X1-Y1}
Suppose $G$ has exactly $\sqrt{\frac{2(r-2)}{9r}}m^{3/2}$ rainbow triangles and $(X_1,X_2,X_3)$ is uniformly distributed over rainbow triangles.
    For any rainbow triangle $x=(x_1,x_2,x_3)$ in $G$, the marginal distribution of $(X_1,Y_1)$ is the same as the following distribution: first sample $\bfi\in [(r-1)^2]$ uniformly, and then sample two independent uniform vertices from $V_{\bfi}$.
    In particular, the family of sets $\left\{V_1^{(x)},\ldots, V_{(r-1)^2}^{(x)}\right\}$ does not depend on the ordered triple $(x_1,x_2,x_3)$.
\end{lemma}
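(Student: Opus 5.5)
The plan is to read off the statement from the independence facts in \cref{lem:structure-easy-corollary}, together with the explicit values of $p_1,p_2,p_3$. Let $\mu$ be the marginal distribution of $X_1$; by symmetry it is also the distribution of every $X_i$. The proof has three steps: identify the conditional law of $Y_1$ given $X_1$, show that these laws come from a partition of $\supp\mu$ into blocks on which $\mu$ is uniform, and show that each $V_j^{(x)}$ is one of these blocks.

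\textbf{Step 1 (masses of the parts).} By \cref{lem:structure-easy-corollary}, $Y$ is independent of $(X_1,X_2,X_3)$. Since $Y$ is a mixture of $Y_1,Y_2,Y_3$, each distributed as $X_1$, the conditional law of $Y$ given $(X_1,X_2,X_3)=x$ is exactly $\mu$ for every rainbow triangle $x$. Given $x$, the mixture puts $Y$ in $V_1^{(x)}$ with probability $p_1=\frac1{(r-1)^2}$. It puts $Y$ in the support of $Y_2$ with probability $p_2$, and by the uniform-colour statement this mass is split equally among $V_2^{(x)},\dots,V_{r-1}^{(x)}$. Similarly, $Y$ lands in the support of $Y_3$ with probability $p_3$, split equally over the $(r-1)(r-2)$ colour pairs. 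This gives
\[\mu\bigl(V_j^{(x)}\bigr)=\tfrac1{(r-1)^2}\quad\text{for every } j\in[(r-1)^2],\]
and $\supp\mu=V_1^{(x)}\cup\dots\cup V_{(r-1)^2}^{(x)}$.

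\textbf{Step 2 (law of $Y_1$ given $X_1$).} Because $(X_1,X_2,X_3)$ is uniform over rainbow triangles, the law of $X_1$ given $X_2,X_3,\chi(X_1X_2),\chi(X_1X_3)$ is uniform on the set of vertices $v$ such that $(v,x_2,x_3)$ is rainbow with the prescribed colours. Hence $Y_1$ given $x$ is uniform on $V_1^{(x)}$. On the other hand, conditioning $Y$ on the event $Y\in V_1^{(x)}$ shows that $\mu$ restricted to $V_1^{(x)}$ is this same law, so $\mu$ is uniform on $V_1^{(x)}$. As remarked after \cref{lem:structure-easy-corollary}, $Y_1$ is conditionally independent of $X_2,X_3$ given $X_1$. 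Therefore $V_1^{(x)}=:B(x_1)$ depends only on $x_1$, and $x_1\in B(x_1)$.

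Next I show the sets $B(\cdot)$ partition $\supp\mu$. Take $v\in B(u)$ and a triangle $x$ with $x_1=u$. Then $(v,x_2,x_3)$ is a rainbow triangle with the same colours $\chi(vx_2)=\chi(ux_2)$ and $\chi(vx_3)=\chi(ux_3)$. Since $V_1^{(x)}$ is determined by $(x_2,x_3)$ and these two colours alone, we get $B(v)=V_1^{(v,x_2,x_3)}=V_1^{(x)}=B(u)$. So the distinct sets $B(\cdot)$ partition $\supp\mu$, each has $\mu$-mass $\frac1{(r-1)^2}$, and $\mu$ is uniform on each, so there are exactly $(r-1)^2$ of them. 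Then $(X_1,Y_1)$ is generated as follows: $X_1\sim\mu$ falls into a uniformly random block, is uniform inside it, and $Y_1$ is an independent uniform vertex of the same block. This is exactly the claimed distribution.

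\textbf{Step 3 (the family of $V_j^{(x)}$ is independent of $x$).} This is the step I expect to need the most care: showing that every $V_j^{(x)}$ with $j\ge2$ is a block $B(w)$. First take $w\in V_j^{(x)}$ in the $Y_2$-range. Then $(x_1,w,x_3)$ is a rainbow triangle, and the support of $Y_2$ given $x$ is, by uniformity, the set of all $y$ such that $(x_1,y,x_3)$ is rainbow with $\chi(x_1y)=\chi(x_1x_2)$. Applying Step 2 to the triangle $(w,x_1,x_3)$, every $v\in B(w)$ satisfies
\[\chi(vx_1)=\chi(wx_1),\qquad \chi(vx_3)=\chi(wx_3),\]
and $(v,x_1,x_3)$ is rainbow, so $v\in V_j^{(x)}$; thus $B(w)\subseteq V_j^{(x)}$. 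The $Y_3$-range is handled the same way using the triangle $(w,x_1,x_2)$. Both sets have $\mu$-mass $\frac1{(r-1)^2}$ and $V_j^{(x)}\subseteq\supp\mu$, so $B(w)=V_j^{(x)}$. Hence $\{V_1^{(x)},\dots,V_{(r-1)^2}^{(x)}\}$ is the fixed partition into blocks, independent of $x$.
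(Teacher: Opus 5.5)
Your proof is correct and is essentially the paper's argument. In both, the masses $\frac{1}{(r-1)^2}$ come from the independence of $Y$ from $(X_1,X_2,X_3)$ together with the uniform colour statements, and the key step is transferring the conditional independence of $Y_1$ and $(X_2,X_3)$ given $X_1$ to every part by re-rooting the triangle at a vertex of that part. The paper does this re-rooting probabilistically, via the rotated triple $(Y,X_{\mathbf{t}+1},X_{\mathbf{t}+2})$ and an auxiliary $Y^*$ with $(Y,X_{\mathbf{t}+1},X_{\mathbf{t}+2},Y^*)$ distributed as $(X_1,X_2,X_3,Y_1)$, whereas you do it pointwise through the blocks $B(u)$. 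Note that $B(w)=V_j^{(x)}$ follows directly from the definitions as an equality of sets, so the mass comparison at the end of your Step 3 is unnecessary.
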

\begin{proof}
    Let $X = (X_1,X_2,X_3)$ and consider the random sets $V_1^{(X)},\ldots, V_{(r-1)^2}^{(X)}$ constructed above.
    Recall also that $\PP(Y=Y_i) = p_i$ for all $i\in[3]$.
    Conditioning on $Y=Y_3$, we know that it is in each of $V_r^{(X)},\ldots, V_{(r-1)^2}^{(X)}$ with equal probability by \cref{lem:structure-easy-corollary}.
    Therefore $\PP(Y\in V_j^{(X)}) = p_3/(r-1)(r-2) = (r-1)^{-2}$ for any $r\leq j\leq (r-1)^2$.
    A similar argument shows that $Y$ lies in each of $V_1^{(X)},\ldots, V_{(r-1)^2}^{(X)}$ with probability $p_2/(r-2)=(r-1)^{-2}=p_1$.

    Now let $\mathbf{t}\in[3]$ be such that $Y=Y_{\mathbf{t}}$, and let $\bfi\in[(r-1)^2]$ be such that $Y\in V_{\bfi}^{(X)}$.
    From the discussion above, we see $\bfi$ is uniform and independent of $(X_1,X_2,X_3)$.
    Now sample $Y^*$ conditionally independently given $X, \bfi$ and uniformly at random from $V_{\bfi}^{(X)}$.
    The key observation is that $(Y, X_{\mathbf{t}+1}, X_{\mathbf{t}+2}, Y^*)$ is identically distributed as $(X_1,X_2,X_3,Y_1)$.
    Recall that $(Y_1,X_2,X_3)$, $(Y_2,X_3,X_1)$, and $(Y_3,X_1,X_2)$ are all identically distributed as $(X_1,X_2,X_3)$.
    Thus their mixture $(Y,X_{\mathbf{t}+1},X_{\mathbf{t}+2})$ is also distributed as $(X_1,X_2,X_3)$.
    From the definition of $V_{\bfi}^{(X)}$, we know that $Y^*$ is uniformly chosen from the vertices that form, with $X_{\mathbf{t}+1},X_{\mathbf{t}+2}$, a rainbow triangle with the same coloring as the rainbow triangle $YX_{\mathbf{t}+1}X_{\mathbf{t}+2}$.
    From the definition of $Y_1$, we see that $(Y, X_{\mathbf{t}+1}, X_{\mathbf{t}+2}, Y^*)$ is identically distributed as $(X_1,X_2,X_3,Y_1)$.
    As $(X_2,X_3)$ and $Y_1$ are conditionally independent given $X_1$, we see that $(X_{\mathbf{t}+1},X_{\mathbf{t}+2})$ and $Y^*$ are conditionally independent given $Y$.
    This shows that the set $V_{\bfi}^{(X)}$ only depends on $Y$ and does not depend on $X=(X_1,X_2,X_3)$ at all.
    As a consequence, the family of sets $\left\{V_1^{(X)},\ldots, V_{(r-1)^2}^{(X)}\right\}$ does not depend on $X$ at all, showing that it is equal to $\left\{V_1^{(x)},\ldots, V_{(r-1)^2}^{(x)}\right\}$ as well.
    Moreover, as $(X_1,Y_1)$ is symmetric, we know that $(Y,Y^*)$ is symmetric as well.
    Since $Y$ is equally likely to lie in each of $V_1^{(x)},\ldots, V_{(r-1)^2}^{(x)}$ and $Y^*$ is chosen uniformly from that set, we see that $(Y,Y^*)$ must follow the distribution as described in the lemma statement.
    We are then done as $(Y,Y^*)$ is identically distributed as $(X_1,Y_1)$.
\end{proof}

We will now fix the partition $V_1^{(x)},\ldots, V_{(r-1)^2}^{(x)}$ given by \cref{lem:structure-X1-Y1} according to an arbitrary rainbow triangle $x=(x_1,x_2,x_3)$.
We omit the superscript $x$ if it is clear from context.
\cref{lem:structure-X1-Y1} tells us what the distribution of $(X_1,Y_1)$ is.
Note that we also know the distribution of $(X_1,Y)$ as $Y$ is independent of $X_1$ and also identically distributed as $X_1$.
Using this, it is easy to deduce the distribution of $(X_1,Y')$, which is the same as the distribution of $(X_1,X_2)$.
This will allow us to show that $G$ is a balanced blowup of some complete colored graph on $(r-1)^2$ vertices.

\begin{lemma}\label{lem:structure-X1-Yprime}
    Suppose $G$ has exactly $\sqrt{\frac{2(r-2)}{9r}}m^{3/2}$ rainbow triangles and $(X_1,X_2,X_3)$ is uniformly distributed over rainbow triangles.
    The marginal distribution of $(X_1,X_2)$ is the same as the following distribution: first sample $\bfi,\bfj\in [(r-1)^2]$ uniformly subject to $\bfi \neq \bfj$ and then sample one vertex uniformly from each of $V_{\bfi}, V_{\bfj}$.
    In addition, $\abs{V_1}=\cdots =\abs{V_{(r-1)^2}}$ and for any $i\neq j\in [(r-1)^2]$, $G$ is monochromatic and complete between $V_i$ and $V_j$.
\end{lemma}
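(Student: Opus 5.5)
The plan is to read off the law of $(X_1,Y')$ from the laws of $(X_1,Y)$ and $(X_1,Y_1)$, using that $Y$ is a mixture of $Y_1$ and $Y'$. Then I will transfer this to $(X_1,X_2)$, since the proof of \cref{claim:many-color-2} showed that $(X_1,Y')$ has the same marginal distribution as $(X_1,X_2)$.

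\textbf{Step 1 (law of $(X_1,Y')$).} By \cref{lem:structure-easy-corollary}, $Y$ is independent of $(X_1,X_2,X_3)$ and distributed as $X_1$. Hence $(X_1,Y)$ is a product of two copies of the law of $X_1$. By \cref{lem:structure-X1-Y1}, this law of $X_1$ is the law of $Y_1$: choose $\bfi$ uniform in $[(r-1)^2]$, then a uniform vertex of $V_\bfi$. Thus $(X_1,Y)$ is obtained by picking $\bfi,\bfj$ independently and uniformly, then independent uniform vertices of $V_\bfi$ and $V_\bfj$.

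The mixing index is independent of $(X_1,X_2,X_3,Y_1,Y_2,Y_3)$, so the law of $(X_1,Y)$ equals $p_1\cdot\mathrm{law}(X_1,Y_1)+(1-p_1)\cdot\mathrm{law}(X_1,Y')$. Here $p_1=(r-1)^{-2}$, and $\mathrm{law}(X_1,Y_1)$ is exactly the diagonal part $\bfi=\bfj$ of the product law. Subtracting it, the remaining mass $1-(r-1)^{-2}$ is the off-diagonal part. So $(X_1,Y')$, and hence $(X_1,X_2)$, has the claimed law: $\bfi\ne\bfj$ uniform, then independent uniform vertices of $V_\bfi$ and $V_\bfj$.

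\textbf{Step 2 (structure of $G$).} Since $(X_1,X_2)$ is uniform over ordered edges lying in rainbow triangles, its law is uniform on its support. The law from Step 1 gives the pair $(u,v)\in V_i\times V_j$ ($i\neq j$) probability
\[
\frac{1}{(r-1)^2((r-1)^2-1)\,|V_i|\,|V_j|}.
\]
Its support is all of $\bigcup_{i\ne j}V_i\times V_j$, so every cross pair is an edge of $G$ lying in a rainbow triangle. Uniformity forces $|V_i||V_j|$ to be constant over $i\ne j$, which gives $|V_1|=\dots=|V_{(r-1)^2}|$ (compare $|V_i||V_j|$ with $|V_i||V_k|$, using $(r-1)^2\ge 4>2$).

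For monochromaticity, fix $i\neq j$ and any $u\in V_i$, $v\in V_j$. Take a rainbow triangle $x=(u,x_2,x_3)$ through $u$; we are free to choose the fixed partition from such an $x$, since by \cref{lem:structure-X1-Y1} the family of parts is independent of $x$. With $x_1=u$, the part containing $v$ is by definition one of the $V_t^{(x)}$, and each part records the colors from $v$ to the triangle:
\begin{itemize}
\item a part $V_t$ with $t\ge r$ determines $(\chi(x_1v),\chi(x_2v))$;
\item a part $V_t$ with $2\le t\le r-1$ determines $\chi(x_1v)=\chi(x_1x_2)$ by the definition of $Y_2$;
\item $V_1$ corresponds to $v$ being a resample of $x_1$ itself.
\end{itemize}
So $\chi(uv)$ is determined by the index of the part containing $v$ and by the part containing $u$, independently of the particular $u\in V_i$ and $v\in V_j$. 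Here I use that the $V_t^{(x)}$ for different base triangles are the same sets, even if their labels permute.

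\textbf{Main obstacle.} The delicate step is this last one: making rigorous that $\chi(uv)$ depends only on the pair of parts. The difficulty is that the labeling of the parts depends on $x$ while the family does not. I would handle it by fixing $v$'s part $V_j$ and varying $u\in V_i$. For two vertices $u,u'\in V_i$, apply \cref{lem:structure-X1-Y1} with $x_1=u$: the part of $u$ is $V_1^{(x)}$, which contains $u'$, and $u'$ is in the support of $Y_1$ there. Hence $(u',x_2,x_3)$ is a rainbow triangle with the same coloring as $(u,x_2,x_3)$, so replacing $u$ by $u'$ preserves the rainbow-triangle data. The partition attached to $(u',x_2,x_3)$ then coincides with that attached to $(u,x_2,x_3)$, labels included, since the labels are defined by colors to $x_2,x_3$ and those match. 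Consequently $v\in V_t^{(x)}=V_t^{(x')}$ for the same $t$. If $t\neq1$, the color $\chi(uv)$ (respectively $\chi(u'v)$) is read off from $t$ as above. If $t=1$, then $v$ lies in the same part as $u$ and $u'$, so $i=j$, contrary to assumption. Therefore $\chi(uv)=\chi(u'v)$, and symmetrically in $v$, which completes the argument.
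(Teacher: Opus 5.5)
Step 1 matches the paper's argument, and so do the completeness and equal-sizes parts of Step 2. The monochromaticity step, however, has a genuine gap.

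\textbf{The gap.} In your ``main obstacle'' paragraph you claim that the partition attached to $x'=(u',x_2,x_3)$ coincides with the one attached to $x=(u,x_2,x_3)$, ``labels included, since the labels are defined by colors to $x_2,x_3$''. This is unjustified. The matching colors $\chi(u'x_2)=\chi(ux_2)$ and $\chi(u'x_3)=\chi(ux_3)$ give only $V_1^{(x')}=V_1^{(x)}$. The other parts are defined through edges at the first vertex:
\begin{itemize}
\item the support of $Y_2$ given $x$ is $\{y:(x_1,y,x_3)\text{ rainbow},\ \chi(x_1y)=\chi(x_1x_2)\}$;
\item the support of $Y_3$ is $\{y:(x_1,x_2,y)\text{ rainbow}\}$, split according to $(\chi(x_1y),\chi(x_2y))$.
\end{itemize}
All of these can change when $x_1=u$ is replaced by $u'$. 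Worse, for $t\ge r$ the assertion that $v$ has label $t$ with respect to both $x$ and $x'$ says precisely $\chi(uv)=\chi(u'v)$. So the argument is circular. Your earlier sentence, that $\chi(uv)$ is determined ``independently of the particular $u\in V_i$'', overclaims for the same reason: the labeling depends on $x$, hence on $u$.

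\textbf{The repair.} You only need what your bullet list already proves, with no comparison of labels across base triangles.
\begin{itemize}
\item Fix $u\in V_i$ and a rainbow triangle $x$ with $x_1=u$. Such a triangle exists because $u\in\supp(X_1)$ and the distribution is symmetric.
\item Then $V_1^{(x)}=V_i$. By \cref{lem:structure-X1-Y1}, every $V_j$ with $j\ne i$ equals some $V_t^{(x)}$ with $t\ge 2$.
\item On such a part, $\chi(u\,\cdot)$ is constant: it equals $\chi(ux_2)$ if $2\le t\le r-1$, and the first coordinate of the label if $t\ge r$.
\end{itemize}
So for every $u\in V_i$, $\chi(u\,\cdot)$ is constant on $V_j$. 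Running the same argument with base vertex $v\in V_j$ shows that $\chi(\cdot\,v)$ is constant on $V_i$. Hence $\chi(uv)=\chi(uv')=\chi(u'v')$.

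With this fix, your route is a valid alternative to the paper's. The paper instead uses that $Y_1$ is conditionally independent of $X_2$ given $X_1$, so $(X_1,X_2,Y_1)$ has full support on the triples in $V_i\times V_j\times V_i$. Combined with $\chi(X_1X_2)=\chi(Y_1X_2)$, this makes $\chi(\cdot\,x_2)$ constant on $V_i$, and then the paper swaps $i$ and $j$. Your version needs only the $x$-independence of the family of parts.

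\textbf{A secondary point.} In Step 2, uniformity of $(X_1,X_2)$ on its support does not follow from $(X_1,X_2,X_3)$ being uniform, because the marginal weights each ordered edge by the number of rainbow triangles through it. It follows instead because the hypothesis forces equality throughout
\[
\log_2(6T)\le\tfrac12\bigl(3\HH(X_1,X_2)+\log_2\tfrac{r-2}{r}\bigr)\le\tfrac12\log_2\bigl(\tfrac{r-2}{r}(2m)^3\bigr).
\]
Hence $\HH(X_1,X_2)=\log_2(2m)$, and $(X_1,X_2)$ is uniform over all $2m$ ordered edges of $G$. You should cite this.
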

\begin{proof}
    By \cref{lem:structure-X1-Y1}, $(X_1,Y_1,Y)$ can be sampled as follows: first sample $\bfi, \bfi^*\in [(r-1)^2]$ independently and uniformly, next sample $X_1,Y_1$ uniformly from $V_{\bfi}$, and lastly sample $Y$ uniformly from $V_{\bfi^*}$ unless $\bfi=\bfi^*$, in which case set $Y=Y_1$.
    Since $Y$ is a mixture of $Y_1$ and $Y'$ with probabilities $(\frac{1}{(r-1)^2}, \frac{r(r-2)}{(r-1)^2})$, this means that $Y'$ should be sampled uniformly from $V_{\bfj}$ where $\bfj$ is sampled uniformly from $[(r-1)^2]\backslash \{\bfi\}$.
    As $(X_1,Y')$ is identically distributed as $(X_1,X_2)$, the first claim follows.

    Since we are in the extremal case, we know that $(X_1,X_2)$ is a uniform oriented edge in $G$.
    This shows that $\abs{V_1}=\cdots=\abs{V_{(r-1)^2}}$.

    Lastly, by the definition of $Y_1$, we know that $\chi(X_1X_2) = \chi(Y_1X_2)$.
    On the other hand, we know that $Y_1$ is conditionally independent of $X_2$ given $X_1$, showing that we can sample $(X_1,X_2,Y_1)$ just like how we sampled $(X_1,Y',Y_1)$ above.
    In particular, for any $i\neq j\in [(r-1)^2]$, we know that $G$ is complete between $V_i,V_j$ and also for any $x_1,y_1\in V_i$ and $x_2\in V_j$ we have $\chi(x_1x_2) = \chi(y_1x_2)$.
    Swapping the role of $i,j$, we see that $G$ is also monochromatic between $V_i$ and $V_j$.
\end{proof}

Let $H$ be the auxiliary complete graph on $[(r-1)^2]$ where $\chi(ij)$ is the color of the biclique between $V_i$ and $V_j$ in $G$.
Then $G$ is a balanced blowup of $H$ with possibly some isolated vertices, and it suffices to show that $H$ comes from some affine plane of order $(r-1)$.

Recall that the family $\left\{V_1^{(x)}, \ldots, V_{(r-1)^2}^{(x)}\right\}$ does not depend on $x_1x_2x_3$.
What this means for $H$ is that for any rainbow triangle $ijk$ in $H$, there is exactly one index $i'$ with $\chi(i'j)=\chi(ij), \chi(i'k)=\chi(ik)$ (which is $i'=i$), exactly $(r-2)$ indices $j'$ with $\chi(j'k) = \chi(jk)$ and $ij'k$ rainbow, and exactly $(r-2)(r-1)$ indices $k'$ with $ijk'$ rainbow. 
Since each edge is in some rainbow triangle by \cref{lem:structure-X1-Yprime}, we can end with the following lemma.

\begin{lemma}\label{lem:extract-affine-plane}
    Let $H$ be an $r$-edge-colored complete graph on $[(r-1)^2]$.
    Suppose that for each $i\neq j$ in $[(r-1)^2]$, there are exactly $(r-2)$ indices $j'$ with $\chi(ij') = \chi(ij)$ and exactly $(r-1)(r-2)$ indices $k$ with $ijk$ forming a rainbow triangle.
    Then $H$ is a graph induced by an affine plane of order $(r-1)$.
\end{lemma}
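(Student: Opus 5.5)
Let $n=(r-1)^2$ and $q=r-1$. The plan is to show that every color class of $H$ is a disjoint union of $q$ cliques of size $q$. By the equivalent characterization given in the introduction, $H$ is then the graph induced by an affine plane of order $q$: the points are the vertices, the lines are the cliques, and the $r$ colors are the $r$ parallel classes.

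\emph{Step 1: degrees.} Fix a vertex $i$ and a color $c$, and let $d_c(i)$ be the number of $c$-colored edges at $i$. The first hypothesis, applied to any edge $ij$ of color $c$, gives $d_c(i)=r-1=q$ whenever $d_c(i)>0$. Since $\sum_c d_c(i)=n-1=q^2+q-1$, the number of colors present at $i$ is $(q^2+q-1)/q$, which is not an integer unless $q=1$. So I will have to reread the hypothesis: ``exactly $r-2$ indices $j'$'' must mean $j'\neq j$. Then $d_c(i)=r-1=q$ for every color present at $i$, and there are $(q^2-1)/q$ present colors, which again fails. I suspect the intended count is $j'\ne i,j$ with $d_c(i)=q-1$, so that each color appears at $i$ exactly $(q^2-1)/(q-1)=q+1=r$ times over all colors. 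That is, every color is present at every vertex with degree $q-1$. I will adopt this reading (consistent with the affine plane: each line through $i$ has $q-1$ other points).

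\emph{Step 2: rainbow count forces cliques.} Fix an edge $ij$ of color $c$. Count $k\notin\{i,j\}$ with $ijk$ rainbow. The $n-2$ candidates are reduced as follows. First remove the $k$ with $\chi(ik)=c$; there are $q-2$ of these. Next remove the $k$ with $\chi(jk)=c$; there are also $q-2$. Finally remove the $k$ with $\chi(ik)=\chi(jk)\ne c$. Hence the number of rainbow $k$ is
\[
(n-2)-2(q-2)+|A\cap B|-N,
\]
where $A=N_c(i)\setminus\{j\}$, $B=N_c(j)\setminus\{i\}$, and $N\ge 0$ counts the $k$ with $\chi(ik)=\chi(jk)\neq c$. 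Setting this equal to $(r-1)(r-2)=q(q-1)$ gives
\[
|A\cap B|-N=q^2-q-q^2+2+2q-4=q-2.
\]
Since $|A\cap B|\le q-2$ and $N\ge0$, we must have $A=B$ and $N=0$. Thus $A=B$ says that $c$-neighbourhoods are closed: $N_c(i)\cup\{i\}=N_c(j)\cup\{j\}$ whenever $\chi(ij)=c$. So each color class is a disjoint union of cliques of size $q$. Moreover $N=0$ says that the only non-rainbow triangles are monochromatic ones.

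\emph{Step 3: conclusion.} Call the color-$c$ cliques the lines. Each color gives $q^2/q=q$ parallel lines, and any two points lie on exactly one line, namely the clique of color $\chi(ij)$ containing them. This is an affine plane of order $q$, and $H$ is its induced graph. The main obstacle is Step 1, pinning down the correct reading of the counting hypothesis. If the literal reading is intended (with $j'$ allowed to equal $j$, giving degree $r-2$ per present color), I would instead run the same inclusion–exclusion of Step 2 with general degrees and then use the identity in Step 2 to force all degrees equal to $q-1$.
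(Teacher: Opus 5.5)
Your argument is correct and is essentially the paper's proof: the paper notes that the $r-3$ non-rainbow choices of $k$ must be exactly the $r-3$ vertices with $\chi(ik)=\chi(ij)$, and by symmetry in $i,j$ also exactly those with $\chi(jk)=\chi(ij)$, so every non-rainbow triangle is monochromatic and each color class splits into $r-1$ disjoint $(r-1)$-cliques --- this is your identity $|A\cap B|-N=q-2$ with different bookkeeping. The detour in your Step~1 comes only from an arithmetic slip ($n-1=q^2-1$, not $q^2+q-1$): the literal hypothesis, which counts $j'=j$, already gives $d_c(i)=r-2$ for each color present at $i$, hence all $r$ colors appear at every vertex, so no reinterpretation of the statement is needed.
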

\begin{proof}
    Note that if $\chi(ik)=\chi(ij)$ then $ijk$ is not a rainbow triangle.
    Therefore for any $k\neq i,j$, we know that either $ijk$ is a rainbow triangle or $\chi(ik)=\chi(ij)$.
    By symmetry we know that if $ijk$ is not a rainbow triangle then $\chi(jk)=\chi(ij)$ as well.
    This shows that for any distinct $i,j,k\in [(r-1)^2]$, if $\chi(ij)=\chi(jk)$, then $ijk$ is a monochromatic triangle.
    As an immediate consequence, each color class must be a vertex-disjoint union of some number of cliques.
    The assumption shows that each clique must be of size $(r-1)$, and each vertex is in such a clique of any given color.
    In other words, each color class must be a union of $(r-1)$ vertex-disjoint $(r-1)$-cliques, as desired.
\end{proof}

Since the auxiliary $H$ satisfies the assumptions in \cref{lem:extract-affine-plane} and $G$ is a balanced blowup of $H$ with potentially some isolated vertices, \cref{thm:tight-construction} is now proven assuming \cref{lem:structure-easy-corollary}.

\subsection{Entropy input}\label{subsec:entropy_input}
In this subsection, we carry out the entropic computations for both the proof of \cref{thm:tight-construction} and the proof of \cref{thm:stability_brief}.
We briefly remark that for the proof of \cref{thm:tight-construction}, we just need (a) throguh (e) of \cref{lem:entropy_input} to prove \cref{lem:structure-easy-corollary}, whereas we will also need (f) and (g) for \cref{thm:stability_brief}.
\begin{lemma}\label{lem:entropy_input}
    The following are all true.
    \begin{enumerate}[label=\textup{(\alph*)}]
        \item $\HH(Y\mid X_1,X_2,X_3)\geq \HH(X_1)-\veps$.
        \item $\HH(Y'\mid X_1,X_2,X_3)\geq \HH(X_2\mid X_1)-\veps$.
        \item $p_1=\frac{1}{(r-1)^2}+O(\veps)$, $p_2=\frac{r-2}{(r-1)^2}+O(\veps)$, $p_3=\frac{r-2}{r-1}+O(\veps)$.
        \item $\HH(\chi(X_3Y_2)\mid X_1,X_2,X_3)\geq \log_2(r-2)-O(r^2\veps)$.
        \item $\HH(\chi(X_1Y_3),\chi(X_2Y_3)\mid X_1,X_2,X_3)\geq \log_2((r-1)(r-2))-O(r^2\veps)$.
        \item $\HH(\chi(X_1X_2)\mid X_1)\geq \log_2r-O(r\veps)$.
        \item $\HH(Y_2\mid X_1,X_2,X_3)\geq \HH(X_2\mid X_1, \chi(X_1X_2))-O(r\varepsilon)$. 
    \end{enumerate}
    
    \begin{proof}
    We begin with some preparation.
    By the definition, we have $2\HH(X_3\mid X_1,X_2)= \HH(X_1,X_2)+\log_2\frac{r-2}{r}-\varepsilon$.
    Thus, by \cref{claim:many-color-1} and \cref{claim:many-color-2}, we see that we have \[\HH(X_3\mid X_1,X_2)\geq \HH(X_1)+\log_2\frac{r-2}{r-1}-\varepsilon\]
    and 
    \[\HH(X_3\mid X_1,X_2)\geq \HH(X_2\mid X_1)+\log_2\frac{r-1}{r}-\varepsilon.\]

    To simplify the expressions, we define $a = 2^{\HH(\chi(X_1X_2)\mid X_1,X_3)}$ and $b=2^{\HH(\chi(X_1X_2),\chi(X_1X_3)\mid X_2,X_3)}$. Recall from the proof of \cref{claim:entropy-lower-bound} that
    \[\HH(Y_1\mid X_1,X_2,X_3) = \HH(X_3\mid X_1,X_2)-\log_2b,\]
    \[\HH(Y_2\mid X_1,X_2,X_3) = \HH(X_3\mid X_1,X_2)-\log_2a,\]
    and
    \[\HH(Y_3\mid X_1,X_2,X_3) = \HH(X_3\mid X_1,X_2).\]
    From this, we know that $p_1:p_2:p_3=b^{-1}:a^{-1}:1$.

    \paragraph{Proof of (a) and (b).}
    By the uniform bound, we know that $a\leq r-1$ and $b\leq (r-1)(r-2)$.
    Moreover, from the fact that $2^{\HH(Y\mid X_1,X_2,X_3)} = \sum_{i=1}^{3}2^{\HH(Y_i\mid X_1,X_2,X_3)}$, we see that
    \begin{align*}
        \HH(Y\mid X_1,X_2,X_3) =& \log_2\left(1+a^{-1}+b^{-1}\right)+\HH(X_3\mid X_1,X_2)\\
        \geq &\HH(X_1)+\log_2\left(\frac{r-2}{r-1}(1+a^{-1}+b^{-1})\right)-\varepsilon\\
        \geq & \HH(X_1)-\varepsilon.
    \end{align*}
    A similar argument applied to $\HH(Y'\mid X_1,X_2,X_3)$ gives $\HH(Y'\mid X_1,X_2,X_3)\geq \HH(X_2\mid X_1)-\varepsilon$. 
    This proves (a) and (b).

    \paragraph{Proof of (c).}
    To do this, we first bound $a$ and $b$.
    Indeed, since the marginal distribution of $Y$ is the same as $X_1$, we have
    \[\HH(X_1)+\log_2\left(\frac{r-2}{r-1}(1+a^{-1}+b^{-1})\right)-\varepsilon \leq \HH(Y\mid X_1,X_2,X_3)\leq \HH(Y)=\HH(X_1).\]
    Therefore,
    \[1+a^{-1}+b^{-1}\leq \frac{r-1}{r-2}+O(\varepsilon).\]
    It follows
    \[a^{-1}= \frac{1}{r-1}+O(\varepsilon),\,b^{-1}= \frac{1}{(r-1)(r-2)}+O(\varepsilon).\]
    Using $p_1:p_2:p_3=b^{-1}:a^{-1}:1$, we get $p_1=\frac{1}{(r-1)^2}+O(\veps)$, $p_2=\frac{r-2}{(r-1)^2}+O(\veps)$, $p_3=\frac{r-2}{r-1}+O(\veps)$.

    \paragraph{Proof of (d).} From the definition of $Y_2$ and the chain rule, we have
    \begin{align*}
        \HH(\chi(X_3Y_2)\mid X_1,X_2,X_3)=&\HH(\chi(X_2X_3)\mid X_1,X_3,\chi(X_1X_2))\\
        =&\HH(\chi(X_1X_2),\chi(X_2X_3)\mid X_1,X_3)-\HH(\chi(X_2X_3)\mid X_1,X_3)\\
        =&\log_2b-\log_2a\\
        \geq& \log_2(r-2)-O(r^2\veps).
    \end{align*}
    \paragraph{Proof of (e).} From the definition of $Y_3$, we have
    \begin{align*}
        \HH(\chi(X_1Y_3),\chi(X_2Y_3)\mid X_1,X_2,X_3)=&\HH(\chi(X_1X_3),\chi(X_2X_3)\mid X_1,X_2)\\
        =&\log b\geq \log_2((r-1)(r-2))-O(r^2\veps).
    \end{align*}

    \paragraph{Proof of (f).} Note that the tuple $(\chi(X_1X_2),\chi(X_1X_3),X_1)$ and the tuple $(\chi(X_1X_3),\chi(X_1X_3),X_1)$ have disjoint supports. Thus, by \cref{lem:mix} we have
    \[2^{\HH(\chi(X_1X_\mathbf{t}),\chi(X_1X_3),X_1)}\geq 2^{\HH(\chi(X_1X_2),\chi(X_1X_3),X_1)}+2^{\HH(\chi(X_1X_3),\chi(X_1X_3),X_1)}\]
    for some independent random index $\mathbf{t}\in \{2,3\}$. Dividing both sides by $2^{\HH(\chi(X_1X_3),X_1)}$, we get
    \[2^{\HH(\chi(X_1X_\mathbf{t})\mid\chi(X_1X_3),X_1)}\geq 2^{\HH(\chi(X_1X_2)\mid\chi(X_1X_3),X_1)}+1.\]
    Since $2^{\HH(\chi(X_1X_2)\mid\chi(X_1X_3), X_1)}\geq a$ and $\HH(\chi(X_1X_\mathbf{t}))\geq \HH(\chi(X_1X_\mathbf{t}\mid X_1)\mid\chi(X_1X_3),X_1)$, we can conclude that
    \[2^{\HH(\chi(X_1X_\mathbf{t})\mid X_1)}\geq a+1\geq r-1-O(r\veps)+1,\]
    and (f) follows as $(X_1,\chi(X_1X_2))$ and $(X_1,\chi(X_1X_3))$ are identically distributed.

    \paragraph{Proof of (g).} We have
    \begin{align*}
        \HH(Y_2\mid X_1,X_2,X_3)\geq& \HH(X_3\mid X_1,X_2)-\log_2(r-1)\\
        \geq& \HH(X_2\mid X_1)+\log_2\frac{r-1}{r}-\varepsilon-\log_2(r-1)\\
        =&\HH(X_2\mid X_1)+\log_2r-\varepsilon\\
        \geq& \HH(X_2\mid X_1)-\HH(\chi(X_1X_2)\mid X_1)-O(r\veps),
    \end{align*}
    where the first inequality follows from \cref{claim:entropy-lower-bound} and the last inequality follows from (f).
    By the fact that $\HH(X_2\mid X_1) = \HH(X_2,\chi(X_1X_2)\mid X_1)$ and the chain rule, this gives (g).
    \end{proof}
\end{lemma}

Now we derive \cref{lem:structure-easy-corollary} as an immediate corollary of \cref{lem:entropy_input}, which concludes the proof of \cref{thm:tight-construction}.
\begin{proof}[Proof of \cref{lem:structure-easy-corollary}]
    Since $\varepsilon=0$, we have $p_1 = \frac{1}{(r-1)^2}$, $p_2 = \frac{r-2}{(r-1)^2}$ and $p_3 = \frac{r-2}{r-1}$ by (c) in \cref{lem:entropy_input}.
    Also $\HH(Y'\mid X_1,X_2,X_3)\geq \HH(Y'\mid X_1)$ by \cref{lem:entropy_input} (b), showing that $Y'$ and $(X_2,X_3)$ are conditionally independent given $X_1$ by \cref{prop:drop}.
    Similarly, we know $\HH(Y\mid X_1,X_2,X_3)\geq \HH(Y)$ by \cref{lem:entropy_input} (a). By \cref{prop:drop} we know that the equality holds and $Y$ is independent of $(X_1,X_2,X_3)$.

    By \cref{lem:entropy_input} (d), we have $\HH(\chi(X_3Y_2)\mid X_1,X_2,X_3)\geq \log_2(r-2)$. On the other hand, we know that given any instance $(x_1,x_2,x_3)\in\supp(X_1,X_2,X_3)$, the support of $\chi(X_3Y_2)$ is contained in $[r]\backslash \{\chi(x_1x_2),\chi(x_1x_3)\}$, which has size $r-2$. Hence the reverse inequality also holds by \cref{prop:uniform} and the fact that $\HH(\chi(X_3Y_2)\mid X_1,X_2,X_3)$ is the expectation, as $x_1,x_2,x_3$ are sampled from the distribution of $(X_1,X_2,X_3)$, of $\HH(\chi(X_3Y_2)\mid X_1=x_1,X_2=x_2,X_3=x_3)$. This forces the equality to hold and the color $\chi(X_3Y_2)$ to distribute uniformly among $[r]\backslash \{\chi(X_1X_2),\chi(X_1X_3)\}$. Similarly, we have $\HH(\chi(X_1Y_3),\chi(X_2Y_3)\mid X_1,X_2)= \log_2((r-1)(r-2))$ by \cref{lem:entropy_input} (e) and the last part of the lemma follows.
\end{proof}

\subsection{Total variation and Pinsker's inequality}\label{subsec:total-variation}
For the proof of \cref{thm:stability_brief}, we can no longer deduce exact independence as we did in \cref{lem:structure-easy-corollary}.
Therefore we need the following tools that allow us to deal with approximate independence: we need tools that evaluate how close a joint distribution $P_{A,B}$ of random variables $A,B$ is to $P_{A,B'}$ where $B'$ is an independent resample of $B$.
From the point of view of entropy, this can be done by the Kullback--Leibler divergence, or the \emph{KL divergence}.

\begin{definition}[KL divergence]
    Let $A,A'$ be two random variables with $\supp(A)\subseteq \supp(A')$.
    Then define
    \[\kldiv{P_A}{P_{A'}} = \sum_{a\in \supp(A')}P_A(a)\log_2\left(\frac{P_{A}(a)}{P_{A'}(a)}\right).\]
\end{definition}

On the other hand, to produce a stability statement, it is more convenient for us to use the \emph{total variation} to evaluate how close two distributions are.

\begin{definition}[Total variation]
    Let $A,A'$ be two random variables.
    Define
    \[d_{\textup{TV}}(P_A,P_{A'}) =  \frac{1}{2}\sum_{a\in \supp(A)\cup \supp(A')}\abs{P_A(a)-P_{A'}(a)},\]
    or equivalently
    \[d_{\textup{TV}}(P_A,P_{A'}) = \max_{S\subseteq \supp(A)\cup \supp(A')}\abs{P_A(S)-P_A(S')}.\]
    If $d_{\textup{TV}}(P_A,P_{A'})\leq \varepsilon$, then we say $A$ is \emph{$\varepsilon$-close} to $A'$.
\end{definition}

To gain control of total variations from KL divergences, we use Pinsker's inequality.
Pinsker's inequality was originally proven in a weaker form by Pinsker \cite{Pin64}.
It was strengthened independently by Csisz\'ar \cite{Csi66}, Kullback \cite{Kul67} and Kemperman \cite{Kem69}, which is the version we state below.

\begin{proposition}[Pinsker's inequality]\label{prop:pinsker}
    Let $A,A'$ be two random variables with $\supp(A)\subseteq \supp(A').$
    Then
    \[d_{\textup{TV}}(P_A,P_{A'})\leq \sqrt{\frac{\log 2}{2}\kldiv{P_A}{P_{A'}}}.\]
\end{proposition}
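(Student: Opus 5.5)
The plan is to reduce to the case of two-point distributions and then verify a one-variable calculus inequality. Here $\log 2$ denotes the natural logarithm of $2$, so the statement is equivalent to $d_{\textup{TV}}(P_A,P_{A'})^2\leq \frac12 D$, where $D=(\ln 2)\,\kldiv{P_A}{P_{A'}}$ is the KL divergence measured in nats. Write $P=P_A$ and $Q=P_{A'}$, and let $S=\{a: P(a)> Q(a)\}$. Then $d_{\textup{TV}}(P,Q)=P(S)-Q(S)$; call this quantity $\delta$.

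\textbf{Reduction to two points.} Set $p=P(S)$ and $q=Q(S)$, so that $\delta=p-q\geq 0$. I would apply the log-sum inequality, which follows from convexity of $t\mapsto t\ln t$ and Jensen, separately to the indices in $S$ and in its complement:
\[\sum_{a\in S}P(a)\ln\frac{P(a)}{Q(a)}\geq p\ln\frac pq, \qquad \sum_{a\notin S}P(a)\ln\frac{P(a)}{Q(a)}\geq (1-p)\ln\frac{1-p}{1-q}.\]
This gives $D\geq d(p,q)$, where $d(p,q)=p\ln\frac pq+(1-p)\ln\frac{1-p}{1-q}$ is the binary divergence. The support hypothesis $\supp(A)\subseteq\supp(A')$ guarantees that every term is finite. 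The degenerate cases $q\in\{0,1\}$ either force $p=q$ or are excluded by the support condition, and are handled with the convention $0\ln 0=0$.

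\textbf{Binary case.} It remains to show $d(p,q)\geq 2(p-q)^2$ for $0\leq q\leq p\leq 1$. Fix $p$ and set
\[g(q)=d(p,q)-2(p-q)^2.\]
Then $g(p)=0$, and
\[g'(q)=-\frac pq+\frac{1-p}{1-q}+4(p-q)=(q-p)\left(\frac{1}{q(1-q)}-4\right).\]
Since $q(1-q)\leq \frac14$, the bracket is nonnegative, so $g'(q)\leq 0$ for $q\leq p$. Hence $g$ is nonincreasing on $(0,p]$, and therefore $g(q)\geq g(p)=0$ on that interval. Combining the two steps gives $2\delta^2\leq d(p,q)\leq D$, which is the claimed inequality.

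\textbf{Main obstacle.} The only delicate point is bookkeeping rather than substance: the constant depends on converting between $\log_2$ in the definition of KL divergence and the natural log in the stated bound, and on handling the boundary cases where $P$ or $Q$ vanishes at some points. The core inequality is the derivative computation in the binary case, which is routine.
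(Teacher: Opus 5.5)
Your proof is correct. With $S=\{a:P(a)>Q(a)\}$ one indeed has $d_{\textup{TV}}(P,Q)=P(S)-Q(S)$. The support hypothesis forces $Q>0$ on $S$, so the log-sum step is legitimate and the degenerate cases collapse to $\delta=0$. The factorization $g'(q)=(q-p)\bigl(\frac{1}{q(1-q)}-4\bigr)$ checks out. Reading $\log 2$ as $\ln 2$ gives the sharp form; the other reading would only weaken the bound.

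The paper does not prove this statement itself but defers to Lemma~2.5 of Tsybakov's book. As far as I recall, the argument there avoids any reduction to two points. One verifies the pointwise inequality $3(x-1)^2\le(2x+4)(x\ln x-x+1)$ for $x\ge 0$ and puts $x_a=P(a)/Q(a)$ for $a\in\supp(A')$. Cauchy--Schwarz then gives $\sum_a|P(a)-Q(a)|\le\bigl(\sum_a Q(a)\frac{2x_a+4}{3}\bigr)^{1/2}\bigl(\sum_a Q(a)(x_a\ln x_a-x_a+1)\bigr)^{1/2}=\sqrt{2D}$. That route is a single estimate once the pointwise inequality is checked, and it applies verbatim to densities with respect to any dominating measure.

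Yours is the classical data-processing proof; the log-sum step is exactly data processing for the indicator of $S$. It costs one extra reduction, but the analytic input becomes a transparent one-variable computation. It also shows where the constant $\frac12$ comes from: $d(p,q)\approx\frac{(p-q)^2}{2q(1-q)}$ for $p$ near $q$, and $q(1-q)\le\frac14$ with equality at $q=\frac12$.
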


We refer the readers to Lemma 2.5 in \cite[Section 2.4]{Tsy09} for a proof.

Before we show how we will use Pinsker's inequality in our setting, we need a simple proposition.

\begin{proposition}\label{prop:kl-chain-rule}
    Let $(A,B,B')$ be joint random variables with $\supp(A,B)\subseteq \supp(A,B')$.
    Then
    \[\kldiv{P_{A,B}}{P_{A,B'}} = \sum_{a\in \supp(A)}P_A(a)\kldiv{P_{B\mid A=a}}{P_{B'\mid A=a}}.\]
\end{proposition}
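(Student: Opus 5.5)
The plan is to expand both sides directly from the definition of KL divergence, factoring the joint probabilities through the marginal of $A$. First I will check that $A$ and $A$ have the same marginal on both sides, which is automatic since the first coordinate is the same random variable $A$. Then $P_{A,B}(a,b)=P_A(a)P_{B\mid A=a}(b)$ and $P_{A,B'}(a,b)=P_A(a)P_{B'\mid A=a}(b)$.

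Next I will handle the supports. The hypothesis $\supp(A,B)\subseteq\supp(A,B')$ implies, for each $a\in\supp(A)$, that $\supp(B\mid A=a)\subseteq\supp(B'\mid A=a)$. So each conditional divergence $\kldiv{P_{B\mid A=a}}{P_{B'\mid A=a}}$ on the right is well defined. Pairs $(a,b)$ with $P_{A,B}(a,b)=0$ contribute nothing to either side, under the usual convention $0\log 0=0$.

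Substituting into the definition gives
\[\sum_{(a,b)}P_A(a)P_{B\mid A=a}(b)\log_2\frac{P_A(a)P_{B\mid A=a}(b)}{P_A(a)P_{B'\mid A=a}(b)}.\]
The factors $P_A(a)$ inside the logarithm cancel. Grouping the sum by $a$ then yields exactly $\sum_a P_A(a)\kldiv{P_{B\mid A=a}}{P_{B'\mid A=a}}$.

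There is no real obstacle. The only point needing care is the bookkeeping of supports, so that no term has a zero denominator inside the logarithm; the hypothesis handles this.
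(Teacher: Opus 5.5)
Your proposal is correct and is the same argument as the paper's: factor $P_{A,B}(a,b)=P_A(a)P_{B\mid A=a}(b)$ and $P_{A,B'}(a,b)=P_A(a)P_{B'\mid A=a}(b)$, cancel the common $P_A(a)$ inside the logarithm, and regroup the sum by $a$. The paper runs this computation from the right-hand side to the left, and your explicit check that $\supp(B\mid A=a)\subseteq\supp(B'\mid A=a)$ for each $a\in\supp(A)$ spells out support bookkeeping that the paper leaves implicit.
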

\begin{proof}
    The right hand side is simply
    \[\sum_{a\in \supp(A)}\sum_{b\in \supp(B'\mid A=a)}P_A(a)P_{B\mid A=a}(b)\log_2\left(\frac{P_{B\mid A=a}(b)}{P_{B'\mid A=a}(b)}\right),\]
    which by the chain rule for conditional probability is simply
    \[\sum_{(a,b)\in \supp(A,B')}P_{A,B}(a,b)\log_2\left(\frac{P_{A,B}(a,b)}{P_{A,B'}(a,b)}\right).\]
    The identity now follows from the definition of $\kldiv{P_{A,B}}{P_{A,B'}}$.
\end{proof}

Using this, we first show that if the entropy of $B,C$ is almost as large as the entropy of $B',C$ where $B'$ is an independent resample, this means that $B,C$ is close to $B',C$ in total variation.
We will need a conditional statement stated as follows.

\begin{corollary}\label{cor:mutual-info-small}
    Let $A,B,C$ be joint random variables, $B'$ be a conditionally independent resample of $B$ given $A$, and $\varepsilon>0$ be some real number.
    If $\HH(A,B,C)\geq \HH(A,B',C)-\varepsilon$, then $(A,B,C)$ is $O(\veps^{1/2})$-close to $(A,B',C)$.
\end{corollary}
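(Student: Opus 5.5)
We plan to compute the KL divergence $\kldiv{P_{A,B,C}}{P_{A,B',C}}$ exactly in terms of entropies, show that it equals $\HH(A,B',C)-\HH(A,B,C)$, and then apply Pinsker's inequality (\cref{prop:pinsker}). Before anything else, we note that $\supp(A,B,C)\subseteq\supp(A,B',C)$. Indeed, $B'$ is a conditionally independent resample of $B$ given $A$, so $(A,B',C)$ has probability mass function $P_{A,B}(a,b)P_{C\mid A}(c\mid a)/1$, more precisely
\[
P_{A,B',C}(a,b,c)=P_A(a)P_{B\mid A=a}(b)P_{C\mid A=a}(c).
\]
This is positive whenever $P_{A,B,C}(a,b,c)>0$. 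So the KL divergence is well defined.

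Next we expand the divergence, using \cref{prop:kl-chain-rule} or a direct computation:
\[
\kldiv{P_{A,B,C}}{P_{A,B',C}}=\sum_{(a,b,c)}P_{A,B,C}(a,b,c)\log_2\frac{P_{A,B,C}(a,b,c)}{P_A(a)P_{B\mid A=a}(b)P_{C\mid A=a}(c)}.
\]
The numerator contributes $-\HH(A,B,C)$. For the denominator, the key observation is that its logarithm is a sum of functions of $(a)$, $(a,b)$ and $(a,c)$ only. Averaging each of these under $P_{A,B,C}$ therefore gives the same result as averaging under $P_{A,B',C}$, because the two distributions share the marginals $(A,B)$ and $(A,C)$. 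Hence the denominator contributes $+\HH(A,B',C)$, where $-\sum P_{A,B',C}\log_2 P_{A,B',C}=\HH(A,B',C)$. This yields
\[
\kldiv{P_{A,B,C}}{P_{A,B',C}}=\HH(A,B',C)-\HH(A,B,C)\le\veps.
\]
The same identity can be phrased as the conditional mutual information $I(B;C\mid A)$.

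Finally, Pinsker's inequality gives
\[
d_{\textup{TV}}(P_{A,B,C},P_{A,B',C})\le\sqrt{\tfrac{\log 2}{2}\veps}=O(\veps^{1/2}),
\]
which is the claimed closeness. The only delicate step is the middle one: we must check carefully that the cross term $\EE\log_2 P_{A,B',C}$ can be taken under either distribution. This holds exactly because the logarithm splits into a sum of functions of $(A)$, $(A,B)$ and $(A,C)$.
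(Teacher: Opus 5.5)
Your proof is correct and follows the paper's route: identify $\kldiv{P_{A,B,C}}{P_{A,B',C}}$ with the entropy gap $\HH(A,B',C)-\HH(A,B,C)$, then apply Pinsker's inequality (\cref{prop:pinsker}). The paper first proves the unconditional identity $\kldiv{P_{X,Y}}{P_{X',Y}}=\HH(X',Y)-\HH(X,Y)$ and then averages over $A$ via \cref{prop:kl-chain-rule}, whereas you do it in one step using that $\log_2 P_{A,B',C}$ splits into functions of the $(A,B)$ and $(A,C)$ marginals; your explicit check that $\supp(A,B,C)\subseteq\supp(A,B',C)$ is a detail the paper leaves implicit.
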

\begin{proof}
    We first note that for any two random variables $X,Y$, if $X'$ is an independent resample of $X$, then
    \begin{align*}\kldiv{P_{X,Y}}{P_{X',Y}} = &\sum_{x,y}P_{X,Y}(x,y)\log_2\frac{P_{X,Y}(x,y)}{P_X(x)P_Y(y)} \\ = &\sum_x-P_X(x)\log_2P_X(x)+\sum_y-P_Y(y)\log_2P_Y(x)\\&-\sum_{x,y}-P_{X,Y}(x,y)\log_2P_{X,Y}(x,y) \\=& \HH(X',Y)-\HH(X,Y).
    \end{align*}
    Now by \cref{prop:kl-chain-rule}, it is clear that
    \begin{align*}\kldiv{P_{A,B,C}}{P_{A,B',C}} =& \sum_aP_A(a)\kldiv{P_{B,C\mid A=a}}{P_{B',C\mid A=a}} \\=& \sum_aP_A(a)\left(\HH(B',C\mid A=a)-\HH(B,C\mid A=a)\right)\\=&\HH(A,B',C)-\HH(A,B,C)\leq \varepsilon.
    \end{align*}
    Therefore we can apply Pinsker's inequality (\cref{prop:pinsker}) to conclude.
\end{proof}

Similarly, if $B$ has entropy almost as large as $\log_2\abs{\supp(B)}$, then $B$ must be close to uniform.
We will once again need a conditional version of this.

\begin{corollary}\label{cor:almost-uniform}
    Let $A,B$ be a random variables, $n\in \NN$ be some positive integer and $\varepsilon$ be some real number.
    Suppose that for all $a\in \supp(A)$ we have a set $S_a$ of size $n$ so that $\supp(B\mid A=a)\subseteq S_a$.
    Given $A=a$, let $B'$ be sampled uniformly from $S_a$.
    If $\HH(B\mid A)\geq \log_2n-\varepsilon$, then $(A,B)$ is $O(\veps^{1/2})$-close to $(A,B')$.
\end{corollary}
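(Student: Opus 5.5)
The plan is to reduce to \cref{cor:mutual-info-small}'s approach: compute the KL divergence of $P_{A,B}$ from $P_{A,B'}$, bound it by $\varepsilon$, and then apply Pinsker's inequality (\cref{prop:pinsker}).

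First, the support condition. Since $B'$ given $A=a$ is uniform on $S_a \supseteq \supp(B\mid A=a)$, we have $\supp(A,B)\subseteq\supp(A,B')$. So the KL divergence is well defined, and \cref{prop:kl-chain-rule} applies.

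By \cref{prop:kl-chain-rule},
\[
\kldiv{P_{A,B}}{P_{A,B'}}=\sum_{a\in\supp(A)}P_A(a)\,\kldiv{P_{B\mid A=a}}{P_{B'\mid A=a}}.
\]
For each fixed $a$, the reference distribution is uniform on $S_a$, so $P_{B'\mid A=a}(b)=1/n$ for every $b\in S_a$. The inner divergence therefore equals
\[
\sum_{b} P_{B\mid A=a}(b)\log_2\bigl(n\,P_{B\mid A=a}(b)\bigr)=\log_2 n-\HH(B\mid A=a).
\]
Averaging over $a$ with weights $P_A(a)$ gives
\[
\kldiv{P_{A,B}}{P_{A,B'}}=\log_2 n-\HH(B\mid A)\le\varepsilon,
\]
using the hypothesis and the averaged form of conditional entropy from its definition.

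Finally, Pinsker's inequality yields
\[
d_{\textup{TV}}(P_{A,B},P_{A,B'})\le\sqrt{\tfrac{\log 2}{2}\,\varepsilon}=O(\varepsilon^{1/2}),
\]
so $(A,B)$ is $O(\varepsilon^{1/2})$-close to $(A,B')$.

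There is no real obstacle. The only points to check are that the sum in the KL divergence may be restricted to $\supp(B\mid A=a)$, since terms with $P_{B\mid A=a}(b)=0$ vanish, and that the hypothesis $\varepsilon>0$ or $\varepsilon\ge 0$ is compatible with the square root. The latter holds automatically because the divergence is nonnegative, which forces $\log_2 n-\HH(B\mid A)\ge 0$.
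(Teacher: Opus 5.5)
Your proposal is correct and matches the paper's proof. The paper also computes the divergence of a distribution from the uniform distribution on a superset of its support as $\log_2\abs{S}-\HH(X)$, averages with \cref{prop:kl-chain-rule} to obtain $\log_2 n-\HH(B\mid A)\le\varepsilon$, and concludes with Pinsker's inequality (\cref{prop:pinsker}).
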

\begin{proof}
    We first note that for any random variable $X$ and any superset $S$ of $\supp(X)$, if $X'$ is uniform on $S$, then
    \[\kldiv{P_X}{P_{X'}} = \sum_xP_X(x)\left(\log_2P_X(x)+\log_2\abs{S}\right) = \log_2\abs{S}-\HH(X).\]
    By \cref{prop:kl-chain-rule}, we know that
    \begin{align*}\kldiv{P_{A,B}}{P_{A,B'}} =& \sum_aP_A(a)\kldiv{P_{B\mid A=a}}{P_{B'\mid A=a}}\\ =& \sum_{a}P_A(a)\left(\log_2\abs{S_a}-\HH(B\mid A=a)\right) \\=& \log_2 n - \HH(B\mid A).
    \end{align*}
    Therefore we can conclude using Pinsker's inequality (\cref{prop:pinsker}).
\end{proof}

For a major portion of the proof, we will use total variations to measure closeness of random variables.
In the remaining of this subsection, we introduce several tools that allow us to manipulate total variations.
We start with the following standard fact.

\begin{proposition}[Maximal coupling]
    Suppose that $A,A'$ are random variables.
    Then there exists a coupling $(A,A')$ of the two random variables so that
    \[\PP(A\neq A') = d_{\textup{TV}}(P_A,P_{A'}).\]
\end{proposition}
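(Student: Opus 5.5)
The plan is to build the coupling directly from the overlap of the two distributions. Write $p(a)=P_A(a)$ and $q(a)=P_{A'}(a)$ on the finite set $U=\supp(A)\cup\supp(A')$. Set $\mu(a)=\min(p(a),q(a))$ and $\delta=d_{\textup{TV}}(P_A,P_{A'})$. The first step is the identity
\[\sum_{a\in U}\mu(a)=1-\delta,\]
which follows from $\min(x,y)=\frac{x+y-\abs{x-y}}{2}$ after summing over $U$ and using $\sum p=\sum q=1$. If $\delta=0$ then $P_A=P_{A'}$ and we simply take $A'=A$, so from now on assume $\delta>0$.

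Next, define the residual measures
\[\alpha(a)=\frac{p(a)-\mu(a)}{\delta},\qquad \beta(a)=\frac{q(a)-\mu(a)}{\delta}.\]
Both are nonnegative and, by the identity above, both sum to $1$, so they are probability distributions. Their supports are disjoint, since at each $a$ at least one of $p(a)-\mu(a)$ and $q(a)-\mu(a)$ vanishes.

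The coupling is then sampled as follows. Flip a coin that comes up heads with probability $1-\delta$.
\begin{itemize}
\item On heads, draw $a$ from the normalized distribution $\mu/(1-\delta)$ and set $A=A'=a$.
\item On tails, draw $A$ from $\alpha$ and, independently, $A'$ from $\beta$.
\end{itemize}
The marginal of $A$ at $a$ is $\mu(a)+\delta\alpha(a)=p(a)$, and likewise the marginal of $A'$ at $a$ is $\mu(a)+\delta\beta(a)=q(a)$, so this is a genuine coupling.

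Finally, on tails $A\neq A'$ almost surely because $\alpha$ and $\beta$ have disjoint supports, while on heads $A=A'$ by construction. Hence $\PP(A\neq A')=\delta$ exactly. The only point requiring care is this disjointness of the residual supports, which is exactly what makes the equality exact rather than just an upper bound. For context, the reverse inequality $\PP(A\neq A')\ge\delta$ holds for every coupling, since $P_A(S)-P_{A'}(S)\le\PP(A\in S,\,A'\notin S)\le\PP(A\neq A')$ for any set $S$; this is why the construction is called maximal, though it is not needed for the statement.
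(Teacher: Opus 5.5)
Your proof is correct and takes the same approach as the paper: put mass $\min\{P_A(a),P_{A'}(a)\}$ on the diagonal, then compute $\PP(A\neq A')=1-\sum_a\min\{P_A(a),P_{A'}(a)\}=d_{\textup{TV}}(P_A,P_{A'})$. Your explicit construction, which couples the two residual measures independently and uses that their supports are disjoint, supplies the existence step the paper only asserts: that the remaining mass can be placed off the diagonal without adding any mass to it.
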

\begin{proof}
    Consider any coupling $(A,A')$ so that $P_{A,A'}(a,a) = \min\{P_A(a),P_{A'}(a)\}$ for any $a$ lying in $\supp(A)\cap \supp(A')$.
    Note that such coupling is possible as $P_{A,A'}(a,a)$ is bounded above by both $P_A(a)$ and $P_{A'}(a)$.
    Now with this coupling, 
    \begin{align*}
        \PP(A\neq A')=&1-\sum_{a\in \supp(A)\cup \supp(A')}\min\{P_A(a),P_{A'}(a)\}\\
        =& \sum_{a\in \supp(A)\cup \supp(A')}\left(\frac{P_A(a)+P_{A'}(a)}{2}-\min\{P_A(a),P_{A'}(a)\}\right)\\
        =& \frac{1}{2}\sum_{a\in \supp(A)\cup \supp(A')}\abs{P_A(a)-P_{A'}(a)}\\
        =&d_{\textup{TV}}(P_A, P_{A'}),
    \end{align*}
    as desired.
\end{proof}

The following two propositions deal with how total variations behave with mixtures.

\begin{proposition}\label{prop:mixing-variable}
Let $A_1,\ldots, A_k$ and $B_1,\ldots, B_k$ be random variables, and let $\bfi,\bfj\in [k]$ be independent random indices.
Assume that $A_i$ is $\varepsilon$-close to $B_i$ for all $i\in [k]$ and $\bfi$ is $\varepsilon$-close to $\bfj$.
Then $A_{\bfi}$ is $2\varepsilon$-close to $B_{\bfj}$.
\end{proposition}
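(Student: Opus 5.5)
The plan is a coupling argument built from the maximal coupling proposition; a triangle-inequality split is the fallback. Throughout, $A_{\bfi}$ denotes the mixture $A_\bfi$ with $\bfi$ independent of $(A_1,\ldots,A_k)$, and likewise for $B_{\bfj}$. This is the only reading under which the statement makes sense. I will also treat $d_{\textup{TV}}$ as a metric on distributions, which is immediate from the $\ell^1$ formula in its definition.

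First I build a good coupling of the two indices. By the maximal coupling proposition, there is a coupling $(\bfi,\bfj)$ with $\PP(\bfi\neq\bfj)\le\varepsilon$. Next I couple each pair $(A_i,B_i)$ with $\PP(A_i\neq B_i)\le\varepsilon$, again by maximal coupling. I take these $k$ couplings independent of one another and independent of the pair $(\bfi,\bfj)$. The marginal law of $A_\bfi$ depends only on the law of $\bfi$ and the marginal laws of the $A_i$, because $\bfi$ is independent of the $A$'s. So the joint random variable $(A_\bfi,B_\bfj)$ built this way is a valid coupling of the mixture $A_\bfi$ with the mixture $B_\bfj$.

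Then I bound the disagreement probability by a union bound over two failure events:
\[
\PP(A_\bfi\neq B_\bfj)\le \PP(\bfi\neq\bfj)+\sum_{i}\PP(\bfi=\bfj=i)\,\PP(A_i\neq B_i)\le\varepsilon+\varepsilon.
\]
Here I use that, on the event $\bfi=\bfj=i$, disagreement requires $A_i\neq B_i$, and that $(A_i,B_i)$ is independent of the indices. The sum $\sum_i\PP(\bfi=\bfj=i)$ is at most $1$, which gives the second $\varepsilon$. Finally, any coupling satisfies $d_{\textup{TV}}\le\PP(\text{disagree})$. This holds because for every set $S$, $|\PP(A_\bfi\in S)-\PP(B_\bfj\in S)|\le\PP(A_\bfi\neq B_\bfj)$. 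Hence $A_\bfi$ is $2\varepsilon$-close to $B_\bfj$.

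The main point needing care is the justification that the joint construction realizes the correct marginals, namely that a mixture's law depends only on the law of the index and the marginal laws of the components. If one prefers to avoid couplings entirely, there is an alternative route via the intermediate variable $B_\bfi$. On one side, $d_{\textup{TV}}(A_\bfi,B_\bfi)\le\sum_i\PP(\bfi=i)\,d_{\textup{TV}}(A_i,B_i)\le\varepsilon$, which is convexity of $d_{\textup{TV}}$. On the other side, $d_{\textup{TV}}(B_\bfi,B_\bfj)\le d_{\textup{TV}}(\bfi,\bfj)\le\varepsilon$, since the two mixtures are images of the index laws under the same Markov kernel. The triangle inequality then gives the same bound of $2\varepsilon$.
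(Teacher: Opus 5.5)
Your proposal is correct and matches the paper's proof: take a coupling of $(\bfi,\bfj)$ with $\PP(\bfi\neq\bfj)\le\varepsilon$ and couplings of each $(A_i,B_i)$ with $\PP(A_i\neq B_i)\le\varepsilon$, then union-bound over the events $\bfi\neq\bfj$ and $A_{\bfi}\neq B_{\bfi}$. You also say explicitly why the component couplings can be taken independent of each other and of the indices: a mixture's law depends only on the index law and the component marginals. The paper uses this point without stating it.
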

\begin{proof}
    Consider any coupling $(A_1,\ldots, A_k, B_1,\ldots, B_k)$ so that $\PP(A_i\neq B_i)\leq \varepsilon$ for any $i\in [k]$.
    Also consider any coupling $(\bfi, \bfj)$ so that $\PP(\bfi\neq \bfj)\leq \varepsilon$.
    Then 
    \[\PP(A_{\bfi}\neq B_{\bfj})\leq \PP(\bfi\neq \bfj) + \PP(A_{\bfi}\neq B_{\bfi})\leq \varepsilon+\sum_{i=1}^{k}P_{\bfi}(i)\PP(A_i\neq B_i)\leq 2\varepsilon,\]
    as desired.
\end{proof}

\begin{proposition}\label{prop:unmixing-variable}
    Let $A_1,A_2,B_1,B_2$ be joint random variables and $\bfi\in [2]$ be an independent random index.
    Let $p_1 = \PP(\bfi = 1)$ and let $A=A_{\bfi}, B = B_{\bfi}$. 
    Assume that $A_2$ is $\varepsilon$-close to $B_2$ and $A$ is $\varepsilon$-close to $B$.
    Then $A_1$ is $2p_1^{-1}\varepsilon$-close to $B_1$.
\end{proposition}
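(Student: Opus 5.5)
We argue through couplings, in the same style as the proof of \cref{prop:mixing-variable}. Since $A$ is $\varepsilon$-close to $B$, the maximal coupling gives a joint realization of $(A,B)$ with $\PP(A\neq B)\le\varepsilon$. But $A$ and $B$ are mixtures driven by the \emph{same} index $\bfi$, and we want to pass to the components. So the plan is to work with total variation directly on the conditional laws given $\bfi$ rather than constructing couplings.

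Write $P_A=p_1P_{A_1}+p_2P_{A_2}$ and $P_B=p_1P_{B_1}+p_2P_{B_2}$, where $p_2=1-p_1$. This decomposition holds because $\bfi$ is independent of $(A_1,A_2,B_1,B_2)$: conditioned on $\bfi=i$, the variable $A$ has law $P_{A_i}$ and $B$ has law $P_{B_i}$. Then
\[
p_1\bigl(P_{A_1}-P_{B_1}\bigr)=\bigl(P_A-P_B\bigr)-p_2\bigl(P_{A_2}-P_{B_2}\bigr)
\]
as signed measures on the union of the supports.

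Take the $\ell_1$ norm of both sides, halve, and apply the triangle inequality. This gives
\[
p_1\,d_{\textup{TV}}(P_{A_1},P_{B_1})\le d_{\textup{TV}}(P_A,P_B)+p_2\,d_{\textup{TV}}(P_{A_2},P_{B_2})\le\varepsilon+p_2\varepsilon\le 2\varepsilon.
\]
Dividing by $p_1$ yields $d_{\textup{TV}}(P_{A_1},P_{B_1})\le 2p_1^{-1}\varepsilon$. We could sharpen this to $(1+p_2)p_1^{-1}\varepsilon$, but that is unnecessary.

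The only point needing care is justifying the mixture decomposition of $P_A$ and $P_B$, that is, the independence of $\bfi$ from the underlying variables. This is exactly the hypothesis that $\bfi$ is an independent random index. With that in hand, the rest is the linear identity above and the triangle inequality for the $\ell_1$ norm; no coupling is actually needed.
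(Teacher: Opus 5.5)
Your proof is correct and matches the paper's argument: the paper also writes $P_{A_1}=(P_A-(1-p_1)P_{A_2})/p_1$ (and likewise for $B_1$) pointwise, then sums the absolute differences and applies the triangle inequality. The only difference is that you keep the factor $p_2$ on the second term, giving the slightly sharper bound $(1+p_2)p_1^{-1}\varepsilon$, whereas the paper bounds that factor by $1$.
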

\begin{proof}
    We know that for any $a\in \supp(A)$, we have
    \[P_{A_1}(a) = \frac{P_A(a)-(1-p_1)P_{A_2}(a)}{p_1}\]
    and similarly for $P_{B_1}$.
    Therefore
    \begin{align*}&\frac{1}{2}\sum_{a\in \supp(A_1)\cup \supp(B_1)}\abs{P_{A_1}(a)-P_{B_1}(a)}\\ \leq & \frac{1}{2p_1}\sum_{a\in \supp(A_1)\cup \supp(B_1)}\left(\abs{P_{A}(a)-P_{B}(a)}+\abs{P_{A_2}(a)-P_{B_2}(a)}\right)\\\leq &2p_1^{-1}\varepsilon,
    \end{align*}
    as desired.
\end{proof}

For our applications, it is sometimes more convenient to work with two different sequence of probabilities for mixtures.

\begin{corollary}\label{cor:advanced-unmixing-variable}
    Let $A_1,A_2,B_1,B_2$ be joint random variables and $\bfi,\bfj\in [2]$ be an independent random index.
    Let $p_1 = \PP(\bfi = 1)$, $q_1 = \PP(\bfj=1)$ and let $A=A_{\bfi}, B = B_{\bfj}$. 
    Assume that $A_2$ is $\varepsilon$-close to $B_2$ and $A$ is $\varepsilon$-close to $A'$.
    Then $A_1$ is $\varepsilon'$-close to $B_1$ with $\varepsilon'=2p_1^{-1}(\varepsilon+\abs{p_1-q_1})$.
\end{corollary}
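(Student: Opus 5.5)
The hypothesis ``$A$ is $\varepsilon$-close to $A'$'' should be read as ``$A=A_{\bfi}$ is $\varepsilon$-close to $B=B_{\bfj}$''; that is the reading used below. The plan is to reduce to \cref{prop:unmixing-variable}. That proposition requires both mixtures to use the \emph{same} index, so the first step is to replace $B=B_{\bfj}$ by $\tilde B\eqdef B_{\bfi}$. Here $\bfi$ is taken independent of $(B_1,B_2)$, so $\tilde B$ is the mixture of $B_1,B_2$ with probabilities $(p_1,1-p_1)$.

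\textbf{Step 1: the two mixtures of the $B_i$ are close.} For every $a$ we have
\[
P_{B}(a)-P_{\tilde B}(a)=(q_1-p_1)P_{B_1}(a)+\bigl((1-q_1)-(1-p_1)\bigr)P_{B_2}(a)=(q_1-p_1)\bigl(P_{B_1}(a)-P_{B_2}(a)\bigr).
\]
Summing absolute values over $a$ and using $\sum_a\abs{P_{B_1}(a)-P_{B_2}(a)}\leq 2$ gives $d_{\textup{TV}}(P_B,P_{\tilde B})\leq\abs{p_1-q_1}$.

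Alternatively, one can maximally couple $\bfi$ and $\bfj$ so that $\PP(\bfi\neq\bfj)=\abs{p_1-q_1}$, keep them independent of the $B$'s, and note that $B_{\bfi}\neq B_{\bfj}$ forces $\bfi\neq\bfj$. This is the same idea as in \cref{prop:mixing-variable}.

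\textbf{Step 2: triangle inequality.} Total variation distance is a metric on distributions, so
\[
d_{\textup{TV}}(P_A,P_{\tilde B})\leq d_{\textup{TV}}(P_A,P_B)+d_{\textup{TV}}(P_B,P_{\tilde B})\leq\varepsilon+\abs{p_1-q_1}\eqdef\delta.
\]
Thus $A=A_{\bfi}$ is $\delta$-close to $\tilde B=B_{\bfi}$. Also, $A_2$ is $\varepsilon$-close, and hence $\delta$-close, to $B_2$.

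\textbf{Step 3: unmix.} Apply \cref{prop:unmixing-variable} to $A_1,A_2,B_1,B_2$ with the common index $\bfi$, with $\delta$ in place of $\varepsilon$. This gives that $A_1$ is $2p_1^{-1}\delta$-close to $B_1$, and $2p_1^{-1}\delta=2p_1^{-1}(\varepsilon+\abs{p_1-q_1})=\varepsilon'$, as required.

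The only point needing care is that \cref{prop:unmixing-variable} uses a single shared index independent of all of $A_1,A_2,B_1,B_2$. Since the mixture $\tilde B$ was built with exactly that index $\bfi$, its hypotheses are met. No step here is a genuine obstacle; the main content is the one-line computation in Step 1.
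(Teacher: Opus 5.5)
Your proposal is correct and follows the paper's proof. Both replace $B_{\bfj}$ by $B_{\bfi}$ at cost $\abs{p_1-q_1}$, apply the triangle inequality, and invoke \cref{prop:unmixing-variable} with $\varepsilon+\abs{p_1-q_1}$; you obtain the first bound by a direct computation where the paper uses the $\abs{p_1-q_1}$-closeness of $\bfi$ and $\bfj$, and you correctly read the hypothesis ``$A$ is $\varepsilon$-close to $A'$'' as ``$A$ is $\varepsilon$-close to $B$''.
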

\begin{proof}
    We know that $\bfi$ and $\bfj$ are $\delta\eqdef \abs{p_1-q_1}$-close to each other, showing that $B_{\bfi}$ is $\delta$-close to $B$.
    Therefore $A$ and $B_{\bfi}$ are $(\varepsilon+\delta)$-close to each other.
    Now we may apply \cref{prop:unmixing-variable} to get that $A_1$ and $B_1$ are $\varepsilon'$-close to each other where
    \[\varepsilon' = 2p_1^{-1}(\varepsilon+\delta) = 2p_1^{-1}(\varepsilon+\abs{p_1-q_1}).\qedhere\]
\end{proof}

We will also frequently resample a random variable conditionally independently, so it is important how total variation behaves for conditionally independent random variables.

\begin{proposition}\label{prop:total-variation-gluing}
    Let $A,B,C$ be joint random variables so that $A,C$ are conditionally independent given $B$.
    Let $A',B',C'$ be analogously defined.
    Finally, let $\varepsilon_1,\varepsilon_2>0$ be any real numbers.
    If $(A,B)$ is $\varepsilon_1$-close to $(A',B')$ and $(B,C)$ is $\varepsilon_2$-close to $(B',C')$, then $(A,B,C)$ is $(\varepsilon_1+\varepsilon_2)$-close to $(A',B',C')$.
\end{proposition}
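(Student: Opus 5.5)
The plan is to build an explicit coupling of the two triples by gluing couplings of the pairs along the shared middle coordinate, and then bound the failure probability by a union bound. By the maximal coupling proposition, choose a coupling of $(A,B)$ with $(A',B')$ such that $\PP((A,B)\neq(A',B'))\le\varepsilon_1$. Likewise, choose a coupling of $(B,C)$ with $(B',C')$ such that $\PP((B,C)\neq(B',C'))\le\varepsilon_2$. These two couplings live on different probability spaces, so the real work is merging them into one space carrying $(A,B,C,A',B',C')$ whose marginals are correct and which fails only with probability at most $\varepsilon_1+\varepsilon_2$.

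Merging directly through $B$ is awkward because the coupling also involves $B'$. I would therefore first reduce to a single-sided gluing statement. The target is a coupling in which $(A,B)=(A',B')$ fails with probability at most $\varepsilon_1$ and $(B,C)=(B',C')$ fails with probability at most $\varepsilon_2$. On the intersection of the two success events we have $(A,B,C)=(A',B',C')$, and the union bound then gives the claim.

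A cleaner route avoids couplings and works directly with the $\ell^1$ formula for $d_{\textup{TV}}$, using the conditional independence. Write
\[P_{A,B,C}(a,b,c)=P_{A,B}(a,b)\,P_{C\mid B=b}(c)\]
and similarly for the primed triple. Insert the hybrid term $P_{A',B'}(a,b)P_{C\mid B=b}(c)$ and apply the triangle inequality.

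The first difference is
\[\sum|P_{A,B}(a,b)-P_{A',B'}(a,b)|\,P_{C\mid B=b}(c).\]
Summing over $c$ gives exactly $2d_{\textup{TV}}(P_{A,B},P_{A',B'})\le 2\varepsilon_1$. Values of $b$ outside $\supp(B)$ need a convention, but they contribute nothing there.

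The second difference is
\[\sum_{a,b,c}P_{A',B'}(a,b)\,|P_{C\mid B=b}(c)-P_{C'\mid B'=b}(c)|.\]
Summing over $a$ reduces it to
\[\sum_{b,c}P_{B'}(b)\,|P_{C\mid B=b}(c)-P_{C'\mid B'=b}(c)|.\]
This is not literally $2d_{\textup{TV}}((B,C),(B',C'))$, because the weight is $P_{B'}(b)$ rather than a comparison of joints. To fix it, I would bound
\[P_{B'}(b)\,|P_{C\mid b}-P_{C'\mid b}|\le |P_{B',C'}(b,c)-P_{B,C}(b,c)|+|P_B(b)-P_{B'}(b)|\,P_{C\mid B=b}(c).\]
Summing, the first piece gives $2\varepsilon_2$. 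The second piece gives $2d_{\textup{TV}}(P_B,P_{B'})$, which is at most $2\min(\varepsilon_1,\varepsilon_2)$.

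This naive bookkeeping therefore yields $\varepsilon_1+\varepsilon_2+\min(\varepsilon_1,\varepsilon_2)$, slightly weaker than claimed. The main obstacle is removing that extra term. I would do this by symmetrizing the hybrid. Use the midpoint-type decomposition
\[P_{A,B,C}-P_{A',B',C'}=\bigl(P_{A\mid B}-P_{A'\mid B'}\bigr)\text{-terms}+\dots,\]
arranged so that one difference is weighted by $P_{B,C}$ and the other by $P_{A',B'}$, each term then being exactly a joint difference. Concretely, write
\[P_{A,B,C}-P_{A',B',C'}=P_{A\mid B}\,(P_{B,C}-P_{B',C'})+P_{C'\mid B'}\,(P_{A,B}-P_{A',B'}),\]
with the conditionals interpreted appropriately on the differing supports. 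One checks that the $P_{A\mid B}P_{B',C'}$ and $P_{C'\mid B'}P_{A,B}$ cross terms cancel only when the $B$-marginals agree, so a support convention is needed. If that cancellation fails, I would fall back to the coupling route, gluing the two maximal couplings along $B$ via a conditionally independent product given the common value of $B$.

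Since the constants only matter up to $O(\cdot)$ downstream, the weaker bound $2(\varepsilon_1+\varepsilon_2)$ would also suffice for the paper's purposes if the exact constant resists.
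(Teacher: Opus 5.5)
Your $\ell^1$ computation is correct, using the convention $P_{C\mid B=b}:=P_{C'\mid B'=b}$ for $b\notin\supp(B)$. It proves
\[d_{\textup{TV}}(P_{A,B,C},P_{A',B',C'})\le \varepsilon_1+\varepsilon_2+d_{\textup{TV}}(P_B,P_{B'})\le \varepsilon_1+\varepsilon_2+\min(\varepsilon_1,\varepsilon_2).\]
The extra term cannot be removed, because the proposition as stated is false. Take $B,B'\in\{0,1\}$ with $\PP(B=0)=\PP(B'=1)=10/17$ and $\PP(B=1)=\PP(B'=0)=7/17$. Given $B=0$, let $A,C$ be independent uniform on $\{1,2,3\}$; given $B=1$, let them be independent uniform on $\{1,2\}$. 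Build $(A',B',C')$ the same way with the roles of $0$ and $1$ swapped.

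On the slice $b=0$, $P_{A,B}$ puts $10/51$ on each of $1,2,3$ and $P_{A',B'}$ puts $7/34$ on each of $1,2$. This contributes $\frac{22}{102}$ to the $\ell^1$ distance. On the same slice, $P_{A,B,C}$ puts $10/153$ on nine cells and $P_{A',B',C'}$ puts $7/68$ on four of them, contributing $4\cdot\frac{23}{612}+5\cdot\frac{40}{612}=\frac{292}{612}$. The slice $b=1$ is symmetric. Hence $\varepsilon_1=\varepsilon_2=\frac{11}{51}$, but $d_{\textup{TV}}(P_{A,B,C},P_{A',B',C'})=\frac{73}{153}>\frac{66}{153}=\varepsilon_1+\varepsilon_2$. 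This is consistent with your bound, since $d_{\textup{TV}}(P_B,P_{B'})=\frac{27}{153}$.

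The mechanism is exactly the weight mismatch you spotted. Write $p=P_B(b)$, $q=P_{B'}(b)$, and $\alpha,\alpha',\gamma,\gamma'$ for the conditional laws of $A,A',C,C'$ on a slice. The slice inequality $\|p\,\alpha\otimes\gamma-q\,\alpha'\otimes\gamma'\|_1\le\|p\alpha-q\alpha'\|_1+\|p\gamma-q\gamma'\|_1$ holds when $p=q$ but can fail when $p\neq q$.

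So neither of your repair routes can succeed.

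\begin{itemize}
\item Your symmetrized decomposition is an identity only when $P_B=P_{B'}$. In that case it does give $\varepsilon_1+\varepsilon_2$.
\item Your coupling fallback is the paper's own argument, and it breaks at the marginal check. After gluing the extended maximal couplings conditionally independently given $(B,B')$, the variables $A$ and $C_*$ are conditionally independent given $(B,B')$. However, the conditional law of $A$ given $(B,B')$ depends on $B'$. Off the diagonal it is the normalized excess $(P_{A,B}-P_{A',B'})^+(\cdot,b)$, while on $\{B=B'=b\}$ it also carries the common part $\min(P_{A,B},P_{A',B'})(\cdot,b)$. So $A$ and $C_*$ need not be conditionally independent given $B$ alone, and $(A,B,C_*)$ need not have the law of $(A,B,C)$.
\end{itemize}

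In fact, in the example no coupling with $\PP((A,B)\neq(A',B'))\le\varepsilon_1$ and $\PP((B,C)\neq(B',C'))\le\varepsilon_2$ exists, since the union bound would then contradict the computed distance. The right conclusion is the one you reached at the end: state and prove the corrected bound $\varepsilon_1+\varepsilon_2+d_{\textup{TV}}(P_B,P_{B'})$. Every later use of this proposition in the paper only matters up to constant factors, so the corrected version suffices there.
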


To prove this, we need the following lemma.

\begin{lemma}\label{lem:extend-coupling}
    Let $A,B$ be joint random variables, and similarly for $A',B'$.
    Then any coupling of $B,B'$ with $\PP(B\neq B') = d_{\textup{TV}}(P_B, P_{B'})$ can be extended to a coupling of $(A,B),(A',B')$ with $\PP((A,B)\neq (A',B')) = d_{\textup{TV}}(P_{A,B},P_{A',B'})$.
\end{lemma}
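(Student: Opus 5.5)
Given a coupling $(B,B')$ with $\PP(B\neq B')=d_{\textup{TV}}(P_B,P_{B'})$, I will extend it by choosing, for each $b$, how to generate $A$ and $A'$. Since $(B,B')$ is optimal, it has the maximal-coupling form: $P_{B,B'}(b,b)=\min\{P_B(b),P_{B'}(b)\}$ for every $b$. Indeed, $\PP(B=B')=\sum_b P_{B,B'}(b,b)\le\sum_b\min\{P_B(b),P_{B'}(b)\}=1-d_{\textup{TV}}$, with equality forcing equality termwise.

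For the extension I will use an explicit target rather than abstract existence. Write $\mu(a,b)=P_{A,B}(a,b)$ and $\mu'(a,b)=P_{A',B'}(a,b)$. I want a coupling of $(A,B)$ and $(A',B')$ such that for every $(a,b)$,
\[\PP\bigl((A,B)=(A',B')=(a,b)\bigr)=\min\{\mu(a,b),\mu'(a,b)\}.\]
Then
\[\PP\bigl((A,B)\neq(A',B')\bigr)=1-\sum_{a,b}\min\{\mu,\mu'\}=d_{\textup{TV}}(P_{A,B},P_{A',B'}).\]
So I need two things: this coupling exists, and its $(B,B')$-marginal is exactly the prescribed coupling.

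I will build it fiberwise over the value of $(B,B')$.

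\emph{Diagonal cells.} On the event $B=B'=b$, which has mass $m_b=\min\{P_B(b),P_{B'}(b)\}$, I need to place mass $\min\{\mu(a,b),\mu'(a,b)\}$ on each diagonal pair $((a,b),(a,b))$. Since $\sum_a\min\{\mu(a,b),\mu'(a,b)\}\le m_b$, this fits. The leftover diagonal mass $m_b-\sum_a\min\{\mu(a,b),\mu'(a,b)\}$ must be filled with off-diagonal pairs $(a,a')$, $a\ne a'$. The available $A$-side residual is $\mu(a,b)-\min\{\mu(a,b),\mu'(a,b)\}$, which sums to $P_B(b)-\sum_a\min\ge$ leftover, and the $A'$-side residual sums to $P_{B'}(b)-\sum_a\min\ge$ leftover as well. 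On the diagonal, the two residuals have disjoint supports in $a$, since at each $a$ one of them is zero. Hence any product-type allocation of the leftover mass between them uses only pairs with $a\neq a'$.

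\emph{Off-diagonal cells.} On the cells $(b,b')$ with $b\neq b'$, I distribute the remaining residual mass of $\mu(\cdot,b)$ and $\mu'(\cdot,b')$ proportionally, i.e.\ independently conditioned on the cell. These pairs never agree since $b\ne b'$.

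\emph{Main check.} The key obstacle is bookkeeping: I must verify that for each $b$, the $A$-residual left after the diagonal step totals exactly $\sum_{b'\neq b}P_{B,B'}(b,b')=P_B(b)-m_b$, and symmetrically for $A'$. I will arrange this by choosing, in the diagonal step, the sub-amount of each residual used proportionally. Then each remaining residual is a nonnegative vector with the correct total, so the off-diagonal proportional allocation matches marginals. Finally, summing the masses confirms that $(A,B)$ has law $\mu$, that $(A',B')$ has law $\mu'$, and that the $(B,B')$ marginal is the given coupling.
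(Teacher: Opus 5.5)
Your proposal is correct and is essentially the paper's argument. The paper writes each of $(A,B)$ and $(A',B')$ as a mixture of the common part $\min\{\mu,\mu'\}$ and a residual, and places the common part under the diagonal of the given coupling using $\sum_a \min\{\mu(a,b),\mu'(a,b)\}\le \min\{P_B(b),P_{B'}(b)\}=P_{B,B'}(b,b)$. It then glues the residuals conditionally independently along the leftover part of $(B,B')$; unpacked cell by cell, this is exactly your proportional/product allocation, so both constructions give the same coupling. Your proposal also makes explicit two points the paper uses without comment: that an optimal coupling must have $P_{B,B'}(b,b)=\min\{P_B(b),P_{B'}(b)\}$, and that the residuals on a diagonal cell have disjoint supports in $a$, so the residual part never creates agreement.
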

\begin{proof}
    If $(A,B)$ and $(A',B')$ have disjoint supports, then there are nothing to prove.
    Otherwise, we know that $d_{\textup{TV}}(P_{A,B}, P_{A',B'})<1$.
    For simplicity, let $s = 1-d_{\textup{TV}}(P_{A,B}, P_{A',B'})>0$.
    Let $(A^*,B^*)$ be such that 
    \[P_{A^*,B^*}(a,b) = s^{-1}\min\{P_{A,B}(a,b),P_{A',B'}(a,b)\}.\]
    Then it is clear that $(A,B)$ is a mixture of $(A^*,B^*)$ and some random pair $(C,D)$ with probabilities $(s,1-s)$, and $(A',B')$ is a mixture of $(A^*,B^*)$ and some random pair $(C',D')$ with probabilities $(s,1-s)$.

    Now for any $b\in \supp(B)\cap \supp(B')$, we see that 
    \[sP_{B^*}(b) = \sum_{a}\min\{P_{A,B}(a,b),P_{A',B'}(a,b)\}\leq \min\{P_B(b), P_{B'}(b)\}=P_{B,B'}(b,b).\]
    This shows that $(B,B')$ is a mixture of $(B^*,B^*)$ and some other $(E,E')$ with probabilities $(s,1-s)$.
    It is clear that the marginal distribution of $E$ is the same as $D$, and the marginal distribution of $E'$ is the same as $D'$.
    Therefore we may sample $(C,D,C',D')$ jointly so that the marginal distribution of $(D,D')$ is the same as $(E,E')$.
    Now take $(A,B,A',B')$ to be the mixture of $(A_*,B_*,A_*,B_*)$ and $(C,D,C',D')$ with probabilities $(s,1-s)$, and this is the desired coupling.
\end{proof}

\begin{proof}[Proof of \cref{prop:total-variation-gluing}]
    Let $(B,B')$ be any coupling so that $\PP(B\neq B') = d_{\textup{TV}}(P_B, P_{B'})$.
    Apply \cref{lem:extend-coupling} to extend to a coupling $(A,B,A',B')$ and a coupling $(B,C,B',C')$.
    Therefore we get a coupling $(A,B,C_*,A',B',C_*')$ where $(A,A')$ and $(C_*,C_*')$ are conditionally independent given $(B,B')$.
    Therefore the marginal distribution of $(A,B,C_*)$ is the same as $(A,B,C)$, and the marginal distribution of $(A',B',C_*')$ is the same as $(A',B',C')$, showing that we indeed get a coupling $(A,B,C,A',B',C')$.
    This coupling satisfies that $\PP((A,B)\neq (A',B'))\leq \varepsilon_1$ and also $\PP((B,C)\neq (B',C'))\leq \varepsilon_2$.
    By the union bound, this means that $\PP((A,B,C)\neq (A',B',C'))\leq \varepsilon_1+\varepsilon_2$, as desired.
\end{proof}

Finally, at some point of the argument, we would like to fix some variables while making sure that some other random variables are still close to each other.
To make sure that it is possible to do so, we need the following simple corollary of the Markov's inequality.

\begin{proposition}\label{prop:total-variation-markov}
    Let $A,B,B'$ be joint random variables, and let $p\in (0,1)$.
    Let $S_p\subseteq \supp(A)$ be the set of $a\in \supp(A)$ with
    \[d_{\textup{TV}}(P_{B\mid A=a},P_{B'\mid A=a})< p^{-1}d_{\textup{TV}}(P_{A,B}, P_{A,B'}).\]
    Then $P_A(S_p)\geq 1-p$.
\end{proposition}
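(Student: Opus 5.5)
This is a Markov inequality argument applied to a nonnegative function of $a$. Write $\delta\eqdef d_{\textup{TV}}(P_{A,B},P_{A,B'})$ and, for each $a\in\supp(A)$, set $f(a)\eqdef d_{\textup{TV}}(P_{B\mid A=a},P_{B'\mid A=a})\geq 0$. The plan has two steps: express $\delta$ as the average of $f(a)$ under $P_A$, then apply Markov.

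\textbf{Step 1: the averaging identity.} Since $(A,B,B')$ is a joint random variable, the pairs $(A,B)$ and $(A,B')$ have the same first marginal $P_A$. Hence
\[P_{A,B}(a,b)=P_A(a)P_{B\mid A=a}(b)\qquad\text{and}\qquad P_{A,B'}(a,b)=P_A(a)P_{B'\mid A=a}(b).\]
Substitute these into the sum definition of total variation and group the terms by $a$. Because $P_A(a)\geq 0$ factors out of the absolute values, this gives
\[\delta=\frac12\sum_{a}\sum_b P_A(a)\abs{P_{B\mid A=a}(b)-P_{B'\mid A=a}(b)}=\sum_{a\in\supp(A)}P_A(a)f(a)=\EE[f(A)].\]
This is the total-variation analogue of \cref{prop:kl-chain-rule}, and in fact it is an exact identity. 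The only bookkeeping needed is that pairs $(a,b)$ with $P_A(a)=0$ contribute nothing.

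\textbf{Step 2: Markov's inequality.} If $\delta=0$, then $f(a)=0$ for every $a$ with $P_A(a)>0$. The strict inequality $f(a)<p^{-1}\cdot 0$ would then fail, so in this degenerate case I read the statement with $\leq$. Note that the later uses of this proposition only need the non-strict bound.

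If $\delta>0$, Markov's inequality gives
\[\PP\bigl(f(A)\geq p^{-1}\delta\bigr)\leq \frac{\EE[f(A)]}{p^{-1}\delta}=p.\]
The complement of this event is exactly $\{A\in S_p\}$, so $P_A(S_p)\geq 1-p$.

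There is no real obstacle here. The only points requiring care are the identity in Step 1, which relies on the two distributions sharing the marginal $P_A$, and the degenerate case $\delta=0$.
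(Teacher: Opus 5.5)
Your proof is correct and matches the paper's argument, which also writes $d_{\textup{TV}}(P_{A,B},P_{A,B'})=\EE_A f(A)$ with $f(a)=d_{\textup{TV}}(P_{B\mid A=a},P_{B'\mid A=a})$ and then applies Markov's inequality to $f(A)$. Your handling of the degenerate case is a real catch: when $d_{\textup{TV}}(P_{A,B},P_{A,B'})=0$ the strict inequality makes $S_p$ empty, so the statement as written fails, a point the paper's proof passes over; the non-strict version you adopt is all the later application needs.
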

\begin{proof}
    Let $f(a) = d_{\textup{TV}}(P_{B\mid A=a},P_{B'\mid A=a})$ for any $a\in \supp(A)$.
    Then it is clear by definition that 
    \[d_{\textup{TV}}(P_{A,B},P_{A,B'}) = \EE_A f(A).\]
    Moreover, it is clear that $S_p$ is the set of elements $a$ with $f(a)<p^{-1}\EE_Af(A)$.
    By Markov's inequality, we know that $P_A(\supp(A)\backslash S_p)\leq p$, and so $P_A(S_p)\geq 1-p$.
\end{proof}

\subsection{Stability}\label{subsec:stability}
We will follow a similar strategy as in the proof of \cref{thm:tight-construction}, although the structure we may extract is going to be weaker. 
To be specific, we will show the following.
\begin{theorem}\label{thm:strong-stability}
    Let $G$ be a simple $r$-edge-colored graph, and let $(X_1,X_2,X_3)$ be a random rainbow triangle in $G$ following a symmetric distribution.
    Suppose that
    \[2\HH(X_1,X_2,X_3)= 3\HH(X_1,X_2)+\log_2\frac{r-2}{r}-\veps\]
    with $\veps<\alpha r^{-16}$ for some absolute constant $\alpha>0$ small enough.
    Then there exist random variables $A_1,\dots,A_{(r-1)^2}\in V(G)$ such that 
\begin{enumerate}
    \item their supports $V_i\eqdef \supp(A_i)$ are mutually disjoint,
    \item for each $1\leq i\neq j\leq (r-1)^2$, there is a color $c_{ij}$ such that $A_{i}A_{j}$ is an edge with color $c_{ij}$ with probability at least $1-O(r^6\veps^{1/2})$,
    \item the complete graph on $[(r-1)^2]$ with color $c_{ij}$, which we call $H$, is a graph induced by an affine plane of order $(r-1)$, and
    \item $(X_1,X_2,X_3)$ is $O(r^2\veps^{1/2})$-close to $(A_{\bfi},A_{\bfj},A_{\bfk})$, where $(\bfi,\bfj,\bfk)$ is chosen from $[(r-1)^2]^3$ uniformly at random such that $c_{\bfi\bfj},c_{\bfi\bfk},c_{\bfj\bfk}$ are all distinct.
\end{enumerate}

\end{theorem}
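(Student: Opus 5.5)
We follow the proof of \cref{thm:tight-construction}, with each exact independence replaced by approximate closeness in total variation. First we convert the entropic inputs of \cref{lem:entropy_input} into closeness statements.

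\textbf{Step 1: closeness statements.}
\begin{itemize}
\item From (a), $\HH(Y\mid X_1,X_2,X_3)\geq \HH(Y)-\veps$. Let $\tilde Y$ be an independent copy of $X_1$. By \cref{cor:mutual-info-small}, with trivial conditioning, $(X_1,X_2,X_3,Y)$ is $O(\veps^{1/2})$-close to $(X_1,X_2,X_3,\tilde Y)$.
\item From (b), $(X_1,X_2,X_3,Y')$ is $O(\veps^{1/2})$-close to the tuple in which $Y'$ is resampled given $X_1$ alone.
\item From (d) and (e), \cref{cor:almost-uniform} shows that the colour $\chi(X_3Y_2)$ and the colour pair $(\chi(X_1Y_3),\chi(X_2Y_3))$ are $O(r\veps^{1/2})$-close to uniform on their allowed sets.
\item From (c), the mixing weights $p_i$ differ from their extremal values by $O(\veps)$.
\end{itemize}

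\textbf{Step 2: unmixing to control $Y_1$.} The unmixing results \cref{prop:unmixing-variable,cor:advanced-unmixing-variable} let us pass from $Y$ and $Y'$ to $Y_1$. This costs a factor $p_1^{-1}\approx (r-1)^2$, and yields that $(X_1,X_2,X_3,Y_1)$ is $O(r^2\veps^{1/2})$-close to the tuple in which $Y_1$ is resampled given $X_1$ only.

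\textbf{Step 3: constructing the parts.} Next we mimic \cref{lem:structure-X1-Y1}. We define, for a rainbow triangle $x$, the $(r-1)^2$ sets $V^{(x)}_i$, refined by the colour data as before. The near-uniformity from Step 1 gives that the class index $\bfi$ of $Y$ is close to uniform and nearly independent of $X$. The previous step then says that the conditional law of the vertex in class $\bfi$ depends essentially only on $Y$, not on the rest of $X$.

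To extract honest disjoint sets, we fix a good reference triangle $x$ via \cref{prop:total-variation-markov}, losing a constant factor. We then let $A_i$ be distributed as $Y_1$ conditioned on $Y\in V_i^{(x)}$; equivalently, $A_i$ is a resample from the conditional law of $X_1$ within its class. Disjointness of the supports holds by construction, since the $V_i^{(x)}$ are disjoint.

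\textbf{Step 4: edges and colours between parts.} We sample $(X_1,X_2,Y_1)$ using \cref{prop:total-variation-gluing} together with $\chi(Y_1X_2)=\chi(X_1X_2)$. This shows that, apart from probability $O(r^{O(1)}\veps^{1/2})$, every pair $A_iA_j$ is an edge of a single colour $c_{ij}$. We obtain property 2 with the stated loss $r^6\veps^{1/2}$, where factors of $r^2$ arise from conditioning on an index of probability $(r-1)^{-2}$ (possibly twice), via \cref{prop:total-variation-markov}. Property 4 follows by gluing $(X_1,X_2)\approx(A_\bfi,A_\bfj)$ with the resampling of $X_3$.

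\textbf{Step 5: the affine-plane structure.} For property 3 we want to apply \cref{lem:extract-affine-plane} to $H$. Its hypotheses are exact counting statements. We derive them from the approximate ones, using that each relevant count is an integer multiple of $(r-1)^{-2}$ in probability. If an error bound $O(r^{6}\veps^{1/2})$ is below $c\,r^{-2}$, approximate counts force the exact counts. This is precisely where the assumption $\veps<\alpha r^{-16}$ enters: then $r^6\veps^{1/2}\ll r^{-2}$.

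\textbf{Main obstacle.} The delicate step is Step 3, making \cref{lem:structure-X1-Y1} robust. The sets $V^{(X)}_i$ genuinely depend on $X$, and we must show they agree with a single fixed partition up to small mass, without the error compounding across the $(r-1)^2$ classes. The plan is to compare the partitions for two independent good triangles through the shared variable $Y$, applying the Markov step once per class and tracking losses polynomially in $r$.
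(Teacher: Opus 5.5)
\textbf{There is a genuine gap: property 4, and with it property 3, is never established.}

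Your Steps 1--2, the single-good-triangle construction of the $A_i$, and property 2 essentially follow the paper.

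You say property 4 follows by gluing $(X_1,X_2)\approx(A_\bfi,A_\bfj)$ with the resampling of $X_3$. But \cref{prop:total-variation-gluing} needs a conditional-independence chain, and $(X_1,X_2,X_3)$ has none. $X_2$ and $X_3$ are not conditionally independent given $X_1$. Gluing along $(X_1,X_2)$ presupposes that the conditional law of $X_3$ given $(X_1,X_2)$ is already close to that of $A_\bfk$ given $(\bfi,\bfj)$, which is the whole content of property 4. Your outline never identifies this conditional law.

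The paper gets it by unmixing $Y'$, a mixture of $Y_2$ and $Y_3$ with weights $\bigl(\frac{p_2}{p_2+p_3},\frac{p_3}{p_2+p_3}\bigr)\approx\bigl(\frac1r,\frac{r-1}r\bigr)$. Three inputs are needed:
\begin{itemize}
\item $(X_1,X_2,Y')$ is controlled by (b), because $Y'$ is nearly conditionally independent of $X_2$ given $X_1$.
\item $(X_1,X_2,Y_2)$ is controlled by the extra entropy inputs (f) and (g) of \cref{lem:entropy_input}. These make $Y_2$ nearly conditionally independent of $(X_2,X_3)$ given $(X_1,\chi(X_1X_2))$ (\cref{lem:Y2-approx-independence,lem:X1-X2-Y2-approx-distribution}).
\item For the $H$-side mixture to have the matching weights $\bigl(\frac1r,\frac{r-1}r\bigr)$, $H$ must be \emph{exactly} color-regular (\cref{lem:color-regular}). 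The paper derives this from (f), i.e.\ near-uniformity of $\chi(X_1X_2)$ given $X_1$, plus a granularity argument at scale $\Theta(r^{-4})$.
\end{itemize}
You use only (a)--(e), and nothing in your plan replaces (f), (g), or color-regularity.

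Property 3 inherits this gap. You assert that ``approximate counts force exact counts'' without saying which approximate counts you have. In the paper, the rainbow-count hypothesis of \cref{lem:extract-affine-plane} comes only after property 4, via \cref{lem:rainbow-cherry-to-triangle}. Each triple $(i,j,k)$ with $c_{ij}\neq c_{ik}$ carries mass $\Theta(r^{-6})$ under $(\bfi,\bfj,\bfk)$, so properties 4 and 2 give failure probability $O(r^8\veps^{1/2})$. The hypothesis $\veps<\alpha r^{-16}$ is exactly what makes this below $\frac12$; it is not about $r^6\veps^{1/2}\ll r^{-2}$.

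The step you flag as the main obstacle, comparing partitions across triangles, is not needed. The paper resamples $Y_i^*$ within the color class of $Y_i$ and mixes to get $Y^*$. It then proves $(X_1,X_2,X_3,Y,Y^*)\approx(X_1,X_2,X_3,W,W^*)$ by gluing with your Step 2 (\cref{lem:Ystar-approx-independence}). After that it fixes one good triangle $x$ and reads all remaining structure off distributions relative to that single family $(A_i)$.

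A smaller correction: $A_i$ should be the law of $Y$ given $X=x$ and $Y\in V_i^{(x)}$. Given $X=x$, $Y_1$ always lies in $V_1^{(x)}$, so ``$Y_1$ conditioned on $Y\in V_i^{(x)}$'' is not the right object.
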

It is clear that if we take $X_1,X_2,X_3$ to be a uniform rainbow triangle, then this theorem implies  (the contrapositive of) \cref{cor:weak-stability}.
In the remaining of the subsection, we give a proof of \cref{thm:strong-stability}.

We still begin by applying \cref{lem:entropy_input}, which gives us parameters $p_1 = \frac{1}{(r-1)^2}+O(\veps)$, $p_2 = \frac{r-2}{(r-1)^2}+O(\veps)$, $p_3 = \frac{r-2}{r-1}+O(\veps)$, and random variables $Y_1,Y_2,Y_3,Y,Y'$ that we will use throughout the subsection.

We would like to still show a version of \cref{lem:structure-easy-corollary}, but now we cannot expect the full independence that held in the previous setting.
Instead, we will apply the propositions proven in the previous subsection to prove approximate independence.
To this end, we do the following sampling.
Let $(Z_1,Z,Z')$ be a conditionally independent resampling of $(Y_1,Y,Y')$ given just $X_1$.
Also let $W$ be an independent resampling of $Y$.

\begin{lemma}\label{lem:Y-approx-independence}
    The tuples $(X_1,X_2,X_3,Y)$, $(X_1,X_2,X_3,Z)$ and $(X_1,X_2,X_3,W)$ are $O(\veps^{1/2})$-close to each other.
\end{lemma}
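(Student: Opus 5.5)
We compare each tuple with $(X_1,X_2,X_3,Y)$ using \cref{cor:mutual-info-small}, with inputs from \cref{lem:entropy_input}(a), and then combine the two comparisons by the triangle inequality for total variation.

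\emph{Comparison with $W$.} Apply \cref{cor:mutual-info-small} with $A$ trivial, $B=Y$ and $C=(X_1,X_2,X_3)$, so that $B'=W$ is an independent resample of $Y$. We need
\[\HH(X_1,X_2,X_3,Y)\geq \HH(X_1,X_2,X_3,W)-\veps.\]
The right-hand side equals $\HH(X_1,X_2,X_3)+\HH(Y)$. By the chain rule, the required bound is therefore $\HH(Y\mid X_1,X_2,X_3)\geq \HH(Y)-\veps$. The marginal of $Y$ is that of $X_1$, so $\HH(Y)=\HH(X_1)$, and the bound is exactly \cref{lem:entropy_input}(a). Hence $(X_1,X_2,X_3,Y)$ is $O(\veps^{1/2})$-close to $(X_1,X_2,X_3,W)$.

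\emph{Comparison with $Z$.} Apply \cref{cor:mutual-info-small} with $A=X_1$, $B=Y$ and $C=(X_2,X_3)$. Since $Z$ is a conditionally independent resample of $Y$ given $X_1$, we have $B'=Z$. We need
\[\HH(X_1,X_2,X_3,Y)\geq \HH(X_1,X_2,X_3,Z)-\veps.\]
Here $\HH(X_1,X_2,X_3,Z)=\HH(X_1,X_2,X_3)+\HH(Y\mid X_1)$. The required bound becomes $\HH(Y\mid X_1,X_2,X_3)\geq \HH(Y\mid X_1)-\veps$, which follows from (a) together with $\HH(Y\mid X_1)\leq \HH(Y)=\HH(X_1)$. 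Hence $(X_1,X_2,X_3,Y)$ is $O(\veps^{1/2})$-close to $(X_1,X_2,X_3,Z)$.

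Finally, total variation satisfies the triangle inequality, so $(X_1,X_2,X_3,Z)$ and $(X_1,X_2,X_3,W)$ are $O(\veps^{1/2})$-close as well.

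The only point needing care is the identification in the first step. One must check that $(X_1,X_2,X_3,W)$ is precisely the tuple $(A,B',C)$ in \cref{cor:mutual-info-small} with $A$ constant; this holds since $W$ is an independent copy of $Y$, so its joint law with $(X_1,X_2,X_3)$ is the product law. Beyond that the lemma is essentially immediate: all the substantive work was already done in \cref{lem:entropy_input}(a).
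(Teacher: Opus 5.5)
Your proposal is correct and follows the paper's proof essentially verbatim. Both comparisons go through \cref{cor:mutual-info-small} with the same choices of $A,B,C$, using \cref{lem:entropy_input}(a), together with dropping conditions $\HH(Y\mid X_1)\leq\HH(Y)$ in the $Z$ case. Your explicit triangle-inequality step comparing $Z$ with $W$ is the only addition, and the paper leaves it implicit.
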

\begin{proof}
    By \cref{lem:entropy_input} and the fact that $X_1$ and $Y$ are identically distributed, we know that
    \[\HH(X_1,X_2,X_3,Y)\geq \HH(Y)+\HH(X_1,X_2,X_3) -\veps = \HH(X_1,X_2,X_3,W)-\veps,\]
    and so we know that $(X_1,X_2,X_3,Y)$ is $O(\veps^{1/2})$-close to $(X_1,X_2,X_3,W)$ by \cref{cor:mutual-info-small} applied with $A=\emptyset$, $B = Y$ and $C=(X_1,X_2,X_3)$.
    Similarly, we know that
    \[\HH(X_1,X_2,X_3,Y)\geq \HH(Y\mid X_1)+\HH(X_1,X_2,X_3) -\veps=\HH(X_1,X_2,X_3,Z)-\veps,\]
    and so \cref{cor:mutual-info-small} applies with $A=X_1$, $B = Y$ and $C= (X_2,X_3)$.
\end{proof}

\begin{lemma}\label{lem:Yprime-approx-independence}
    The tuples $(X_1,X_2,X_3,Y')$ and $(X_1,X_2,X_3,Z')$ are $O(\veps^{1/2})$-close to each other.
\end{lemma}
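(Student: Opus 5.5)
The plan is to mirror the second half of the proof of \cref{lem:Y-approx-independence}, replacing \cref{lem:entropy_input}(a) by \cref{lem:entropy_input}(b). By construction, $Z'$ is a conditionally independent resample of $Y'$ given $X_1$; indeed $(Z_1,Z,Z')$ is a conditionally independent resample of $(Y_1,Y,Y')$ given $X_1$, so its marginal $Z'$ is a conditionally independent resample of $Y'$ given $X_1$. It follows that $(X_1,Z')$ is distributed as $(X_1,Y')$, and that $Z'$ is conditionally independent of $(X_2,X_3)$ given $X_1$. Hence, by the chain rule,
\[
\HH(X_1,X_2,X_3,Z')=\HH(X_1,X_2,X_3)+\HH(Z'\mid X_1)=\HH(X_1,X_2,X_3)+\HH(Y'\mid X_1).
\]

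Next I identify $\HH(Y'\mid X_1)$. As noted in the proof of \cref{claim:many-color-2}, $(X_1,Y')$ has the same distribution as $(X_1,X_2)$, so $\HH(Y'\mid X_1)=\HH(X_2\mid X_1)$. Combining this with \cref{lem:entropy_input}(b), which states $\HH(Y'\mid X_1,X_2,X_3)\geq \HH(X_2\mid X_1)-\veps$, and using the chain rule gives
\[
\HH(X_1,X_2,X_3,Y')=\HH(X_1,X_2,X_3)+\HH(Y'\mid X_1,X_2,X_3)\geq \HH(X_1,X_2,X_3,Z')-\veps.
\]

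Finally, I apply \cref{cor:mutual-info-small} with $A=X_1$, $B=Y'$, $B'=Z'$ and $C=(X_2,X_3)$. Its hypothesis is exactly the displayed inequality, and its conclusion is that $(X_1,Y',X_2,X_3)$ is $O(\veps^{1/2})$-close to $(X_1,Z',X_2,X_3)$. Reordering coordinates does not change total variation distance, so this is the claim. There is no real obstacle here; the only point needing care is that $Z'$, defined through a joint resampling of a triple, really is a conditionally independent resample of $Y'$ alone given $X_1$. This holds because marginalizing the joint conditional law of $(Y_1,Y,Y')$ given $X_1$ to its last coordinate gives the conditional law of $Y'$ given $X_1$.
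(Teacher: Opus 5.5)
Your proof is correct and is essentially the paper's argument: you combine \cref{lem:entropy_input}(b) with the fact that $(X_1,Y')$ is distributed as $(X_1,X_2)$ to get $\HH(X_1,X_2,X_3,Y')\geq \HH(X_1,X_2,X_3,Z')-\veps$, then apply \cref{cor:mutual-info-small} with $A=X_1$, $B=Y'$, $C=(X_2,X_3)$. Your display keeps the $-\veps$ on the right-hand side, which the paper's corresponding display inadvertently drops.
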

\begin{proof}
    By \cref{lem:entropy_input} and the fact that $(X_1,Y')$ is identically distributed as $(X_1,X_2)$, we know that
    \[\HH(X_1,X_2,X_3,Y') \geq \HH(Y'\mid X_1)+\HH(X_1,X_2,X_3)-\veps = \HH(X_1,X_2,X_3,Z'),\]
    and so we may apply \cref{cor:mutual-info-small} with $A=X_1$, $B=Y'$ and $C=(X_2,X_3)$.
\end{proof}

Next, sample $(\bfc_1,\bfc_2)$ conditionally independently and uniformly from 
\[\{(c_1,c_2)\in [r]^2\mid c_1,c_2,\chi(X_1X_2)\textup{ all distinct}\}\]
given $\chi(X_1X_2)$. 
Also sample $\bfc_0$ from $[r]\backslash \{\chi(X_1X_2), \chi(X_1X_3)\}$ conditionally independently and uniformly given $\chi(X_1X_2), \chi(X_1X_3)$.

\begin{lemma}\label{lem:color-approx-independence}
    The tuples $(X_1,X_2,X_3,\chi(X_3Y_2))$ and $(X_1,X_2,X_3,\bfc_0)$ are $O(r\veps^{1/2})$-close to each other.
    The tuples $(X_1,X_2,X_3,\chi(X_1Y_3),\chi(X_2Y_3))$ and $(X_1,X_2,X_3,\bfc_1,\bfc_2)$ are $O(r\veps^{1/2})$-close to each other as well.
\end{lemma}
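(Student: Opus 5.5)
The plan is to apply \cref{cor:almost-uniform} twice, once for each of the two statements, using the entropy lower bounds (d) and (e) of \cref{lem:entropy_input}.

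For the first statement, take $A=(X_1,X_2,X_3)$ and $B=\chi(X_3Y_2)$. For each $a=(x_1,x_2,x_3)\in\supp(X_1,X_2,X_3)$, let $S_a=[r]\backslash\{\chi(x_1x_2),\chi(x_1x_3)\}$, which has size $n=r-2$. This set contains $\supp(\chi(X_3Y_2)\mid A=a)$. Indeed, $Y_2$ is a resample of $X_2$ with $\chi(X_1Y_2)=\chi(X_1X_2)$ fixed, so $(x_1,Y_2,x_3)$ is a rainbow triangle. Hence $\chi(x_3Y_2)$ differs from both $\chi(x_1Y_2)=\chi(x_1x_2)$ and $\chi(x_1x_3)$. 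By construction, $\bfc_0$ given $A=a$ is uniform on $S_a$, and it is conditionally independent of everything else given the two colors, which are functions of $A$. Thus $(A,\bfc_0)$ is exactly the pair $(A,B')$ in \cref{cor:almost-uniform}. Part (d) of \cref{lem:entropy_input} gives $\HH(B\mid A)\ge\log_2(r-2)-O(r^2\veps)$. \Cref{cor:almost-uniform} with slack $O(r^2\veps)$ then yields closeness $O((r^2\veps)^{1/2})=O(r\veps^{1/2})$.

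The second statement follows the same pattern with $B=(\chi(X_1Y_3),\chi(X_2Y_3))$. Let $S_a$ be the set of pairs $(c_1,c_2)$ with $c_1,c_2,\chi(x_1x_2)$ pairwise distinct, so $n=(r-1)(r-2)$. The containment holds because $(x_1,x_2,Y_3)$ is a rainbow triangle. Given $A$, the pair $(\bfc_1,\bfc_2)$ is uniform on $S_a$. Part (e) of \cref{lem:entropy_input} gives the entropy deficit $O(r^2\veps)$, which again produces closeness $O(r\veps^{1/2})$.

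No step is a genuine obstacle. The only points needing care are the support containments and checking that the constant from Pinsker's inequality is absolute, so that the square root of $O(r^2\veps)$ is indeed $O(r\veps^{1/2})$.
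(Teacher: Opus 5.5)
Your proposal is correct and is the paper's proof: it also applies \cref{cor:almost-uniform} conditionally on $(X_1,X_2,X_3)$, using parts (d) and (e) of \cref{lem:entropy_input}. The support containments come from the rainbow condition, and taking the square root of the $O(r^2\veps)$ entropy deficit gives the $O(r\veps^{1/2})$ closeness.
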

\begin{proof}
    Recall from \cref{lem:entropy_input} that
    \begin{align*}\HH(\chi(X_1Y_3),\chi(X_2Y_3)\mid X_1,X_2,X_3) \geq \log_2((r-1)(r-2))-O(r^2\veps).
    \end{align*}
    Also note that for any $(x_1,x_2,x_3)\in \supp(X_1,X_2,X_3)$, we have that $(\chi(X_1Y_3),\chi(X_1Y_3))$ is in the set
    \[\{c_1,c_2\in [r]\backslash \{\chi(x_1x_2)\}\mid c_1\neq c_2\},\]
    which has size $(r-1)(r-2)$.
    By \cref{cor:almost-uniform}, we know that $(X_1,X_2,X_3,\chi(X_2Y_3),\chi(X_1Y_3))$ is $O(r\veps^{1/2})$-close to $(X_1,X_2,X_3,\bfc_1,\bfc_2)$, as desired.
    To show that the tuples $(X_1,X_2,X_3,\chi(X_3Y_2))$ and $(X_1,X_2,X_3,\bfc_0)$, the same argument holds except that now one uses
    \[\HH(\chi(X_3Y_2)\mid X_1,X_2,X_3)\geq \log_2(r-2)-O(r^2\veps).\qedhere\]
\end{proof}

Now, we would again like to show that $Y_1$ is approximately conditionally independent of $X_2,X_3$ given $X_1$.
This will be a simple corollary of \cref{prop:unmixing-variable}.

\begin{lemma}\label{lem:Y1-approx-independence}
    The tuples $(X_1,X_2,X_3,Y_1)$ and $(X_1,X_2,X_3,Z_1)$ are $O(r^2\veps^{1/2})$-close to each other.
\end{lemma}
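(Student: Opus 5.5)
The plan is to apply \cref{prop:unmixing-variable} directly. Write $X=(X_1,X_2,X_3)$. By construction, $Y$ is a mixture of $Y_1$ and $Y'$: there is an independent index $\mathbf{t}\in\{1,2\}$ with $\PP(\mathbf{t}=1)=p_1$, and $Y=Y_1$ when $\mathbf{t}=1$, $Y=Y'$ when $\mathbf{t}=2$. This holds because choosing $Y_2$ or $Y_3$ with probabilities $\frac{p_2}{p_2+p_3},\frac{p_3}{p_2+p_3}$ is exactly $Y'$. I would take
\[A_1=(X,Y_1),\quad A_2=(X,Y'),\quad B_1=(X,Z_1),\quad B_2=(X,Z'),\]
and use the same index $\mathbf{t}$ for both mixtures. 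Then $A_{\mathbf{t}}=(X,Y)$.

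The first point to check is that $B_{\mathbf{t}}$ is distributed as $(X,Z)$. To arrange this, I would make the resampling of $(Y_1,Y,Y')$ given $X_1$ include the index. That is, resample the quadruple $(Y_1,Y',\mathbf{t},Y)$ conditionally independently given $X_1$, and call the copy $(Z_1,Z',\mathbf{t}',Z)$. Since $\mathbf{t}$ is independent of everything, $\mathbf{t}'$ is again an independent index with $\PP(\mathbf{t}'=1)=p_1$, and $Z=Z_{\mathbf{t}'}$ in the obvious sense. The marginal law of $(X,Z_1,Z')$ is unchanged by this, so $(X,Z)$ is exactly the mixture of $B_1,B_2$ with index law $(p_1,1-p_1)$. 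If one prefers not to identify the indices, \cref{cor:advanced-unmixing-variable} handles two indices with equal marginals equally well, with $\abs{p_1-q_1}=0$.

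Next I would supply the two closeness hypotheses. \cref{lem:Yprime-approx-independence} gives that $A_2=(X,Y')$ is $O(\veps^{1/2})$-close to $B_2=(X,Z')$. \cref{lem:Y-approx-independence} gives that $A_{\mathbf{t}}=(X,Y)$ is $O(\veps^{1/2})$-close to $B_{\mathbf{t}}=(X,Z)$. \cref{prop:unmixing-variable} then yields that $(X,Y_1)$ is $2p_1^{-1}\cdot O(\veps^{1/2})$-close to $(X,Z_1)$.

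Finally I would bound $p_1^{-1}$. By \cref{lem:entropy_input}(c), $p_1=\frac1{(r-1)^2}+O(\veps)$. Because $\veps<\alpha r^{-16}$ with $\alpha$ small, the error term is at most $\frac1{2(r-1)^2}$, so $p_1^{-1}\le 2(r-1)^2$. This gives the claimed $O(r^2\veps^{1/2})$ bound.

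The only delicate step is the bookkeeping ensuring $(X,Z)$ is genuinely the $p_1$-mixture of $(X,Z_1)$ and $(X,Z')$. Everything else is a direct plug-in of the earlier lemmas.
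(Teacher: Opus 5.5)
Your proposal is correct and is essentially the paper's proof: write $(X_1,X_2,X_3,Y)$ and $(X_1,X_2,X_3,Z)$ as $p_1$-mixtures of $(X_1,X_2,X_3,Y_1),(X_1,X_2,X_3,Y')$ and of $(X_1,X_2,X_3,Z_1),(X_1,X_2,X_3,Z')$, feed \cref{lem:Y-approx-independence,lem:Yprime-approx-independence} into \cref{prop:unmixing-variable}, and use $p_1=\Theta(r^{-2})$. Your extra bookkeeping about the index is harmless but not needed. Total variation depends only on laws, and the identity $P_{Y\mid X_1}=p_1P_{Y_1\mid X_1}+(1-p_1)P_{Y'\mid X_1}$ already makes $(X_1,X_2,X_3,Z)$ the required mixture.
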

\begin{proof}
    Note that $Y$ is a mixture of $Y_1$ and $Y'$ with $\PP(Y=Y_1) = p_1 = \Theta(r^{-2})$ (where we use $\veps = o(r^{-2})$).
    Also $Z$ is a mixture of $Z_1$ and $Z'$ with $\PP(Z=Z_1)=p_1$ as well.
    Therefore by \cref{lem:Y-approx-independence,lem:Yprime-approx-independence} and \cref{prop:unmixing-variable} we see that $(X_1,X_2,X_3,Y_1)$ is $O(r^2\veps^{1/2})$-close to $(X_1,X_2,X_3,Z_1)$.
\end{proof}

To apply these approximate independences is nonetheless trickier than before.
The main difficulty is that the partition generated in the argument of \cref{lem:structure-X1-Y1} did not depend on the rainbow triangle we started with, but now it can vary a little bit even for most of the rainbow triangles.
To circumvent it, we would need to first extract the key step from the argument and prove an approximate version of it.
To this end, for each $i\in[3]$, take $Y_i^*$ to be a conditionally independent resample of $Y_i$ given $X_1,X_2,X_3,\chi(Y_iX_{i+1}), \chi(Y_iX_{i+2})$.
Then let $Y^*$ be such that $(X_1,X_2,X_3,Y,Y^*)$ is the mixture of $(X_1,X_2,X_3,Y_i,Y_i^*)_{i=1}^{3}$ with probabilities $(p_1,p_2,p_3)$.
(We remark that this is precisely the $Y^*$ we constructed in the proof of \cref{lem:structure-X1-Y1} when $X_1X_2X_3$ is a uniform rainbow triangle.)
It is easy to see that $(Y,Y^*)$ is identically distributed as $(X_1,Y_1)$, and the key claim is that it is approximately independent of $(X_1,X_2,X_3)$.
Recall that $W$ is an independent resample of $Y$, which is the same as an independent resample of $X_1$.
Let $W^*$ be sampled conditionally independently given $W$ so that $(W,W^*)$ is identically distributed as $(X_1,Y_1)$.

\begin{lemma}\label{lem:Ystar-approx-independence}
    The tuples $(X_1,X_2,X_3,Y,Y^*)$ and $(X_1,X_2,X_3,W,W^*)$ are $O(r^2\veps^{1/2})$-close to each other.
\end{lemma}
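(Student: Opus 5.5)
Following the exact argument of \cref{lem:structure-X1-Y1}, I will show that the closeness of $(X_1,X_2,X_3,Y)$ to $(X_1,X_2,X_3,W)$ survives attaching $Y^*$. The key point is that, given $Y$, the sample $Y^*$ is approximately independent of $(X_1,X_2,X_3)$.

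\textbf{Step 1: reduce to $(X_1,X_2,X_3,Y,Y^*)$ versus $(X_1,X_2,X_3,Y,\tilde Y^*)$.} Let $\tilde Y^*$ be a conditionally independent resample of $Y^*$ given $Y$ alone. Since $(Y,Y^*)$ has the law of $(X_1,Y_1)$, the pair $(Y,\tilde Y^*)$ has the law of $(W,W^*)$ given the first coordinate. By \cref{prop:total-variation-gluing}, with $B=Y$ (respectively $W$), $A=(X_1,X_2,X_3)$ and $C=\tilde Y^*$ (respectively $W^*$), the tuple $(X_1,X_2,X_3,Y,\tilde Y^*)$ is $O(\veps^{1/2})$-close to $(X_1,X_2,X_3,W,W^*)$. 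This uses \cref{lem:Y-approx-independence} for the $(A,B)$ part, and the $(B,C)$ parts are identically distributed. It therefore remains to show that $(X_1,X_2,X_3,Y,Y^*)$ is $O(r^2\veps^{1/2})$-close to $(X_1,X_2,X_3,Y,\tilde Y^*)$.

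\textbf{Step 2: use the symmetric reparametrization.} Let $\mathbf t$ be the mixing index. As in \cref{lem:structure-X1-Y1}, the tuple $(Y,X_{\mathbf t+1},X_{\mathbf t+2},Y^*)$ has the law of $(X_1,X_2,X_3,Y_1)$. This holds because each $(Y_i,X_{i+1},X_{i+2})$ has the law of $(X_1,X_2,X_3)$, and $Y_i^*$ is resampled exactly as $Y_1$ is from $X_1$, namely given the other two vertices and the two colors at the resampled vertex. By \cref{lem:Y1-approx-independence}, the tuple $(Y,X_{\mathbf t+1},X_{\mathbf t+2},Y^*)$ is therefore $O(r^2\veps^{1/2})$-close to the version in which $Y^*$ is resampled given $Y$ only, which is $(Y,X_{\mathbf t+1},X_{\mathbf t+2},\tilde Y^*)$.

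\textbf{Step 3: upgrade to the full triple $(X_1,X_2,X_3)$.} Here I need that, given $(Y,X_{\mathbf t+1},X_{\mathbf t+2})$, the sample $Y^*$ is conditionally independent of the remaining variable $X_{\mathbf t}$, and also of $\mathbf t$ itself. Independence from $X_{\mathbf t}$ holds by construction: $Y_i^*$ is resampled given $X_{i\pm1}$ and the two colors at $Y_i$, and these colors are functions of $Y_i,X_{i\pm1}$. Independence from $\mathbf t$ is the delicate part. I will condition on $\mathbf t=i$ for each $i$ and apply the same reasoning to each mixture component $(Y_i,X_{i+1},X_{i+2},Y_i^*)$. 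I would use \cref{prop:mixing-variable} to recombine the components, or alternatively bound the closeness of the mixture directly. The unmixing costs factors $p_i^{-1}$, so I will avoid unmixing and work with the whole mixture.

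For the whole mixture, I will apply \cref{prop:total-variation-gluing} with $B=(Y,X_{\mathbf t+1},X_{\mathbf t+2},\mathbf t)$, $A=X_{\mathbf t}$ and $C=Y^*$. The required closeness of $(B,C)$ to its resampled version comes from Step 2, after noting that $\mathbf t$ is recoverable from $(X_1,X_2,X_3,Y)$ by disjointness of supports. Gluing then gives closeness of the full tuple with error $O(r^2\veps^{1/2})$. Forgetting $\mathbf t$ and reordering the $X$'s completes the proof.

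\textbf{Main obstacle.} The main obstacle is making Step 3 rigorous. The issue is that $\tilde Y^*$ defined via $Y$ alone must coincide with the resample obtained given $(Y,X_{\mathbf t+1},X_{\mathbf t+2},\mathbf t)$ after approximate independence is used. To handle this, I would first apply Step 2 conditioned on $\mathbf t$, using \cref{lem:Y1-approx-independence} together with the symmetry of $(X_1,X_2,X_3)$.
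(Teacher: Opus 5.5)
Your plan is essentially the paper's proof, and it goes through. Step 1 is the paper's last step, with your $\tilde Y^*$ playing the role of its $\widetilde W$. Steps 2--3 are the paper's per-component application of \cref{lem:Y1-approx-independence}, followed by gluing in the missing vertex $X_{\mathbf t}$ and recombining the three components. Putting $\mathbf t$ into $B$ packages the paper's per-component gluing plus \cref{prop:mixing-variable} into a single application of \cref{prop:total-variation-gluing}.

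Your conditioning set is in fact the better one. The paper glues using that $X_i$ and $(Y_i,Y_i^*)$ are conditionally independent given $(X_{i+1},X_{i+2})$ alone. This is not literally true for $i=1,2$: for example, $\chi(Y_1X_2)=\chi(X_1X_2)$ correlates $Y_1$ with $X_1$. With $Y$ inside $B$, as you do, the needed independence does hold. The conditional law of $Y_i^*$ depends only on $X_{i+1},X_{i+2}$ and the colors at $Y_i$, and $\tilde Y^*$ depends only on $Y$.

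The one step you justify wrongly is the closeness of $(B,C)$ to $(B',C')$ in Step 3. Recoverability of $\mathbf t$ from $(X_1,X_2,X_3,Y)$ is beside the point, since $B$ and $C$ do not contain $X_{\mathbf t}$. In fact $(Y,X_{\mathbf t+1},X_{\mathbf t+2},Y^*)$ is independent of $\mathbf t$, and that independence is the correct reason. Conditioned on $\mathbf t=i$, the tuples $(Y,X_{\mathbf t+1},X_{\mathbf t+2},Y^*)$ and $(Y,X_{\mathbf t+1},X_{\mathbf t+2},\tilde Y^*)$ are distributed exactly as $(X_1,X_2,X_3,Y_1)$ and $(X_1,X_2,X_3,Z_1)$. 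So \cref{lem:Y1-approx-independence} bounds each conditional distance by $O(r^2\veps^{1/2})$. Averaging over $\mathbf t$ then gives the claim with no $p_i^{-1}$ loss, since no unmixing ever occurs.

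The same fact disposes of your ``main obstacle''. Every $(Y_i,Y_i^*)$ is distributed as $(X_1,Y_1)$, so the conditional law of $Y^*$ given $(Y,\mathbf t)$ is that of $Y_1$ given $X_1=Y$, whatever the value of $\mathbf t$. Hence resampling given $Y$ alone and resampling given $(Y,\mathbf t)$ are the same thing.
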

\begin{proof}
    For each $i\in[3]$, let $W_i$ be sampled conditionally independently given $Y_i$ so that the distribution of $(Y_i,W_i)$ is the same as $(X_1,Y_1)$.
    Then by \cref{lem:Y1-approx-independence}, we know that $(Y_i,X_{i+1},X_{i+2},Y^*_i)$ is $O(r^2\veps^{1/2})$-close to $(Y_i,X_{i+1},X_{i+2},W_i)$.
    Since $X_i$ and $(Y_i, Y^*_i)$ are conditionally independent given $X_{i+1}, X_{i+2}$, and $X_i$ and $(Y_i, W_i)$ are conditionally independent given $X_{i+1},X_{i+2}$, by \cref{prop:total-variation-gluing} we know that $(X_1,X_2,X_3,Y_i,Y_i^*)$ and $(X_1,X_2,X_3,Y_i, W_i)$ are $O(r^2\veps^{1/2})$-close to each other.

    Now let $(X_1,X_2,X_3,Y,\widetilde{W})$ be the mixture of $(X_1,X_2,X_3,Y_i,W_i)_{i=1}^3$ with probabilities $(p_1,p_2,p_3)$.
    By \cref{prop:mixing-variable}, we know that $(X_1,X_2,X_3,Y,Y^*)$ is $O(r^2\veps^{1/2})$-close to $(X_1,X_2,X_3,Y,\widetilde{W})$.
    It remains to show that $(X_1,X_2,X_3,Y,\widetilde{W})$ is close to $(X_1,X_2,X_3,W,W^*)$.

    Note that $(X_1,X_2,X_3)$ and $\Tilde{W}$ are conditionally independent given $Y$, and $(X_1,X_2,X_3)$ and $W^*$ are conditionally independent given $W$.
    Also note that $(Y,\Tilde{W})$ and $(W,W^*)$ are both identically distributed as $(X_1,Y_1)$.
    Therefore by \cref{lem:Y-approx-independence} and \cref{prop:total-variation-gluing} applied with $A=A'=(X_1,X_2,X_3)$, $B=Y$, $B'=W$, $C=\widetilde{W}$ and $C' = W^*$, we know that $(X_1,X_2,X_3,Y,\widetilde{W})$ is $O(\veps^{1/2})$-close to $(X_1,X_2,X_3,W,W^*)$.
    We are thus done by the triangle inequality.
\end{proof}

We will now fix a good rainbow triangle so that intuitively we can fix the sets $V_1,\ldots, V_{(r-1)^2}$ with similar effects as in \cref{lem:structure-X1-Y1}.
In particular, by \cref{prop:total-variation-markov} and \cref{lem:color-approx-independence,lem:Ystar-approx-independence}, there exists a rainbow triangle $x=(x_1,x_2,x_3)$ so that the following properties hold.
\begin{enumerate}
    \item $(Y_x, Y_x^*)\eqdef (Y,Y^*\mid (X_1,X_2,X_3)=(x_1,x_2,x_3))$ is $O(r^2\veps^{1/2})$-close to $(W,W^*)$.
    \item $(\chi(X_3Y_2)\mid (X_1,X_2,X_3) = (x_{1},x_{2},x_{3}))$ is $O(r\veps^{1/2})$-close to $(\bfc_0\mid (X_1,X_2,X_3)=(x_{1},x_{2},x_{3}))$.
    \item $(\chi(X_1Y_3), \chi(X_2Y_3)\mid (X_1,X_2,X_3) = (x_{1},x_{2},x_{3}))$ is $O(r\veps^{1/2})$-close to $(\bfc_1,\bfc_2\mid (X_1,X_2,X_3)=(x_{1},x_{2},x_{3}))$.
\end{enumerate}
With this choice of rainbow triangle, we may now define the analog of the sets $V_1,\ldots, V_{(r-1)^2}$.
Instead of sets, we will define random variables $A_1,\ldots, A_{(r-1)^2}$ with disjoint supports.
These will be defined as follows.
\begin{enumerate}
    \item The random variable $A_1$ is sampled from the distribution $(Y_1\mid (X_1,X_2,X_3)=(x_1,x_2,x_3))$ independently.
    \item The random variables $A_2,\ldots, A_{r-1}$ are sampled from the distributions $(Y_2\mid (X_1,X_2,X_3)=(x_1,x_2,x_3), \chi(X_3Y_2)=c)$ independently when $c$ ranges through $[r]\backslash \{\chi(x_1x_2),\chi(x_1x_3)\}$.
    \item The random variables $A_r,\ldots, A_{(r-1)^2}$ are sampled from the distributions $(Y_3\mid (X_1,X_2,X_3)=(x_1,x_2,x_3), (\chi(X_1Y_3),\chi(X_2Y_3)) = (c_1,c_2))$ independently when $(c_1,c_2)$ ranges through pairs in $\left([r]\backslash \{\chi(x_1x_2)\}\right)^2$ with $c_1\neq c_2$.
\end{enumerate}
We will also set $V_i = \supp(A_i)$ for each $i\in[(r-1)^2]$.
We will show that this choice of random variables satisfies all four requirements in the statement of \cref{thm:strong-stability}.
The first item is clear, and it remains to prove the other three.

Having set up the random variables $A_1,\ldots, A_{(r-1)^2}$, we may now prove the analog of \cref{lem:structure-X1-Y1}.
Let $\bfi\in [(r-1)^2]$ be an independent uniform random variable on $[(r-1)^2]$.
Also for each $i\in[(r-1)^2]$, let $A_i^*$ be an independent resampling of $A_i$.

\begin{lemma}\label{lem:X1-Y1-approx-distribution}
    We have that $(X_1,Y_1)$ is $O(r^2\veps^{1/2})$-close to $(A_{\bfi}, A_{\bfi}^*)$.
\end{lemma}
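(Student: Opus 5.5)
The plan is to compare $(X_1,Y_1)$ with $(Y_x,Y_x^*)$ through the chain
\[
(X_1,Y_1)\;\overset{d}{=}\;(W,W^*)\;\approx\;(Y_x,Y_x^*)\;\approx\;(A_{\bfi},A_{\bfi}^*).
\]
The first identity holds by construction, since $(W,W^*)$ was sampled to be identically distributed as $(X_1,Y_1)$. The second step is property 1 of the chosen rainbow triangle $x$, which gives closeness $O(r^2\veps^{1/2})$. So everything reduces to showing that $(Y_x,Y_x^*)$ is $O(r^2\veps^{1/2})$-close to $(A_{\bfi},A_{\bfi}^*)$; the lemma then follows from the triangle inequality for total variation.

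To analyze $(Y_x,Y_x^*)$, I will write it as a mixture over the $(r-1)^2$ cells. Condition on $(X_1,X_2,X_3)=x$ and let $\mathbf t\in[3]$ be the mixture index, with $\PP(\mathbf t=i)=p_i$.
\begin{itemize}
\item If $\mathbf t=1$, then $(Y,Y^*)=(Y_1,Y_1^*)$. Since $\chi(Y_1X_2),\chi(Y_1X_3)$ are fixed to $\chi(x_1x_2),\chi(x_1x_3)$ given $x$, $Y_1^*$ is an independent resample of $Y_1$. This is exactly $(A_1,A_1^*)$.
\item If $\mathbf t=2$, condition further on the color $c=\chi(x_3Y_2)$. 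Given $x$ and $c$, $Y_2$ is distributed as $A_c$, and $Y_2^*$ is an independent resample with the same colors to $x_3,x_1$. Hence $(Y_2,Y_2^*)$ given $(x,c)$ is exactly $(A_c,A_c^*)$, and the conditional pair is a mixture of the $(A_c,A_c^*)$ with weights given by the law of $\chi(X_3Y_2)$ given $x$.
\item If $\mathbf t=3$, the same reasoning with color pairs $(c_1,c_2)$ gives an exact mixture of the $(A_j,A_j^*)$, $r\le j\le (r-1)^2$.
\end{itemize}
The only thing to check here is that the conditioning defining $Y_i^*$ matches the conditioning defining $A_j$. This is a routine unpacking of the definitions.

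Consequently, $(Y_x,Y_x^*)=(A_{\mathbf j},A_{\mathbf j}^*)$ exactly, for a random index $\mathbf j\in[(r-1)^2]$ independent of the $A$'s and of the $A^*$'s. Its law is
\[
\PP(\mathbf j=1)=p_1,\qquad \PP(\mathbf j=c)=p_2\,\PP\bigl(\chi(X_3Y_2)=c\mid x\bigr),\qquad \PP(\mathbf j=(c_1,c_2))=p_3\,\PP\bigl((\chi(X_1Y_3),\chi(X_2Y_3))=(c_1,c_2)\mid x\bigr).
\]
By Proposition~\ref{prop:mixing-variable}, applied with $A_j=B_j$ (so the componentwise distance is $0$), it then suffices to show that $\mathbf j$ is $O(r^2\veps^{1/2})$-close to the uniform index $\bfi$.

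This closeness is where the quantitative work lies, and I expect it to be the main obstacle. Lemma~\ref{lem:entropy_input}(c) gives $p_1=\frac{1}{(r-1)^2}+O(\veps)$, $p_2=\frac{r-2}{(r-1)^2}+O(\veps)$ and $p_3=\frac{r-2}{r-1}+O(\veps)$. Properties 2 and 3 of $x$ say that the conditional color laws are $O(r\veps^{1/2})$-close to uniform on sets of sizes $r-2$ and $(r-1)(r-2)$ respectively. I will use the following two-level bound on total variation: the distance from $\mathbf j$ to $\bfi$ is at most the distance between the top-level weights $(p_1,p_2,p_3)$ and $\bigl(\frac1{(r-1)^2},\frac{r-2}{(r-1)^2},\frac{r-2}{r-1}\bigr)$, plus the $p$-weighted sum of the within-block distances. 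The first term is $O(\veps)$ and the second is $O(r\veps^{1/2})$. Both are within $O(r^2\veps^{1/2})$, since $\veps$ is tiny relative to $r^{-16}$ and $O(\veps)\le O(\veps^{1/2})$.

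Combining the three comparisons by the triangle inequality gives the claimed $O(r^2\veps^{1/2})$ bound.
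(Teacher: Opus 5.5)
Your proposal is correct and follows essentially the same route as the paper. You reduce via $(W,W^*)$ and property~1 of $x$ to comparing $(Y_x,Y_x^*)$ with $(A_{\bfi},A_{\bfi}^*)$, write $(Y_x,Y_x^*)$ as an exact mixture of the pairs $(A_j,A_j^*)$, and show the mixing index is close to uniform, using Lemma~\ref{lem:entropy_input}(c) for the block weights and properties~2 and~3 of $x$ within blocks. The only cosmetic difference is that you bound the index distance by a direct two-level triangle inequality, whereas the paper phrases that step as a second application of Proposition~\ref{prop:mixing-variable}.
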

\begin{proof}
    Recall that $(X_1,Y_1)$ is identically distributed as $(W,W^*)$, which is $O(r^2\veps^{1/2})$-close to $(Y_x, Y_x^*)$.
    Therefore it suffices to show that $(Y_x, Y_x^*)$ is $O(r^2\veps^{1/2})$-close to $(A_{\bfi}, A_{\bfi}^*)$.

    Note that $(Y_x, Y_x^*)$ and $(A_{\bfi},A_{\bfi}^*)$ are both mixtures of $(A_i,A_i^*)_{i=1}^{(r-1)^2}$, though with different probabilities.
    We know that $(A_{\bfi},A_{\bfi}^*)$ is a mixture of $(A_i, A_i^*)_{i=1}^{(r-1)^2}$ with uniform probabilities $((r-1)^{-2},\ldots, (r-1)^{-2})$.
    Suppose that $(Y_x, Y_x^*)$ is a mixture of $(A_i, A_i^*)_{i=1}^{(r-1)^2}$ with probabilities $(q_i)_{i=1}^{(r-1)^2}$.
    By definition, we know that $p_1=q_1$, $p_2 = q_2+\cdots+q_{r-1}$ and $p_3 = q_r+\cdots+q_{(r-1)^2}$.
    We will show that $(q_1,\ldots, q_{(r-1)^2})$ is close to uniform.

    We begin by showing that $q_r,\ldots, q_{(r-1)^2}$ are almost the same.
    In fact, the distribution with masses $p_3^{-1}q_r$, $\ldots$, $p_3^{-1}q_{(r-1)^2}$ is precisely the distribution of $(\chi(X_1Y_3),\chi(X_2Y_3)\mid (X_1,X_2,X_3)=(x_1,x_2,x_3))$, which is $O(r\veps^{1/2})$-close to uniform.
    This shows that $\bfi_3'$ is $O(r\veps^{1/2})$-close to $\bfi_3$, where $\PP(\bfi_3'=i)=p_3^{-1}q_i$ for $i=r,\ldots, (r-1)^2$ and $\bfi_3$ is uniform on $\{r,\ldots, (r-1)^2\}$.

    Similarly, the distribution with masses $p_2^{-1}q_2,\ldots, p_2^{-1}q_{r-1}$ is precisely the distribution of the color $(\chi(X_3Y_2)\mid (X_1,X_2,X_3)=(x_1,x_2,x_3))$, which is $O(r\veps^{1/2})$-close to $(\bfc_0\mid (X_1,X_2,X_3)=(x_1,x_2,x_3))$, which is uniform on $[r]\backslash \{\chi(x_1x_2),\chi(x_1x_3)\}$.
    This shows that $\bfi_2'$ and $\bfi_2$ are $O(r\veps^{1/2})$-close to each other, where $\PP(\bfi_2'=i)=p_2^{-1}q_i$ for $i=2,\ldots, r-1$ and $\bfi_2$ is uniform on $\{2,\ldots, r-1\}$.

    Finally, let $\bfi_1=\bfi_1'=1$ deterministically.
    Then $\bfi$ is the mixture of $\bfi_1,\bfi_2,\bfi_3$ with probabilities $(\frac{1}{(r-1)^2}, \frac{r-2}{(r-1)^2}, \frac{r-2}{r-1})$.
    Set $\bfi'$ to be the mixture of $\bfi_1',\bfi_2',\bfi_3'$ with probabilities $(p_1,p_2,p_3)$.
    Recall that the distribution with masses $\frac{1}{(r-1)^2}, \frac{r-2}{(r-1)^2}, \frac{r-2}{r-1}$ is $O(\veps)$-close to the one with masses $p_1,p_2,p_3$.
    Therefore by \cref{prop:mixing-variable}, we know that $\bfi$ is $O(r\veps^{1/2})$-close to $\bfi'$.
    Now as $(Y_x,Y_x^*)=(A_{\bfi'}, A_{\bfi'}^*)$, we can deduce that $(A_{\bfi}, A_{\bfi}^*)$ is $O(r\veps^{1/2})$-close to $(Y_x,Y_x^*)$ using \cref{prop:mixing-variable} again.
\end{proof}

Similar to \cref{lem:structure-X1-Yprime}, we may now study the distribution of $(X_1,X_2)$ and use it to conclude that $G$ is mostly monochromatic and complete between $V_i$ and $V_j$.
As the argument is more involved in this setting, we split this step into two lemmas as follows.
Below, let $\bfj$ be sampled conditionally independently given $\bfi$ and uniformly from $[(r-1)^2]\backslash \bfi$.

\begin{lemma}\label{lem:X1-X2-approx-distribution}
    The pair $(X_1,X_2)$ is $O(r^2\veps^{1/2})$-close to $(A_{\bfi}, A_{\bfj})$.
\end{lemma}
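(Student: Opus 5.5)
Mirror the proof of \cref{lem:structure-X1-Yprime}, replacing exact identities by total-variation estimates.

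\textbf{Step 1: describe $(X_1,Y_1,Y)$.} First we build the triple $(A_{\bfi},A^*_{\bfi},\cdot)$ as a comparison model. \cref{lem:X1-Y1-approx-distribution} says $(X_1,Y_1)$ is $O(r^2\veps^{1/2})$-close to $(A_{\bfi},A^*_{\bfi})$. \cref{lem:Y-approx-independence} says $(X_1,Y)$ is $O(\veps^{1/2})$-close to $(X_1,W)$, with $W$ an independent copy of $X_1$. Since $(Z_1,Z)$ is a conditionally independent resample of $(Y_1,Y)$ given $X_1$, the natural object to compare is $(X_1,Z_1,Z)$.

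\textbf{Step 2: approximate the law of $Z$ given $(X_1,Z_1)$.} Let $\bfi^*$ be uniform and independent of $\bfi$. Define $\widehat Y=A^*_{\bfi}$ if $\bfi^*=\bfi$, and $\widehat Y=A'_{\bfi^*}$ (a fresh copy) otherwise. Then $(A_{\bfi},\widehat Y)$ has the law of $(A_{\bfi},A_{\bfi^*})$ with independent indices, i.e.\ of two independent copies of $A_{\bfi}$. Since $X_1$ is $O(r^2\veps^{1/2})$-close to $A_{\bfi}$ and $W$ is independent of $X_1$, the pair $(X_1,W)$ is $O(r^2\veps^{1/2})$-close to $(A_{\bfi},\widehat Y)$. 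The coupling with $A^*_{\bfi}$ also makes $(A_{\bfi},A^*_{\bfi},\widehat Y)$ close to the law forced on $(X_1,Y_1,Y)$.

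This is the delicate point. In the exact case the identification $Y=Y_1$ whenever $\bfi=\bfi^*$ came for free. Here it must come from the mixture structure: $Y=Y_1$ with probability $p_1=(r-1)^{-2}+O(\veps)$, and $Y_1$ is determined inside the mixture. Using the disjointness of the $V_i$, the event $\{Y\in V_{\bfi(X_1)}\}$ has probability about $(r-1)^{-2}$. The claim to establish is that, up to $O(r^2\veps^{1/2})$ in total variation, this event coincides with $\{Y=Y_1\}$. Equivalently, $Y'$ lands in the part of $X_1$ only with probability $O(r^2\veps^{1/2})$.

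\textbf{Step 3: extract $Y'$.} Apply \cref{cor:advanced-unmixing-variable} to the decomposition of $Y$ as a mixture of $Y_1$ and $Y'$ with weights $(p_1,1-p_1)$. Compare it with the decomposition of the model $(A_{\bfi},\widehat Y)$ into the branch $\bfi^*=\bfi$ (weight $(r-1)^{-2}$, close to $(A_{\bfi},A^*_{\bfi})$ by \cref{lem:X1-Y1-approx-distribution}) and the branch $\bfi^*\neq\bfi$. Here the roles are swapped: the small component is known and we solve for the large one, with $p_2+p_3\ge 1/2$. The unmixing loss factor is therefore only $O(1)$, and the weight discrepancy is $|p_1-(r-1)^{-2}|=O(\veps)$. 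This gives that $(X_1,Y')$ is $O(r^2\veps^{1/2})$-close to $(A_{\bfi},A_{\bfj})$ with $\bfj$ uniform on $[(r-1)^2]\setminus\{\bfi\}$.

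\textbf{Step 4: conclude.} Since $(X_1,Y')$ has the same law as $(X_1,X_2)$, this proves the lemma.

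The main obstacle is making the unmixing in Step 3 legitimate without a coupling in which $Y_1$ is literally the same-part component. I would do it on the level of the pair $(X_1,Y)$ alone: write both $P_{X_1,Y}$ and $P_{A_{\bfi},\widehat Y}$ as $p\cdot(\text{first})+(1-p)\cdot(\text{second})$, with the first components $O(r^2\veps^{1/2})$-close by \cref{lem:X1-Y1-approx-distribution}. Then subtract exactly as in the proof of \cref{prop:unmixing-variable}. The only requirement is that $(X_1,Y_1)$ has the same law as the first component of the mixture defining $(X_1,Y)$, which holds by construction since the index is independent.
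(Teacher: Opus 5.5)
Your argument is correct and is essentially the paper's proof: you compare $(X_1,Y)$ with $(A_{\bfi},A^*_{\bfi^*})$ using \cref{lem:Y-approx-independence}, \cref{lem:X1-Y1-approx-distribution} and gluing, then apply \cref{cor:advanced-unmixing-variable} to extract the large component $(X_1,Y')$ with only an $O(1)$ loss factor. The ``delicate point'' in Step 2 (that $\{Y\in V_{\bfi(X_1)}\}$ nearly coincides with $\{Y=Y_1\}$) and the unsupported remark about the triple $(A_{\bfi},A^*_{\bfi},\widehat Y)$ are not needed and can be dropped: as you say at the end, the law-level subtraction only uses that $(X_1,Y_1)$ is the first mixture component of $(X_1,Y)$.
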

\begin{proof}
    As in \cref{lem:structure-X1-Yprime}, we will use the distributions of $(X_1,Y)$ and $(X_1,Y_1)$ to study the distribution of $(X_1,Y')$.
    Recall from \cref{lem:Y-approx-independence} that $(X_1,Y)$ is $O(\veps^{1/2})$-close to $(X_1,W)$, and \cref{lem:X1-Y1-approx-distribution} shows that $X_1$ is $O(r^2\veps^{1/2})$-close to $A_{\bfi}$.
    Let $\bfi^*$ be an independent resampling of $\bfi$.
    As $X_1,W$ are i.i.d. and $A_{\bfi}, A_{\bfi^*}^*$ are i.i.d. as well, by \cref{prop:total-variation-gluing} we know that $(X_1,W)$ is $O(r^2\veps^{1/2})$-close to $(A_{\bfi}, A_{\bfi^*}^*)$.
    Therefore $(X_1,Y)$ is $O(r^2\veps^{1/2})$-close to $(A_{\bfi}, A_{\bfi^*}^*)$.

    Now notice that $(X_1,Y)$ is a mixture of $(X_1,Y_1)$ and $(X_1,Y')$ with probabilities $(p_1,p_2+p_3)$.
    Also notice that $(A_{\bfi}, A_{\bfi^*}^*)$ is a mixture of $(A_{\bfi}, A_{\bfi}^*)$ and $(A_{\bfi}, A_{\bfj}^*)$ with probabilities $(\frac{1}{(r-1)^2}, \frac{r(r-2)}{(r-1)^2})$.
    Since $p_2+p_3 = \frac{r(r-2)}{(r-1)^2}+O(\veps)$, by \cref{cor:advanced-unmixing-variable}, we know that $(X_1,Y')$ is $O(r^2\veps^{1/2})$-close to $(A_{\bfi}, A_{\bfj}^*)$.
    Since $A_{\bfi}$ and $A_{\bfj}$ are independent, we see that $(A_{\bfi},A_{\bfj}^*)$ is identically distributed as $(A_{\bfi}, A_{\bfj})$, as desired.
\end{proof}

\begin{lemma}\label{lem:stability-almost-monochromatic}
    With probability $1-O(r^2\veps^{1/2})$, both $A_{\bfi}A_{\bfj}$ and $A_{\bfi}^*A_{\bfj}^*$ are edges in $G$ with the same color.
\end{lemma}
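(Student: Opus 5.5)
We will mimic the last paragraph of the proof of \cref{lem:structure-X1-Yprime}, with exact identities replaced by the closeness statements from \cref{subsec:total-variation}. Consider the quadruple $(X_1,X_2,Y_1,Y_2^\circ)$, where $Y_2^\circ$ is a conditionally independent resample of $X_2$ given $X_1,Y_1,\chi(X_1X_2)$ such that $(Y_1,Y_2^\circ)$ again forms an edge of the same color. A cleaner route, which we will take, is to use only the triple $(X_1,X_2,Y_1)$. By definition of $Y_1$, the vertex $Y_1X_2$ is always an edge and $\chi(Y_1X_2)=\chi(X_1X_2)$. So it suffices to show that $(X_1,X_2,Y_1)$ is $O(r^2\veps^{1/2})$-close to $(A_{\bfi},A_{\bfj},A_{\bfi}^*)$. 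Then, with probability $1-O(r^2\veps^{1/2})$, the pairs $A_{\bfi}A_{\bfj}$ and $A_{\bfi}^*A_{\bfj}$ are edges of the same color. Running the same argument with the roles of $X_1$ and $X_2$ swapped (by symmetry of $(X_1,X_2)$) gives the same for $A_{\bfi}^*A_{\bfj}$ and $A_{\bfi}^*A_{\bfj}^*$. A union bound over these two events, coupled on the common pair $(A_{\bfi}^*,A_{\bfj})$, then yields the claim.

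The key step is the closeness of $(X_1,X_2,Y_1)$ to $(A_{\bfi},A_{\bfj},A_{\bfi}^*)$. By \cref{lem:Y1-approx-independence}, $(X_1,X_2,X_3,Y_1)$ is $O(r^2\veps^{1/2})$-close to $(X_1,X_2,X_3,Z_1)$, so marginalizing shows that $(X_1,X_2,Y_1)$ is close to $(X_1,X_2,Z_1)$. In the latter, $X_2$ and $Z_1$ are conditionally independent given $X_1$. On the other side, $A_{\bfj}$ and $A_{\bfi}^*$ are conditionally independent given $A_{\bfi}$: given $\bfi$, the variables $A_{\bfi},A_{\bfi}^*,\bfj,A_{\bfj}$ are independent, and knowing $A_{\bfi}$ determines $\bfi$ because the supports $V_i$ are disjoint. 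We then apply \cref{prop:total-variation-gluing} with $B=X_1$, $B'=A_{\bfi}$, $A=X_2$, $A'=A_{\bfj}$, $C=Z_1$, $C'=A_{\bfi}^*$. Its two inputs are \cref{lem:X1-X2-approx-distribution}, which gives that $(X_1,X_2)$ is close to $(A_{\bfi},A_{\bfj})$, and \cref{lem:X1-Y1-approx-distribution}, which gives that $(X_1,Z_1)$, being identically distributed to $(X_1,Y_1)$, is close to $(A_{\bfi},A_{\bfi}^*)$. This gives $O(r^2\veps^{1/2})$-closeness, and the triangle inequality finishes this step.

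For the swap, we use that $(X_1,X_2,X_3)$ is symmetric. Define $Y_2''$ as the analogous resample of $X_2$ given $X_1,X_3,\chi(X_2X_1),\chi(X_2X_3)$; it satisfies $\chi(X_1Y_2'')=\chi(X_1X_2)$. Applying the same argument to $(X_2,X_1,Y_2'')$ gives closeness to $(A_{\bfi},A_{\bfj},A_{\bfi}^*)$ with the labels permuted. Since $(A_{\bfi},A_{\bfj})$ is symmetric in distribution, this is equivalent to the second event. The main obstacle is combining the two events into one joint statement about $(A_{\bfi},A_{\bfj},A_{\bfi}^*,A_{\bfj}^*)$. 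The events ``$A_{\bfi}A_{\bfj}$, $A_{\bfi}^*A_{\bfj}$ same-colored edges'' and ``$A_{\bfi}^*A_{\bfj}$, $A_{\bfi}^*A_{\bfj}^*$ same-colored edges'' each depend on only three of the four independent coordinates. Because of the product structure given $(\bfi,\bfj)$, each event has the stated probability under the full four-variable distribution, so a union bound gives probability $1-O(r^2\veps^{1/2})$ that all three pairs are same-colored edges. This includes the desired pair $A_{\bfi}A_{\bfj}$, $A_{\bfi}^*A_{\bfj}^*$.
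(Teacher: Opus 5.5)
Your proposal is correct and follows the paper's proof. Like the paper, you show that $(X_1,X_2,Y_1)$ is $O(r^2\veps^{1/2})$-close to $(A_{\bfi},A_{\bfj},A_{\bfi}^*)$ via \cref{lem:X1-Y1-approx-distribution,lem:X1-X2-approx-distribution} and \cref{prop:total-variation-gluing}, obtain the second event from the fact that $(A_{\bfi},A_{\bfi}^*,A_{\bfj})$ and $(A_{\bfj},A_{\bfj}^*,A_{\bfi}^*)$ are identically distributed, and finish with a union bound. Your explicit switch from $Y_1$ to $Z_1$ via \cref{lem:Y1-approx-independence}, together with disjointness of the $V_i$, supplies the conditional-independence hypotheses of the gluing proposition that the paper leaves implicit; the detour through $Y_2''$ is unnecessary, since that identity in law alone gives the second event.
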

\begin{proof}
    It suffices to show that with probability $1-O(r^2\veps^{1/2})$, we have that both $A_{\bfi}A_{\bfj}$ and $A_{\bfi}^*A_{\bfj}$ are edges in $G$ with the same color.
    This is because that $(A_{\bfi},A_{\bfi}^*,A_{\bfj})$ is identically distributed as $(A_{\bfj}, A_{\bfj}^*, A_{\bfi}^*)$ and we can simply apply the union bound.
    Notice that $(X_1,Y_1,X_2)$ always satisfies that $X_1X_2$ and $Y_1X_2$ are edges with the same color.
    Therefore it suffices to show that $(X_1,Y_1,X_2)$ is $O(r^2\veps^{1/2})$-close to $(A_{\bfi}, A_{\bfi}^*, A_{\bfj})$.
    This now follows from \cref{lem:X1-Y1-approx-distribution,lem:X1-X2-approx-distribution} and \cref{prop:total-variation-gluing}.
\end{proof}
Now for each $i\neq j\in [(r-1)^2]$, let $c_{ij}\in[(r-1)^2]$ be the most common color for $A_iA_j$ (we break tie arbitrarily).
\cref{lem:stability-almost-monochromatic} suggests that $A_{\bfi}A_{\bfj}$ should most of the time be an edge with color $c_{\bfi\bfj}$.
For simplicity, set $\chi(uv)=*$ if $uv$ is not an edge in $G$.
We make the previous intuition rigorous as follows.

\begin{lemma}\label{lem:edge-is-prescribed-color}
    We have $\PP(\chi(A_{\bfi}A_{\bfj})\neq c_{\bfi\bfj}) = O(r^2\veps^{1/2}).$
\end{lemma}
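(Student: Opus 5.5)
We derive \cref{lem:edge-is-prescribed-color} from \cref{lem:stability-almost-monochromatic} by a short collision-probability argument, done separately for each pair $(i,j)$ and then averaged. Write $\delta=O(r^2\veps^{1/2})$ for the failure probability in \cref{lem:stability-almost-monochromatic}.

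\textbf{Step 1: reduce to a fixed pair.} Condition on $(\bfi,\bfj)=(i,j)$. By construction, $A_i,A_j,A_i^*,A_j^*$ are mutually independent, since the $A$'s are sampled independently and each $A^*_k$ is an independent resample of $A_k$. Hence $\chi(A_iA_j)$ and $\chi(A_i^*A_j^*)$ are i.i.d.\ random variables with values in $[r]\cup\{*\}$. Let $\mu_{ij}$ be their common distribution, and set
\[
\delta_{ij}=\PP\bigl(A_iA_j,\ A_i^*A_j^* \text{ are not both edges of the same color}\bigr).
\]
Averaging over the uniform pair $(\bfi,\bfj)$, \cref{lem:stability-almost-monochromatic} gives $\EE[\delta_{\bfi\bfj}]=\delta$.

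\textbf{Step 2: collision bound for one pair.} For two i.i.d.\ samples from $\mu_{ij}$, the probability that both are the same genuine color $c\in[r]$ is
\[
\sum_{c\in[r]}\mu_{ij}(c)^2\le \max_{c\in[r]}\mu_{ij}(c)\cdot\sum_{c\in[r]}\mu_{ij}(c)\le \mu_{ij}(c_{ij}),
\]
where $c_{ij}$ is the most common color, as in its definition. Restricting the maximum and the sum to genuine colors, excluding $*$, is harmless here. The left-hand side equals $1-\delta_{ij}$, so
\[
\PP\bigl(\chi(A_iA_j)\ne c_{ij}\bigr)=1-\mu_{ij}(c_{ij})\le\delta_{ij}.
\]

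\textbf{Step 3: average.} Taking expectation over $(\bfi,\bfj)$ gives
\[
\PP\bigl(\chi(A_\bfi A_\bfj)\ne c_{\bfi\bfj}\bigr)\le \EE[\delta_{\bfi\bfj}]=O(r^2\veps^{1/2}),
\]
which is the claim.

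\textbf{Main obstacle.} The only subtle point is the mutual independence used in Step 1. It requires $A_i^*$ to be independent of $A_j$ as well, not only of $A_i$. This follows because the $A_k$ for different $k$ are sampled independently and each $A_k^*$ is an independent resample. One also needs "most common color" to range over genuine colors, so that the non-edge value $*$ is never selected. With that convention the inequality in Step 2 holds as written, and no other difficulties arise.
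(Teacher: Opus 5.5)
Your proposal is correct and follows the paper's proof. Both arguments fix a pair $(i,j)$, use the independence of $(A_i,A_j)$ from $(A_i^*,A_j^*)$ and the maximality of $c_{ij}$ to bound $\PP(\chi(A_iA_j)\neq c_{ij})$ by the per-pair failure probability, then average over $(\bfi,\bfj)$ and apply \cref{lem:stability-almost-monochromatic}; your inequality $\sum_{c\in[r]}\mu_{ij}(c)^2\le\mu_{ij}(c_{ij})$ is a compact restatement of the paper's chain of inequalities.
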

\begin{proof}
    Fix any $i\neq j\in [(r-1)^2]$. 
    Then it is clear that
\begin{align*}&1-\PP(A_iA_j, A_i^* A_j^* \textup{ are edges with the same color})\\
\geq& \PP(A_iA_j\textup{ is not an edge})+\sum_{c\in [r]}\PP(\chi(A_iA_j)=c)\PP(\chi(A_i^*A_j^*)\neq c\mid \chi(A_iA_j)=c)\\
\geq& \left(\PP(A_iA_j\textup{ is not an edge})+\sum_{c\in [r]}\PP(\chi(A_iA_j)=c)\right)\PP(\chi(A_i^*A_j^*)\neq c_{ij})\quad(\textup{by the definition of }c_{ij})\\
=&\PP(\chi(A_iA_j)\neq c_{ij}).
\end{align*}
Taking the average over all pairs of $(i,j)$ and applying \cref{lem:stability-almost-monochromatic} gives the desired result.
\end{proof}

In some sense, we have proven that $G$ looks like some blowup of $H$, where $H$ is complete on $[(r-1)^2]$ where the edge $ij$ is colored with $c_{ij}$.
In fact, with this choice of $H$, the second item of \cref{thm:strong-stability} already follows by \cref{lem:edge-is-prescribed-color}.
To see this, simply note that for any $i\neq j\in [(r-1)^2]$,
\[\PP(\chi(A_iA_j)\neq c_{ij})\leq \frac{\PP(\chi(A_{\bfi}A_{\bfj})\neq c_{\bfi\bfj})}{\PP((\bfi,\bfj)=(i,j))} = O(r^6\veps^{1/2}).\]
It remains to prove the last two items in the statement of \cref{thm:strong-stability}.

Previously, we used the strong fact that the partition $V_1\cup\cdots\cup V_{(r-1)^2}$ does not depend on the rainbow triangle with which we construct it to conclude that $H$ is a graph induced by an affine plane of order $(r-1)$.
Now, the best we can hope for is an approximate version of this fact, which significantly complicates the argument.
Instead, we focus on studying the distribution of $(X_1,X_2,X_3)$, which eventually will give us structure for $H$ as a byproduct.

The distribution of $(X_1,X_2,X_3)$ is the same as that of $(X_1,X_2,Y_3)$.
Similar to how we studied the distribution of $(X_1,Y')$, we will first study the distributions of $(X_1,X_2,Y')$ and $(X_1,X_2,Y_2)$.
Then we will deduce the distribution of $(X_1,X_2,Y_3)$ from the fact that $(X_1,X_2,Y')$ is a mixture of $(X_1,X_2,Y_2)$ and $(X_1,X_2,Y_3)$.

We begin with the easier task of investigating the distribution of $(X_1,X_2,Y')$.
Let $\bfj^*$ be a conditionally independent resample of $\bfj$ given $\bfi$.
\begin{lemma}\label{lem:X1-X2-Yprime-approx-distribution}
    The tuple $(X_1,X_2,Y')$ is $O(r^2\veps^{1/2})$-close to $(A_{\bfi}, A_{\bfj}, A_{\bfj^*}^*)$.
\end{lemma}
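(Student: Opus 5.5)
The plan is to glue two known approximate distributions through the conditional independence structure, using \cref{prop:total-variation-gluing}. Let $Z'$ be the conditionally independent resample of $Y'$ given $X_1$ introduced earlier.

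First, by \cref{lem:Yprime-approx-independence}, $(X_1,X_2,X_3,Y')$ is $O(\veps^{1/2})$-close to $(X_1,X_2,X_3,Z')$. Marginalizing out $X_3$ (total variation does not increase under projection), $(X_1,X_2,Y')$ is $O(\veps^{1/2})$-close to $(X_1,X_2,Z')$. So it suffices to show that $(X_1,X_2,Z')$ is $O(r^2\veps^{1/2})$-close to $(A_{\bfi},A_{\bfj},A_{\bfj^*}^*)$, and then finish with the triangle inequality.

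Second, I compare the two triples through their conditional independence structure.
\begin{itemize}
\item In $(X_1,X_2,Z')$, the variables $X_2$ and $Z'$ are conditionally independent given $X_1$. Moreover, $(X_1,Z')$ has the same distribution as $(X_1,Y')$, which in turn has the same distribution as $(X_1,X_2)$.
\item In $(A_{\bfi},A_{\bfj},A_{\bfj^*}^*)$, the variables $A_{\bfj}$ and $A^*_{\bfj^*}$ are conditionally independent given $A_{\bfi}$. To see this, condition on $A_{\bfi}=a$. Since the supports $V_i$ are disjoint, $a$ determines $\bfi$. Then $\bfj$ and $\bfj^*$ are independent uniform elements of $[(r-1)^2]\setminus\{\bfi\}$, and $A_{\bfj}$ and $A^*_{\bfj^*}$ are independent of each other and of $A_{\bfi}$. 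Finally, $(A_{\bfi},A^*_{\bfj^*})$ has the same distribution as $(A_{\bfi},A_{\bfj})$.
\end{itemize}
By \cref{lem:X1-X2-approx-distribution}, $(X_1,X_2)$ is $O(r^2\veps^{1/2})$-close to $(A_{\bfi},A_{\bfj})$. The same bound therefore holds for $(X_1,Z')$ versus $(A_{\bfi},A^*_{\bfj^*})$. Applying \cref{prop:total-variation-gluing} with $B=X_1$, $A=X_2$, $C=Z'$ and $B'=A_{\bfi}$, $A'=A_{\bfj}$, $C'=A^*_{\bfj^*}$ gives closeness $O(r^2\veps^{1/2})$ for the triples.

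The only delicate point is the conditional independence on the $A$ side. It depends on the disjointness of the supports $V_i$, so that $A_{\bfi}$ recovers $\bfi$, and on $A_{\bfj}$ and $A^*_{\bfj^*}$ being drawn from independent copies even when $\bfj=\bfj^*$, which holds because $A^*$ is an independent resampling. Once this is checked, the rest is bookkeeping.
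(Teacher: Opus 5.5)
Your proposal is correct and matches the paper's proof. You pass from $Y'$ to $Z'$ via \cref{lem:Yprime-approx-independence}, observe that $Z'$ is a conditionally independent resample of $X_2$ given $X_1$, and glue \cref{lem:X1-X2-approx-distribution} along that structure using \cref{prop:total-variation-gluing}. You also spell out two steps the paper leaves implicit: the projection that drops $X_3$, and the conditional independence of $A_{\bfj}$ and $A^*_{\bfj^*}$ given $A_{\bfi}$, which follows from the disjointness of the supports $V_i$.
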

\begin{proof}
    From \cref{lem:Yprime-approx-independence}, we know that $(X_1,X_2,Y')$ is $O(\veps^{1/2})$-close to $(X_1,X_2,Z')$.
    Recall that $Z'$ is the conditionally independent resample of $Y'$ given $X_1$, and $(X_1,Y')$ is identically distributed as $(X_1,X_2)$.
    Therefore $Z'$ is in fact a conditionally independent resample of $X_2$ given $X_1$.
    Now the lemma follows from \cref{lem:X1-X2-approx-distribution} and \cref{prop:total-variation-gluing}.
\end{proof}

Before we can get to the distribution of $(X_1,X_2,Y_2)$, we need several lemmas.
\begin{lemma}\label{lem:X1-X2-color-approx-distribution}
    We have $(X_1,X_2,\chi(X_1X_2))$ is $O(r^2\veps^{1/2})$-close to $(A_{\bfi}, A_{\bfj}, c_{\bfi\bfj})$.
\end{lemma}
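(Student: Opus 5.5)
The plan is to push the closeness from \cref{lem:X1-X2-approx-distribution} through the deterministic map $(u,v)\mapsto (u,v,\chi(uv))$, and then compare $\chi(A_{\bfi}A_{\bfj})$ with $c_{\bfi\bfj}$ using \cref{lem:edge-is-prescribed-color}. Total variation does not increase under deterministic maps. Indeed, any coupling of $(X_1,X_2)$ with $(A_{\bfi},A_{\bfj})$ that disagrees with probability $\delta$ induces a coupling of the images that disagrees with probability at most $\delta$. Set $\chi(uv)=*$ when $uv$ is not an edge, as in the paper. Then \cref{lem:X1-X2-approx-distribution} gives that $(X_1,X_2,\chi(X_1X_2))$ is $O(r^2\veps^{1/2})$-close to $(A_{\bfi},A_{\bfj},\chi(A_{\bfi}A_{\bfj}))$.

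Next I compare $(A_{\bfi},A_{\bfj},\chi(A_{\bfi}A_{\bfj}))$ with $(A_{\bfi},A_{\bfj},c_{\bfi\bfj})$. These two tuples can be coupled identically, sharing the same $\bfi,\bfj,A_{\bfi},A_{\bfj}$. Under this coupling they differ exactly on the event $\chi(A_{\bfi}A_{\bfj})\neq c_{\bfi\bfj}$. By \cref{lem:edge-is-prescribed-color} this event has probability $O(r^2\veps^{1/2})$, so the total variation distance is at most that.

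Combining the two bounds with the triangle inequality for total variation gives the claim with constant $O(r^2\veps^{1/2})$.

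The only point needing care is that $c_{\bfi\bfj}$ is a function of the indices $(\bfi,\bfj)$ and not of the vertices. This is harmless, because the supports $V_i$ are disjoint, so $(\bfi,\bfj)$ can be read off from $(A_{\bfi},A_{\bfj})$. Even without that observation, the coupling in the second step is carried out on the full tuple $(\bfi,\bfj,A_{\bfi},A_{\bfj})$ before projecting, so no real obstacle arises. This lemma is essentially bookkeeping.
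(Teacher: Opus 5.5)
Your proposal is correct and matches the paper, which proves this lemma as an immediate corollary of \cref{lem:X1-X2-approx-distribution} and \cref{lem:edge-is-prescribed-color}. Your version just writes out the two steps the paper leaves implicit: pushing the closeness forward through $(u,v)\mapsto(u,v,\chi(uv))$, and using the identity coupling on the event $\chi(A_{\bfi}A_{\bfj})\neq c_{\bfi\bfj}$, followed by the triangle inequality.
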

\begin{proof}
    This is an immediate corollary of \cref{lem:X1-X2-approx-distribution} and \cref{lem:edge-is-prescribed-color}.
\end{proof}

Now let $Z_2$ be a conditionally independent resample of $Y_2$ given $X_1,\chi(X_1X_2)$.

\begin{lemma}\label{lem:Y2-approx-independence}
    The tuples $(X_1,\chi(X_1X_2),X_2,X_3,Y_2)$ and $(X_1,\chi(X_1X_2),X_2,X_3,Z_2)$ are $O(r^{1/2}\veps^{1/2})$-close to each other.
\end{lemma}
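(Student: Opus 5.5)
The plan is to apply \cref{cor:mutual-info-small} with $A=(X_1,\chi(X_1X_2))$, $B=Y_2$ and $C=(X_2,X_3)$. By definition $Z_2$ is a conditionally independent resample of $Y_2$ given $A$, so the corollary applies once we show
\[
\HH(X_1,\chi(X_1X_2),X_2,X_3,Y_2)\geq \HH(X_1,\chi(X_1X_2),X_2,X_3,Z_2)-O(r\veps).
\]
The corollary then gives closeness $O((r\veps)^{1/2})=O(r^{1/2}\veps^{1/2})$, which is exactly the claimed bound. Everything therefore reduces to an entropy comparison, and the main input is \cref{lem:entropy_input}(g).

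For the left side, note that $\chi(X_1X_2)$ is a function of $(X_1,X_2)$. By \cref{prop:recover} and the chain rule, the left side equals $\HH(X_1,X_2,X_3)+\HH(Y_2\mid X_1,X_2,X_3)$.

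For the right side, $Z_2$ is conditionally independent of $(X_2,X_3)$ given $(X_1,\chi(X_1X_2))$. Hence the right side equals
\[
\HH(X_1,X_2,X_3)+\HH(Z_2\mid X_1,\chi(X_1X_2))=\HH(X_1,X_2,X_3)+\HH(Y_2\mid X_1,\chi(X_1X_2)).
\]
We now identify this last conditional entropy. By construction $Y_2$ is a resample of $X_2$ given $X_1,X_3,\chi(X_1X_2)$, so $\chi(X_1Y_2)=\chi(X_1X_2)$ always holds. Moreover $(X_1,Y_2,\chi(X_1X_2))$ has the same distribution as $(X_1,X_2,\chi(X_1X_2))$. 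Thus
\[
\HH(Y_2\mid X_1,\chi(X_1X_2))=\HH(X_2\mid X_1,\chi(X_1X_2)).
\]

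It then suffices to have $\HH(Y_2\mid X_1,X_2,X_3)\geq \HH(X_2\mid X_1,\chi(X_1X_2))-O(r\veps)$, and this is precisely \cref{lem:entropy_input}(g). Substituting into \cref{cor:mutual-info-small} finishes the proof.

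The only step needing care is the bookkeeping for the right side. We must check that conditioning on $(X_1,\chi(X_1X_2))$ makes $Z_2$ independent of $(X_2,X_3)$. We must also confirm that the joint law of $(X_1,\chi(X_1X_2),Y_2)$ matches that of $(X_1,\chi(X_1X_2),X_2)$, which relies on the color of $X_1Y_2$ being preserved. Both are direct from the sampling definitions, so I expect no real obstacle.
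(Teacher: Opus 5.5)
Your proposal is correct and matches the paper's proof. The paper likewise writes the left side as $\HH(X_1,X_2,X_3)+\HH(Y_2\mid X_1,X_2,X_3)$, bounds it below using \cref{lem:entropy_input}(g), identifies $\HH(X_1,X_2,X_3)+\HH(X_2\mid X_1,\chi(X_1X_2))$ with the entropy of the $Z_2$-tuple, and concludes by \cref{cor:mutual-info-small} with $A=(X_1,\chi(X_1X_2))$. Your extra checks (that $Z_2$ is independent of $(X_2,X_3)$ given $(X_1,\chi(X_1X_2))$, and that $(X_1,\chi(X_1X_2),Y_2)$ and $(X_1,\chi(X_1X_2),X_2)$ have the same law) make explicit what the paper leaves implicit in that identification.
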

\begin{proof}
    We know from the last part of \cref{lem:entropy_input} that 
    \begin{align*}\HH(X_1,\chi(X_1X_2),X_2,X_3,Y_2) =& \HH(X_1,X_2,X_3,Y_2) \\\geq &\HH(X_1,X_2,X_3)+\HH(X_2\mid X_1,\chi(X_1X_2))-O(r\veps)\\ = &\HH(X_1,\chi(X_1X_2),X_2,X_3,Z_2)-O(r\veps).
    \end{align*}
    The statement thus follows from \cref{cor:mutual-info-small}.
\end{proof}

We can finally describe the distribution of $X_1,X_2,Y_2$.
Let $\bfj'$ be a conditionally independent resample of $\bfj$ given $\bfi$ and $c_{\bfi\bfj}$.

\begin{lemma}\label{lem:X1-X2-Y2-approx-distribution}
    We have $(X_1,X_2,Y_2)$ and $(A_{\bfi}, A_{\bfj}, A_{\bfj'}^*)$ are $O(r^2\veps^{1/2})$-close to each other.
\end{lemma}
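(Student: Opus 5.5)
The plan is to write $(X_1,X_2,Y_2)$ as a gluing of two pieces, each already controlled, and glue them with \cref{prop:total-variation-gluing}. By \cref{lem:Y2-approx-independence}, $(X_1,\chi(X_1X_2),X_2,Y_2)$ is $O(r^{1/2}\veps^{1/2})$-close to $(X_1,\chi(X_1X_2),X_2,Z_2)$, by marginalizing out $X_3$. Total variation cannot increase under taking marginals, so this step is immediate. It therefore suffices to show that $(X_1,X_2,Z_2)$ is $O(r^2\veps^{1/2})$-close to $(A_{\bfi},A_{\bfj},A^*_{\bfj'})$.

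Next I identify the law of $Z_2$. Since $Y_2$ is a resample of $X_2$ given $(X_1,X_3,\chi(X_1X_2))$, the pair $(X_1,\chi(X_1X_2),Y_2)$ has the same distribution as $(X_1,\chi(X_1X_2),X_2)$. Hence $Z_2$ is a conditionally independent resample of $X_2$ given $(X_1,\chi(X_1X_2))$. So $(X_1,X_2,Z_2)$ is obtained as follows: take $B=(X_1,\chi(X_1X_2))$, and draw $X_2$ and $Z_2$ conditionally i.i.d.\ given $B$.

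On the model side I do the same. Take $B'=(A_{\bfi},c_{\bfi\bfj})$, together with $A_{\bfj}$ and $A^*_{\bfj'}$. Given $(\bfi,c_{\bfi\bfj})$, the index $\bfj'$ is an independent copy of $\bfj$, and $A^*_{\bfj'}$ is independent of everything else. So given $(A_{\bfi},c_{\bfi\bfj})$, the variables $A_{\bfj}$ and $A^*_{\bfj'}$ should be conditionally i.i.d. There is one wrinkle: conditioning on the value of $A_{\bfi}$ rather than on $\bfi$. This is harmless because the supports $V_i$ are disjoint, so $\bfi$ is a function of $A_{\bfi}$. Hence $A_{\bfj}$ and $A^*_{\bfj'}$ are conditionally independent given $B'$, and each pair $(B',A_{\bfj})$, $(B',A^*_{\bfj'})$ has the law of $(A_{\bfi},c_{\bfi\bfj},A_{\bfj})$.

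Now \cref{lem:X1-X2-color-approx-distribution} gives that $(B,X_2)$ is $O(r^2\veps^{1/2})$-close to $(B',A_{\bfj})$. Since $(B,Z_2)$ and $(B',A^*_{\bfj'})$ have the same laws as these pairs, the same bound holds for them. Applying \cref{prop:total-variation-gluing} with $A=X_2$, $C=Z_2$ on one side and $A'=A_{\bfj}$, $C'=A^*_{\bfj'}$ on the other yields that $(X_2,B,Z_2)$ is $O(r^2\veps^{1/2})$-close to $(A_{\bfj},B',A^*_{\bfj'})$. Dropping the color coordinate and combining with the first step by the triangle inequality finishes the proof.

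The main point requiring care is the conditional-independence structure of the model tuple, namely the disjointness-of-supports remark above. Everything else is bookkeeping with the already-established closeness lemmas.
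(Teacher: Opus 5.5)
Your proposal is correct and follows the paper's proof essentially step for step: pass from $Y_2$ to $Z_2$ via \cref{lem:Y2-approx-independence}, identify $Z_2$ as a conditionally independent resample of $X_2$ given $(X_1,\chi(X_1X_2))$, and glue two copies of \cref{lem:X1-X2-color-approx-distribution} with \cref{prop:total-variation-gluing}. Your remark that $\bfi$ is recoverable from $A_{\bfi}$ because the supports $V_i$ are disjoint justifies the model-side conditional independence, which the paper only asserts.
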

\begin{proof}
    By \cref{lem:Y2-approx-independence}, we know that  the tuple $(X_1,\chi(X_1X_2),X_2,Y_2)$ is $O(r^{1/2}\veps^{1/2})$-close to the tuple $(X_1,\chi(X_1X_2), X_2, Z_2)$.
    By the definition of $Z_2$ and \cref{lem:X1-X2-color-approx-distribution}, we know that $(X_1,\chi(X_1X_2),X_2)$ and $(X_1,\chi(X_1X_2),Z_2)$ are both $O(r^2\veps^{1/2})$-close to $(A_{\bfi}, c_{\bfi\bfj}, A_{\bfj})$, which is identically distributed as $(A_{\bfi}, c_{\bfi\bfj}, A_{\bfj'}^*)$.
    As $X_2$ and $Z_2$ are conditionally independent given $(X_1,\chi(X_1X_2))$, and $A_{\bfj}, A_{\bfj'}^*$ are conditionally independent given $(A_{\bfi}, c_{\bfi\bfj})$, we see that $(X_1,\chi(X_1X_2),X_2,Z_2)$ is $O(r^2\veps^{1/2})$-close to $(A_{\bfi}, c_{\bfi\bfj}, A_{\bfj}, A_{\bfj'}^*)$.
    The desired statement then follows.
\end{proof}

Having understood the distribution of $(X_1,X_2,Y')$ and $(X_1,X_2,Y_2)$, we are almost ready to study the distribution of $(X_1,X_2,X_3)$.
For technical reasons, we need to first show that $H$ is a regular graph in each color class.

\begin{lemma}\label{lem:color-regular}
    For any $c\in[r]$ and any $i\in [(r-1)^2]$, there are exactly $(r-2)$ indices $j\in [(r-1)^2]$ with $j\neq i$ and $c_{ij}=c$.
\end{lemma}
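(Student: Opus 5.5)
The plan is to compare two descriptions of the pair ``first vertex, color of the first edge''. On one side, \cref{lem:entropy_input}(f) says the color $\chi(X_1X_2)$ is nearly uniform on $[r]$ given $X_1$. On the other side, \cref{lem:X1-X2-color-approx-distribution} says $(X_1,\chi(X_1X_2))$ is close to $(A_{\bfi},c_{\bfi\bfj})$, whose color distribution given $\bfi=i$ is governed by the color degrees of $i$ in $H$. These degrees are integers, so any deviation from $r-2$ produces a total variation gap of order $r^{-4}$. That gap cannot be hidden by errors of order $r^2\veps^{1/2}\le \alpha^{1/2}r^{-6}$.

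\textbf{Reduction to indices.} Each $i$ has $(r-1)^2-1=r(r-2)$ neighbors in $H$, distributed among $r$ colors. So it suffices to rule out any color degree $d_i(c)\neq r-2$. Since the $V_i$ are disjoint, define $f:V(G)\to[(r-1)^2]$ by $f(v)=i$ for $v\in V_i$ (arbitrary elsewhere), so that $f(A_{\bfi})=\bfi$. Applying $f$ (total variation does not increase under deterministic maps) to \cref{lem:X1-X2-color-approx-distribution} gives that $(f(X_1),\chi(X_1X_2))$ is $O(r^2\veps^{1/2})$-close to $(\bfi,c_{\bfi\bfj})$.

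\textbf{Near-uniform color.} Let $\bfc$ be uniform on $[r]$, independent of everything. Given $X_1$, the color $\chi(X_1X_2)$ lies in the $r$-element set $[r]$. By \cref{lem:entropy_input}(f), $\HH(\chi(X_1X_2)\mid X_1)\ge\log_2 r-O(r\veps)$. Then \cref{cor:almost-uniform} shows $(X_1,\chi(X_1X_2))$ is $O(r^{1/2}\veps^{1/2})$-close to $(X_1,\bfc)$, and hence after applying $f$, $(f(X_1),\chi(X_1X_2))$ is close to $(f(X_1),\bfc)$. Also $f(X_1)$ is $O(r^2\veps^{1/2})$-close to $\bfi$ by \cref{lem:X1-Y1-approx-distribution}. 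Since $\bfc$ is independent, $(f(X_1),\bfc)$ is $O(r^2\veps^{1/2})$-close to $(\bfi,\bfc)$. By the triangle inequality,
\[
d_{\textup{TV}}\bigl(P_{\bfi,c_{\bfi\bfj}},P_{\bfi,\bfc}\bigr)=O(r^2\veps^{1/2}).
\]

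\textbf{Integrality gap.} Conditioned on $\bfi=i$, $c_{\bfi\bfj}$ takes value $c$ with probability $d_i(c)/(r(r-2))$. If $d_i(c)\neq r-2$ for some $i,c$, then $\abs{d_i(c)/(r(r-2))-1/r}\ge 1/(r(r-2))$. Since $\PP(\bfi=i)=(r-1)^{-2}$, the total variation above is at least $\tfrac12(r-1)^{-2}(r(r-2))^{-1}=\Omega(r^{-4})$. But with $\veps<\alpha r^{-16}$ we have $O(r^2\veps^{1/2})=O(\alpha^{1/2}r^{-6})$, which is smaller than this for $\alpha$ small enough. This is a contradiction, so every color degree equals $r-2$.

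\textbf{Main obstacle.} The only delicate point is bookkeeping: making sure every closeness statement passes through the index map $f$ and the independent uniform color without losing more than $O(r^2\veps^{1/2})$. This is exactly what the hypothesis $\veps<\alpha r^{-16}$ accommodates.
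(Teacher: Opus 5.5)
Your proposal is correct and is essentially the paper's proof: both compare $(X_1,\chi(X_1X_2))$ with $(A_{\bfi},c_{\bfi\bfj})$ via \cref{lem:X1-X2-color-approx-distribution} and with $(X_1,\bfc)$ via \cref{lem:entropy_input}(f) and \cref{cor:almost-uniform}, then play the $\Omega(r^{-4})$ integrality gap against the $O(r^2\veps^{1/2})$ error. The only cosmetic difference is that you project to indices through the map $f$ and use the triangle inequality, whereas the paper stays at the level of $A_{\bfi}$ and replaces $X_1$ by $A_{\bfi}$ using \cref{prop:total-variation-gluing}.
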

\begin{proof}
    To show this, we will show that $(A_{\bfi}, c_{\bfi\bfj})$ is close to $(A_{\bfi}, \bfc)$ where $\bfc$ is independently and uniformly distributed among $[r]$.
    First, by \cref{lem:X1-X2-color-approx-distribution}, we know that $(A_{\bfi}, c_{\bfi\bfj})$ is $O(r^2\veps^{1/2})$-close to $(X_1,\chi(X_1X_2))$.

    Now by \cref{lem:entropy_input} and \cref{cor:almost-uniform}, we know that $(X_1,\chi(X_1X_2))$ is $O(r^{1/2}\veps^{1/2})$-close to $(X_1,\bfc)$.
    As an immediate corollary, we know that $(A_{\bfi}, c_{\bfi\bfj})$ is $O(r^2\veps^{1/2})$-close to $(X_1, \bfc)$.
    Since $\bfc$ is independent of both $A_{\bfi}$ and $X_1$, and $X_1$ is $O(r^2\veps^{1/2})$-close to $A_{\bfi}$, by \cref{prop:total-variation-gluing} we obtain that $(A_{\bfi}, c_{\bfi\bfj})$ is $O(r^2\veps^{1/2})$-close to $(A_{\bfi}, \bfc)$.
    Now for any $i\in [(r-1)^2]$ and $c\in [r]$, let $N_{i,c}$ be the set of indices $j$ with $c_{ij}=c$.
    By explicitly computing the total variation between $(A_{\bfi}, c_{\bfi\bfj})$ and $(A_{\bfi}, \bfc)$, we have
    \[\frac{1}{(r-1)^2}\sum_{i=1}^{(r-1)^2}\sum_{c=1}^{r}\abs{\frac{\abs{N_{i,c}}}{r(r-2)}-\frac{1}{r}}=O(r^2\veps^{1/2}).\]
    If $\abs{N_{i,c}}\neq r-2$ for some choice of $i,c$, then the left hand side is at least $\frac{1}{r(r-1)^2(r-2)} = \Theta(r^4)$, which is a contradiction as $\veps = o(r^{-12})$.
\end{proof}

We can finally describe the distribution of $(X_1,X_2,X_3).$
Let $\bfk$ be chosen conditionally independently given $\bfi,\bfj$ from the indices $[(r-1)^2]\backslash \{\bfi\}$ so that $c_{\bfi\bfk}\neq c_{\bfi\bfj}$.

\begin{lemma}\label{lem:X1-X2-X3-approx-distribution}
    The random rainbow triangle $(X_1,X_2,X_3)$ is $O(r^2\veps^{1/2})$-close to $(A_{\bfi}, A_{\bfj}, A_{\bfk})$.
\end{lemma}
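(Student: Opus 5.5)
The plan is to "unmix" the distribution of $(X_1,X_2,Y')$ using \cref{cor:advanced-unmixing-variable}, as announced before \cref{lem:X1-X2-Yprime-approx-distribution}. Recall that $(X_1,X_2,Y')$ is the mixture of $(X_1,X_2,Y_2)$ and $(X_1,X_2,Y_3)$ with probabilities $\bigl(\frac{p_2}{p_2+p_3},\frac{p_3}{p_2+p_3}\bigr)$. By \cref{lem:entropy_input}(c), these probabilities are $\bigl(\frac1{r},\frac{r-1}{r}\bigr)+O(\veps)$. Since $Y_3$ is a resample of $X_3$ given $(X_1,X_2)$, the triple $(X_1,X_2,Y_3)$ is distributed as $(X_1,X_2,X_3)$.

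First I will write the target $(A_{\bfi},A_{\bfj},A^*_{\bfj^*})$ of \cref{lem:X1-X2-Yprime-approx-distribution} as a mixture of the same shape. Condition on whether $c_{\bfi\bfj^*}=c_{\bfi\bfj}$. By \cref{lem:color-regular}, given $\bfi$ and $\bfj$ there are exactly $r-2$ indices $j'\neq \bfi$ with $c_{\bfi j'}=c_{\bfi\bfj}$, and this count includes $\bfj$ itself. So the event $c_{\bfi\bfj^*}=c_{\bfi\bfj}$ has probability exactly $\frac{r-2}{r(r-2)}=\frac1r$. On this event, $\bfj^*$ is uniform among indices of color $c_{\bfi\bfj}$ from $\bfi$, so it has the law of $\bfj'$. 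On the complementary event, $\bfj^*$ is uniform among the indices $k\neq\bfi$ with $c_{\bfi k}\neq c_{\bfi\bfj}$, so it has the law of $\bfk$.

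Hence $(A_{\bfi},A_{\bfj},A^*_{\bfj^*})$ is the mixture of $(A_{\bfi},A_{\bfj},A^*_{\bfj'})$ and $(A_{\bfi},A_{\bfj},A^*_{\bfk})$ with probabilities exactly $\bigl(\frac1r,\frac{r-1}r\bigr)$. Because the $A_i$ and $A_i^*$ are independent and identically distributed copies, and $\bfk\neq\bfj$, the triple $(A_{\bfi},A_{\bfj},A^*_{\bfk})$ has the same law as $(A_{\bfi},A_{\bfj},A_{\bfk})$. The regularity from \cref{lem:color-regular} is what makes these weights exact rather than merely approximate.

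Now apply \cref{cor:advanced-unmixing-variable}, with the third component of each mixture singled out as the one to be recovered. The inputs are:
\begin{itemize}
\item $(X_1,X_2,Y_2)$ is $O(r^2\veps^{1/2})$-close to $(A_{\bfi},A_{\bfj},A^*_{\bfj'})$, by \cref{lem:X1-X2-Y2-approx-distribution};
\item the two mixtures are $O(r^2\veps^{1/2})$-close, by \cref{lem:X1-X2-Yprime-approx-distribution};
\item the mixing weights differ by $O(\veps)$.
\end{itemize}
The loss factor is $2p^{-1}$, where $p=\frac{r-1}{r}+O(\veps)$ is the weight of the component being recovered. This factor is bounded by an absolute constant, so $(X_1,X_2,X_3)$ is $O(r^2\veps^{1/2}+\veps)=O(r^2\veps^{1/2})$-close to $(A_{\bfi},A_{\bfj},A_{\bfk})$.

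The main obstacle is bookkeeping rather than estimation. The decomposition of $\bfj^*$ has to produce exactly the laws of $\bfj'$ and $\bfk$ as defined in the paper. The one subtle point is that $\bfj'$ may equal $\bfj$, and that case is correctly placed in the first component. One must also check that the weight $\frac1r$ is exact.
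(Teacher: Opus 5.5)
Your proposal is correct and follows the paper's proof. Both arguments identify $(X_1,X_2,X_3)$ with $(X_1,X_2,Y_3)$, write $(A_{\bfi},A_{\bfj},A^*_{\bfj^*})$ as the mixture of $(A_{\bfi},A_{\bfj},A^*_{\bfj'})$ and $(A_{\bfi},A_{\bfj},A^*_{\bfk})$ with exact weights $(\frac1r,\frac{r-1}r)$ via \cref{lem:color-regular}, and then unmix with \cref{cor:advanced-unmixing-variable} using \cref{lem:X1-X2-Yprime-approx-distribution,lem:X1-X2-Y2-approx-distribution}. Your explicit check that the event $c_{\bfi\bfj^*}=c_{\bfi\bfj}$ has probability exactly $\frac1r$ for every value of $(\bfi,\bfj)$, so that it is independent of $(\bfi,\bfj)$, fills in a step the paper only asserts.
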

\begin{proof}
    Observe that $(X_1,X_2,X_3)$ is identically distributed as $(X_1,X_2,Y_3)$.
    Also note that $(A_{\bfi}, A_{\bfj}, A_{\bfk})$ is identically distributed as $(A_{\bfi}, A_{\bfj}, A_{\bfk}^*)$ as $\bfk\neq \bfi,\bfj$ always holds.
    Therefore we just need to show that $(X_1,X_2,Y_3)$ is close to $(A_{\bfi}, A_{\bfj}, A_{\bfk}^*)$.
    
    Note that $(X_1,X_2,Y')$ is the mixture of $(X_1,X_2,Y_2)$ and $(X_1,X_2,Y_3)$ with probability distribution $(\frac{p_2}{p_2+p_3}, \frac{p_3}{p_2+p_3})$.
    On the other hands, $(A_{\bfi}, A_{\bfj}, A_{\bfj^*}^*)$ is the mixture of $(A_{\bfi}, A_{\bfj}, A_{\bfj'}^*)$ and $(A_{\bfi}, A_{\bfj}, A_{\bfk}^*)$ with probabilities $(\frac{1}{r},\frac{r-1}{r})$ (here, we use \cref{lem:color-regular} crucially).
    Now by combining \cref{lem:X1-X2-Yprime-approx-distribution,lem:X1-X2-Y2-approx-distribution} and the fact that $(\frac{p_2}{p_2+p_3}, \frac{p_3}{p_2+p_2})$ is $O(\veps)$-close to $(\frac{1}{r}, \frac{r-1}{r})$, we may apply \cref{cor:advanced-unmixing-variable} to deduce that $(X_1,X_2,Y_3)$ is $O(r^2\veps^{1/2})$-close to $(A_{\bfi}, A_{\bfj}, A_{\bfk}^*)$, as desired.
\end{proof}

This is almost the last item of \cref{thm:strong-stability}.
The only thing we need to make sure is that the distribution of $(\bfi, \bfj, \bfk)$ is the same as sampling vertices of a rainbow triangle in $H$ uniformly at random.
Notice that in order for it to be true, we need the following property of $H$: if $c_{ij}\neq c_{ik}$, then $ijk$ is a rainbow triangle in $H$.
We prove this in the lemma below.

\begin{lemma}\label{lem:rainbow-cherry-to-triangle}
    Recall that $\veps<\alpha r^{-16}$ where $\alpha>0$ is a small constant.
    For sufficiently small $\alpha$, the graph $H$ satisfies that if $i,j,k$ are distinct indices in $[(r-1)^2]$ with $c_{ij}\neq c_{ik}$, then $ijk$ is a rainbow triangle in $H$.
\end{lemma}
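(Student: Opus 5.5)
Argue by contradiction, using that $(X_1,X_2,X_3)$ is always a rainbow triangle in $G$ while, by \cref{lem:X1-X2-X3-approx-distribution}, it is $O(r^2\veps^{1/2})$-close to $(A_{\bfi},A_{\bfj},A_{\bfk})$. Suppose some distinct $i,j,k$ have $c_{ij}\neq c_{ik}$ but $ijk$ is not rainbow in $H$, so $c_{jk}\in\{c_{ij},c_{ik}\}$. Since the coordinates of $(\bfi,\bfj,\bfk)$ play no symmetric roles (the law is defined through $\bfi$), we should first check that the triple $(i,j,k)$ is actually hit by $(\bfi,\bfj,\bfk)$. It is: $\bfi=i$ with probability $(r-1)^{-2}$, then $\bfj=j$ with probability $((r-1)^2-1)^{-1}=(r(r-2))^{-1}$. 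Given these, $\bfk$ is uniform among indices $k'\neq i$ with $c_{ik'}\neq c_{ij}$. By \cref{lem:color-regular} there are exactly $(r-1)(r-2)$ such indices, and $k$ is one of them. Hence
\[\PP\big((\bfi,\bfj,\bfk)=(i,j,k)\big)=\frac{1}{(r-1)^2\,r(r-2)\,(r-1)(r-2)}=\Theta(r^{-6}).\]

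Next, we show that on this event $A_iA_jA_k$ is rarely rainbow. By the second item of \cref{thm:strong-stability} (already established from \cref{lem:edge-is-prescribed-color}), each of $A_iA_j$, $A_iA_k$, $A_jA_k$ fails to be an edge of color $c_{ij}$, $c_{ik}$, $c_{jk}$ respectively with probability $O(r^6\veps^{1/2})$. A union bound over the three pairs shows that with probability $1-O(r^6\veps^{1/2})$ all three edges carry their prescribed colors. Since two prescribed colors coincide, the triangle $A_iA_jA_k$ is then not rainbow. Because $\bfi,\bfj,\bfk$ are distinct and independent of the $A$'s, this gives
\[\PP\big(A_{\bfi}A_{\bfj}A_{\bfk}\text{ not rainbow}\big)\ \ge\ \Theta(r^{-6})\big(1-O(r^6\veps^{1/2})\big)=\Omega(r^{-6}),\]
provided $\veps<\alpha r^{-16}$ with $\alpha$ small, which makes $r^6\veps^{1/2}=O(\alpha^{1/2}r^{-2})$ tiny.

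On the other hand, the event ``not rainbow'' has probability $0$ under $(X_1,X_2,X_3)$. Total variation therefore forces this probability to be $O(r^2\veps^{1/2})=O(\alpha^{1/2}r^{-6})$ under $(A_{\bfi},A_{\bfj},A_{\bfk})$. For $\alpha$ small enough this contradicts the $\Omega(r^{-6})$ lower bound. The main obstacle is the bookkeeping of powers of $r$, since the $\Theta(r^{-6})$ point mass must beat both error terms. It does: $r^2\veps^{1/2}\ll r^{-6}$ requires $\veps\ll r^{-16}$, exactly matching the hypothesis, while $r^6\veps^{1/2}\ll 1$ holds comfortably.
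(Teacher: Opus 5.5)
Your proof is correct and is essentially the paper's argument. Both combine the $\Theta(r^{-6})$ point mass of $(\bfi,\bfj,\bfk)$ at $(i,j,k)$, the closeness from \cref{lem:X1-X2-X3-approx-distribution}, and the second item of \cref{thm:strong-stability}. The only difference is that the paper argues directly rather than by contradiction: the two failure probabilities for $A_iA_jA_k$ sum to $O(r^8\veps^{1/2})<1/2$, so some rainbow triangle carries the colors $c_{ij},c_{ik},c_{jk}$.
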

\begin{proof}
    Fix any distinct $i,j,k$ with $c_{ij}\neq c_{ik}$.
    Notice that  we have 
    \[\PP((\bfi, \bfj, \bfk)=(i,j,k)) = \frac{1}{(r-1)^2\cdot r(r-2)\cdot (r-1)(r-2)} = \Theta(r^{-6}).\]
    By \cref{lem:X1-X2-X3-approx-distribution}, the probability that $A_i, A_j, A_k$ does not form a rainbow triangle is $O(r^8\veps^{1/2})$.
    Moreover, by the second item of \cref{thm:strong-stability} that we have proven, we know that the event $(\chi(A_iA_j),\chi(A_iA_k), \chi(A_jA_k)) \neq (c_{ij},c_{ik}, c_{jk})$ happens with probability $O(r^6\veps^{1/2})$.
    Therefore the above two failure probabilities add up to $O(r^8\veps^{1/2}) = O(\alpha^{1/2})$, and as long as $\alpha$ is sufficiently small, this is at most $1/2$.
    This shows that there is a rainbow triangle whose edges have colors $c_{ij}, c_{ik}, c_{jk}$.
    In particular, $ijk$ forms a rainbow triangle in $H$.
\end{proof}

Now we may use \cref{lem:color-regular}, \cref{lem:rainbow-cherry-to-triangle} to verify the condition for \cref{lem:extract-affine-plane}.
This shows that $H$ is a graph induced by an affine plane of order $(r-1)$, proving the third item in \cref{thm:strong-stability}.
Finally, to show the fourth item, it suffices to show that $\bfi, \bfj, \bfk$ form a uniform rainbow triangle in $H$.
By \cref{lem:rainbow-cherry-to-triangle}, it suffices to show that $\bfi, \bfj, \bfk$ is chosen uniformly from the triplets $(i,j,k)$ with $c_{ij}\neq c_{ik}$.
This follows from the definition of the random variables and \cref{lem:color-regular}.
We have thus concluded the proof of \cref{thm:strong-stability}.

\section{Concluding remarks}\label{sec:remark}

\subsection{Many colors with a given number of edges for each color}
The number of rainbow triangles when there are only three colors can be bounded more precisely by \cref{thm:rainbow-triangle} if the number of edges of each color is known to us.
It is natural to ask whether there is a better bound for many colors as well if we know this extra piece of information.
In fact, it is possible to mimic the strategy used at the end of \cref{subsec:two_tricks}---we just need to upper bound $\HH(X_1,X_2)$ given the number of edges for each color.
However, when each color has approximately the same number of edges, this optimization produces the exact same bound as \cref{thm:r-color}.
The extra optimization only improves the bound if one or two colors constitute most edges in the graph.
For example, if the numbers of edges are $m_1\geq \cdots \geq m_r$ and $m_1\geq \frac{m}{3}$, $m_2\geq \frac{m-m_1}{2}$, then $m$ can be replaced with $3\sqrt[3]{m_1m_2\left(m_3+\cdots+m_r\right)}$.

Let us also remark that the na\"ive generalization of the $\sqrt{2m_rm_gm_b}$ bound is simply not true for more colors.
Indeed, in the setting of $r$ colors with $r\geq 4$, it is impossible to bound the number of rainbow triangles by $C(m_1\cdots m_r)^{\frac{3}{2r}}$ for any constant $C$, as there can still be rainbow triangles when $m_r=0$.
What the precise answer should be is unclear to us.
Perhaps the first step is to go back to $r=3$ and determine a tighter upper bound when $m_r,m_g,m_b$ are not equal to each other.

\begin{question}
    For any $\alpha_r,\alpha_g,\alpha_b\in(0,1)$ with $\alpha_r+\alpha_g+\alpha_b=1$, determine the smallest constant $C=C(\alpha_r,\alpha_g,\alpha_b)$ so that the following is true.
    In a simple graph $G$ with at most $\alpha_rm$ red edges, at most $\alpha_gm$ green edges and at most $\alpha_bm$ blue edges, there are at most $(C+o(1))(\alpha_r\alpha_g\alpha_b)^{1/2}m^{3/2}$ rainbow triangles.
\end{question}
We know that $C\leq \sqrt{2}$ for any $\alpha_r,\alpha_g,\alpha_b$ with equality when $\alpha_r=\alpha_g=\alpha_b=\frac{1}{3}$.
On the other hand, even a blowup of a rainbow triangle gives $C\geq 1$, and one should be able to obtain a better bound from unbalanced blowups of a properly colored $K_4$.

\subsection{Many colors with a given number of vertices}
An old problem of Erd\H{o}s and S\'os asks for the maximum number of rainbow triangles in a simple $3$-edge-colored graph with a given number of vertices (see \cite{EH72}).
This was resolved exactly for sufficiently large $n$ by Balogh et.~al.~\cite{BHLPVY17} using flag algebras, where they show that balanced iterated blowups of a properly colored $K_4$ maximize the number of rainbow triangles.
Asking the same question for more colors leads to the following natural conjecture.

\begin{conjecture}
    Let $r\geq 3$ be a positive integer.
    Then a simple $r$-edge-colored graph $G$ with $n$ vertices contains at most $\frac{(r-1)(r-2)}{r^2-2r+2}\cdot \frac{n^3}{6}$ rainbow triangles.
\end{conjecture}
If this is true, then it would be asymptotically tight for any $r$ where there is a projective plane of order $(r-1)$ as we can take the graph induced by an affine plane of order $(r-1)$ and consider its balanced iterated blowups.

\subsection{Stability for \texorpdfstring{\cref{thm:properly-colored-Kd-in-r-colors}}{Theorem 1.8}}
As discussed in the introduction, \cref{thm:properly-colored-Kd-in-r-colors} gives, for each $d\geq 4$ and odd $r$, a tight upper bound on the number of properly colored $K_d$'s in a simple $r$-edge-colored graph with $m$ edges.
A natural question is to ask for the best possible bound for even $r$.
It is likely that we could carry out a stability argument akin to the one in \cref{sec:sturctural_stability} to improve the constant by a little bit when $r$ is even, but it is unclear to us what the right constant should be in that case.
To be more specific, we pose the following question.
\begin{question}
    Determine the smallest constant $C$ so that the following holds.
    Every $4$-edge-colored simple graph with $m$ edges contains at most $Cm^2$ properly colored $K_4$'s.
\end{question}

\subsection{Rainbow cliques}\label{subsec:rainbow_Kd}
\cref{thm:properly-colored-Kd-in-r-colors} can be seen as an extension of \cref{thm:r-color} where rainbow triangles are thought of as properly colored triangles.
However, a more natural question is to determine the maximum number of rainbow $K_d$'s in a simple $r$-edge-colored graph with a given number of edges.
This question looks much harder in general.
Take $K_4$ for example.
Below, let $m$ be the number of edges in the graph we are going to construct, and let $N_4$ be the number of rainbow $K_4$'s in the graph.
The question is to maximize $N_4/m^2$ for a given $r$.
For our convenience, we will compute $(24N_4)/(2m)^2$ instead.

Based on our results in this paper, the natural families of graphs to test are properly colored cliques and graphs induced by affine planes.
For $r$ such that $r-1$ is an odd prime power, the graph induced by the affine plane over $\FF_{r-1}$ gives $2m = (r-1)^2\cdot r(r-2)$ and $24N_4 = (r-1)^2\cdot r(r-2)\cdot (r-1)(r-2)\cdot \left[(r-1)^2-(6r-15)\right] = r(r-1)^3(r-2)^2(r-4)^2$, giving
\[\frac{24N_4}{(2m)^2} = \frac{(r-4)^2}{r(r-1)}.\]
If $r-1$ is an even prime power, the same construction gives $2m = (r-1)^2\cdot r(r-2)$ and $24N_4 = (r-1)^2\cdot r(r-2)\cdot (r-1)(r-2)\cdot \left[(r-1)^2-(6r-14)\right] = r(r-1)^3(r-2)^2(r-3)(r-5)$, yielding
\[\frac{24N_4}{(2m)^2} = \frac{(r-3)(r-5)}{r(r-1)}.\]

For $r$ odd, we would also like to consider a properly colored $K_{r+1}$ with $r$ colors.
However, there is not a unique such coloring, and different colorings can give different numbers of rainbow $K_4$'s. In particular, every $K_4$ in a properly colored $K_{r+1}$ is properly colored, and the only obstruction to it being rainbow is that opposite edges could have the same color.
To maximize the number of $K_4$'s, we would thus like to take a proper coloring with the following property: if $abcd$ are four vertices with $\chi(ab)=\chi(cd)$, then $\chi(ad)=\chi(bc)$ and $\chi(ac)=\chi(bd)$ as well.
For example, if $r=2^k-1$, identifying the vertex set of $K_{r+1}$ with $\FF_2^k$ yields a Cayley coloring with this property, where the edge $xy$ receives the color $x+y$.
This graph has $2m = 2^k(2^k-1)= (r+1)r$ and $24N_4 = 2^k(2^k-1)(2^k-2)(2^k-4) = (r+1)r(r-1)(r-3)$, giving 
\[\frac{24N_4}{(2m)^2} = \frac{(r-1)(r-3)}{r(r+1)}.\]
This formula is the largest among all the constructions above.

The situation becomes more interesting for $K_6$.
Let $N_6$ be the number of $K_6$'s. As before, we would like to compute $(6!N_6)/(2m)^3$ to compare different constructions.
For the $\FF_2^k$ construction, we compute that 
\[6!N_6 = 2^k(2^k-1)(2^k-2)(2^k-4)(2^k-8)(2^k-15) = (r+1)r(r-1)(r-3)(r-7)(r-14),\]
and so 
\[\frac{6!N_6}{(2m)^3} = \frac{(r-1)(r-3)(r-7)(r-14)}{r^2(r+1)^2}=1-\Theta(r^{-1}).\]
However, we may also consider another construction if $r = \binom{n}{2}$, in which case we could simply take a rainbow $K_n$.
In this case, we have
\[\frac{6!N_6}{(2m)^3} = \frac{(n-2)(n-3)(n-4)(n-5)}{n^2(n-1)^2} = 1-\Theta(r^{-1/2}).\]
Although this is worse as $r$ gets large, when $r=15$, the former construction gives $k=4$ and $6!N_6/(2m)^3 = 7/300$ and the latter construction gives $n=6$ and $6!N_6/(2m)^3 = 8/300$, which is slightly better.
Motivated by those examples, we pose the following question.

\begin{question}
    Is it true that when $r=2^k-1$ is sufficiently large, the Cayley colored graph of $\FF_2^k$ maximizes the number of rainbow $K_4$'s and the number of rainbow $K_6$'s among all $r$-edge-colored simple graphs with the same number of edges?
\end{question}

We briefly remark that Cairncross, Mizgerd and Mubayi \cite{CMM26} have considered a similar problem for $K_6$ where the number of vertices is fixed instead.
Interestingly, when the number of vertices is fixed, balanced blowups of a rainbow $K_6$ perform worse than balanced blowups of the Cayley graph of $\FF_2^4$.
We also remark that in the Cayley graph construction, when $k$ is sufficiently large, the number of ways to extend a rainbow $K_6$ to a rainbow $K_7$ starts to depend on the particular $K_6$, which makes counting rainbow $K_d$'s trickier.
It also suggests that maybe the Cayley graph construction is not optimal for $K_d$'s when $d$ is large.

\subsection{Generalization to hypergraphs}
Instead of generalizing rainbow triangles to rainbow cliques, we can also move from graphs to hypergraphs.
To be more specific, in a simple $r$-edge-colored $d$-uniform hypergraph with $m$ hyperedges, what is the maximum number of rainbow $K_{d+1}^{(d)}$'s?
This was already asked in \cite{CY24kk} for the case $r=d+1$.

In fact, our argument for \cref{thm:r-color} generalizes to higher uniformities to some extent. 
Although the argument still works, the bound obtained from it is no longer tight.
Take $d=3$ and $r=4$ for example.
Our argument can be generalized to show that there are at most $\sqrt[3]{\frac{5}{16}m^4}$ rainbow $K_4^{(3)}$'s.
However, one can argue that for the bound to be tight, the link at each vertex would need to be a tight example for $d=2$ and $r=4$, which has to be a balanced blowup of the $\FF_3^2$ construction by \cref{thm:tight-construction}.
With a little analysis, one sees that this is impossible.

\section*{Acknowledgement}
The first and third authors would like to thank Shagnik Das and Christoph Spiegel for suggesting the problem and some enlightening conversations.
The research of the second author was partially supported by the National Science Foundation under Grant No.~DMS-2424441.
The research of the third author is supported by the Jane Street Graduate Research Fellowship.

\bibliographystyle{plain}
\bibliography{bib.bib}
\end{document}